\begin{document}
\newtheoremstyle{all}
  {11pt}
  {11pt}
  {\slshape}
  {}
  {\bfseries}
  {}
  {.5em}
  {}

\theoremstyle{all}
\newtheorem{thm}{Theorem}[section]
\newtheorem{prop}[thm]{Proposition}
\newtheorem{cor}[thm]{Corollary}
\newtheorem{lemma}[thm]{Lemma}
\newtheorem{defn}[thm]{Definition}
\newtheorem{ques}[thm]{Question}
\newtheorem{conj}[thm]{Conjecture}
\newtheorem{hypothesis}[thm]{Hypothesis}
\newtheorem{rem}[thm]{Remark}

\newcommand{\nc}{\newcommand}
\newcommand{\renc}{\renewcommand}
  \nc{\kac}{\kappa^C}
\nc{\Lco}{L_{\la}}
\nc{\qD}{q^{\nicefrac 1D}}
\nc{\ocL}{M_{\la}}
\nc{\excise}[1]{}
\nc{\mpmod}{\operatorname{-pmod}}
\newcommand{\arxiv}[1]{\href{http://arxiv.org/abs/#1}{\tt arXiv:\nolinkurl{#1}}}

\nc{\Dbe}{D_{\mathsf{gf}}}
\nc{\tr}{\operatorname{tr}}
\nc{\tla}{\mathsf{t}_\la}
\nc{\llrr}{\langle\la,\rho\rangle}
\nc{\lllr}{\langle\la,\la\rangle}
\nc{\K}{\mathbbm{k}}
\nc{\Stosic}{Sto{\v{s}}i{\'c}\xspace}
\nc{\cd}{\mathcal{D}}
\nc{\vd}{\mathbb{D}}
\nc{\R}{\mathbb{R}}
  \nc{\Lam}[3]{\La^{#1}_{#2,#3}}
  \nc{\Lab}[2]{\La^{#1}_{#2}}
  \nc{\Lamvwy}{\Lam\Bv\Bw\By}
  \nc{\Labwv}{\Lab\Bw\Bv}
  \nc{\nak}[3]{\mathcal{N}(#1,#2,#3)}
  \nc{\hw}{highest weight\xspace}
  \nc{\al}{\alpha}
  \nc{\be}{\beta}
  \nc{\bM}{\mathbf{m}}
  \nc{\bx}{\mathbf{x}}
\nc{\bp}{\mathbf{p}}
\nc{\by}{\mathbf{y}}
 \nc{\bkh}{\backslash}
  \nc{\Bi}{\mathbf{i}}
  \nc{\Bm}{\mathbf{m}}
\nc{\bd}{\mathbf{d}}
  \nc{\bpi}{\boldsymbol{\pi}}

  \nc{\Bj}{\mathbf{j}}
\nc{\RAA}{R^\A_A}
  \nc{\Bv}{\mathbf{v}}
  \nc{\Bw}{\mathbf{w}}
\nc{\Id}{\operatorname{Id}}
  \nc{\By}{\mathbf{y}}
\nc{\eE}{\EuScript{E}}
\nc{\eI}{\EuScript{I}}
  \nc{\Bz}{\mathbf{z}}
  \nc{\coker}{\mathrm{coker}\,}
  \nc{\C}{\mathbb{C}}
  \nc{\ch}{\mathrm{ch}}
  \nc{\de}{\delta}
  \nc{\ep}{\epsilon}
  \nc{\Rep}[2]{\mathsf{Rep}_{#1}^{#2}}
  \nc{\Ev}[2]{E_{#1}^{#2}}
  \nc{\fr}[1]{\mathfrak{#1}}
  \nc{\fp}{\fr p}
  \nc{\fq}{\fr q}
  \nc{\fl}{\fr l}
  \nc{\fgl}{\fr{gl}}
\nc{\rad}{\operatorname{rad}}
\nc{\ind}{\operatorname{ind}}
  \nc{\GL}{\mathrm{GL}}
  \nc{\Hom}{\mathrm{Hom}}
  \nc{\im}{\mathrm{im}\,}
 \nc{\SHom}{\EuScript{H}\!\mathit{om}}
  
 \nc{\La}{\Lambda}
  \nc{\la}{\lambda}
  \nc{\mult}{b^{\mu}_{\la_0}\!}
  \nc{\mc}[1]{\mathcal{#1}}
  \nc{\om}{\omega}
\nc{\gl}{\mathfrak{gl}}
  \nc{\cF}{\mathcal{F}}
 \nc{\cC}{\mathcal{C}}
  \nc{\Vect}{\mathsf{Vect}}
 \nc{\modu}{\operatorname{-mod}}
  \nc{\qvw}[1]{\La(#1 \Bv,\Bw)}
  \nc{\van}[1]{\nu_{#1}}
  \nc{\Rperp}{R^\vee(X_0)^{\perp}}
  \nc{\si}{\sigma}
  \nc{\croot}[1]{\al^\vee_{#1}}
\nc{\di}{\mathbf{d}}
  \nc{\SL}[1]{\mathrm{SL}_{#1}}
  \nc{\Th}{\theta}
  \nc{\vp}{\varphi}
  \nc{\wt}{\mathrm{wt}}
  \nc{\Z}{\mathbb{Z}}
  \nc{\Q}{\mathbb{Q}}
  \nc{\Znn}{\Z_{\geq 0}}
  \nc{\ver}{\EuScript{V}}
  \nc{\Res}[2]{\operatorname{Res}^{#1}_{#2}}
  \nc{\edge}{\EuScript{E}}
  \nc{\Spec}{\mathrm{Spec}}
  \nc{\tie}{\EuScript{T}}
  \nc{\ml}[1]{\mathbb{D}^{#1}}
  \nc{\fQ}{\mathfrak{Q}}
        \nc{\fg}{\mathfrak{g}}
  \nc{\Uq}{U_q(\fg)}
        \nc{\bom}{\boldsymbol{\omega}}
\nc{\bla}{{\underline{\boldsymbol{\la}}}}
\nc{\bmu}{{\boldsymbol{\mu}}}
\nc{\bal}{{\boldsymbol{\al}}}
\nc{\bet}{{\boldsymbol{\eta}}}
\nc{\rola}{X}
\nc{\wela}{Y}
\nc{\fM}{\mathfrak{M}}
\nc{\fX}{\mathfrak{X}}
\nc{\fH}{\mathfrak{H}}
\nc{\fE}{\mathfrak{E}}
\nc{\fF}{\mathfrak{F}}
\nc{\fI}{\mathfrak{I}}
\nc{\qui}[2]{\fM_{#1}^{#2}}
\renc{\cL}{\mathcal{L}}
\nc{\ca}[2]{\fQ_{#1}^{#2}}
\nc{\cat}{\mathcal{V}}
\nc{\cata}{\mathfrak{V}}
\nc{\pil}{{\boldsymbol{\pi}}^L}
\nc{\pir}{{\boldsymbol{\pi}}^R}
\nc{\cO}{\mathcal{O}}
\nc{\tO}{\tilde{\cO}}
\nc{\Ko}{\text{\Denarius}}
\nc{\Ei}{\fE_i}
\nc{\Fi}{\fF_i}
\nc{\fil}{\mathcal{H}}
\nc{\brr}[2]{\beta^R_{#1,#2}}
\nc{\brl}[2]{\beta^L_{#1,#2}}
\nc{\so}[2]{\EuScript{Q}^{#1}_{#2}}
\nc{\EW}{\mathbf{W}}
\nc{\rma}[2]{\mathbf{R}_{#1,#2}}
\nc{\Dif}{\EuScript{D}}
\nc{\MDif}{\EuScript{E}}
\renc{\mod}{\mathsf{mod}}
\nc{\modg}{\mathsf{mod}^g}
\nc{\fmod}{\mathsf{mod}^{fd}}
\nc{\id}{\operatorname{id}}
\nc{\DR}{\mathbf{DR}}
\nc{\End}{\operatorname{End}}
\nc{\Fun}{\operatorname{Fun}}
\nc{\Ext}{\operatorname{Ext}}
\nc{\tw}{\tau}
\nc{\lcm}{\operatorname{lcm}}
\nc{\A}{\EuScript{A}}
\nc{\Loc}{\mathsf{Loc}}
\nc{\eF}{\EuScript{F}}
\nc{\LAA}{\Loc^{\A}_{A}}
\nc{\perv}{\mathsf{Perv}}
\nc{\teF}{\tilde{\EuScript{F}}}
\nc{\teE}{\tilde{\EuScript{E}}}
\nc{\gfq}[2]{B_{#1}^{#2}}
\nc{\qgf}[1]{A_{#1}}
\nc{\qgr}{\qgf\rho}
\nc{\tqgf}{\tilde A}
\nc{\IC}{\mathbf{IC}}
\nc{\Tr}{\operatorname{Tr}}
\nc{\Tor}{\operatorname{Tor}}
\nc{\cQ}{\mathcal{Q}}
\nc{\st}[1]{\Delta(#1)}
\nc{\cst}[1]{\nabla(#1)}
\nc{\ei}{\mathbf{e}_i}
\nc{\Be}{\mathbf{e}}
\nc{\Hck}{\mathfrak{H}}
\renc{\P}{\mathbb{P}}
\nc{\cI}{\mathcal{I}}
\nc{\tU}{\mathcal{U}}
\nc{\hU}{\widehat{\mathcal U}}
\nc{\coe}{\mathfrak{K}}
\nc{\pr}{\operatorname{pr}}
\nc{\ttU}{\tilde{\mathcal{U}}}
\nc{\bra}{\mathfrak{B}}
\nc{\rcl}{\rho^\vee(\la)}
\nc{\bz}{\mathbf{z}}
\nc{\bLa}{\boldsymbol{\Lambda}}
\nc{\hwo}{\mathbb{V}}
\nc{\cosoc}{\operatorname{cosoc}}
\nc{\socle}{\operatorname{soc}}
\nc{\alg}{T}

\excise{
\newenvironment{block}
\newenvironment{frame}
\newenvironment{tikzpicture}
\newenvironment{equation*}
}

\setcounter{tocdepth}{1}

\baselineskip=1.1\baselineskip
\renc{\theequation}{\arabic{section}.\arabic{equation}}

 \usetikzlibrary{decorations.pathreplacing,backgrounds,decorations.markings}
\tikzset{wei/.style={draw=red,double=red!40!white,double distance=1.5pt,thin}}
\tikzset{bdot/.style={fill,circle,color=blue,inner sep=3pt,outer sep=0}}
\tikzset{dir/.style={postaction={decorate,decoration={markings,
    mark=at position .8 with {\arrow[scale=1.3]{<}}}}}}
\begin{center}
\noindent {\large  \bf Knot invariants and higher representation theory I:\\ diagrammatic and geometric categorification of tensor products}
\medskip

\noindent {\sc Ben Webster}\footnote{Supported by an NSF Postdoctoral Research Fellowship and  by the NSA under Grant H98230-10-1-0199.}\\
Department of Mathematics\\ Northeastern University\\
Boston, MA\\
Email: {\tt b.webster@neu.edu}
\end{center}
\bigskip
{\small
\begin{quote}
\noindent {\em Abstract.}
In this paper, we study 2-representations of 2-quantum groups (in the sense of Rouquier and Khovanov-Lauda) categorifying tensor products of irreducible representations. Our aim is to construct knot homologies categorifying Reshetikhin-Turaev invariants of knots for arbitrary representations, which will be done in a follow-up paper.

We consider an algebraic construction of these categories, via an
explicit diagrammatic presentation, generalizing the cyclotomic
quotient of the quiver Hecke algebra.  When the Lie algebra under
consideration is $\mathfrak{sl}_n$, we show that these categories
agree with certain subcategories of parabolic category $\cO$ for $\mathfrak{gl}_k$.

We also investigate finer structure of these categories.  Like many similar representation-theoretic categories, they are standardly stratified and satisfy a double centralizer property with respect to their self-dual modules.  The standard modules of the stratification play an important role, as Vermas do in more classical representation theory, as test objects for functors. 

The existence of these representations has consequences for the
structure of previously studied categorifications; it allows us to
prove the non-degeneracy of Khovanov and Lauda's 2-category (that its
Hom spaces have the expected dimension) in all symmetrizable types,
and that the cyclotomic quiver Hecke algebras are symmetric Frobenius.
\end{quote}
}

\vspace{5mm}

\renc{\thethm}{\Alph{thm}}


The program of ``higher representation theory,'' begun (at least as an
explicit program) by Chuang and Rouquier in \cite{CR04} and continued
by Rouquier \cite{Rou2KM} and Khovanov-Lauda \cite{KLIII} is aimed at
studying ``2-analogues'' of the universal enveloping algebras of
simple Lie algebras $U(\fg)$, and their quantizations $U_q(\fg)$.  In
this paper, we study certain representations of these analogues. Our
objects of study are certain explicitly given categories which are
categorifications of tensor products of simple integrable modules for
$U_q(\fg)$ (in the sense that their Grothendieck groups are integral
forms of these representations).  Our interest in these categories has
arisen because of their applicability to the construction of knot
invariants, which we address in a sequel to this paper \cite{KI-HRT};
however, we believe they are also of independent interest.

These algebras also have connections in the type A case to classical
representation theory, as has been explored by Brundan and Kleshchev
\cite{BKKL}. We will build on their work in Section \ref{sec:type-A}
by showing that our categories appear in the context of category $\cO$
in type A.  For a general $\fg$, our categories should be viewed as a generalization of the
type A category $\cO$ orthogonal to that of category $\cO$ for other
groups, just as quiver varieties are a generalization of the type A
flag variety orthogonal to the flag varieties of other types.

Our primary construction of these categories is algebraic; the
underlying category $\cata^\bla$ is the representations of an algebra
$\alg^\bla$ defined in this paper.  The algebra $T^\bla$ is a
generalization of the cyclotomic quiver Hecke algebra introduced by
Khovanov and Lauda.  This categorification is well defined for any
symmetrizable Kac-Moody algebra, and it depends on a choice of base
field $\K$ and polynomial $Q_{ij}\in \K[u,v]$ for all $i,j$ in the
Dynkin diagram.
Our
main theorem is as follows:
\begin{thm}\label{main}
  The category $\cata^\bla$ is a
  categorification in the sense of Khovanov-Lauda, that is, it carries
  an action of the 2-category $\tU$ defined in \cite[\S 2]{CaLa} and its Grothendieck group is canonically isomorphic to
  the tensor product $$V_\bla\cong V_{\la_1}\otimes \cdots\otimes
  V_{\la_\ell}.$$
\end{thm}
We should note that here, and throughout the paper, ``2-category'' is
meant in the strict sense, not that of bicategories, so associativity
laws hold ``on the nose.''

In the case where $\fg$ is finite-type and simply-laced, we can
strengthen this theorem to show in \cite[6.11]{WebCB} that the
indecomposable projectives in this category give Lusztig's canonical
basis of a tensor product.

We should note that even in the case of $\bla=(\la)$, where the
algebra $T^\la$ is a cyclotomic quotient in the sense of \cite[\S
3.4]{KLIII}, this is a new theorem, which in particular implies that
the induction and restriction functors on these categories are
biadjoint.  This was proved independently by Kang and Kashiwara
\cite{KK} by completely different methods.

We show that these categories have many properties that would be
expected by analogy with similar representation-theoretic categories:
\begin{thm}
  The projectives-injective objects of $\cata^\bla$
  form a categorification of the subrepresentation $V_{\la_1+\dots
    +\la_n}\subset V_\bla$.  In particular, if $\bla=(\la)$, then 
  all projectives are injective; in fact, the algebra $\alg^{(\la)}$
  is Frobenius.

  The sum of all indecomposable projective-injectives has the double
  centralizer property; this realizes $\alg^\bla$ as the endomorphisms
  of a natural collection of modules over the algebra for the
  corresponding simple module $\alg^{(\la_1+\dots +\la_n)}$.

  The algebra $\alg^\bla$ is standardly stratified; the
  semi-orthogonal decomposition for this stratification categorifies
  the decomposition of $V_\bla$ as the sum of tensor products of
  weight spaces.
\end{thm}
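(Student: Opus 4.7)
The proof rests on Theorem~A, which provides the 2-quantum-group action on $\cata^\bla$ and identifies $K_0(\cata^\bla)$ with $V_\bla$. I would attack the three assertions in the natural order: first establish the standard stratification, since it gives the structural backbone; then identify the projective-injectives with $V_{\la_1 + \dots + \la_\ell}$; and finally prove the double centralizer property.

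For the stratification, the plan is to index standard modules by tuples $\bmu = (\mu_1, \dots, \mu_\ell)$ where each $\mu_i$ is a weight of $V_{\la_i}$. Each $\st{\bmu}$ should be constructed by induction from a ``parabolic'' subalgebra of $\alg^\bla$ that categorifies the factors $V_{\la_i}$ at weight $\mu_i$ independently. The diagrammatic presentation admits a natural filtration controlled by how the black root-labeled strands distribute between the red strands separating the $\la_i$, and reading this filtration algebraically should yield both the Ext-vanishing between standards in the correct direction and the identification of $[\st{\bmu}]$ with $v_{\mu_1} \otimes \cdots \otimes v_{\mu_\ell}$ in the Grothendieck group. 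The resulting semi-orthogonal decomposition then automatically categorifies the tensor product of weight spaces decomposition of $V_\bla$.

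For the projective-injective claim, observe that $V_{\la_1 + \dots + \la_\ell} \subset V_\bla$ is the $\tU$-submodule generated by the highest weight vector. Categorically, this corresponds to the image of the ``top'' standard $\st{(\la_1, \dots, \la_\ell)}$ (which is itself projective, being largest in the stratification order) under repeated application of the $\Fi$. Since the functors $\Fi$ coming from the 2-representation preserve projectivity and, for objects in this top image, injectivity as well, one obtains the desired subcategorification of $V_{\la_1+\dots+\la_\ell}$. When $\bla = (\la)$ the stratification is trivial and every projective lies in this image, hence is injective. The symmetric Frobenius property of $\alg^{(\la)}$ then follows from the biadjointness of $\Ei$ and $\Fi$ supplied by Theorem~A, which upgrades the canonical trace on $\alg^{(\la)}$ to a symmetric nondegenerate pairing via standard 2-categorical arguments about cyclic biadjunctions.

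The double centralizer property is the most delicate part and will probably be the main technical obstacle. The plan is a Soergel-style argument: setting $P$ to be the sum of indecomposable projective-injectives, it suffices to show each indecomposable projective $Q$ admits a two-step copresentation $0 \to Q \to P_0 \to P_1$ with $P_0, P_1 \in \mathrm{add}(P)$. Using the stratification this reduces to finding such copresentations for each $\st{\bmu}$; these can in turn be produced from the $\Fi$-action, since $\st{\bmu}$ embeds naturally into an $\Fi$-descendant of a top projective-injective. Granting these embeddings, the identification $\alg^\bla \cong \End_{\alg^{(\la_1 + \dots + \la_\ell)}}(\Hom(P, \alg^\bla))$ follows by a left-exact $\Hom(P,-)$ calculation. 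The hardest step will be verifying that the copresentations can be chosen coherently across the stratification and that the endomorphism algebra obtained this way is exactly $\alg^\bla$ and not a proper enlargement.
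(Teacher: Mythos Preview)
Your outline is broadly on target, and in one place arguably cleaner than the paper: since $\fE_i$ is exact and left adjoint to $\fF_i$, the functor $\fF_i$ preserves injectives, so starting from the highest-weight projective $P^0_\emptyset$ (which is trivially injective) you immediately get that every $P^0_\Bi$ is projective-injective. The paper instead shows $P^0$ is self-dual by building an explicit Frobenius pairing from the trace $\tau_\la$ on $R^\la \cong \End(P^0)$.

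However, there is a genuine gap: you only prove one direction of the projective-injective characterization. To get that the projective-injectives categorify exactly $V_{\la_1+\cdots+\la_\ell}$ (and not something larger), and to run the double-centralizer argument, you need the converse: \emph{every} indecomposable projective-injective is a summand of $P^0$. The paper handles this by showing that the socle of any standard module $S^\kappa_\Bi$ is detected by the idempotents $e(\Bi,0)$; the key technical input is that the element $\theta_\kappa$ (pulling all reds left and blacks right) induces an \emph{injective} map $P^\kappa_\Bi \to P^0_\Bi$. This injectivity, proved via the standard filtration, forces any submodule of $S^\kappa_\Bi$ killed by all $e(\Bi,0)$ to be zero. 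Your proposal does not supply a substitute for this step, and without it the copresentation argument for the double centralizer property cannot get started.

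Two smaller points. For the stratification, the paper's mechanism for showing that the successive quotients of the filtration on projectives are exactly the standard modules is a dimension count through the Grothendieck group: one writes $v^\kappa_\Bi = \sum_\phi q^{-\deg x_\phi} s^{\kappa_\phi}_{\Bi_\phi}$ and pairs with $[\alg^\bla]$. Your outline gestures at ``reading this filtration algebraically'' but does not name this trick, and without it the surjections $P_{\leq\phi}/P_{<\phi} \twoheadrightarrow S^{\kappa_\phi}_{\Bi_\phi}$ are hard to upgrade to isomorphisms. For the double centralizer property, once you have that injective hulls of standards lie in $\mathrm{add}(P^0)$, the paper simply invokes the Mazorchuk--Stroppel criterion rather than building the copresentations by hand; this is more efficient than your proposed route, though equivalent in content.
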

This double centralizer result allows us to generalize a theorem of
Brundan and Kleshchev \cite[Main Theorem]{BKKL}, and show that in type
$A$, the algebras $\alg^\bla$ are endomorphism algebras of certain
projectives in parabolic category $\cO$, while in type $\widehat A$, they are related to the representations of the cyclotomic $q$-Schur algebra.  This relationship will be explored more fully in work of the author and Stroppel \cite{SWschur}.

We see no reason to think that our category has a similar description
in terms of classical representation theory when
$\fg\not\cong\mathfrak{sl}_n, \widehat{\mathfrak{sl}}_n$, though we would be quite pleased to be
proven wrong in this speculation.\medskip

The action on these categories plays a similar role to the actions of
equivariant cohomology studied by Lauda in \cite{LauSL21,LauSL2} and
Khovanov-Lauda in \cite{KLIII}; it shows by direct construction that
the set of diagrams conjectured by Khovanov and Lauda to give a basis
of 2-morphisms indeed does (because there is no linear combination of
them that acts trivially on all categories $\cata^\bla$).
\begin{thm}\label{nond}
  The 2-category $\tU$ is nondegenerate (in the
  sense of \cite[Definition 3.15]{KLIII}) over any field.
\end{thm}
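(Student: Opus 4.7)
The strategy is to exploit Theorem A to exhibit a 2-representation of $\tU$ on which the Khovanov-Lauda conjectural basis acts by linearly independent operators. Since the relations of $\tU$ already provide an upper bound by spanning, and biadjunction in $\tU$ reduces arbitrary $\Hom$ spaces to $\End_\tU(\fE_\Bi \mathbf{1}_\mu)$, nondegeneracy amounts to exhibiting a faithful action on these endomorphism algebras in some 2-representation. Theorem A supplies, for every tuple of dominant weights $\bla$, a 2-functor $\tU \to \End(\cata^\bla)$, and I would assemble these over all $\bla$ to show that no nonzero $\K$-linear combination of KL basis diagrams can annihilate all of $\bigoplus_\bla \cata^\bla$.

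The key mechanism is that the quiver Hecke algebra embeds naturally (by adding strands on the left) into the endomorphism algebra of $\fE_\Bi$ applied to a standard module of $\cata^\bla$, provided $\bla$ is sufficiently dominant relative to the weights involved. The explicit diagrammatic presentation of $\alg^\bla$ makes clear that the cyclotomic-type relations are vacuous in weight spaces far from the source weights, so in a fixed weight space the map from the unquotiented quiver Hecke algebra into $\alg^\bla$ is injective. This handles all KL basis elements supported on purely upward-oriented strands. For the remaining basis elements with caps, cups and counterclockwise bubbles, I would invoke the biadjunction of $\fE_i$ and $\fF_i$ on $\cata^\bla$ (guaranteed by the symmetric Frobenius property of $\alg^{(\la)}$ in the preceding theorem) to rewrite each such element as an upward-oriented diagram acting on a modified 1-morphism $\fE_{\Bi'} \mathbf{1}_{\mu'}$ in a weight space of some enlarged $\cata^\bla$.

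The main obstacle will be controlling the counterclockwise bubbles, since $\End_\tU(\mathbf{1}_\mu)$ contains a polynomial ring of bubbles whose images in the 2-representation must be algebraically independent for the counting argument to go through. When $\fQ_\bla$ is defined this is geometrically transparent: bubbles correspond to equivariant Chern classes of tautological bundles on quiver varieties and visibly generate a polynomial ring, and the derived equivalence in Theorem A then transports this conclusion to $\alg^\bla$. For general symmetrizable types the geometric picture is not available, and one must instead compute directly inside the algebraic presentation of $\alg^\bla$ to verify that the bubbles act by a genuine polynomial family on a suitable block. Once this bubble computation is in hand, combining it with the quiver Hecke embedding and the biadjunction above produces a faithful image for every Khovanov-Lauda basis diagram, yielding the theorem.
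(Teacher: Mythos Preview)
Your overall strategy matches the paper's: build 2-representations and show the Khovanov--Lauda spanning set acts faithfully on them. Two points of divergence are worth noting. First, the tensor product categories $\cata^\bla$ are unnecessary; the paper establishes nondegeneracy inside the proof of Theorem~\ref{cyc-action} using only the cyclotomic quotients $\cata^\la$ for single dominant weights~$\la$. Referring to Theorem~A is not circular (the action on $\cata^\bla$ is defined by checking relations and does not presuppose nondegeneracy), but it is overkill.

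Second, and this is the genuine gap, your handling of bubbles in the non-symmetric case is not an argument. You write that ``one must compute directly inside the algebraic presentation of $\alg^\bla$ to verify that the bubbles act by a genuine polynomial family on a suitable block,'' but you do not carry this out, and the claim as stated is problematic: in any single $\cata^\la$ the bubbles satisfy relations forced by the cyclotomic ideal (at the highest weight all positive-degree bubbles vanish), so they do not form a free polynomial algebra on any fixed block. What is needed is only that for each nonzero bubble polynomial there exists \emph{some} $\la$ on which it acts nontrivially.

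The paper's device avoids any polynomial-ring claim via an inductive degree reduction. Given a nonzero polynomial in positive-degree bubbles at weight $\mu$, choose a color $i$ occurring in its leading term and surround the expression with $k=\max(1,1-\mu^i)$ nested $i$-colored bubbles, the outermost carrying $\mu^i-1$ dots. The bubble relations of Figures~\ref{inv-rels} and~\ref{pop-rels} convert this into a nonzero bubble polynomial of strictly lower degree at a shifted weight. Iterating lands on a nonzero scalar multiple of $\id_{\la'}$ for some $\la'$, and one then picks $\la$ so that the $\la'$-weight space of $\cata^\la$ is nonzero. This trick works uniformly for all symmetrizable types with no case analysis and no geometric input; it is the missing idea in your sketch.
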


Let us now summarize the structure of the paper.
\begin{itemize}
\item In Section \ref{sec:categorification}, we discuss the basics of
  the 2-category $\tU$, and prove it acts on $\cata^\la$.
  This is accomplished by the construction of categorifications
  $\tU^-_i$ for the minimal non-solvable parabolics $U(\mathfrak{p}_i)$.
  These categories carry a mixture of the characteristics of
  $U(\mathfrak{b})$ and $U(\mathfrak{sl}_2)$; an
  appropriate non-degeneracy result is already known for both of
  these algebras separately. By modifying the
  proofs of these previous results, we can show that $\tU_i^-$ acts on $\cata^\la$.  It
  is an easy consequence of this that the full $\tU$ acts, which proves Theorem \ref{nond}.
\item In Section \ref{sec:KL}, we define the algebras $\alg^\bla$.  
  As far as we know, these algebras are new to the literature, but are
  constructed using the familiar tool of Khovanov-Lauda's graphical
  calculus.  This graphical calculus gives an easy description of the
  action of the category $\tU$.  We also study the relationship of this category to $\alg^{(\la_1+\cdots +\la_\ell)}$.
\item In Section \ref{sec:standard},  we develop a
  special class of modules which we term {\bf standard modules}, which categorify pure tensors.
  These are typically not the standard modules of a quasi-hereditary
  structure, but rather of a weaker standardly stratified structure.  Amongst other things, these modules will prove crucial as ``test'' objects for understanding how functors decategorify.
\item In Section \ref{sec:type-A}, we consider the case
  $\fg=\mathfrak{sl}_n$ or $\widehat{\mathfrak{sl}}_n$.  In this case, we employ results of Brundan
  and Kleshchev to show that $\alg^\bla$ is in fact the endomorphism algebra of
  a projective in a parabolic category $\cO$ in finite type and in the representation category of the cyclotomic $q$-Schur algebra in affine type.  This result will be important for comparing our construction of knot homology in the sequel to versions previously defined using category $\cO$.
\end{itemize}

We should note that an earlier version of this paper contained a
section on the connection between the algebraic material in this paper
to the geometry of quiver varieties.  In the interest of length and
heaviness of machinery, that material has been moved to other papers \cite{Webcatq,WebwKLR}.

\subsection*{Notation}

We let $\fg$ be a symmetrizable Kac-Moody algebra, which we will assume is
fixed for the remainder of the paper.  Let $\Gamma$ denote the Dynkin
diagram of this algebra, considered as an unoriented graph.
We denote the weight
lattice $\wela(\fg)$ and root lattice $\rola(\fg)$, the
simple roots $\al_i$ and coroots $\al_i^\vee$.  Let
$c_{ij}=\al_j^{\vee}(\al_i)$ be the entries of the Cartan matrix.

We let $\langle
-,-\rangle$ denote the symmetrized inner product on $\wela(\fg)$,
fixed by the fact that the shortest root has length $\sqrt{2}$
and $$2\frac{\langle \al_i,\la\rangle}{\langle
  \al_i,\al_i\rangle}=\al_i^\vee(\la).$$ As usual, we let $2d_i
=\langle\al_i,\al_i\rangle$, and for $\la\in\wela(\fg)$, we
let $$\la^i=\al_i^\vee(\la)=\langle\al_i,\la\rangle/d_i.$$
We note that we have $d_ic_{ji}=d_jc_{ij}$ for all $i,j$.

\excise{
We let $\rho$ be the unique weight such that $\al^\vee_i(\rho)=1$ and
$\rho^\vee$ the unique coweight such that $\rho^\vee(\al_i)=1$.  We
note that since $\rho\in \nicefrac 12\rola$ and $\rho^\vee\in
\nicefrac 12\wela^*$, for any weight $\la$, the numbers $\llrr$ and
$\rcl$ are not necessarily integers, but $2\llrr$ and $2\rcl$
are (not necessarily even) integers.

Throughout the paper, we will use $\bla=(\la_1,\dots, \la_\ell)$ to
denote an ordered $\ell$-tuple of dominant weights, and always use the
notation $\la=\sum_{i}\la_i$.  }

We let $U_q(\fg)$ denote the deformed universal enveloping algebra of
$\fg$; that is, the 
associative $\C(q)$-algebra given by generators $E_i$, $F_i$, $K_{\mu}$ for $i\in \Gamma$ and $\mu \in \wela(\fg)$, subject to the relations:
\begin{center}
\begin{enumerate}[i)]
 \item $K_0=1$, $K_{\mu}K_{\mu'}=K_{\mu+\mu'}$ for all $\mu,\mu' \in \wela(\fg)$,
 \item $K_{\mu}E_i = q^{\al_i^{\vee}(\mu)}E_iK_{\mu}$ for all $\mu \in
 \wela(\fg)$,
 \item $K_{\mu}F_i = q^{- \al_i^{\vee}(\mu)}F_iK_{\mu}$ for all $\mu \in
 \wela(\fg)$,
 \item $E_iF_j - F_jE_i = \delta_{ij}
 \frac{\tilde{K}_i-\tilde{K}_{-i}}{q^{d_i}-q^{-d_i}}$, where
 $\tilde{K}_{\pm i}=K_{\pm d_i \al_i}$,
 \item For all $i\neq j$ $$\sum_{a+b=-c_{ij}+1}(-1)^{a} E_i^{(a)}E_jE_i^{(b)} = 0
 \qquad {\rm and} \qquad
 \sum_{a+b=-c_{ij} +1}(-1)^{a} F_i^{(a)}F_jF_i^{(b)} = 0 .$$
\end{enumerate} \end{center}

This is a Hopf algebra with coproduct on Chevalley generators given
by $$\Delta(E_i)=E_i\otimes 1
+\tilde K_i\otimes E_i\hspace{1cm}\Delta(F_i)=F_i\otimes \tilde K_{-i}
+ 1 \otimes F_i$$

We let $U_q^\Z(\fg)$ denote the Lusztig (divided powers) integral form
generated over $\Z[q,q^{-1}]$ by
$\frac{E_i^n}{[n]_q!},\frac{F_i^n}{[n]_q!}$ for all integers $n$ of
this quantum group.  The integral form of the representation of
highest weight $\la$ over this quantum group will be denoted by
$V_\la^{\Z}$; for a sequence $\bla$, we will be interested in the
tensor product \[V_\bla^\Z=V_{\la_1}^\Z\otimes_{\Z[q,q^{-1}]}\cdots
\otimes_{\Z[q,q^{-1}]}V_{\la_\ell}^\Z;\] we will also consider the
completion of these modules in the $q$-adic topology
$V_\bla=V_\bla^\Z\otimes_{\Z[q,q^{-1}]}\Z((q))$.

We will always use $K_0(R)$ for a graded ring $R$ to denote the
Grothendieck group of finitely generated graded projective
$R$-modules. This group carries an action of $\Z[q,q^{-1}]$ by grading
shift $[A(i)]=q^i[A]$, where $A(i)$ is the graded module whose $j$th
grade is the $i+j$th of $A$.  The careful reader should note that this
is opposite to the grading convention of Khovanov and Lauda.

\subsection*{Acknowledgments}
I would like to thank Catharina Stroppel for both suggestions on this
project, and teaching me a great deal of quasi-hereditary and
standardly stratified representation theory; Jon Brundan, Ben Elias,
Alex Ellis, Jun Hu, Joel Kamnitzer, Masaki Kashiwara, Mikhail
Khovanov, Marco Mackaay, Josh Sussan and Oded Yacobi for valuable
comments on earlier versions of this paper; Hao Zheng, Rapha\"el
Rouquier, Nick Proudfoot, Tony Licata, Aaron Lauda and Tom Braden for
very useful discussions.

I'd also like to thank the organizers of the conference ``Categorification and Geometrization from Representation Theory'' in Glasgow which made a huge difference in the development of these ideas.

\tableofcontents

\section{Categorification of quantum groups}\label{sec:categorification}
\subsection{2-Categories}
\renc{\thethm}{\arabic{section}.\arabic{thm}}

Let me re-emphasize that in this paper, unless specified otherwise,
``2-category'' will mean a strict 2-category, not a weak one or bicategory.

In this paper, our notation builds on that of Khovanov and Lauda, who
give a graphical version of the 2-quantum group, which we denote $\tU$
(leaving $\fg$ understood).  These constructions could also be
rephrased in terms of Rouquier's description and we have striven to
make the paper readable following either \cite{KLIII} or
\cite{Rou2KM}; however, it is most sensible for us to follow the
2-category defined by Cautis and Lauda \cite{CaLa}, which is a
variation on both of these.  The difference between this category and
the categories defined by Rouquier in \cite{Rou2KM} is quite
subtle--it concerns precisely whether the inverse to a particular map
is formally added, or imposed to be a particular composition of other
generators in the category. Most important for our purposes, the
2-category $\tU$ receives a canonical map from each of Rouquier's
categories $\mathfrak{A}$ and $\mathfrak{A}'$, so a representation of
it is a representation in Rouquier's sense as well.  Furthermore,
Cautis and Lauda have shown that any representation in Rouquier's
sense satisfying very mild technical conditions will be a
representation of $\tU$.

Since the construction of these categories is rather complex, we give a somewhat abbreviated description.  The most important points are these:
\begin{itemize}
\item an object of this category is a weight $\la\in \wela$.
\item a 1-morphism $\la\to \mu$ is a formal sum of words in the
  symbols $\eE_i$ and $\eF_i$ where $i$ ranges over $\Gamma$ of weight
  $\la-\mu$, $\eE_i$ and $\eF_i$ having weights $\pm\al_i$.  In \cite{Rou2KM},
the corresponding 1-morphisms are denoted $E_i,F_i$, but we use these for elements of
$U_q(\fg)$.  Composition is simply concatenation of words.  In fact, we will take idempotent completion, and thus add a new 1-morphism for every projection from a 1-morphism to itself (once we have added 2-morphisms).

By convention, $\eF_{\Bi}=\eF_{i_n}\cdots \eF_{i_1}$ if
$\Bi=(i_1,\dots,i_n)$ (this somewhat dyslexic convention is designed
to match previous work on cyclotomic quotients by Khovanov-Lauda and
others).  In Khovanov and Lauda's graphical calculus, this 1-morphism
is represented by a sequence of dots on a horizontal line labeled with
the sequence $\Bi$.  

We should warn the reader, this convention
requires us to read our diagrams differently from the conventions of
\cite{LauSL21,KLIII,CaLa}; in our diagrammatic calculus, 1-morphisms point
from the left to the right, not from the right to the left as
indicated in \cite[\S 4]{LauSL21}.  Technically, the 2-category $\tU$ we
define is the 1-morphism dual of Cautis and Lauda's 2-category: the
objects are the same, but the 1-morphisms are all reversed.  The
practical implication will be that our relations are the reflection
through a vertical line of Cautis and Lauda's (without changing the
labeling of regions).

\item 2-morphisms are a certain quotient of the $\K$-span of certain
  immersed oriented 1-manifolds carrying an arbitrary number of dots
  whose boundary is given by the domain sequence on the line $y=1$ and
  the target sequence on $y=0$.  We require that any component begin
  and end at like-colored elements of the 2 sequences, and that they
  be oriented upward at an $\eE_i$ and downward at an $\eF_i$.  We
  will describe their relations momentarily. We require that these
  1-manifolds satisfy the same genericity assumptions as projections
  of tangles (no triple points or tangencies), but intersections are
  not over- or under-crossings; our diagrams are genuinely planar.  We
  consider these up to isotopy which preserves this genericity.

We draw these 2-morphisms in the style of Khovanov-Lauda, by labeling the regions of the plane by the weights (objects) that the 1-morphisms are acting on.

By Morse theory, we can see that these are generated by 
\renewcommand{\labelitemii}{$*$} 
\begin{itemize}
\item a cup $\ep:\eE_i\eF_i\to \emptyset$ or  $\ep':\eF_i\eE_i\to
  \emptyset$
\[
\ep=\tikz[baseline,very thick,scale=2.5]{\draw[<-] (.2,.1)
  to[out=-120,in=-60]  node[at end,above left,scale=.8]{$i$}
  node[at start,above right,scale=.8]{$i$} (-.2,.1);\node[scale=.8] at
  (0,.3){$\la$}; \node[scale=.8] at (0,-.2){$\la+\al_i$};}\qquad \qquad
\ep'=\tikz[baseline,very thick,scale=2.5]{\draw[->] (.2,.1)
  to[out=-120,in=-60] node[at end,above left,scale=.8]{$i$}
  node[at start,above right,scale=.8]{$i$} (-.2,.1) ;\node[scale=.8] at
  (0,.3){$\la$}; \node[scale=.8] at (0,-.2){$\la-\al_i$};}
\]
\item a cap $\iota':\emptyset\to \eE_i\eF_i$ or  $\iota:\emptyset\to
  \eF_i\eE_i$
\[
\iota'=\tikz[baseline,very thick,scale=2.5]{\draw[->] (.2,.1)
  to[out=120,in=60] node[at end,below left,scale=.8]{$i$}
  node[at start,below right,scale=.8]{$i$}  (-.2,.1) ;\node[scale=.8] at
  (0,.4){$\la$}; \node[scale=.8] at (0,-.1){$\la-\al_i$};}\qquad \qquad
\iota=\tikz[baseline,very thick,scale=2.5]{\draw[<-] (.2,.1)
  to[out=120,in=60] node[at end,below left,scale=.8]{$i$}
  node[at start,below right,scale=.8]{$i$} (-.2,.1) ;\node[scale=.8] at
  (0,.4){$\la$}; \node[scale=.8] at (0,-.1){$\la+\al_i$};}
\]
\item a crossing $\psi:\eF_i\eF_j\to \eF_j\eF_i$
\[\psi=
\tikz[baseline=-2pt,very thick,scale=2.5]{\draw[->] (.2,.2) --
  (-.2,-.2) node[at end,below,scale=.8]{$i$} node[at start,above,scale=.8]{$i$} ; \draw[<-]
  (.2,-.2) -- (-.2,.2) node[at start,below,scale=.8]{$j$} node[at
  end,above,scale=.8]{$j$}; \node[scale=.8] at (0,.4) {$\la$}; \node[scale=.8] at (0,-.4) {$\la$};
} \]
\item a dot $y:\eF_i\to \eF_i$
\[
 y=\tikz[baseline=-2pt,very thick,->,scale=2.5]{\draw
  (0,.2) -- (0,-.2) node[at end,below,scale=.8]{$i$} node[at start,above,scale=.8]{$i$}
  node[midway,circle,fill=black,inner
  sep=2pt]{}; \node[scale=.8] at (0,.45) {$\la$}; \node[scale=.8] at (0,-.45) {$\la$};}\]
\end{itemize}
\end{itemize}

Once and for all, fix a matrix of
polynomials $Q_{ij}(u,v)$ for $i\neq j\in \Gamma$ (by convention
$Q_{ii}=0$) valued in $\K$. We assume each polynomial is homogeneous
of degree $\langle\al_i,\al_j\rangle= -2d_jc_{ij}=-2d_ic_{ji}$ when
$u$ is given degree $2d_i$ and $v$ degree $2d_j$.   We will always
assume that the leading order of $Q_{ij}$ in $u$ is $-c_{ji}$, and
that $Q_{ij}(u,v)=Q_{ji}(v,u)$.  We let $t_{ij}=Q_{ij}(1,0)$; by
convention $t_{ii}=1$. In order to match with \cite{CaLa}, we take
\[Q_{ij}(u,v)=t_{ij} u^{-c_{ji}}+t_{ij} v^{-c_{ij}}+\sum_{q c_{ji}+ pc_{ij}=c_{ji}c_{ij}} s^{pq}_{ij}u^pv^q.\]
Khovanov and Lauda's original category is the
choice $Q_{ij}=u^{-c_{ji}}+v^{-c_{ij}}$.

Before writing the relations, let us remind the reader that these
2-morphism spaces are actually graded; the degrees are given by \[
\deg\tikz[baseline,very thick,scale=1.5]{\draw[->] (.2,.3) --
  (-.2,-.1) node[at end,below, scale=.8]{$i$}; \draw[<-]
  (.2,-.1) -- (-.2,.3) node[at start,below,scale=.8]{$j$};} =-\langle\al_i,\al_j\rangle \qquad  \deg\tikz[baseline,very thick,->,scale=1.5]{\draw
  (0,.3) -- (0,-.1) node[at end,below,scale=.8]{$i$}
  node[midway,circle,fill=black,inner
  sep=2pt]{};}=\langle\al_i,\al_i\rangle \qquad \deg\tikz[baseline,very thick,scale=1.5]{\draw[<-] (.2,.3) --
  (-.2,-.1) node[at end,below,scale=.8]{$i$}; \draw[->]
  (.2,-.1) -- (-.2,.3) node[at start,below,scale=.8]{$j$};} =-\langle\al_i,\al_j\rangle \qquad  \deg\tikz[baseline,very thick,<-,scale=1.5]{\draw
  (0,.3) -- (0,-.1) node[at end,below,scale=.8]{$i$}
  node[midway,circle,fill=black,inner
  sep=2pt]{};}=\langle\al_i,\al_i\rangle\]
\[
\deg\tikz[baseline,very thick,scale=1.5]{\draw[->] (.2,.1)
  to[out=-120,in=-60] (-.2,.1)  node[at end,above left,scale=.8]{$i$};\node[scale=.8] at (0,.3){$\la$};} =-\langle\la,\al_i\rangle-d_i \qquad 
\deg\tikz[baseline,very thick,scale=1.5]{\draw[<-] (.2,.1)
  to[out=-120,in=-60] (-.2,.1) node[at end,above left,scale=.8]{$i$};\node[scale=.8] at (0,.3){$\la$};}
=\langle \la,\al_i\rangle-d_i
\] 
\[
\deg\tikz[baseline,very thick,scale=1.5]{\draw[->] (.2,.1)
  to[out=120,in=60] (-.2,.1) node[at end,below left,scale=.8]{$i$};\node[scale=.8] at (0,-.1){$\la$};} =-\langle\la,\al_i\rangle-d_i \qquad 
\deg\tikz[baseline,very thick,scale=1.5]{\draw[<-] (.2,.1)
  to[out=120,in=60] (-.2,.1) node[at end,below left,scale=.8]{$i$};\node[scale=.8] at (0,-.1){$\la$};}
=\langle \la,\al_i\rangle-d_i.
\]

\medskip

The relations satisfied by the 2-morphisms include:
\begin{itemize}
\item the cups and caps are the units and counits of a biadjunction.
  The morphism $y$ is cyclic, whereas the morphism $\psi$ is double
  right dual to $t_{ij}/t_{ji}\cdot \psi$ (see \cite{CaLa} for more details).
\item Any bubble of negative degree is zero,
  any bubble of degree 0 is equal to 1.  We must add formal symbols
  called ``fake bubbles'' which are bubbles labelled with a negative
  number of dots (these are explained in \cite[\S 3.1.1]{KLIII});
  given these, we have the inversion formula for bubbles, shown in Figure
  \ref{inv-rels}.
\begin{figure}[h!]
\begin{tikzpicture}
\node at (-1,0) {$\displaystyle \sum_{k=\la^i-1}^{j+\la^i+1}$};
\draw[postaction={decorate,decoration={markings,
    mark=at position .5 with {\arrow[scale=1.3]{<}}}},very thick] (.5,0) circle (15pt);
\node [fill,circle,inner sep=2.5pt,label=right:{$k$},right=11pt] at (.5,0) {};
\node[scale=1.5] at (1.375,-.6){$\la$};
\draw[postaction={decorate,decoration={markings,
    mark=at position .5 with {\arrow[scale=1.3]{>}}}},very thick] (2.25,0) circle (15pt);
\node [fill,circle,inner sep=2.5pt,label=right:{$j-k$},right=11pt] at (2.25,0) {};
\node at (5.7,0) {$
  =\begin{cases}
    1 & j=-2\\
    0 & j>-2
  \end{cases}
$};
\end{tikzpicture}
\caption{Bubble inversion relations; all strands are colored with $\al_i$.}
\label{inv-rels}
\end{figure}
\item 2 relations connecting the crossing with cups and caps, shown in Figure \ref{pop-rels}.

\begin{figure}[t!]
\begin{tikzpicture}[scale=.9]
\node at (-3,-3.5){
\scalebox{.95}{\begin{tikzpicture} [scale=1.3]
\node at (0,0){\begin{tikzpicture} [scale=1.3]
\node[scale=1.5] at (-.7,1){$\la$};
\draw[postaction={decorate,decoration={markings,
    mark=at position .5 with {\arrow[scale=1.3]{>}}}},very thick] (0,0) to[out=90,in=-90] (1,1) to[out=90,in=-90] (0,2);  
\draw[postaction={decorate,decoration={markings,
    mark=at position .5 with {\arrow[scale=1.3]{<}}}},very thick] (1,0) to[out=90,in=-90] (0,1) to[out=90,in=-90] (1,2);
\end{tikzpicture}};

\node at (1.7,0) {$=$};
\node at (5.4,0){\begin{tikzpicture} [scale=1.3]

\node[scale=1.5] at (.3,1){$\la$};

\node at (.7,1) {$-$};
\draw[postaction={decorate,decoration={markings,
    mark=at position .5 with {\arrow[scale=1.3]{>}}}},very thick] (1,0) to[out=90,in=-90]  (1,2);  
\draw[postaction={decorate,decoration={markings,
    mark=at position .5 with {\arrow[scale=1.3]{<}}}},very thick] (1.7,0) to[out=90,in=-90]  (1.7,2);

\node at (2.5,1.15) {$+$}; 
\node at (3,1) {$\displaystyle\sum_{a+b+c=-1}$};
\draw[postaction={decorate,decoration={markings,
    mark=at position .5 with {\arrow[scale=1.3]{>}}}},very thick] (4,0) to[out=90,in=-180] (4.5,.5) to[out=0,in=90] node [pos=.6, fill,circle,inner sep=2.5pt,label=above:{$a$}] {} (5,0);  
\draw[postaction={decorate,decoration={markings,
    mark=at position .5 with {\arrow[scale=1.3]{<}}}},very thick] (4,2) to[out=-90,in=-180] (4.5,1.5) to[out=0,in=-90] node [pos=.6, fill,circle,inner sep=2.5pt,label=below:{$c$}] {} (5,2);
\draw[postaction={decorate,decoration={markings,
    mark=at position .5 with {\arrow[scale=1.3]{>}}}},very thick] (5.5,1) circle (10pt);
\node [fill,circle,inner sep=2.5pt,label=right:{$b$},right=10.5pt] at (5.5,1) {};
\node[scale=1.5] at (6.5,1){$\la$};
\end{tikzpicture}};

\end{tikzpicture}}};
\node at (-3,0){\scalebox{.95}{\begin{tikzpicture} [scale=1.3]
\node at (0,0){\begin{tikzpicture} [scale=1.3]
\node[scale=1.5] at (-.7,1){$\la$};
\draw[postaction={decorate,decoration={markings,
    mark=at position .5 with {\arrow[scale=1.3]{<}}}},very thick] (0,0) to[out=90,in=-90] (1,1) to[out=90,in=-90] (0,2);  
\draw[postaction={decorate,decoration={markings,
    mark=at position .5 with {\arrow[scale=1.3]{>}}}},very thick] (1,0) to[out=90,in=-90] (0,1) to[out=90,in=-90] (1,2);
\end{tikzpicture}};

\node at (1.7,0) {$=$};
\node at (5.4,0){\begin{tikzpicture} [scale=1.3]

\node[scale=1.5] at (.3,1){$\la$};

\node at (.7,1) {$-$};
\draw[postaction={decorate,decoration={markings,
    mark=at position .5 with {\arrow[scale=1.3]{<}}}},very thick] (1,0) to[out=90,in=-90]  (1,2);  
\draw[postaction={decorate,decoration={markings,
    mark=at position .5 with {\arrow[scale=1.3]{>}}}},very thick] (1.7,0) to[out=90,in=-90]  (1.7,2);

\node at (2.5,1.15) {$+$}; 
\node at (3,1) {$\displaystyle\sum_{a+b+c=-1}$};
\draw[postaction={decorate,decoration={markings,
    mark=at position .5 with {\arrow[scale=1.3]{<}}}},very thick] (4,0) to[out=90,in=-180] (4.5,.5) to[out=0,in=90] node [pos=.6, fill,circle,inner sep=2.5pt,label=above:{$a$}] {} (5,0);  
\draw[postaction={decorate,decoration={markings,
    mark=at position .5 with {\arrow[scale=1.3]{>}}}},very thick] (4,2) to[out=-90,in=-180] (4.5,1.5) to[out=0,in=-90] node [pos=.6, fill,circle,inner sep=2.5pt,label=below:{$c$}] {} (5,2);
\draw[postaction={decorate,decoration={markings,
    mark=at position .5 with {\arrow[scale=1.3]{<}}}},very thick] (5.5,1) circle (10pt);
\node [fill,circle,inner sep=2.5pt,label=right:{$b$},right=10.5pt] at (5.5,1) {};
\node[scale=1.5] at (6.5,1){$\la$};
\end{tikzpicture}};

\end{tikzpicture}}};
\node at (-3,3){\scalebox{.95}{\begin{tikzpicture} [scale=1.3] 
\node[scale=1.5] at (-.7,1){$\la$};
\draw[postaction={decorate,decoration={markings,
    mark=at position .5 with {\arrow[scale=1.3]{<}}}},very thick] (0,0) to[out=90,in=-90]  (1,1) to[out=90,in=0]  (.5,1.5) to[out=180,in=90]  (0,1) to[out=-90,in=90]  (1,0);  
\node at (1.5,1.15) {$=$}; 
\node at (2.05,.95) {$\displaystyle\sum_{a+b=-1}$};
\draw[postaction={decorate,decoration={markings,
    mark=at position .5 with {\arrow[scale=1.3]{>}}}},very thick]
(3,0) to[out=90,in=180]  (3.5,.5) to[out=0,in=90]  node
[pos=.7,fill=black,circle,label={[label distance=5pt] right:{$a$}},inner sep=2.5pt]{} (4,0); 

\draw[postaction={decorate,decoration={markings,
    mark=at position .5 with {\arrow[scale=1.3]{<}}}},very thick] (3.5,1.3) circle (10pt);
\node [fill,circle,inner sep=2.5pt,label=right:{$b$},right=10.5pt] at (3.5,1.3) {};
\node[scale=1.5] at (4.7,1){$\la$};
\end{tikzpicture}}};
\node at (-3,6){\scalebox{.95}{\begin{tikzpicture} [scale=1.3] 
\node[scale=1.5] at (-.7,1){$\la$};
\draw[postaction={decorate,decoration={markings,
    mark=at position .5 with {\arrow[scale=1.3]{>}}}},very thick] (0,0) to[out=90,in=-90]  (1,1) to[out=90,in=0]  (.5,1.5) to[out=180,in=90]  (0,1) to[out=-90,in=90]  (1,0);  
\node at (1.5,1.15) {$= \,-$}; 
\node at (2.05,.95) {$\displaystyle\sum_{a+b=-1}$};
\draw[postaction={decorate,decoration={markings,
    mark=at position .5 with {\arrow[scale=1.3]{<}}}},very thick]
(3,0) to[out=90,in=180]  (3.5,.5) to[out=0,in=90]  node
[pos=.7,fill=black,circle,label={[label distance=5pt] right:{$a$}},inner sep=2.5pt, outer sep=3pt]{} (4,0); 
\draw[postaction={decorate,decoration={markings,
    mark=at position .5 with {\arrow[scale=1.3]{>}}}},very thick] (3.5,1.3) circle (10pt);
\node [fill,circle,inner sep=2.5pt,label=right:{$b$},right=10.5pt] at (3.5,1.3) {};
\node[scale=1.5] at (4.7,1){$\la$};
\end{tikzpicture}}};
\end{tikzpicture}

\caption{``Cross and cap'' relations; all strands are colored with $\al_i$.  By convention, a negative number of dots on a strand which is not closed into a bubble is 0.}
\label{pop-rels}
\end{figure}

\begin{figure}[h!] \scalebox{.9}{
\begin{tikzpicture} [scale=1.2]
\node at (0,0){\begin{tikzpicture} [scale=1.3]
\node[scale=1.5] at (-.7,1){$\la$};
\draw[postaction={decorate,decoration={markings,
    mark=at position .5 with {\arrow[scale=1.3]{<}}}},very thick] (0,0) to[out=90,in=-90] node[at start,below]{$i$} (1,1) to[out=90,in=-90] (0,2) ;  
\draw[postaction={decorate,decoration={markings,
    mark=at position .5 with {\arrow[scale=1.3]{>}}}},very thick] (1,0) to[out=90,in=-90] node[at start,below]{$j$} (0,1) to[out=90,in=-90] (1,2);
\end{tikzpicture}};

\node at (1.7,0) {$=$};
\node[scale=1.1] at (2.3,0) {$t_{ij}$};
\node at (3.7,0){\begin{tikzpicture} [scale=1.3]

\node[scale=1.5] at (2.4,1){$\la$};

\draw[postaction={decorate,decoration={markings,
    mark=at position .5 with {\arrow[scale=1.3]{<}}}},very thick] (1,0) to[out=90,in=-90]  node[at start,below]{$i$} (1,2);  
\draw[postaction={decorate,decoration={markings,
    mark=at position .5 with {\arrow[scale=1.3]{>}}}},very thick] (1.7,0) to[out=90,in=-90] node[at start,below]{$j$} (1.7,2);
\end{tikzpicture}};

\node at (0,-3){\begin{tikzpicture} [scale=1.3]
\node[scale=1.5] at (-.7,1){$\la$};
\draw[postaction={decorate,decoration={markings,
    mark=at position .5 with {\arrow[scale=1.3]{>}}}},very thick] (0,0) to[out=90,in=-90] node[at start,below]{$i$} (1,1) to[out=90,in=-90] (0,2) ;  
\draw[postaction={decorate,decoration={markings,
    mark=at position .5 with {\arrow[scale=1.3]{<}}}},very thick] (1,0) to[out=90,in=-90] node[at start,below]{$j$} (0,1) to[out=90,in=-90] (1,2);
\end{tikzpicture}};

\node at (1.7,-3) {$=$};
\node[scale=1.1] at (2.3,-3) {$t_{ji}$};
\node at (3.7,-3){\begin{tikzpicture} [scale=1.3]

\node[scale=1.5] at (2.4,1){$\la$};

\draw[postaction={decorate,decoration={markings,
    mark=at position .5 with {\arrow[scale=1.3]{>}}}},very thick] (1,0) to[out=90,in=-90]  node[at start,below]{$i$ }(1,2) ;  
\draw[postaction={decorate,decoration={markings,
    mark=at position .5 with {\arrow[scale=1.3]{<}}}},very thick] (1.7,0) to[out=90,in=-90]  node[at start,below]{$j$} (1.7,2);
\end{tikzpicture}};

\end{tikzpicture}}
\caption{The cancellation of oppositely oriented crossings with different labels.}
\label{opp-cancel}
\end{figure}

\item Oppositely oriented crossings of differently colored strands
  simply cancel with a scalar, shown in Figure \ref{opp-cancel}.

\item the endomorphisms of words only using $\eF_i$ (or by duality only $\eE_i$'s) satisfy the relations of the {\bf quiver Hecke algebra} $R$, shown in Figure \ref{quiver-hecke}.

\begin{figure}[h!]
\begin{equation*}
    \begin{tikzpicture}[scale=1]
      \draw[very thick](-4,0) +(-1,-1) -- +(1,1) node[below,at start]
      {$i$}; \draw[very thick](-4,0) +(1,-1) -- +(-1,1) node[below,at
      start] {$j$}; \fill (-4.5,.5) circle (3pt);
      \node at (-2,0){=}; \draw[very thick](0,0) +(-1,-1) -- +(1,1)
      node[below,at start] {$i$}; \draw[very thick](0,0) +(1,-1) --
      +(-1,1) node[below,at start] {$j$}; \fill (.5,-.5) circle (3pt);
      \node at (4,0){unless $i=j$};
    \end{tikzpicture}
  \end{equation*}
  \begin{equation*}
    \begin{tikzpicture}[scale=1]
      \draw[very thick](-4,0) +(-1,-1) -- +(1,1) node[below,at start]
      {$i$}; \draw[very thick](-4,0) +(1,-1) -- +(-1,1) node[below,at
      start] {$i$}; \fill (-4.5,.5) circle (3pt);
      \node at (-2,0){=}; \draw[very thick](0,0) +(-1,-1) -- +(1,1)
      node[below,at start] {$i$}; \draw[very thick](0,0) +(1,-1) --
      +(-1,1) node[below,at start] {$i$}; \fill (.5,-.5) circle (3pt);
      \node at (2,0){+}; \draw[very thick](4,0) +(-1,-1) -- +(-1,1)
      node[below,at start] {$i$}; \draw[very thick](4,0) +(0,-1) --
      +(0,1) node[below,at start] {$i$};
    \end{tikzpicture}
  \end{equation*}
 \begin{equation*}
    \begin{tikzpicture}[scale=1]
      \draw[very thick](-4,0) +(-1,-1) -- +(1,1) node[below,at start]
      {$i$}; \draw[very thick](-4,0) +(1,-1) -- +(-1,1) node[below,at
      start] {$i$}; \fill (-4.5,-.5) circle (3pt);
      \node at (-2,0){=}; \draw[very thick](0,0) +(-1,-1) -- +(1,1)
      node[below,at start] {$i$}; \draw[very thick](0,0) +(1,-1) --
      +(-1,1) node[below,at start] {$i$}; \fill (.5,.5) circle (3pt);
      \node at (2,0){+}; \draw[very thick](4,0) +(-1,-1) -- +(-1,1)
      node[below,at start] {$i$}; \draw[very thick](4,0) +(0,-1) --
      +(0,1) node[below,at start] {$i$};
    \end{tikzpicture}
  \end{equation*}
  \begin{equation*}
    \begin{tikzpicture}[very thick,scale=1]
      \draw (-2.8,0) +(0,-1) .. controls +(1.6,0) ..  +(0,1)
      node[below,at start]{$i$}; \draw (-1.2,0) +(0,-1) .. controls
      +(-1.6,0) ..  +(0,1) node[below,at start]{$i$}; \node at (-.5,0)
      {=}; \node at (0.4,0) {$0$};
\node at (1.5,.05) {and};
    \end{tikzpicture}
\hspace{.4cm}
    \begin{tikzpicture}[very thick,scale=1]

      \draw (-2.8,0) +(0,-1) .. controls +(1.6,0) ..  +(0,1)
      node[below,at start]{$i$}; \draw (-1.2,0) +(0,-1) .. controls
      +(-1.6,0) ..  +(0,1) node[below,at start]{$j$}; \node at (-.5,0)
      {=};

\draw (1.8,0) +(0,-1) -- +(0,1) node[below,at start]{$j$};
      \draw (1,0) +(0,-1) -- +(0,1) node[below,at start]{$i$}; 
\node[inner xsep=10pt,fill=white,draw,inner ysep=8pt] at (1.4,0) {$Q_{ij}(y_1,y_2)$};
    \end{tikzpicture}
  \end{equation*}
  \begin{equation*}
    \begin{tikzpicture}[very thick,scale=1]
      \draw (-3,0) +(1,-1) -- +(-1,1) node[below,at start]{$k$}; \draw
      (-3,0) +(-1,-1) -- +(1,1) node[below,at start]{$i$}; \draw
      (-3,0) +(0,-1) .. controls +(-1,0) ..  +(0,1) node[below,at
      start]{$j$}; \node at (-1,0) {=}; \draw (1,0) +(1,-1) -- +(-1,1)
      node[below,at start]{$k$}; \draw (1,0) +(-1,-1) -- +(1,1)
      node[below,at start]{$i$}; \draw (1,0) +(0,-1) .. controls
      +(1,0) ..  +(0,1) node[below,at start]{$j$}; \node at (5,0)
      {unless $i=k\neq j$};
    \end{tikzpicture}
  \end{equation*}
  \begin{equation*}
    \begin{tikzpicture}[very thick,scale=1]
      \draw (-3,0) +(1,-1) -- +(-1,1) node[below,at start]{$i$}; \draw
      (-3,0) +(-1,-1) -- +(1,1) node[below,at start]{$i$}; \draw
      (-3,0) +(0,-1) .. controls +(-1,0) ..  +(0,1) node[below,at
      start]{$j$}; \node at (-1,0) {=}; \draw (1,0) +(1,-1) -- +(-1,1)
      node[below,at start]{$i$}; \draw (1,0) +(-1,-1) -- +(1,1)
      node[below,at start]{$i$}; \draw (1,0) +(0,-1) .. controls
      +(1,0) ..  +(0,1) node[below,at start]{$j$}; \node at (2.8,0)
      {$+$};        \draw (6.2,0)
      +(1,-1) -- +(1,1) node[below,at start]{$i$}; \draw (6.2,0)
      +(-1,-1) -- +(-1,1) node[below,at start]{$i$}; \draw (6.2,0)
      +(0,-1) -- +(0,1) node[below,at start]{$j$}; 
\node[inner ysep=8pt,inner xsep=5pt,fill=white,draw,scale=.8] at (6.2,0){$\displaystyle \frac{Q_{ij}(y_3,y_2)-Q_{ij}(y_1,y_2)}{y_3-y_1}$};
    \end{tikzpicture}
  \end{equation*}
\caption{The relations of the quiver Hecke algebra.  These relations are insensitive to labeling of the plane.}
\label{quiver-hecke}
\end{figure}
\end{itemize}

This categorification has analogues of the positive and negative
Borels given by the representations of {\bf quiver Hecke algebras},
the algebra given by diagrams where all strands are oriented downwards,
modulo the relations in Figure \ref{quiver-hecke}, which is discussed
in \cite[\S 4]{Rou2KM} and an earlier paper of Khovanov and Lauda
\cite{KLI}. We denote  these 2-categories $\tU^+$ and $\tU^-$.
\clearpage

\subsection{Categorifications for parabolics}

For our purposes, it will be crucial to have a nondegeneracy result for $\tU$; the most important consequence of this will be that the quiver Hecke algebra injects into $\End_{\tU}(\oplus_{\Bi}\eF_{\Bi}\mu)$ for any weight $\mu$. 
Luckily, we know such results for $\mathfrak{sl}_2$, and for the Borel
$\mathfrak{b}_-$ by work of Lauda \cite{LauSL21} and Khovanov-Lauda
\cite{KLI}, with independent proofs given by Rouquier
\cite[Proposition 5.15 \& Proposition 3.12]{Rou2KM}.  Since a
Kac-Moody algebra is essentially a bunch of $\mathfrak{sl}_2$'s with
their interactions described by a Borel, we can hope that these cases
can lead us to the more general case.

Let us first give a rough sketch of the argument:
\begin{itemize}
\item First, we construct an auxilliary 2-category which corresponds to
  a categorification of a minimal parabolic $\mathfrak{b}_-+\C\cdot E_i$.  This
  category contains a copy of Lauda's categorification of
  $\mathfrak{sl}_2$ and of $\tU_i$.  A variant of Lauda's
  non-degeneracy argument works for this category.
\item We can use this non-degeneracy argument to show that the
  projective modules over the cyclotomic quotient are a quotient of an
  obvious 2-representation of 
  this category, and thus also carry an action of it.  This establishes that $\fE_i$ and $\fF_i$ define an action of
  $\tU_{\mathfrak{sl}_2}$; in particular, these functors are
  biadjoint.
\item We then need only check one extra relation to confirm that we
  have an action of all $\tU$ on the projective modules over the
  cyclotomic quotient; this action can be used to confirm
  non-degeneracy for the whole of $\tU$. 
\end{itemize}

In order to follow though on this argument, we consider a new category categorifying the parabolic generated by $\mathfrak{b}_-$ and $E_i$, for a fixed index $i$ (which we leave fixed for the remainder of this section).
\begin{defn}
We let $\tU_i^-$ be the $2$-category whose \begin{itemize}
\item objects are weights of $\fg$,
\item 1-morphisms are compositions of 1-morphisms in $\mathcal{U}^-$ and the single 1-mor\-phism $\eE_i$ from $\tU^+$,
\item 2-morphisms are a quotient of the $\K$-span of string diagrams
  of the form used in $\tU$ in which only $i$-colored strands are
  allowed to go downwards. The relations killed are exactly those from $\tU$ that relate
  such diagrams.
\end{itemize}  
\end{defn} 
In Rouquier's language, we would construct this category by adjoining $\eE_i$ to the lower half categorification as a formal left adjoint to $\eF_i$, and impose the relations that
\begin{itemize}
\item the map $\rho_{s,\la}$ is an isomorphism whose inverse is described by the lower relation in Figure \ref{pop-rels} (in the ``style'' of Rouquier, one would not impose this equation, but simply adjoin an inverse to $\rho_{s,\la}$).
\item the right adjunction between  $\eF_i$ and $\eE_i$ is given by the upper relation of Figure \ref{pop-rels}.
\end{itemize} 

There a functor $\tU_i^-\to \tU$, which is not manifestly faithful,
since new relations could appear when the other objects are added.  We
note that the 2-morphisms in this category have a spanning set
given by the set $B_{\Bi,\Bj,\la}$ defined in \cite[\S 3.2.3]{KLIII}
for the morphisms allowed in $\tU_i^-$,
which we will denote $B_i$.  Let us briefly recall this definition:
\begin{defn}
  For any two 1-morphisms $G$ and $H$ given by words in the $\eF_j$'s
  and $\eE_i$'s, we let $B_{G,H,i}$ denote the set of 2-morphisms
  given by
  \begin{itemize}
  \item for each perfect matching on the collection of all the glyphs in the words $G$ and $H$
    which can only matches
    \begin{itemize}
    \item an $\eF_j$ or $\eE_i$ in $G$ to one in $H$, or 
\item an $\eF_i$ to an $\eE_i$ within the same word
    \end{itemize}
  we choose an arbitrary 2-morphism which connects the matched dots
  without any self-intersection or any strands intersecting twice.  We
  fix an arbitrary point on each strand in this 2-morphism
\item we let $B_{G,H,i}$ be the set obtained by multiplying these
  chosen 2-morphisms by an arbitrary monomial in the bubbles at the
  left and by an arbitrary number of dots at each of the fixed
  location on the strands.
  \end{itemize}
The set $B_i$ is union of these sets over all 1-morphisms $G$ and $H$.
\end{defn}

Just as Lauda's categorification of $\mathfrak{sl}_2$ acts on a ``flag
category,'' this parabolic categorification acts on a ``quiver flag
category,'' which can be thought of as arising from Zheng's
construction \cite{Zheng2008} if one only quotients out by the thick
subcategory for the vertex $i$.  While this geometric perspective can
be made precise for symmetric Kac-Moody algebras, we wish to give a proof for all symmetrizable types,
and thus will give a completely algebraic construction.

For ease, we let $m_\mu^j=\om_j^\vee(\la-\mu)$, where
$\om_i^\vee:\rola\to \Z$ is the linear function sending $\al_i$ to 1,
and all other simple roots to 0.  As usual, we let $\Lambda(\bp)$ be
the algebra of symmetric polynomials on an alphabet $\bp$, and let $e_i(\bp),h_i(\bp)$ denote the elementary and complete symmetric polynomials of degree $i$.  Let $$\tilde{\bLa}_\mu \cong\bigotimes_{j\in \Gamma}\Lambda(p_{j,1},\dots,p_{j,m_\mu^j}).$$
Now consider the polynomial in $\tilde{\bLa}_\mu$ given by 
\[\Xi_\mu(\bp,t)= \left(\sum_{k=0}^{\infty}
  h_{k}(\bp_i)(-t)^{k}\right)\prod_{j\neq i}\prod_{k=0}^{m_\mu^j}
t_{ij}^{-1}\cdot t^{-c_{ji}}Q_{ji}(p_{j,k},-t),\] 
where $\bp_i$ denotes the alphabet of variables $p_{i,*}$.

We let $\bLa_\mu$ be the quotient of $\tilde{\bLa}_\mu $ by the relations:
\begin{equation*}\label{grass-rels}
\Xi_\mu\{t^g\}=0\qquad  \text{ for all } g> \mu^i+m_{\mu}^j  
\end{equation*} 
 Here $f(t)\{t^g\}$ denotes the $t^g$ coefficient of a polynomial.  We note that these are quite reminiscent of the relations in a Grassmannian $\mathrm{Gr}(n,m)$, which are simply that $h_k(\bp)=0$ for all $k>n-m$.  In the symmetric case, for a specific choice of $Q_{ij}$, the ring $\bLa_\mu$ is the cohomology ring of a Grassmannian bundle over a module space of quiver representations, and these constructions can be interpreted geometrically.
\begin{defn}
The ``quiver flag category'' $\mathcal{G}_\la$ is a 2-category that sends each weight $\mu$ to the category of modules over $\bLa_\mu$
with 1-morphisms given by the categories of functors between
$\bLa_\mu \modu$ isomorphic to tensoring with a bimodule between the
corresponding algebras
algebras, and 2-morphisms given by natural transformations isomorphic
to bimodule morphisms.
\end{defn}
It may seem rather strange to use ``functors isomorphic to tensoring
with a bimodule'' here; the point is that this is a strict 2-category,
whereas considering the bimodules themselves would be a weak one.  
\begin{thm}\label{non-degenerate}
There is a strict 2-functor $\tU_i^-\to \mathcal{G}_\la$, and every non-trivial linear combination of elements of $B_i$ in $\tU_i^-$ acts non-trivially in one of these categories.  That is, $\tU_i^-$ is non-degenerate in the sense of Khovanov-Lauda.
\end{thm}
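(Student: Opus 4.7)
The plan is to build the action of $\tU_i^-$ on $\mathcal{G}_\la$ generator by generator, verify the relations using the two limiting cases (the quiver Hecke algebra for downward strands, and Lauda's $\mathfrak{sl}_2$ picture for the $i$-colored cups and caps), and then deduce non-degeneracy by separating a normal-form diagram into those two pieces.

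First, I would define the bimodules for the generating 1-morphisms. For $j\in\Gamma$, the 1-morphism $\eF_j$ acts as the $\bLa_{\mu-\al_j}$-$\bLa_\mu$-bimodule coming from the inclusion $\tilde{\bLa}_\mu\hookrightarrow\tilde{\bLa}_{\mu-\al_j}$ that adjoins a single new variable $p_{j,m_\mu^j+1}$ to the $j$th alphabet, descended to the quotients $\bLa_\mu,\bLa_{\mu-\al_j}$. The 1-morphism $\eE_i$ is modeled on Lauda's $\mathfrak{sl}_2$ bimodule, built from the embedding in the opposite direction on the $i$-alphabet; here the quotient relations $\Xi_\mu\{t^g\}=0$ are tailored precisely to make $\eE_i$ into a biadjoint of $\eF_i$ (with the expected grading shift read off from the degree formulas in the excerpt), while being inert on all alphabets $\bp_j$ with $j\ne i$. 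This is the reason the Grassmannian-style relations appear with exactly the factors $Q_{ji}(p_{j,k},-t)$ and an $i$-side factor $\sum h_k(\bp_i)(-t)^k$.

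Second, I would define the generating 2-morphisms as bimodule maps: a dot on an $\eF_j$-strand is multiplication by the new variable $p_{j,*}$; a crossing of two downward strands is the $Q$-twisted divided difference operator from Khovanov--Lauda/Rouquier (so that the purely-downward subcategory of $\tU_i^-$ acts on $\tilde{\bLa}_\mu$ through the standard faithful polynomial representation of the quiver Hecke algebra $R$); and the $i$-colored cup, cap, and upward dot are defined by transporting Lauda's $\mathfrak{sl}_2$ formulas to the $i$-alphabet, with the bubble parameters arranged so that the inversion identities of Figure \ref{inv-rels} hold tautologically in $\bLa_\mu$. The remaining ``mixed'' 2-morphisms (crossings with one strand labeled $i$ going up) are obtained from these by the adjunction formulas forced by Figure \ref{pop-rels}.

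Third, I would verify the relations in three groups. The purely-downward relations of Figure \ref{quiver-hecke} are automatic from the construction and the known faithfulness of the polynomial representation of $R$. The $i$-colored relations — biadjunction, bubble relations, and the upper and lower identities of Figure \ref{pop-rels} — reduce to Lauda's calculation in $\mathfrak{sl}_2$ once one checks that the defining quotient $\Xi_\mu\{t^g\}=0$ specializes (after setting $p_{j,k}=0$ for $j\ne i$, up to the $t_{ij}$ normalization) to the Grassmannian relations governing Lauda's flag category. The genuinely new content is the oppositely-oriented crossing cancellations of Figure \ref{opp-cancel}: an $\eE_i$ strand must slide past an $\eF_j$ strand ($j\ne i$) up to the scalar $t_{ij}/t_{ji}$, which is precisely what the $Q_{ji}(p_{j,k},-t)$ factors in $\Xi_\mu$ were introduced to enforce.

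For non-degeneracy, any element of $\End_{\tU_i^-}(\oplus_{\Bi}\eF_{\Bi}\mu\cdots)$ can be written, using the cup/cap and $\mathfrak{sl}_2$-type relations, as a linear combination of elements of the spanning set $B_i$ in a normal form with all $i$-colored caps pushed to the top and $i$-colored cups to the bottom. A hypothetical linear dependence acting trivially on every $\mathcal{G}_\la$ restricts, on the one hand, to a dependence in the polynomial representation of $R$ (impossible by Rouquier--Khovanov--Lauda), and on the other hand, after projecting onto the $i$-alphabet factor, to a dependence in Lauda's $\mathfrak{sl}_2$ 2-category (impossible by \cite{LauSL21}). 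The main obstacle is carrying out this separation honestly: one must justify that a normal-form $B_i$-diagram factors, under the bimodule action on $\bLa_\mu$, into its ``downward quiver-Hecke part'' acting on the non-$i$ alphabets and its ``$i$-colored $\mathfrak{sl}_2$ part'' acting on the $i$-alphabet, so that the two faithfulness inputs apply independently. This compatibility is exactly what the multiplicative form of $\Xi_\mu(\bp,t)$, factored as an $i$-piece times a product of $Q_{ji}$ pieces, is designed to deliver, so once the bimodules are correctly chosen the argument closes.
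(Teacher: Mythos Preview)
Your construction of the action---the bimodules for $\eF_j$, $\eE_i$ and the 2-morphisms via $Q$-twisted divided differences together with Lauda's $\mathfrak{sl}_2$ formulas---follows the paper's approach closely. One point you underplay is that the downward-strand relations are not quite ``automatic'': the paper's main computational work is showing that the crossing $\psi$ between an $\eF_i$-strand and an $\eF_j$-strand descends to a well-defined map between the \emph{quotient} bimodules $\bLa_{\mu;i}\otimes\bLa_{\mu-\al_i;j}$ and $\bLa_{\mu;j}\otimes\bLa_{\mu-\al_j;i}$, since these carry different relations. Once well-definedness is established, the relations do follow from the polynomial representation upstairs, but you should not skip that step.

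The genuine gap is in your non-degeneracy argument. You propose to ``separate'' a $B_i$-diagram into a downward quiver-Hecke part acting on the non-$i$ alphabets and an $i$-colored $\mathfrak{sl}_2$ part acting on $\bp_i$, then invoke faithfulness of each piece. You correctly identify this as the main obstacle, and indeed it does not close as stated: the ring $\bLa_\mu$ is not a tensor product of an $i$-factor and a non-$i$-factor, since the relations $\Xi_\mu\{t^g\}=0$ mix all alphabets. There is no clean ``projection onto the $i$-alphabet factor,'' and the multiplicative factorization of the power series $\Xi_\mu(\bp,t)$ does not by itself yield a factorization of the bimodule action on normal-form diagrams with mixed-color crossings.

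The paper's route is much simpler and avoids all of this. It observes that as $\la$ grows, the quantities $\mu^i+m_\mu^j$ can be made arbitrarily large, so the relations $\Xi_\mu\{t^g\}=0$ defining $\bLa_\mu$ only appear in degrees above any fixed bound. Hence for any given nontrivial linear combination of $B_i$-elements (which has bounded degree), one can choose $\la$ large enough that the action on $\bLa_\mu$ is simply the action on free polynomials in that degree range, where faithfulness is immediate. This single-sentence degree argument replaces your entire separation strategy.
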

\begin{proof}
First, we describe the action on the level of 1-morphisms.
\begin{itemize}

\item The functors $\eF_j$ for $j\neq i$ act by tensoring with the $\bLa_\mu\operatorname{-}\bLa_{\mu-\al_i}$ bimodule $\bLa_{\mu}[p_{j,m_{\mu}^j+1}]$.
The left-module structure over $\bLa_\mu$ is the obvious one, and right-module over $\bLa_{\mu-\al_j}$ is a slight tweak of this: $e_k(\bp_j')$ acts by $e_k(\bp_j,p_{j,m_{\mu}^j+1})$, $e_k(\bp_m')$ by $e_k(\bp_m)$ for $m\neq j$.
\item The functor $\eF_i$ acts by an analogue of the action in Lauda's paper \cite{LauSL21}; tensor product with a natural $\bLa_\mu\operatorname{-}\bLa_{\mu-\al_i}$-bimodule $\bLa_{\mu;i}$ which is a quotient of $\bLa_\mu[p_{i,m^i_\mu+1}]$ by the relation
 \begin{equation}\label{bim-rel}
 \left(\sum_{c=0}^\infty (-p_{i,m^i_\mu+1}t)^c\right) \Xi_\mu\{t^g\}=0\qquad  \text{ for all } g> \mu^i+m_{\mu}^j -1 \end{equation}
with the same left and right actions as above.
\item Similarly, the functor $\eE_i$ acts by tensor product with $\dot{\bLa}_{\mu+\al_i;i}$, the bimodule defined above with the actions above reversed.  This can also be presented as a quotient of $\bLa_{\mu}[p_{i,m^i_\mu}]$ by the relation 
\begin{equation*}
 \left(1+ p_{i,m^i_\mu+1}t\right) \Xi_\mu\{t^g\}=0\qquad  \text{ for all } g> \mu^i+m_{\mu}^j.\end{equation*}
\end{itemize}
If we only consider $\eE_i$'s and $\eF_i$'s, then we obtain a sum of specializations of Lauda's construction of a representation of $\tU_{\mathfrak{sl}_2}$ on the equivariant cohomology of Grassmannians.  That is, for each fixed choice of $m^j_\mu$ for $i\neq j$, we realize the functors along the $\mathfrak{sl}_2$ weight-string of $\eta=\la-\sum m^j_\mu\al_j$ by extending scalars from Lauda's construction by the map $H^*_{GL_\infty}(\mathrm{Gr}(m^i_\mu,\infty);\K)\to \bLa$ given by sending $$x_k\mapsto e_k(\bp_i)\qquad y_k\mapsto\Xi_\mu(t)\{t^k\}.$$

Clearly, we have $$\bLa_\mu\cong H^*_{GL_\infty}(\mathrm{Gr}(m^i_\mu,\infty);\K)\otimes_{H^*_{GL_{\eta^i}}(\mathrm{Gr}(m^i_\mu,\eta^i);\K)}\bLa.$$
This allows us to define all necessary 2-morphisms between $\eF_i$'s and $\eE_i$'s, which automatically satisfy all the appropriate relations by \cite[Theorem 4.13]{LauSL2}.  

On the other hand, 2-morphisms between $\fF_j$'s other than $i$ act as in Khovanov and Lauda \cite{KLII} or Rouquier \cite[Proposition 3.12]{Rou2KM}. Similarly, the proof of relations follows over immediately.  Thus, the only issue is the interaction between these 2 classes of functors.

In particular, it remains to show the maps corresponding to elements of  $R(\nu)$ are well defined (the relations between them then automatically hold, since quotienting out by relations will not cause two things to become unequal).  

Now, consider the bimodules  $\bLa_{\mu;i}\otimes_{\bLa_{\mu-\al_i}}
\bLa_{\mu-\al_i;j}$ and $
\bLa_{\mu;j}\otimes_{\bLa_{\mu-\al_j}}\bLa_{\mu-\al_j;i}$.  The
functors of tensor with these are canonically isomorphic to
$\eF_i\eF_j$ and $\eF_j\eF_i$, respectively (though the are not the
same ``on the nose''), so it suffices to define the map $\psi$ as a
map between these bimodules.  The former is just $\bLa_{\mu;i}[ p_{j,m^j_\mu+1}]$, so the relations are just (\ref{bim-rel}).  

The latter is a quotient of $\bLa_{\mu}[p_{j,m^j_\mu+1},p_{i,m^i_\mu+1}]$ by \[ t^{-c_{ji}}Q_{ji}(p_{j,m_\mu^j+1},-t^{-1})\left(\sum_{c=0}^\infty (-p_{i,m^i_\mu+1}t)^c\right) \Xi_\mu\{t^g\}=0\qquad  \text{ for all } g> \mu^i+m_{\mu}^j -1-c_{ij}.\] Modulo the relations (\ref{grass-rels}) of $\bLa_\mu$  this polynomial is congruent to \[t^{-c_{ji}}Q_{ji}(p_{j,m_\mu^j+1},p_{i,m^i_\mu+1})\left(\sum_{c=0}^\infty (-p_{i,m^i_\mu+1}t)^c\right) \Xi_\mu,\]
 so the new relations introduced are exactly $Q_{ji}(p_{j,m_\mu^j+1},p_{i,m^i_\mu+1})$ times those of $\bLa_{\mu;i}[ p_{j,m^j_\mu+1}]$.

Thus, the usual definition of $\psi$ from Khovanov and Lauda indeed induces a map of modules, as long as we are careful to use the convention that $e(j,i)\psi$ corresponds to the identity map (in \cite{KLI}, this is the switch map for two variables, since they do not index the variables for different colors separately) and $e(i,j)\psi$ corresponds to multiplication by $Q_{ji}(p_{j,m_\mu^j+1},p_{i,m^i_\mu+1})$.

Let us illustrate this point in the simplest case, when $\mu=\la$.  \begin{align*}\bLa_{\la}&=\K, &\bLa_{\la-\al_i}&=\K[p_i]/(p_i^{\al_i^\vee(\la)})\\
\bLa_{\la-\al_j}&=\K[p_j] &\bLa_{\la-\al_i-\al_j}&=\K[p_i,p_j]/(p_i^{\al_i^\vee(\la)}Q_{ji}(p_j,p_i))\end{align*}

The only one of these requiring any appreciable computation is the last.  In this case, we have the relation $p_i^{\la^i}Q_{ji}(p_j,p_i)=0$ by relating the $t^{\langle \la-\al_j-\al_i,\al_i\rangle}+2d_i$ term of $(1-p_it+\cdots)t^{-c_{ji}}Q_{ji}(p_j,-t^{-1})$. 

Finally, we must prove the relation shown in Figure \ref{opp-cancel}.  This is simply a calculation, given that we have already defined the morphisms for all the diagrams which appear.  The composition
\begin{equation}
\eF_j\eE_i\overset{\iota_1}\longrightarrow \eE_i\eF_i\eF_j\eE_i\overset{\psi_2}\longrightarrow \eE_i\eF_j\eF_i\eE_i\overset{\ep_3}\longrightarrow \eE_i\eF_j\label{eq:FE}
\end{equation}
is given by 
\begin{align*}
\ep_3\psi_2\iota_1(p_{i,m_\mu^i}^a\otimes p_{j,m_\mu^j+1}^b)&= \ep_3\psi_2\left(\sum_{k=0}^{m_\mu^i-1}p_{i,m_\mu^i}^a\otimes p_{j,m_\mu^j+1}^b \otimes p_{i,m_\mu^i}^{m_\mu^i-k+1}\otimes h_{k}(\bp_i') \right)\\
&=\ep_3\left(\sum_{k=0}^{m_\mu^i-1}p_{i,m_\mu^i}^a\otimes p_{i,m_\mu^i}^{m_\mu^i-k+1} \otimes p_{j,m_\mu^j+1}^b \otimes h_{k}(\bp_i')\right)\\
&=\sum_{k=0}^{a} (-1)^k p_{j,m_\mu^j+1}^b \otimes e_{a-k}(\bp_i) h_{k}(\bp_i')\\
&=p_{j,m_\mu^j+1}^b\otimes p_{i,m_\mu^i}^a 
\end{align*}

Now, note that by our assumptions on $Q_{ij}$, the power series
$\Xi(t)$ has a non-zero constant term, and thus has a formal inverse
in $\Lambda(\bp)[[t]]$, which we denote $\Xi^{-1}(t)$.  By the usual
Cauchy formula, we have \[\Xi^{-1}(t)=\left(\sum_{k=0}^{\infty}
  e_{k}(\bp_i)t^{k}\right)\prod_{j\neq i}\prod_{k=0}^{m_\mu^j}
\frac{t_{ij}\cdot t^{c_{ji}}}{Q_{ji}(p_{j,k},-t)}, \]
and by \cite[Definition 3.1]{LauSL21},  
$$X_k\mapsto \Xi^{-1}(t)\{t^k\}\qquad Y_k\mapsto (-1)^k h_k(\bp_i).$$  

The composition
\begin{equation}
\eE_i\eF_j\overset{\iota_3'}\longrightarrow \eE_i\eF_j\eF_i\eE_i\overset{\psi_2}\longrightarrow \eE_i\eF_i\eF_j\eE_i\overset{\ep_1'}\longrightarrow \eF_j\eE_i\label{eq:EF}
\end{equation}
is given by 
\begin{align*}
\ep_1'\psi_2\iota_3'(p_{j,m_\mu^j+1}^b\otimes p_{i,m_\mu^i}^a )&= \ep_1'\psi_2\left(\sum_{k=0}^{m_\mu^i-1} (-1)^k \Xi(\bp_i',t)\{t^k\}\otimes p_{i,m_\mu^i}^{m_\mu^i-k+1} \otimes p_{j,m_\mu^j+1}^b \otimes p_{i,m_\mu^i}^a  \right)\\
&=\ep_1'\left(\sum_{k=0}^{m_\mu^i-1} (-1)^k \Xi(\bp_i',t)\{t^k\} \otimes p_{j,m_\mu^j+1}^b Q_{ji}(p_{j,m_\mu^j+1},p_{i,m^i_\mu+1})\otimes p_{i,m_\mu^i}^{m_\mu^i-k+1}  \otimes p_{i,m_\mu^i}^a 
\right)\\
&=\sum_{k=0}^{a} (-1)^k \Xi(\bp_i',t)\{t^k\}\cdot \Xi(\bp_i,t)^{-1}Q_{ji}(p_{j,m_\mu^j+1},-t)\{t^{a-k-c_{ij}}\}   \otimes    p_{j,m_\mu^j+1}^b \\
&=\frac{t_{ij}}{1-p_{i,m_\mu^i}t}\{t^a\}\otimes p_{j,m_\mu^j+1}^b\\
&=t_{ij}\cdot p_{i,m_\mu^i}^a \otimes p_{j,m_\mu^j+1}^b 
\end{align*}
Thus, composing the maps \eqref{eq:FE} and \eqref{eq:EF} in either
order gives $t_{ij}$ times the identity, confirming the relation of
Figure \ref{opp-cancel}.

If there is any pair of 1-morphisms where the set $B_i$ is not a basis
(i.e. it has non-trivial relation), then using the biadjunction of
$\eF_i$ and $\eE_i$ and the commutation relations, we can find a pair
of such morphisms where only $\eF_i$'s are used.  In this case, the
functor $\eF_{\Bi}$ corresponds to outer tensor with a polynomial
ring, followed by modding out the appropriate ideal, where morphisms
in $\tU^-$ act on the polynomial ring by the usual polynomial
representation of the KLR algebra.  

No linear combination in $B_i$ acts trivially before modding out by
this ideal.  Furthermore, if $\la$ is sufficiently large, then we can
assure that all relations in $\bLa_\mu$ are of arbitrarily large
degree, so any linear combination of diagrams in $B_i$ can be killed
for all $\la$ by degree reasons.
\end{proof}

\subsection{Cyclotomic quotients}
\label{sec:cyc}
Now that we understand how to add the adjoint of one of the $\eF_i$'s
to $\tU^-$, we move towards considering all of them.  Just as with
$\tU^-$ and $\tU^-_i$, we prove non-degeneracy by constructing a
family of actions which are jointly faithful.  As in the previous
section, $i$ will denote a fixed element of $\Gamma$, and we will use
$j$ for an arbitrary index.

\begin{defn}
  The  {\bf cyclotomic quiver Hecke algebra} $R^\la$ for a weight
  $\lambda$ is the quotient of $R$ by the {\bf cyclotomic ideal}, the
  2-sided ideal generated by the elements $y_1^{\la^{i_1}}e(\Bi)$ for
  all sequences $\Bi$.

We let $\cata^\la$ denote the category of finite dimensional graded $R^\la$-modules. 
\end{defn}
This algebra has attracted great interest recently in the work of
Brundan-Kleshchev \cite{BKKL}, Kleshchev-Ram \cite{KlRa},
Hoffnung-Lauda and Lauda-Vazirani \cite{LV,HL}, 
Hill-Melvin-Mondragon \cite{HMM} and Tingley and the author \cite{TW}.  It has a very rich structure and
representation theory, and some surprising connections to classical
representation theory.  More importantly for our purposes, $\cata^\la$
is a module category over $\tU$, as we will show below.

Consider the map $\nu_j\colon R^\la_{\mu}\to R^\la_{\mu-\al_i}$ that adds a strand
labeled with $j$ at the right.
\begin{defn}
  We let $\fF_j=-\otimes_{R^\la_{\mu}}R^\la_{\mu-\al_i}$ denote the functor of extension
  of scalars by this map; we will refer to this as an {\bf induction}
  functor.

We let $\fE_j=\Hom_{R^\la_{\mu-\al_i}}(R^\la_{\mu},-)(\langle \mu,\al_j\rangle-d_i)$ denote restriction of scalars by
this map (with a grading shift), 
the functors left adjoint to the $\fF_i$'s; we call these {\bf
  restriction functors}.
\end{defn}
It's worth noting that these are graded
differently from the most obvious restriction functors; the presence
of a cup (see Figure~\ref{funcs}) shifts the grading.

The first step to understanding this relation is to realize the
cyclotomic quotient in terms of the category $\tU^-_i$.  Given any
object $\la$ in the 2-category $\tU^-_i$, we have a representation
$\tU^-_i(\la)$ of
this 2-category (i.e. a strict 2-functor to $\mathsf{Cat}$), given by
$\mu\mapsto \Hom(\la,\mu)$ with 1-morphisms giving functors between
these categories and 2-morphisms natural transformations by
composition.   Given any collection $J$ of 2-morphisms closed under
both vertical composition and horizontal composition on the right with
arbitrary morphisms (a ``ideal'' which is 2-sided for the vertical
composition, and 1-sided for horizontal composition), we can consider
the quotient representation $\tU^-_i(\la)/J$ by these 2-morphisms; this is again a
2-functor from $\tU^-_i$ to $\mathsf{Cat}$.  It sends $\mu$ to the
quotient of $\Hom(\la,\mu)/J$, the category whose objects coincide
with those of $\Hom(\la,\mu)$, but where all morphisms in $J$ are
identified with 0 (of course, if the identity morphism of an object is
in $J$, that object is isomorphic to 0 in the quotient category).

\begin{prop}\label{quotient-is-cyc}
  Let $J$ be the smallest set of morphisms
  containing \[\id\colon {\eE_i\la}\to {\eE_i\la}\quad \text{ and }\quad
 y^{\la^j}\colon \eF_j\la\to \eF_{j}\la\quad\text{ for all $j$}\] which is closed under
  both vertical composition and horizontal composition on the right
  with arbitrary morphisms.  The idempotent completion of
  $\tU^-_i(\la)/J$ is equivalent to the category of projective
  $R^\la$-modules, and this equivalence is intertwines the functor
  $\eF_i$ with $\fF_i$ and the functor
  $\eE_i$ with $\fE_i$.

That is, this equivalence induces a strict 2-functor $\tU^-_i\to\mathsf{Cat}$ given by
\begin{align*}
  \mu& \mapsto R^\la_\mu\mathsf{-pmod}\\
  \eF_j&\mapsto \fF_j\\
  \eE_i&\mapsto \fE_i.
\end{align*}
In particular, the functors $\fE_i$ and $\fF_i$ are biadjoint (up to
grading shift) since $\eF_i$ and $\eE_i$ are biadjoint in $\tU_i^-$.
\end{prop}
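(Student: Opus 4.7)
The plan is to construct a 2-functor $\Phi\colon \tU_i^-\to \mathsf{Cat}$ with $\mu\mapsto R^\la_\mu\mpmod$, $\eF_j\mapsto \fF_j$ and $\eE_i\mapsto \fE_i$, verify that it annihilates the ideal $J$, and then argue that the induced functor from the idempotent completion of $\tU_i^-(\la)/J$ to projective $R^\la$-modules is an equivalence.  The biadjunction of $\fE_i$ and $\fF_i$ asserted at the end is then automatic, since 2-functors preserve biadjunctions and $\eE_i\dashv\eF_i\dashv\eE_i$ already holds in $\tU_i^-$.

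The bulk of the work lies in defining $\Phi$ on 2-morphisms.  Dots and crossings among $\eF_j$-strands map to the corresponding elements of the quiver Hecke algebra $R$, acting on $\bigoplus_\Bi\fF_\Bi$ by multiplication on the newly added strands; the quiver Hecke relations are built into $R^\la$ automatically.  The delicate ingredients are the four cup/cap maps for the biadjunction between $\eE_i$ and $\eF_i$.  The standard induction-restriction adjunction between $\fF_i$ and $\fE_i$ supplies the unit and counit for one direction for free.  The opposite pair --- which realizes $\fE_i$ also as an adjoint on the other side of $\fF_i$, up to the grading shift built into its definition --- is constructed using a Mackey-type comparison of $\fE_i\fF_j$ and $\fF_j\fE_i$: for $j\neq i$ these are canonically isomorphic, while for $j=i$ the discrepancy is controlled by dots on the added strand in precise correspondence with the right-hand side of Figure~\ref{pop-rels}.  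Verifying the bubble inversion identities and the remaining cup/cap relations is then a direct calculation on induction bimodules.

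Next I would verify that $\Phi$ factors through $J$.  For the generator $y^{\la^j}\colon \eF_j\la\to \eF_j\la$, note that $\fF_j$ applied to the rank-one projective at weight $\la$ is isomorphic to $e(j)R^\la$, and $\Phi(y^{\la^j})$ is left multiplication by $y_1^{\la^j}$, which is zero by the defining cyclotomic relation of $R^\la$.  For the generator $\id_{\eE_i\la}$, observe that $\la+\al_i$ is not a weight of $V_\la$, so $R^\la_{\la+\al_i}=0$; hence $\fE_i$ applied to the projective at weight $\la$ is the zero object and its identity endomorphism is trivially killed.  Since $J$ is closed under vertical composition and horizontal composition on the right, annihilating the two generators suffices.

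Essential surjectivity of the induced $\bar\Phi$ is immediate: the projectives $e(\Bi)R^\la=\bar\Phi(\eF_\Bi\la)$ exhaust all indecomposable projectives up to summands, so idempotent completion takes care of the rest.  Fullness follows because all the generators of $R^\la$ (dots and crossings) lie in the image of $\Phi$.  The main obstacle is faithfulness: one must show that a 2-morphism in $\tU_i^-(\la)$ killed by $\bar\Phi$ already lies in $J$.  Here I would appeal to the nondegeneracy of $\tU_i^-$ (Theorem~\ref{non-degenerate}) to work with the spanning set $B_i$, and use a Morse-theoretic normal form to push all cups and caps involving $\eE_i$ into the $\la$-labeled region on the right.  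Any such normal form either contains an explicit identity factor on $\eE_i\la$ --- hence lies in $J$ --- or, after resolving the resulting bubbles, reduces to a pure quiver Hecke diagram whose only new relations come from the generator $y^{\la^j}$ acting on the rightmost strand.  These are precisely the cyclotomic relations defining $R^\la$, matching the two sides dimension for dimension and completing the equivalence.
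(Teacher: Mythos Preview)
Your approach inverts the logic of the paper's proof in a way that hides the main difficulty.  You propose to build the 2-functor $\Phi\colon \tU_i^-\to\mathsf{Cat}$ directly, which requires you to \emph{first} construct the second adjunction between $\fE_i$ and $\fF_i$ and then verify all of the $\tU_i^-$ relations---in particular the ``cross and cap'' identities of Figure~\ref{pop-rels}---on $R^\la$-bimodules.  You describe this as ``a direct calculation on induction bimodules,'' but it is not: establishing that second adjunction is precisely the nontrivial content of the proposition (it is what Kang--Kashiwara later proved by independent and substantial methods).  Your Mackey-type sketch gives no indication of how the explicit formulas for the unit and counit of the second adjunction arise, nor why they satisfy the bubble and pop relations; without this, $\Phi$ is not defined.

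The paper avoids this problem entirely.  Rather than constructing a 2-functor out of $\tU_i^-$ by verifying relations on the target, it identifies the quotient $\tU_i^-(\la)/J$ intrinsically: it first shows every object there is a summand of some $\eF_\Bi\la$ (an inductive commutation argument using the relations already built into $\tU_i^-$), and then proves $\Hom_{\tU_i^-(\la)/J}(\eF_\Bi,\eF_\Bj)\cong e_\Bi R^\la e_\Bj$.  The hard step is the latter isomorphism, and the key idea is a ``capping off'' argument: any morphism in $J$ between $\eF$-words factors through $A\eE_i\la$, and closing up that $\eE_i$ produces diagrams with a clockwise bubble at the far \emph{left}, which the inversion relation of Figure~\ref{inv-rels} rewrites in terms of counter-clockwise bubbles carrying at least $\la^i$ dots---hence lying in the cyclotomic ideal.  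Once the quotient is identified with $R^\la\mpmod$, the $\tU_i^-$-action (and thus the biadjunction) is automatic, since the quotient of a $\tU_i^-$-module category is again one.

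Two smaller points.  First, your conventions are reversed: in this paper the highest-weight region $\la$ sits at the \emph{left} of the diagram, the cyclotomic relation kills $y_1^{\la^{i_1}}$ on the \emph{leftmost} strand, and the relevant bubbles are pushed to the far left, not the right.  Second, your faithfulness sketch (normal form, then reduce to bubbles and the cyclotomic relation) is in the same spirit as the paper's capping-off argument, but it only makes sense once you already know the functor exists---which, as noted, is where your argument has its gap.
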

\begin{proof}
  First, we show that $R^\la$ can also be written as a quotient of the
  larger algebra
  $\tilde{R}^\la=\End_{\tU_i^-}(\bigoplus_{\Bi}\eF_{\Bi})$, again by
  the 2-sided ideal generated by $\id_A \cdot y^{\la^j}:A\eF_i\la\to
  A\eF_j\la$ for all 1-morphisms $A$; we call this ideal ``the
  cyclotomic ideal of $\tilde{R}^\la$.''  This ideal contains all
  positive degree clockwise bubbles at the left of the diagram (since
  all of these carry at least $\la^i$ dots), so the quiver Hecke
  algebra surjects onto the quotient.  On the other hand, if a diagram
  in $\End_{\tU_i^-}(\bigoplus_{\Bi}\eF_{\Bi})$ contains a positive
  degree bubble, it cannot be rewritten by the relations to be an
  element of the quiver Hecke algebra.  Thus, the intersection of the
  cyclotomic ideal in $\tilde R^\la$ with the included copy of $R$ is
  the cyclotomic ideal of that smaller algebra.

We also note that in $\tU^-_i(\la)/J$, every object is a summand of one
of the form $\oplus_{\Bi}\eF_{\Bi}\la$ for some set of $\Bi$'s.  Since
such objects generate under the action of $\tU^-_i$ (after all, $\la$
alone generates), it suffices to show such objects are closed under
the action of $\eE_i$.  We induct on the length of $\Bi$.  If
$\Bi=\emptyset$, then $\eE_i\la=0$ and we are done.  In general, we have
that $\eE_i\eF_{i_n}\eF_{\Bi'}\la$ is a summand of $\eF_{i_n}\eE_i
\eF_{\Bi'}\la$ plus some number of copies of $\eF_{\Bi'}\la$, by the
relations in Figures \ref{pop-rels} and \ref{opp-cancel} (this is
discussed in more detail in \cite[\S 5.7]{LauSL21}).  Thus,
by induction, we are done.

Combining these results, we see that the statement of the theorem is
equivalent to the statement that in $\tU^-_i(\la)/J$, the morphism
space $\Hom_{\tU_i^-/J}(\eF_{\Bi},\eF_{\Bj})$ is isomorphic to $e_\Bi R^\la
e_{\Bj}$ with composition sent to multiplication.  We have a
multiplicative map $e_\Bi\tilde{R}^\la e_{\Bj}\to
\Hom_{\tU_i^-/J}(\eF_{\Bi},\eF_{\Bj})$, and this map sends the cyclotomic
ideal to the indicated subcategory, so it induces a map $e_\Bi R^\la
e_{\Bj}\to \Hom_{\tU_i^-/J}(\eF_{\Bi},\eF_{\Bj})$.  If an element of $R^\la$
is in the kernel of this map, its image in
$\Hom_{\tU_i^-}(\eF_{\Bi},\eF_{\Bj})$ is in $J$. Since $R^\la$
injects into $\Hom_{\tU_i^-}(\eF_{\Bi},\eF_{\Bj})$ by Theorem \ref{non-degenerate}, this element can be rewritten as a sum
of diagrams that factor through $A\eE_i\la$ for some 1-morphism $A$
plus elements of the cyclotomic ideal.  We can assume without loss of
generality that it is a sum of elements of the former form.

Said differently, this 2-morphism can be obtained by starting with a 2-morphism $a\colon\eF_{\Bi}\eF_i(\la+\al_i)\to \eF_{\Bj}\eF_i(\la+\al_i)$, and ``capping off'' the $\eF_i$. We rewrite $a$ in terms of Khovanov and Lauda's spanning set, where we choose reduced expressions for our permutations so that the left-most simple reflection only happens once.

``Capping off,'' we obtain an element where every diagram appearing
has either a clockwise bubble at the far left, or a
loop-de-loop turning leftward.  We can apply the relation of Figure
\ref{pop-rels} to see that it is a sum of elements in the cyclotomic
ideal plus diagrams with a clockwise bubble at the left.  By
the relation of Figure \ref{inv-rels}, every positive degree
clockwise bubble can be written as a  a polynomial in positive
degree counter-clockwise bubbles.  A positive degree counter-clockwise bubble must
carry at least $\la^i$  dots and thus lies in the cyclotomic ideal of
$\tilde{R}^\la$.  

This shows that $\tU_i^-$ acts on the category of projective modules
of $R^\la$ and clearly $\eF_i$ is sent to $\fF_i$.  Since
$\eE_i(\langle\mu,\al_i\rangle-d_i)$
(resp. $\eE_i(-\langle\mu,\al_i\rangle+d_i)$) is left (resp. right)
biadjoint to $\eF_i$ in $\tU_i^-$ (up to shift), $\eE_i$ is sent to
$\fE_i$ by the uniqueness of left adjoints.  This also shows that
$\fE_i(-\langle\mu,\al_i\rangle+d_i)$ is right adjoint to $\fF_i$.
\end{proof}

In particular, this shows that every inclusion $\tU^-\hookrightarrow\tU^-_j$ induces the same action of $\tU^-$ on $\cata^\la$.

Using these biadjunctions, we can interpret any picture of the type Khovanov and Lauda draw where all strands begin and end pointing downward as an element of the cyclotomic quotient.  We note that it is not immediately obvious that this assignment satisfies all of Cautis and Lauda's relations.

Still, this equips $R^\la$ with a map $\tau_\la:R^\la\to \K$ given by closing a diagram at the right (if top and bottom strands match) and interpreting this as an element of $R^\la(0)\cong \K$, as shown in Figure \ref{closing}.  The biadjunction implies that this functional makes $R^\la$ into a Frobenius algebra.

Recall that a {\bf Frobenius} structure on a $\K$-algebra $A$  is a
linear map
$\operatorname {tr}\colon A\to \K$ which kills no left ideal.

\begin{figure}[ht]
\begin{tikzpicture}[very thick]
\node (a) at (0,0)[draw,inner sep=20pt, label=above:{$\cdots$},label=below:{$\cdots$}] {d}; \node[scale=1.5] at (-1.4,0){$\lambda$};
\draw (a.55) to[out=90,in=180] (1.2,1.5) to[out=0,in=90] (1.8,.8) to[out=-90,in=90] (1.8,-.8) to[out=-90,in=0] (1.2,-1.5) to[out=180,in=-90] (a.-55);
\draw (a.125) to[out=90,in=180] (1.2,2) to[out=0,in=90] (3,.8) to[out=-90,in=90] (3,-.8) to[out=-90,in=0] (1.2,-2) to[out=180,in=-90] (a.-125);
\node[scale=1.4] at (2.4,0){$\cdots$};
  \end{tikzpicture}
\caption{Closing a diagram}
\label{closing}
\end{figure}

\begin{thm}\label{cyc-action}
  The assignment  $\eE_j\mapsto \fE_j, \eF_j\mapsto \fF_j$ gives a strict
  action of $\tU$ on $R^\la_\mu\mathsf{-pmod}$ and thus on
  $\cata^\la$. Any non-trivial linear combination of Khovanov and Lauda's spanning set acts non-trivially on some $\cata^\la$.  In particular, the functors $\fE_j$ and $\fF_j$ are biadjoint and $\tau_\la$ is a Frobenius structure on $R^\la$.
  
As a $U_q(\fg)$-representation, $K_0(R^\la)$ is
  naturally isomorphic to $V_\la^\Z$.
\end{thm}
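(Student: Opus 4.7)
My plan is to assemble the parabolic actions provided by Proposition \ref{quotient-is-cyc}, applied separately for each $i\in\Gamma$, into a single action of $\tU$ on $R^\la\mpmod$. Each such application produces a 2-functor $\tU_i^-\to\mathsf{Cat}$ sending $\eE_i\mapsto\fE_i$ and $\eF_j\mapsto\fF_j$, and yielding the biadjunction between $\fF_i$ and $\fE_i$ with the prescribed grading shift. Since $\fF_j$ is defined intrinsically by extension of scalars along $\nu_j$ and so is independent of the auxiliary choice of $i$, all of these actions restrict to the same action of the common subcategory $\tU^-\subset\tU_i^-$, and the individual parabolic actions are coherent.

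The main obstacle is upgrading this compatible family to an action of the full $\tU$: every relation of $\tU$ involving only a single color of $\eE$ is already supplied by some $\tU_i^-$, and the quiver Hecke relations live in $\tU^-$, so the only genuinely new relations to verify are the cross-color cancellations of Figure \ref{opp-cancel}. I will define the crossing $\fF_j\fE_i\to\fE_i\fF_j$ by composing the unit for $(\fE_i,\fF_i)$, the $\tU^-$-crossing, and the counit for $(\fE_i,\fF_i)$, and then check that the two compositions in Figure \ref{opp-cancel} are the scalars $t_{ij}$ and $t_{ji}$ respectively; this computation closely parallels the one at the end of the proof of Theorem \ref{non-degenerate}, with $\bLa_\mu$ replaced by $R^\la$. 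With the $\tU$-action established, non-degeneracy is then immediate: any nonzero combination of Khovanov--Lauda diagrams can be closed below by capping with $\eF_i$'s until all boundary strands point downward, yielding an element of the cyclotomic quotient which, for $\la$ sufficiently dominant, is nonzero by Theorem \ref{non-degenerate} together with Proposition \ref{quotient-is-cyc}. Biadjointness of $\fE_j$ and $\fF_j$ is then inherited from each $\tU_j^-$, and composing the biadjunction unit and counit closes a diagram as in Figure \ref{closing} and realizes $\tau_\la$ as the Frobenius trace, whose non-degeneracy on left ideals is equivalent to the biadjunction being honest.

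Finally, the $\tU$-action decategorifies to a $U_q^\Z(\fg)$-action on $K_0(R^\la)$, under which the class of the trivial $R^\la_\la$-module is a highest weight vector of weight $\la$; it generates $K_0(R^\la)$ since every projective $e(\Bi)R^\la$ is obtained by applying $\fF_j$'s to it. The cyclotomic relation $y_1^{\la^{i_1}}e(\Bi)=0$, translated through the biadjunction into the vanishing of certain positive-degree bubbles, forces $E_{i_1}^{\la^{i_1}+1}$ to annihilate the highest weight vector, so $K_0(R^\la)$ is an integrable highest weight quotient of $V_\la^\Z$. The reverse inequality is obtained by a graded character count in each weight space: the standard modules to be developed in Section \ref{sec:standard} produce enough linearly independent classes in $K_0(R^\la_\mu)$ to match $\dim_{\Z((q))}V_\la[\mu]$, yielding the isomorphism $K_0(R^\la)\cong V_\la^\Z$.
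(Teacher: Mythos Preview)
Your overall strategy of gluing the parabolic actions supplied by Proposition~\ref{quotient-is-cyc} is the same as the paper's, but you have misidentified which relation remains to be checked. The cross-color cancellations of Figure~\ref{opp-cancel} involve one upward $i$-strand and one downward $j$-strand; the mixed crossings appearing in them are built from $i$-colored cups and caps together with the downward crossing $\psi$, so these relations already live in $\tU_i^-$ and were verified in the proof of Theorem~\ref{non-degenerate}. The computation you propose ``closely parallels'' that one because it \emph{is} that one. What is genuinely missing is the cyclicity of $\psi\colon\eF_i\eF_j\to\eF_j\eF_i$ for $i\neq j$: the assertion that its double right dual equals $(t_{ij}/t_{ji})\cdot\psi$. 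Computing this double dual requires bending both strands and hence cups and caps of \emph{both} colors $i$ and $j$, so it lies in no single $\tU_k^-$. The paper reformulates this as the trace condition $\tau_\la(\psi\cdot a)=(t_{ji}/t_{ij})\,\tau_\la(a\cdot\psi)$ for a two-color crossing $\psi$, and proves it by induction on the number of strands via the case analysis depicted in Figure~\ref{cycl}.

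Two smaller points. Your non-degeneracy argument does not work as stated: converting to all-downward diagrams lands you in $R\otimes\bLa$, but positive-degree bubbles at the far left die in every $R^\la$ (this is exactly what Proposition~\ref{quotient-is-cyc} shows), so no choice of dominant $\la$ rescues an element with a nontrivial bubble factor. The paper instead reduces first to polynomials in bubbles and then inductively lowers the total bubble degree by nesting inside further bubbles until a nonzero scalar remains. For $K_0(R^\la)\cong V_\la^\Z$, your appeal to the standard modules of Section~\ref{sec:standard} is both unnecessary and circular in the case $\bla=(\la)$, where standards coincide with projectives; the paper simply notes that $K_0(R^\la)$ is a free $\Z[q,q^{-1}]$-module generated by a highest weight vector, that the categorification is integrable, and that $V_\la^\Z$ is the unique integrable quotient of the Verma module which is free over $\Z[q,q^{-1}]$.
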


We should note that this theorem has been independently proven by
Cautis and Lauda \cite[8.1]{CaLa} based on work of Kang and Kashiwara
\cite{KK}.

\begin{rem}
  This Frobenius trace can be easily adjusted to become symmetric. One
  fixes one reference sequence $\Bi_\mu$ for each weight $\mu$; for
  each other sequence $\Bi$, we pick a diagram connecting it to
  $\Bi_\mu$ and for each crossing with and consider the scalar $t(\Bi)$
  which is the product over all crossings in the diagram of
  $t_{ji}/t_{ij}$ where the NE/SW strand of the crossing is labeled
  with $i$ and the NW/SE strand is labeled $j$.  If we multiply the
  trace on $e(\Bi)R^\la e(\Bi)$ by $t(\Bi)$, the result will still be
  Frobenius and be cyclic.

The reader may sensibly ask why we use the trace above instead; it is
in large part so we may match the conventions of \cite{CaLa} and use
their results.  That said, their choice arises very naturally from a
coherent principle: that degree 0 bubbles should be 1.  Trying to
recover cyclicity in $\tU$ will definitely break this condition.
\end{rem}

\begin{proof}[Proof of Theorem \ref{cyc-action}]
We have already established that we have actions of the
categorification of $\mathfrak{sl}_2$ for each simple root and of
$\tU^\pm$, so any relation only involving these subcategories must be
satisfied.  In fact, we already know that any relation only involving
one $\eE_i$ is satisfied.  This leaves exactly one from Khovanov and
Lauda's relations: fixing the double duals of morphisms.  

This is actually equivalent to $\tr$ satisfying the condition that
$\tr(ab)=\tr(ba)$ if $a$ is a diagram only involving dots and
crossings in one color (which we already know from the action of
$\tU_-^i$) and $\tr(\psi\cdot a)=t_{ji}/t_{ij}\cdot
\tr(a\cdot \psi)$ if $\psi$ is crossing with the NE/SW strand labeled
with $i$ and the NW/SE strand labeled $j$, and the latter condition is somewhat simpler to prove (primarily as a matter of organizing induction).  We prove that $\tau_\la$ is symmetric by induction on the number of strands, noting that we already know that $\tau_\la(ab)=\tau_\la(ba)$ if $b$ is a diagram where all dots and crossing only occur in one color.  This establishes the base case of one strand.

We can always use relations in $a$ to assure that the strands at the far right at the top and bottom (if different) cross each other before any other strands.  Thus, if $b$ doesn't cross the rightmost strand, then we can collapse the loop formed when closing $ab$ by crushing the rightmost bubble in $a$. We thus can obtain a diagram $a'$ with fewer strands such that if $b'$ is $b$ with the rightmost strand removed, then $\tr(ab)=\tr(a'b')$ and $\tr(ba)=\tr(b'a')$. Thus, by induction, we have $\tr(ab)=\tr(ba)$.

This reduces us to the case where $b$ is a single crossing of the two
rightmost strands, which may assume are of a different color.  This
separates into 3 cases, grouped by how many the 2 rightmost terminals
at top are connected to  the the 2 rightmost terminals at the bottom;
this is either 0, 1, or 2.  Each of these individual cases is an easy
calculation, which we show in Figure \ref{cycl}. This establishes that
the correct duals hold, and thus that $\tU$ acts on $ R^\la_\mu\mpmod$.
\begin{figure}

\begin{tikzpicture}[xscale=1.1, yscale=.9]
\node at (-5,5.5){
\begin{tikzpicture}[scale=1.3]
\draw[very thick, postaction={decorate,decoration={markings,
    mark=at position .5 with {\arrow[scale=1.3]{<}}}}] (-.5,.1) to [in=-90,out=45]  (.5,1)  to[out=90, in=180] (1,1.5) to [out=0,in=90] (1.5,1) to[out=-90,in=90] node [left,midway]{$i$} (1.5,-.5) to [out=-90,in=0] (1,-1) to [out=180,in=-90] (.5,-.5) to [in=-90,out=90] (0,0)  to [in=-45,out=90] (-1,.9);

\draw[dashed,very thick] (-1,.9) --(-1.3,1.2);
\draw[dashed,very thick] (-.5,.9) --(-.8,1.2);

\draw[dashed,very thick] (-1,.1) --(-1.3,-.2);
\draw[dashed,very thick] (-.5,.1) --(-.8,-.2);

\draw[very thick, postaction={decorate,decoration={markings,
    mark=at position .5 with {\arrow[scale=1.3]{<}}}}] (-1,.1) to [in=-90,out=45]  (0,1)  to[out=90, in=180] (1,2) to [out=0,in=90] (2,1) to[out=-90,in=90] node [right,midway]{$j$} (2,-.5) to [out=-90,in=0] (1,-1.5) to [out=180,in=-90] (0,-.5)  to [in=-90,out=90] (.5,0)  to [in=-45,out=90] (-.5,.9) ;

\draw[dashed] (-.2,0) -- (.7,0);
\draw[dashed] (-.2,1) -- (.7,1);

\node (a) [circle,draw,inner sep=3pt] at  (.07,-.17) {};
\draw[thick,<-, dash pattern=on 1pt off 2pt on 4pt off 2pt] (a.-135) to[out=-140,in=140] (.8,-2) ;
\end{tikzpicture}

};
\node at (0,7){
\begin{tikzpicture}[scale=1.3]

\draw[very thick, postaction={decorate,decoration={markings,
    mark=at position .3 with {\arrow[scale=1.3]{<}}}}] (-1,-.4) to [in=-90,out=45](0,.5) to [in=-90,out=90] (.5,1)  to[out=90, in=180] (1,1.5) to [out=0,in=90] (1.5,1) to[out=-90,in=90] node [left,midway]{$j$} (1.5,-.5) to [out=-90,in=0] (1,-1) to [out=180,in=-90] (.5,-.5)  to [in=-45,out=90] (-.5,.4);

\draw[very thick, postaction={decorate,decoration={markings,
    mark=at position .8 with {\arrow[scale=1.3]{<}}}}] (-.5,-.4) to [in=-90,out=45] (.5,.5) to [in=-90,out=90] (0,1)  to[out=90, in=180] (1,2) to [out=0,in=90] (2,1) to[out=-90,in=90] node [right,midway]{$i$} (2,-.5) to [out=-90,in=0] (1,-1.5) to [out=180,in=-90] (0,-.5) to [in=-45,out=90] (-1,.4);

\draw[dashed,very thick] (-1,.4) --(-1.3,.7);
\draw[dashed,very thick] (-1,-.4) --(-1.3,-.7);

\draw[dashed,very thick] (-.5,.4) --(-.8,.7);

\draw[dashed,very thick] (-.5,-.4) --(-.8,-.7);

\draw[dashed] (-.2,-.5) -- (.7,-.5);
\draw[dashed] (-.2,.5) -- (.7,.5);
\node (a) [circle,draw,inner sep=3pt] at  (.07,.67) {};
\draw[thick,<-, dash pattern=on 1pt off 2pt on 4pt off 2pt] (a.135) to[out=140,in=-140] (.8,2.5) ;

\end{tikzpicture}
};
\node at (-5,-1.5){
\begin{tikzpicture}[scale=1.3]
\draw[very thick, postaction={decorate,decoration={markings,
    mark=at position .8 with {\arrow[scale=1.3]{<}}}}] (.5,-.5) to [in=-90,out=90] (0,0) to [in=-90,out=90] (.5,1)  to[out=90, in=180] (1,1.5) to [out=0,in=90] (1.5,1) to[out=-90,in=90] node [left,midway]{$i$} (1.5,-.5) to [out=-90,in=0] (1,-1) to [out=180,in=-90] (.5,-.5);

\draw[very thick, postaction={decorate,decoration={markings,
    mark=at position .8 with {\arrow[scale=1.3]{<}}}}] (-.5,.1) to [in=-90,out=45]  (0,1)  to[out=90, in=180] (1,2) to [out=0,in=90] (2,1) to[out=-90,in=90] node [right,midway]{$j$} (2,-.5) to [out=-90,in=0] (1,-1.5) to [out=180,in=-90] (0,-.5)  to [in=-90,out=90] (.5,0)  to [in=-45,out=90] (-.5,.9) ;

\draw[dashed] (-.2,0) -- (.7,0);
\draw[dashed] (-.2,1) -- (.7,1);

\node (a) [circle,draw,inner sep=3pt] at  (.07,-.17) {};
\draw[thick,<-, dash pattern=on 1pt off 2pt on 4pt off 2pt] (a.-135) to[out=-140,in=140] (.8,-2) ;

\draw[dashed,very thick] (-.5,.9) --(-.8,1.2);

\draw[dashed,very thick] (-.5,.1) --(-.8,-.2);

\end{tikzpicture}

};
\node at (0,0){
\begin{tikzpicture}[scale=1.3]

\draw[very thick, postaction={decorate,decoration={markings,
    mark=at position .3 with {\arrow[scale=1.3]{<}}}}] (-.5,-.4) to [in=-90,out=45](0,.5) to [in=-90,out=90] (.5,1)  to[out=90, in=180] (1,1.5) to [out=0,in=90] (1.5,1) to[out=-90,in=90] node [left,midway]{$j$} (1.5,-.5) to [out=-90,in=0] (1,-1) to [out=180,in=-90] (.5,-.5)  to [in=-45,out=90] (-.5,.4);

\draw[very thick, postaction={decorate,decoration={markings,
    mark=at position .8 with {\arrow[scale=1.3]{<}}}}] (0,-.5) to [in=-90,out=90] (.5,.5) to [in=-90,out=90] (0,1)  to[out=90, in=180] (1,2) to [out=0,in=90] (2,1) to[out=-90,in=90] node [right,midway]{$i$} (2,-.5) to [out=-90,in=0] (1,-1.5) to [out=180,in=-90] (0,-.5);

\draw[dashed,very thick] (-.5,.4) --(-.8,.7);

\draw[dashed,very thick] (-.5,-.4) --(-.8,-.7);

\draw[dashed] (-.2,-.5) -- (.7,-.5);
\draw[dashed] (-.2,.5) -- (.7,.5);
\node (a) [circle,draw,inner sep=3pt] at  (.07,.67) {};
\draw[thick,<-, dash pattern=on 1pt off 2pt on 4pt off 2pt] (a.135) to[out=140,in=-140] (.8,2.5) ;

\end{tikzpicture}
};
\node at (-5,-8){
\begin{tikzpicture}[scale=1.3]
\draw[very thick, postaction={decorate,decoration={markings,
    mark=at position .8 with {\arrow[scale=1.3]{<}}}}] (.5,-.5) to [in=-90,out=90] (0,0) to [in=-90,out=90] (.5,.5)to [in=-90,out=90] (.5,1)  to[out=90, in=180] (1,1.5) to [out=0,in=90] (1.5,1) to[out=-90,in=90] node [left,midway]{$i$} (1.5,-.5) to [out=-90,in=0] (1,-1) to [out=180,in=-90] (.5,-.5);

\draw[very thick, postaction={decorate,decoration={markings,
    mark=at position .8 with {\arrow[scale=1.3]{<}}}}] (0,-.5) to [in=-90,out=90] (.5,0) to [in=-90,out=90] (0,.5) to [in=-90,out=90] (0,1)  to[out=90, in=180] (1,2) to [out=0,in=90] (2,1) to[out=-90,in=90] node [right,midway]{$j$} (2,-.5) to [out=-90,in=0] (1,-1.5) to [out=180,in=-90] (0,-.5);

\draw[dashed] (-.2,0) -- (.7,0);
\draw[dashed] (-.2,.5) -- (.7,.5);

\node (a) [circle,draw,inner sep=3pt] at  (.07,-.17) {};
\draw[thick,<-, dash pattern=on 1pt off 2pt on 4pt off 2pt] (a.-135) to[out=-140,in=140] (.8,-2) ;
\end{tikzpicture}

};
\node at (0,-7.3){
\begin{tikzpicture}[scale=1.3]
\draw[very thick, postaction={decorate,decoration={markings,
    mark=at position .8 with {\arrow[scale=1.3]{<}}}}] (.5,-.5) to [in=-90,out=90] (.5,0) to [in=-90,out=90] (0,.5) to [in=-90,out=90] (.5,1)  to[out=90, in=180] (1,1.5) to [out=0,in=90] (1.5,1) to[out=-90,in=90] node [left,midway]{$j$} (1.5,-.5) to [out=-90,in=0] (1,-1) to [out=180,in=-90] (.5,-.5);

\draw[very thick, postaction={decorate,decoration={markings,
    mark=at position .8 with {\arrow[scale=1.3]{<}}}}] (0,-.5) to [in=-90,out=90] (0,0) to [in=-90,out=90] (.5,.5) to [in=-90,out=90] (0,1)  to[out=90, in=180] (1,2) to [out=0,in=90] (2,1) to[out=-90,in=90] node [right,midway]{$i$} (2,-.5) to [out=-90,in=0] (1,-1.5) to [out=180,in=-90] (0,-.5);

\draw[dashed] (-.2,0) -- (.7,0);
\draw[dashed] (-.2,.5) -- (.7,.5);
\node (a) [circle,draw,inner sep=3pt] at  (.07,.67) {};
\draw[thick,<-, dash pattern=on 1pt off 2pt on 4pt off 2pt] (a.135) to[out=140,in=-140] (.8,2.5) ;

\end{tikzpicture}
};
\end{tikzpicture}

\caption{Establishing the double dual of $\si_{ij}$.  In each case,
  the proof of double dual is to ``pull'' the indicated strand in the direction of the thin dashed line.}
\label{cycl}
\end{figure}

We know that the functors $\fE_j$ and $\fF_j$ extend to all
modules as do the natural transformations defined by 2-morphisms in
$\tU$.  Since every object in $\cata^\la$ has a presentation by
projectives, it is enough to check relations between natural
transformations on the subcategory of projectives.  Thus, these
functors also define an action of $\tU$ on $\cata^\la$.

To show that any non-trivial linear combination of Khovanov and Lauda's spanning set acts non-trivially, it is enough to show that any polynomial in the dots acts non-trivially for some $\la$ (since no element of $R^\la$ kills the polynomial representation).  This, in turn, reduces to the case of a polynomial in positive degree bubbles (we can simply multiply our polynomial in dots by a monomial to assure that each bubble obtain upon closing is positive degree).

Consider the highest degree monomial in the bubbles, and let $\al_i$ be a simple root such that a positive degree bubble colored with $\al_i$ appears in this term.  Let $j$ be the sum of the degrees of the $i$-colored bubbles in this term.  Let $k=\max(1,1-\mu^i)$, and surround this polynomial in bubbles with $k$ bubbles colored with $i$, with the outer one carrying $\mu^i-1$ dots.  This is a non-zero polynomial in bubbles with lower degree.  By induction, we get a non-zero polynomial of 0 degree, i.e. a scalar map $\id_{\la'}\to\id_{\la'}$ for some weight $\la'$.  Thus, we need only choose $\la$ such that the $\la'$-weight space of $\cata^\la$ is non-trivial.

Finally, we must check that $K_0(R^\la)\cong V_\la$. For this, we need only
note that 
\begin{itemize}
\item $K_0(R^\la)$ is generated by a single highest weight vector of weight $\la$.  Thus it is a quotient of the Verma module of highest weight $\la$.
\item On the other hand, $\cata^\la$ is an integrable categorification in the
  sense of Rouquier: acting by $\fF_i$ or $\fE_i$ a sufficiently large
  number of times kills any $R^\la$-module, so $K_0(R^\la)$ is integrable.
\item $V_\la^\Z$ is the only integrable quotient of the the Verma module which is free as a $\Z[q,q^{-1}]$ module.\qedhere
\end{itemize}
\end{proof}

Since no element of $\dot{U}$ kills all finite dimensional
representations, an immediate consequence of this is that
\begin{cor}
  The map $\gamma\colon \dot{U}\to K(\tU)$ defined by Khovanov and
  Lauda in \cite[\S 3.6]{KLIII} is an isomorphism.
\end{cor}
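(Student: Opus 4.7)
The plan is to leverage the actions constructed in Theorem \ref{cyc-action}, which are already known (after that theorem) to be jointly faithful on 2-morphisms; the corollary then reduces to a completely formal statement about $\dot U$ and its finite-dimensional representations. Surjectivity of $\gamma$ is already established unconditionally by Khovanov and Lauda in \cite[\S 3]{KLIII}, so the only thing to prove is injectivity.

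For injectivity, I would argue as follows. Suppose $x \in \dot U$ lies in $\ker(\gamma)$, so that $[\gamma(x)] = 0$ in the Grothendieck group of the idempotent completion of $\tU$. By Theorem \ref{cyc-action}, for every dominant integral weight $\la$ the 2-category $\tU$ acts on $\cata^\la$, and the induced action of $K(\tU)$ on $K_0(R^\la)$ matches the $\dot U$-module structure on $V_\la^\Z$ under the isomorphism $K_0(R^\la)\cong V_\la^\Z$. Consequently, $x$ must annihilate $V_\la^\Z$ for every dominant $\la$. Since $\bigoplus_\la V_\la$ is a faithful $\dot U$-module (no nonzero element of the idempotented integrable form annihilates every integrable highest weight representation), we conclude $x=0$.

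The only minor subtlety is matching conventions: one needs to verify that the ring structure on $K(\tU)$ coming from composition of 1-morphisms matches the multiplication on $\dot U$ under $\gamma$, and that the $\Z[q,q^{-1}]$-module structure (by grading shift on the categorified side, by the formal variable on the algebraic side) is compatible. Both points are built into the construction of $\gamma$ by Khovanov and Lauda and are preserved under any 2-representation, so there is no real obstacle. Thus the corollary is genuinely immediate from Theorem \ref{cyc-action} together with Khovanov and Lauda's surjectivity result.
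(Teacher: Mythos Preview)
Your proposal is correct and follows essentially the same approach as the paper: the paper's entire argument is the single sentence preceding the corollary, ``Since no element of $\dot{U}$ kills all finite dimensional representations, an immediate consequence of this is that\ldots'', which is exactly your injectivity argument via the faithfulness of $\bigoplus_\la V_\la$ combined with Khovanov--Lauda's surjectivity.
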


Recall that the {\bf $q$-Shapovalov form}  $\langle-,-\rangle$ is the
unique bilinear form on $V_\la^\Z$ such that \begin{itemize}
\item $\langle v_h,v_h\rangle =1$ for a fixed highest weight vector $v_h$.
\item $\langle u\cdot v,v'\rangle=\langle v,\tau(u)\cdot v'\rangle$ for any $v,v'\in V_\la$ and $u\in U_q(\fg)$, where $\tau$ is the $q$-antilinear antiautomorphism defined by $$\tau(E_i)=q_i^{-1}\tilde{K}_{-i}F_i \qquad \tau(F_i)=q_i^{-1}\tilde{K}_{i}E_i \qquad \tau(K_\mu)=K_{-\mu}$$

\item $f\langle v,v'\rangle=\langle \bar f v,v'\rangle=\langle v,f v'\rangle$ for any  $v,v'\in V_\la^\Z$ and $f\in\Z[q,q^{-1}]$. 
\end{itemize}
\begin{cor}\label{Euler-form}
The isomorphism $K_0(R^\la)\cong V^\Z_\la$ intertwines the Euler form $$\langle[P_1],[P_2]\rangle=\dim_q\Hom(P_1,P_2)$$ 
with the $q$-Shapovalov form described above. 
In particular, \[\displaystyle\dim_q e(\Bi) R^\la e(\Bj)=\langle F_\Bi v_h,F_\Bj v_h\rangle\]
\end{cor}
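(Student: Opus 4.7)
The plan is to verify that the Euler form on $K_0(R^\la)$, transported to $V_\la^\Z$ via the isomorphism of Theorem \ref{cyc-action}, satisfies the three defining properties of the $q$-Shapovalov form; uniqueness then forces equality, and the displayed formula follows by a routine rewriting. For the normalization, observe that $R^\la_\la = e(\emptyset) R^\la e(\emptyset) \cong \K$ is concentrated in degree zero (no strands means no dots, and the cyclotomic generators only see sequences of length at least one), so the projective $P_h = R^\la e(\emptyset)$ is identified with $v_h$ and satisfies $\dim_q \End(P_h) = 1$.

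For the $\tau$-adjunction $\langle u\cdot v, v'\rangle = \langle v, \tau(u)\cdot v'\rangle$, I check on Chevalley generators. The case $u = K_\mu$ is immediate: $K_\mu$ acts on $K_0(R^\la_\nu)$ as a weight-dependent power of $q$, and $\tau(K_\mu) = K_{-\mu}$ inverts that power. For $u = F_j$, Theorem \ref{cyc-action} provides biadjointness of $\fF_j$ and $\fE_j$; the right adjoint is shifted relative to the left by the $\langle \mu, \al_j\rangle - d_j$ built into the definition of $\fE_j$. Tensor-hom reciprocity plus this shift yield
$$\dim_q \Hom(\fF_j P, Q) = q^{\langle \mu, \al_j \rangle - d_j}\,\dim_q \Hom(P, \fE_j Q)$$
for $Q$ of weight $\mu$; the exponent is exactly the scalar produced by $q_j^{-1}\tilde K_j$ acting on a weight-$\mu$ vector, so this reads $\langle F_j [P], [Q]\rangle = \langle [P], \tau(F_j)[Q]\rangle$. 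The case $u = E_j$ is dual.

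The $q$-sesquilinearity is an immediate consequence of how $\dim_q \Hom(-,-)$ transforms under grading shifts of its two arguments, combined with the convention $[A(i)] = q^i[A]$. With all three properties verified, Euler and Shapovalov forms coincide. For the displayed formula, Proposition \ref{quotient-is-cyc} identifies $R^\la e(\Bi) = \fF_{i_n}\cdots \fF_{i_1} P_h$, whose class is $F_\Bi v_h$, and the standard equality $\Hom_{R^\la}(R^\la e(\Bi), R^\la e(\Bj)) = e(\Bi) R^\la e(\Bj)$ then reads $\langle F_\Bi v_h, F_\Bj v_h\rangle = \dim_q e(\Bi) R^\la e(\Bj)$.

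The one real obstacle is bookkeeping in the $\tau$-adjunction step: one must confirm that the explicit shift $\langle \mu, \al_j\rangle - d_j$ in $\fE_j$ combines with the biadjunction units and counits to produce exactly the $q$-factor prescribed by $\tau$, and one must remember that the paper's convention $[A(i)] = q^i[A]$ is opposite to Khovanov-Lauda's, so that signs of grading shifts must be flipped relative to the formulas in \cite{KLIII}.
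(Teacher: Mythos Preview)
Your proposal is correct and is precisely the argument the paper intends: the corollary is stated without proof in the paper, and the standard way to establish it is exactly what you do---verify the three characterizing properties of the $q$-Shapovalov form for the Euler form (normalization at the highest weight, $\tau$-adjunction via the graded biadjunction of $\fF_j$ and $\fE_j$ established in Theorem \ref{cyc-action}, and $q$-sesquilinearity from the grading-shift convention), then invoke uniqueness. Your caveat about the bookkeeping of grading shifts is well placed but not a gap: the shifts in the definition of $\fE_j$ and in the biadjunction of Proposition \ref{quotient-is-cyc} are arranged precisely so that the adjunction reproduces $\tau$, and the displayed formula is then the specialization $P_1 = R^\la e(\Bi)$, $P_2 = R^\la e(\Bj)$.
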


We let $\langle-,-\rangle_1$ denote the specialization of this form at
$q=1$, which is thus the ungraded Euler form.

\subsection{Universal categorifications}
\label{sec:univ-quant}

In \cite[\S 5.1.2]{Rou2KM}, Rouquier discusses universal categorifications
of simple integrable modules.  Of course, to speak of universality, we
must have a notion of morphisms between categorical modules.  Let
$\aleph_1,\aleph_2\colon \tU\to \mathsf{Cat}$ be two strict
2-functors.
\begin{defn}
  A {\bf strongly equivariant} functor $\be$ is a collection of
  functors $\be(\la)\colon \aleph_1(\la) \to \aleph_2(\la)$ together
  with natural isomorphisms of functors $c_u\colon \be\circ\aleph_1(u)\cong
  \aleph_2(u)\circ \be$ for every 1-morphism $u\in \tU$ such that 
 \[c_v\circ (\id_{\be}\otimes\,  \aleph_1(\al))= (\aleph_2(\al) \otimes
 \id_{\be}) \circ c_u\] for every 2-morphism $\al\colon u\to v$ in
 $\tU$. (Here we use $\otimes$ for horizontal composition, and $\circ$
 for vertical composition of 2-morphisms). 
\end{defn}

In \cite[\S 5.1.2]{Rou2KM}, it is proven that there is a unique
$\tU$-module category $\check\cata^\la$ (he uses the notation
$\mathcal{L}(\la)$) with generating highest weight object $P$ with the
universal property that
\begin{itemize}
\item [($*$)] for any additive, idempotent-complete $\tU$-module category $\mathcal{C}$ and any
  object $C\in \operatorname{Ob}\mathcal{C}_\la$ with $\fE_i(C)=0$ for
  all $i$, there is a unique (up to unique isomorphism) strongly
  equivariant
  functor $\phi_C\colon \check\cata^\la \to \mathcal{C}$ sending $P_\emptyset$
  to $C$.
\end{itemize}
On purely formal grounds, such a category must exist for any version
of the 2-category categorifying $U_q(\fg)$; thus we will study the
corresponding module for the 2-category $\tU$ we have been using,
which is different from Rouquier's.  

In any case, this is a higher categorical
analogue of the universal property of a Verma module, but somewhat
surprisingly, $\check\cata^\la$ does {\it not} categorify a Verma
module, but rather an integrable module.  We recall that
$\End_{\tU}(\oplus_{\Bi}\eF_{\Bi}\la)\cong R\otimes\bLa$ where
$\bLa\cong \left(\otimes_{j\in\Gamma}\La(\bp_j)\right)$ and $\bp_j$ is an
infinite alphabet attached to each node, with the clockwise bubble of
degree $2n$ corresponding to $(-1)^ne_{n}(\bp_j)$, and the counterclockwise
one of degree $2n$ corresponding to $h_{n}(\bp_i)$.

\begin{defn}
  Let $\check{R}^\la$ be the quotient of $\End_{\tU}(\oplus_{\Bi}\eF_{\Bi}\la)$
  by the relations
$$\begin{tikzpicture}

\node at (12.1,0){$0$};

\node at (11.4,0){$=$};

\node at (-.9,0){$-$};
\node at (0,0){
\begin{tikzpicture}[baseline=-2.75pt, yscale=1.2, xscale=-1.2]
\node [fill=white,circle,inner sep=2pt,label={[scale=.7,white]left:{$-\la^j$}},above=4.5pt] at (1.3,.4) {};
\draw[very thick,postaction={decorate,decoration={markings,
    mark=at position .5 with {\arrow{<}}}}]    
(1,-.5) to[out=90,in=180] node[at start,below]{$j$} 
(1.5,.2) to[out=0,in=90] (1.75,0) to[out=-90,in=0] (1.5, -.2)
to[out=left,in=-90] node[at end,above]{$j$}  (1,.5);
\end{tikzpicture}};
\node at (1,0){$\dots$};
\node at (1.7,0){$=$};
\node at (2,0){
\begin{tikzpicture}[baseline=-2.75pt, yscale=1.2, xscale=-1.2]
\node [fill=white,circle,inner sep=2pt,label={[scale=.7,white]left:{$-\la^j$}},above=4.5pt] at (1.3,.4) {};

\draw[very thick,postaction={decorate,decoration={markings,
    mark=at position .65 with {\arrow{<}}}}]    (1,-.5) -- (1,.5) node[pos=.3,fill,circle,inner sep=2pt,label={[scale=.7]left:{$\la^j$}}]{} node[at end,above]{$j$} node[at start,below]{$j$} ;
\end{tikzpicture}};
\node at (3.1,0){$\dots$};
\node at (3.75,0){$+$};
\node at (5,0){
\begin{tikzpicture}[baseline=-2.75pt, yscale=1.2, xscale=-1.2]
\draw[very thick,postaction={decorate,decoration={markings,
    mark=at position .65 with {\arrow{<}}}}]    (1,-.5) -- (1,.5) node[pos=.3,fill,circle,inner sep=2pt,label={[scale=.7]left:{$\la^j-1$}}]{} node[at end,above]{$j$} node[at start,below]{$j$} ;
\draw[postaction={decorate,decoration={markings,
    mark=at position .5 with {\arrow[scale=1.3]{>}}}},very thick] (1.7,.4) circle (6pt);
\node [fill,circle,inner sep=2pt,label={[scale=.7]45:{$-\la^j$}},above=4.5pt] at (1.7,.4) {};
\end{tikzpicture}};
\node at (6,0){$\dots$};
\node at (6.6,0){$+$};
\node at (7.4,0){$\dots$};
\node at (8.2,0){$+$};
\excise{\node at (4,-2){
\begin{tikzpicture}[baseline=-2.75pt, yscale=1.2, xscale=-1.2]
\draw[very thick,postaction={decorate,decoration={markings,
    mark=at position .65 with {\arrow{<}}}}]    (1,-.5) -- (1,.5) node[pos=.3,fill,circle,inner sep=2pt]{} ;
\draw[postaction={decorate,decoration={markings,
    mark=at position .5 with {\arrow[scale=1.3]{>}}}},very thick] (1.8,.4) circle (6pt);
\node [fill,circle,inner sep=2pt,label={[scale=.7]right:{$-2$}},above=4.5pt] at (1.8,.4) {};
\end{tikzpicture}};

\node at (5.5,-2){$+$};}
\node at (9.4,0){
\begin{tikzpicture}[baseline=-2.75pt, yscale=1.2, xscale=-1.2]
\draw[very thick,postaction={decorate,decoration={markings,
    mark=at position .65 with {\arrow{<}}}}]    (1,-.5) -- (1,.5)  node[at end,above]{$j$} node[at start,below]{$j$} ;
\draw[postaction={decorate,decoration={markings,
    mark=at position .5 with {\arrow[scale=1.3]{>}}}},very thick] (1.8,.4) circle (6pt);
\node [fill,circle,inner sep=2pt,label={[scale=.7]45:{$-1$}},above=4.5pt] at (1.8,.4) {};
\end{tikzpicture}};
\node at (10.7,0){$\dots$};
\end{tikzpicture}$$
$$\tikz [baseline=-2pt, yscale=2, xscale=-2]{\draw[postaction={decorate,decoration={markings,
    mark=at position .5 with {\arrow[scale=1.3]{>}}}},very thick] (0,0) circle (6pt);
\node [fill,circle,inner
sep=2pt,label={[scale=.7]45:{$n$}},above=8.2pt] at (0,0)
{};\node at (-.6,0){$\dots$};}=0\qquad (n\geq 0)$$
where in both pictures, the ellipses indicate that the portion of the
diagram shown is at the far left.  More algebraically, these relations
can be written in the form
  \begin{align*}
    e(\Bi)(y^{\la^{i_i}}_1-e_1(\bp_{i_1})y^{\la_{i_1}-1}_1+\cdots + (-1)^{\la^{i_1}}e_{\la^{i_1}}(\bp_{i_1}))&=0\\
    e_{n}(\bp_j) &=0 \qquad (n> \la^i)
  \end{align*}
\end{defn}

Note that if we specialize $e_n(\bp_j)=0$ for every $n>0$, then we
recover the usual cyclotomic quotient $R^\la$.

If we extend scalars to polynomials in the $p_{*,*}$ and form the
algebra $\check{R}^\la\otimes_{\bLa}\K[p_{1,1},\dots,]$ then we can
rewrite these equations as
  \begin{align*}
    e(\Bi)(y_1-p_{i_1,1})(y_1-p_{i_1,2})\cdots (y_1-p_{i_1,\la^{i_1}})&=0\\
    p_{j,n} &=0 \qquad (n> \la^j)
  \end{align*}
In terms of the geometry of quiver varieties, $\check{R}^\la$ arises from considering equivariant sheaves for
the action of the group $\prod \operatorname{GL}(W_i)$, and its
extension to polynomials from equivariant sheaves for a maximal torus
of this group. 

The following is an analogue of Proposition \ref{quotient-is-cyc}; its
proof is so similar that we leave it as an exercise to the reader.
\begin{prop}\label{quotient-is-cyc-2}
  Let $J'$ be the set of all 2-morphisms in $\tU$ factoring through a
  1-morphism of the form $u\eE_i\colon\la\to \nu$ for all $u\colon\la+\al_i\to \nu$.  The idempotent
  completion of the quotient $\tU(\la)/J'$ is equivalent to the
  category $\check{R}^\la\mpmod$.
\end{prop}

\begin{cor}
   For any additive,  idempotent-complete $\tU$-module category $\mathcal{C}$ and any
  object $C\in \operatorname{Ob}\mathcal{C}_\la$ with $\fE_i(C)=0$ for
  all $i$, there is a unique strongly equivariant functor (up to unique isomorphism)
  $\phi_C\colon \check R^\la\mpmod \to \mathcal{C}$ sending $P_\emptyset$
  to $C$.
\end{cor}
\begin{proof}
For any object $C$, there is a unique strongly equivariant functor $\tU(\la)\to \mathcal{C}$ sending $\id_\la\mapsto C$.  We wish to
show that this factors through the functor from $\tU(\la)\to
\check{R}^\la\mpmod$. By Proposition \ref{quotient-is-cyc-2}, it
suffices to check that this map kills any 2-morphism factoring through
$u\eE_i \id_\la$.  Indeed, this is sent to $u\fE_i(C)=0$, so we kill
the required 2-morphisms.
\end{proof}

These algebras are quite interesting; though they are infinite
dimensional (unlike $R^\la$), they seem to have
finite global dimension (unlike $R^\la$).  We will explore these
algebras and their tensor product analogues in future work.

\section{The tensor product algebras}
\label{sec:KL}
\setcounter{equation}{0}

\subsection{Definition and basic properties}
We now proceed to the algebraic construction mentioned in the
introduction.  This is structured around certain algebras which are
pictorial in definition, and similar in flavor to the algebras $R^\la$ we have already defined.

The generators of our algebra are pictures in $\R^2$ consisting of red and black oriented embedded smooth curves decorated with a number (possibly 0) of dots such that:
\begin{itemize}
\item each curve begins on the line $y=0$ and ends on the line $y=1$
\item each curve is never tangent to a horizontal line
\item locally around each point, our diagram is either a single line or one of the pictures:
\begin{equation*}
\begin{tikzpicture}[scale=.6]
  \draw[very thick](-4,0) +(-1,-1) -- +(1,1);
  \draw[very thick](-4,0) +(1,-1) -- +(-1,1);


  \draw[very thick](4,0) +(-1,-1) -- +(1,1);
  \draw[wei, very thick](4,0) +(1,-1) -- +(-1,1);
\end{tikzpicture}
\end{equation*}
In particular, red lines are never allowed to cross, and no pair of lines are allowed to meet the lines $y=0$ or $y=1$ at the same point.
\end{itemize}
We will only ever be interested in these pictures up to isotopy preserving the conditions above.

Consider the algebra $\alg$ over $\K$ whose
generators are pictures as above, with each black line labeled by a
simple root of $\fg$, and each red line labeled with a dominant
weight.  Multiplication is given by the stacking of diagrams if the
pattern of red and black lines with their labels can be isotoped to
match up at $y=1$ in the first diagram and $y=0$ in the second and is
defined to be 0 otherwise.  Of course, this stacking must be followed
by smoothing any kinks at the joins of the lines (which is unique up
to isotopy) and vertical scaling to match the ends up with the correct
horizontal lines.
By convention the product $ab$ means stacking the diagram $b$ on top
of the diagram $a$.

The black strands satisfy the quiver Hecke relations from Figure \ref{quiver-hecke}, which again we apply as local relations (i.e.\ any time a small portion of a larger diagram matches one side of the relation, we equate it to the diagram with the small portion changed to match the other side of the relation).

We must also include new relations involving red lines which are:
\begin{itemize}
\item  All black crossings and dots can pass through red lines, with a correction term similar to Khovanov and Lauda's (for the latter two relations, we also include their mirror images):
  \begin{equation*}
    \begin{tikzpicture}[very thick]
      \draw (-3,0)  +(1,-1) -- +(-1,1) node[at start,below]{$i$};
      \draw (-3,0) +(-1,-1) -- +(1,1)node [at start,below]{$j$};
      \draw[wei] (-3,0)  +(0,-1) .. controls +(-1,0) ..  +(0,1);
      \node at (-1,0) {=};
      \draw (1,0)  +(1,-1) -- +(-1,1) node[at start,below]{$i$};
      \draw (1,0) +(-1,-1) -- +(1,1) node [at start,below]{$j$};
      \draw[wei] (1,0) +(0,-1) .. controls +(1,0) ..  +(0,1);   
\node at (2.8,0) {$+ $};
      \draw (6.5,0)  +(1,-1) -- +(1,1) node[midway,circle,fill,inner sep=2.5pt,label=right:{$a$}]{} node[at start,below]{$i$};
      \draw (6.5,0) +(-1,-1) -- +(-1,1) node[midway,circle,fill,inner sep=2.5pt,label=left:{$b$}]{} node [at start,below]{$j$};
      \draw[wei] (6.5,0) +(0,-1) -- +(0,1);
\node at (3.8,-.2){$\displaystyle \sum_{a+b+1=\la^i} \delta_{i,j} $}  ;
 \end{tikzpicture}
  \end{equation*}
\begin{equation}\label{dumb}
    \begin{tikzpicture}[very thick,baseline=2.85cm]
      \draw[wei] (-3,3)  +(1,-1) -- +(-1,1);
      \draw (-3,3)  +(0,-1) .. controls +(-1,0) ..  +(0,1);
      \draw (-3,3) +(-1,-1) -- +(1,1);
      \node at (-1,3) {=};
      \draw[wei] (1,3)  +(1,-1) -- +(-1,1);
  \draw (1,3)  +(0,-1) .. controls +(1,0) ..  +(0,1);
      \draw (1,3) +(-1,-1) -- +(1,1);    \end{tikzpicture}
  \end{equation}
\begin{equation*}
    \begin{tikzpicture}[very thick]
  \draw(-3,6) +(-1,-1) -- +(1,1);
  \draw[wei](-3,6) +(1,-1) -- +(-1,1);
\fill (-3.5,5.5) circle (3pt);
\node at (-1,6) {=};
 \draw(1,6) +(-1,-1) -- +(1,1);
  \draw[wei](1,6) +(1,-1) -- +(-1,1);
\fill (1.5,6.5) circle (3pt);
    \end{tikzpicture}
  \end{equation*}
\item The ``cost'' of a separating a red and a black line is adding $\la^i=\al_i^\vee(\la)$ dots to the black strand.
  \begin{equation}\label{cost}
  \begin{tikzpicture}[very thick,baseline=1.6cm]
    \draw (-2.8,0)  +(0,-1) .. controls +(1.6,0) ..  +(0,1) node[below,at start]{$i$};
       \draw[wei] (-1.2,0)  +(0,-1) .. controls +(-1.6,0) ..  +(0,1) node[below,at start]{$\la$};
           \node at (-.3,0) {=};
    \draw[wei] (2.8,0)  +(0,-1) -- +(0,1) node[below,at start]{$\la$};
       \draw (1.2,0)  +(0,-1) -- +(0,1) node[below,at start]{$i$};
       \fill (1.2,0) circle (3pt) node[left=3pt]{$\la^i$};
          \draw[wei] (-2.8,3)  +(0,-1) .. controls +(1.6,0) ..  +(0,1) node[below,at start]{$\la$};
  \draw (-1.2,3)  +(0,-1) .. controls +(-1.6,0) ..  +(0,1) node[below,at start]{$i$};
           \node at (-.3,3) {=};
    \draw (2.8,3)  +(0,-1) -- +(0,1) node[below,at start]{$i$};
       \draw[wei] (1.2,3)  +(0,-1) -- +(0,1) node[below,at start]{$\la$};
       \fill (2.8,3) circle (3pt) node[right=3pt]{$\la^i$};
  \end{tikzpicture}
\end{equation}
\item If at any point in the diagram any black line is to the left of all reds (i.e., there is a value $a$ such that the left-most intersection of $y=a$ with a strand is with a black strand), then the diagram is 0.  We will refer to such a strand as {\bf violating}.
\end{itemize}

We also let $\tilde \alg$ denote the algebra without the last relation
above.  While ${\alg}$ is the algebra of primary importance for us,
$\tilde \alg$ will be of great technical utility, since we can
construct a basis for it, whereas for $\alg$, this seems to be quite
out of reach.  Furthermore, the algebra $\tilde{T}$ has a more simple
geometric description, as we discuss in \cite[\S
4]{WebwKLR}.

Following Brundan and Kleshchev, we will sometimes use $y_i$ to
represent multiplication by a dot on the $i$th black strand, and $\psi_i$ to denote the crossing of the $i$th and $i+1$st black strands and $e(\Bi)$ to denote the sum of
all pictures where there are no crossings or dots, and the black
strands are labeled with $\Bi=(i_1,\dots,i_n)$ in that order.
\begin{figure}[tb]
\begin{tikzpicture}
\draw[wei] (0,0) to [in=-90,out=90] (1,2);
\draw[very thick] (1,0) to [in=-90,out=90] node (a) [pos=.9]{} (2,2) ;
\draw[very thick] (2,0) to [in=-90,out=90] node (b) [pos=.1]{} (0,2) ;
\node (c) at  (5,1.8){ a non-violating strand};
\node (d) at  (5,0.2){ a violating strand};
\draw[->] (c) -- (a);
\draw[->] (d) -- (b);
\end{tikzpicture}
\caption{An example of a violating and non-violating strand}
\label{fig:violating}
\end{figure}

\subsection*{Grading} This algebra is graded with degrees given by 
\begin{itemize}
\item a black/black crossing: $-\langle\al_i,\al_j\rangle$,
\item a black dot:
$\langle\al_i,\al_i\rangle=2d_i$
\item a red/black crossing:
$\langle\al_i,\la\rangle=d_i\la^i$.
\end{itemize}

This algebra is endowed with a natural anti-automorphism $a\mapsto
\dot a$ given by reflecting diagrams in the horizontal axis.  If $M$ is
a right module over this algebra, we let $\dot M$ be the left module
given by twisting the action by this anti-automorphism.
\begin{defn}
For a  finite-dimensional right module $M$, we define the {\bf dual module} by $M^\star=\dot M^*$, where $(\cdot)^*$ denotes usual vector space duality interchanging left and right modules.  
\end{defn} 
This is a right module since both vector space dual and the anti-automorphism interchange left and right modules.

\begin{defn}
  For a sequence of weights $\bla=(\la_1,\dots,\la_\ell)$, we let
  $\alg^\bla$ be the subalgebra of $\alg$ where
  the red lines are labeled, in order, with the elements of $\bla$.
  We let $\cata^\bla=\alg^\bla\modu$ be the category of finite
  dimensional representations of $\alg^\bla$ graded by $\Z$.

  We let $\alg^\bla_\al$ for $\al\in \wela(\fg)$ be the subalgebra of
  $\alg^\bla$ where the sum of the roots associated to the black strands
  is $\sum_i\la_i-\al$.
\end{defn}
We also let $\tilde{\alg}^\bla$ denote the corresponding subalgebra of
$\tilde{\alg}$, and $K^\bla$ denote the kernel of the natural map
$\tilde{\alg}^\bla\to \alg^\bla$.  By definition, $K^\bla$ is the span of
the diagrams in $\tilde{\alg}^\bla$ with a violating strand, since these
elements are generators of the kernel and their span is closed under
left and right multiplication.

Consider a sequence of simple roots $\Bi=(i_1,\dots, i_n)$, and a weakly
increasing map \linebreak $\kappa\colon [1,\ell]\to [0,n]$. 

We can define an idempotent  $e(\Bi,\kappa)$ as the crossingless diagram where the strands are labeled by the roots in the order given by
$\Bi$, with the $j$th red line immediately right of the $\kappa(j)$th
black line, except that if $\kappa(j)$'s agree, the original order of
red lines is preserved. By
convention, if $\kappa(i)=0$, then the $i$th red strand is left of all
black strands.  Note that if $e({\Bi},\kappa)$ is not trivial, we must
have $\kappa(1)=0$.
\begin{defn}
We consider the projective
modules $P_{\Bi}^\kappa=e(\Bi,\kappa)\alg^\bla$
and  $\tilde P^\kappa_\Bi=e(\Bi,\kappa)\tilde \alg^\bla$ and let $K^\kappa_\Bi$ be the kernel of the natural map $\tilde P^\kappa_\Bi\to P^\kappa_\Bi$.
\end{defn} 
Note that the kernel ${K}^\bla$ can also be described as the span of the elements that factor through $\tilde{P}_{\Bi}^{\kappa}$ where $\kappa(1)\neq 0$, that is, the trace of these projectives.  In categorical terms, the projective modules over $\alg^\bla$ are the quotient of the category of projective modules over $\tilde{\alg}^\bla$ by this collection of projectives.

We can
generalize this notion a bit by allowing multiplicities $\vartheta_j$;
we associate a projective to the sequence
$(i_1^{(\vartheta_1)},\dots,i_n^{(\vartheta_n)})$ which is a submodule
of the projective for the sequence where $i_j^{(\vartheta_j)}$ has
been expanded to $\vartheta_j$ instances of $i_j$.  This is the
projective given by multiplying each block of strands in the expanded
projective on the bottom by the idempotent denoted $e_{\vartheta_j}$
in \cite[\S 2]{KLII}, which we illustrate in Figure \ref{fig:e4}.

\begin{figure}[ht]
  \centering
  \begin{tikzpicture}[very thick,yscale=1.2,xscale=-1]
    \draw (0,0) -- (3,2);
\draw (1,0) to[out=135, in=-90] node[pos=.2,fill,circle,inner sep=2.5pt]{}  (.3,.8) to[out=90, in=-135](2,2);
\draw (2,0) to[out=135,  in=-90]  node[pos=.15,fill,circle,inner sep=2.5pt]{} node[pos=.4,fill,circle,inner sep=2.5pt]{}(.3,1.2) to[out=90,  in=-135] (1,2);
\draw (3,0) -- node[pos=.1,fill,circle,inner sep=2.5pt]{} node[pos=.22,fill,circle,inner sep=2.5pt]{} node[pos=.38,fill,circle,inner sep=2.5pt]{} (0,2);
  \end{tikzpicture}
  \caption{The idempotent $e_4$.}
  \label{fig:e4}
\end{figure}
Usually, we will not require these multiplicities, and will thus exclude them from the notation.  Unless they are indicated explicitly, the reader should assume that they are 1.

Under decategorification, the projective $P^\kappa_\Bi$ is sent to the vector $$F_{i_n}^{(\theta_{i_n})}\cdots F_{i_{\kappa(\ell)}}^{(\theta_{i_{\kappa(\ell)}})}(\cdots (F_{i_{\kappa(3)}}^{(\theta_{i_{\kappa(3)}})}\cdots F_{i_{\kappa(2)+1}}^{(\theta_{i_{\kappa(2)+1}})}(F_{i_{\kappa(2)}}^{(\theta_{i_{\kappa(2)}})}\cdots F_{i_{1}}^{(\theta_{i_{1}})}v_1)\otimes v_2)\otimes \cdots \otimes v_\ell),$$
where $v_i\in V_{\la_i}$ is a fixed highest weight vector, as we prove in Section \ref{sec:decat}.

\subsection{Examples}

To give a simple illustration of the behavior of our algebra, let us consider $\fg=\mathfrak{sl}_2$, and $\bla=(1,1)$.  Thus, our diagrams have 2 red lines, both labeled with 1's.

In this case, the algebras $\alg^\bla_\al$ are  easily described as follows:
\begin{itemize}
\item $\alg^{(1,1)}_2\cong \K$: it is just multiples of the diagram which is just a pair of red lines.
\item $\alg^{(1,1)}_0$ is spanned by \begin{center}
\tikz[xscale=.8, yscale=.6]{\draw[wei] (0,0)--(0,1); \draw[thick] (.5,0) --(.5,1);\draw[wei] (1,0)--(1,1); },\quad \tikz[xscale=.8,yscale=.6]{\draw[wei] (0,0)--(0,1); \draw[wei] (.5,0) --(.5,1);\draw[thick] (1,0)--(1,1);},\quad \tikz[xscale=.8,yscale=.6]{\draw[wei] (0,0)--(0,1); \draw[wei] (.5,0) --(.5,1);\draw[thick] (1,0)--(1,1) node[midway,fill, circle,inner sep=1.5pt]{};},\quad \tikz[xscale=.8,yscale=.6]{\draw[wei] (0,0)--(0,1); \draw[wei] (1,0) --(.5,1);\draw[thick] (.5,0)--(1,1);},\quad \tikz[xscale=.8,yscale=.6]{\draw[wei] (0,0)--(0,1); \draw[wei] (.5,0) --(1,1);\draw[thick] (1,0)--(.5,1);}
\end{center}
One can easily check that this is the standard presentation of a regular block of category $\cO$ for $\mathfrak{sl}_2$ as a quotient of the path algebra of a quiver (see, for example, \cite{Str03}).
\item $\alg^{(1,1)}_{-2}\cong \operatorname{End}(\K^3)$: quotienting out by the left ideal generated by all diagrams with crossings gives the unique irreducible representation.  The algebra is spanned by the diagrams, which one can easily check multiply (up to sign) as the elementary generators of $\operatorname{End}(\K^3)$.
\begin{center}
\begin{tikzpicture}[yscale=1.2,xscale=2]
\node at (0,0){ \tikz[xscale=.8, yscale=.6]{\draw[wei] (0,0)--(0,1); \draw[thick] (.5,0) --(.5,1);\draw[wei] (1,0)--(1,1);\draw[thick] (1.5,0) --(1.5,1); }};

\node at (0,-1) {\tikz[xscale=.8,yscale=.6]{\draw[wei] (0,0)--(0,1); \draw[wei] (1,0) --(.5,1);\draw[thick] (1.5,0)--(1,1) node[pos=.85,fill, circle,inner sep=1.5pt]{}; \draw[thick] (.5,0) --(1.5,1) ;}};

\node at (1,-1) {\tikz[xscale=.8,yscale=.6]{\draw[wei] (0,0)--(0,1); \draw[wei] (.5,0) --(.5,1);\draw[thick] (1.5,0)--(1,1) node[pos=.8,fill, circle,inner sep=1.5pt]{}; \draw[thick] (1,0) --(1.5,1) ;}};
\node at (1,0) {\tikz[xscale=.8,yscale=.6]{\draw[wei] (0,0)--(0,1); \draw[wei] (.5,0) --(1,1);\draw[thick] (1.5,0)--(.5,1); \draw[thick] (1,0) --(1.5,1) ;}};

\node at (0,-2) {\tikz[xscale=.8,yscale=.6]{\draw[wei] (0,0)--(0,1); \draw[wei] (1,0) --(.5,1);\draw[thick] (1.5,0)--(1,1); \draw[thick] (.5,0) --(1.5,1) ;}};

\node at (2,-2) {\tikz[xscale=.8,yscale=.6]{\draw[wei] (0,0)--(0,1); \draw[wei] (.5,0) --(.5,1);\draw[thick] (1.5,0)--(1,1) node[pos=.2,fill, circle,inner sep=1.5pt]{}; \draw[thick] (1,0) --(1.5,1);}};

\node at (2,0) {\tikz[xscale=.8,yscale=.6]{\draw[wei] (0,0)--(0,1); \draw[wei] (.5,0) --(1,1);\draw[thick] (1.5,0)--(.5,1) node[pos=.1,fill, circle,inner sep=1.5pt]{}; \draw[thick] (1,0) --(1.5,1);}};

\node at (2,-1) {\tikz[xscale=.8,yscale=.6]{\draw[wei] (0,0)--(0,1); \draw[wei] (.5,0) --(.5,1);\draw[thick] (1.5,0)--(1,1) node[pos=.8,fill, circle,inner sep=1.5pt]{} node[pos=.2,fill, circle,inner sep=1.5pt]{}; \draw[thick] (1,0) --(1.5,1) ;}};

\node at (1,-2) {\tikz[xscale=.8,yscale=.6]{\draw[wei] (0,0)--(0,1); \draw[wei] (.5,0) --(.5,1); \draw[thick] (1.5,0)--(1,1) ; \draw[thick] (1,0) --(1.5,1);}};

\excise{
\node at (1,2){ \tikz[xscale=.8,yscale=.6]{\draw[wei] (0,0)--(0,1); \draw[wei] (1,0) --(.5,1);\draw[thick] (.5,0)--(1,1);}};

 \tikz[xscale=.8,yscale=.6]{\draw[wei] (0,0)--(0,1); \draw[wei] (.5,0) --(1,1);\draw[thick] (1,0)--(.5,1);}
}
\end{tikzpicture}

\end{center}
\end{itemize}
\excise{

Perhaps a more interesting example is the case of $\fg=\mathfrak{sp}_4\cong \mathfrak{so}_5$.  We let $\al$ be the long root, and $\be$ be the short root.  We let $\bla=(\nicefrac{1}{2}\al+\be,\nicefrac{1}{2}\al+\be)$.  In this case:
\begin{itemize}
\item $\alg^\bla_{\al+2\be}\cong \K$.
\item $\alg^\bla_{\al+\be}\cong \K$.
\item 

\end{itemize}
}

\subsection{A basis and spanning set}
Recall that a {\bf reduced word} in the symmetric group is a list of $k$ adjacent transpositions $(i,i+1)$ whose product cannot be written as a shorter product of adjacent transpositions.  For each choice of a reduced word $\Bw$ for a permutation of $n+\ell$
letters which doesn't invert any pair of red strands, we have an element $\psi_{\Bw}$ of $P_{\Bi}^\kappa$ given by
replacing the simple reflection $(i,i+1)$ with the crossing of the $i$
and $i+1$st strands (red or black) and multiplying out the result.

\begin{prop}\label{basis}
 For any fixed choice of reduced word for each permutation, the algebra $\tilde{\alg}^\bla$ has a basis given $e(\Bi,\kappa)\psi_{\Bw}y_1^{a_1}\cdots y_n^{a_n}$ for all permutations which preserve the relative order of the red strands and any $n$-tuple $\{a_i\in \Z_{\geq 0}\}.$
\end{prop}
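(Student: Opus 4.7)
The plan is to prove the proposition in two stages: first, to verify that the proposed elements span $\tilde\alg^\bla$; second, to construct an explicit faithful polynomial representation witnessing their linear independence.

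For the spanning argument, I would take an arbitrary diagram and reduce it inductively on total complexity (crossings plus total dot count). First, push all dots toward the bottom of the diagram using the KLR dot-slide relations and the relation permitting dots to pass through red strands; this expresses the original diagram as a sum of diagrams, each consisting of a pure crossing pattern with dots at the bottom only. The crossing pattern realizes a permutation of the $n+\ell$ strands which necessarily preserves the relative order of the red strands, since red strands may not cross each other. If the underlying word of black/red crossings is not reduced, one applies the KLR triple-point relations together with the red pass-through relations~(\ref{dumb}) and the cost relation~(\ref{cost}) to rewrite the diagram as a combination of diagrams of strictly lower complexity, which by induction are already in the claimed spanning set. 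Any two reduced expressions for the same permutation yield the same element modulo strictly shorter permutations, so one may freely fix a reduced expression for each $\Bw$.

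For linear independence, I would construct a polynomial representation on $V := \bigoplus_{(\Bi,\kappa)} V_{\Bi,\kappa}$ with $V_{\Bi,\kappa} = \K[x_1,\dots,x_n]$. The action is declared as follows: dots $y_k$ act by multiplication by $x_k$; crossings between black strands act by the standard polynomial representation of the quiver Hecke algebra $R$ (divided differences plus the $Q_{ij}$-correction as in \cite{KLI,Rou2KM}); a red-black crossing in which the $k$-th black strand (color $i$) moves from the right of a red strand labeled $\la$ to its left acts by multiplication by $x_k^{\la^i}$, and the reverse crossing acts by the identity. The defining relations of $\tilde\alg^\bla$ are then verified locally: the KLR relations hold by the known polynomial representation of $R$; the pass-through relations~(\ref{dumb}) hold because the red-crossing multiplications act on the ``red'' factor while KLR divided differences act on the black variables in disjoint positions, so they commute; and the cost relation~(\ref{cost}) holds because composing the two oriented red-black crossings is exactly multiplication by $x_k^{\la^i}$, matching $\la^i$ dots on the black strand.

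To conclude, evaluate any linear combination $\sum c_{\Bw,\mathbf{a}}\, e(\Bi,\kappa)\psi_{\Bw} y_1^{a_1}\cdots y_n^{a_n}$ on $1\in V_{\Bi,\kappa}$. The output is $\sum c_{\Bw,\mathbf{a}}\, \psi_{\Bw}(x_1^{a_1}\cdots x_n^{a_n})$, and a standard leading-term argument (using the lexicographic order on monomials, as in \cite[Prop.~2.3]{KLI}) shows these are linearly independent in $V_{\Bi,\kappa}$ as $\Bw$ ranges over red-order-preserving permutations (with our fixed reduced words) and $\mathbf{a}$ over $\Z_{\geq 0}^n$. The main technical obstacle is the bookkeeping verification that every relation of $\tilde\alg^\bla$ is satisfied by the proposed operators, particularly the subtle interaction between the pass-through relations and the ``asymmetric'' definition of red-black crossings; once this verification is complete, spanning plus faithfulness gives the claimed basis.
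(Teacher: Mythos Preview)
Your spanning argument and the idea of constructing a polynomial representation are in the same spirit as the paper's proof, but there are two genuine gaps in the execution.

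First, your verification of the first pass-through relation in~(\ref{dumb}) is incorrect. You assert that red-crossing multiplications act on a ``red factor'' while KLR divided differences act on black variables ``in disjoint positions,'' so they commute. But there are no red variables in the polynomial representation: a red/black crossing acts by multiplication by a power of the \emph{black} variable $x_k$, and multiplication by $x_k^{\la^i}$ does not commute with the Demazure operator $\partial_k$. Indeed, the $\delta_{i,j}$ correction term in~(\ref{dumb}) is exactly this failure of commutativity. The relation does hold in the polynomial representation, but only after an honest computation (which the paper carries out): for $i=j$ one side acts as $f\mapsto \partial_k(x_k^{\la^i}f)$, the other as $f\mapsto x_{k+1}^{\la^i}\partial_k(f)+\frac{x_k^{\la^i}-x_{k+1}^{\la^i}}{x_k-x_{k+1}}f$, and these agree.

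Second, and more seriously, your linear-independence argument fails. Evaluating the proposed basis elements on $1\in V_{\Bi,\kappa}$ does \emph{not} yield linearly independent polynomials once black labels repeat. Already for two black strands of the same color $i$ (no reds needed), one has $\psi_e\cdot 1=1$ while $\psi_{s_1}(y_1\cdot 1)=\partial_1(x_1)=1$, so two distinct basis elements have the same image; with the opposite order of operations, $\psi_{s_1}\cdot 1=\partial_1(1)=0$ kills infinitely many basis elements outright. Faithfulness of the polynomial module is the correct tool, but it cannot be witnessed on a single vector. The paper proceeds differently: it uses the polynomial representation only to conclude that the quiver Hecke algebra $R$ injects into $\tilde\alg^\bla$, and then, given a hypothetical nontrivial relation among the proposed basis elements, multiplies on the bottom by $\theta_\kappa$ and on the top by $\dot\theta_{\kappa'}$ to push all red strands to the far left. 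Straightening the result and tracking the Bruhat-maximal term produces a nontrivial relation among Khovanov--Lauda's basis elements of $R$, a contradiction. This reduction step is the missing idea in your proposal.
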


This proposition is crucial in that it not only gives us a basis, but an ordered basis; permutations have a natural partial order, the strong Bruhat order.

We will always refer to the process of rewriting an element in terms of this basis as ``straightening'' since visually, it is akin to pulling all the strands taut until they are straight, though this image is slightly misleading, as we will explain momentarily.

\begin{proof}
The proof is directly analogous to that of \cite[Theorem 2.5]{KLI}. 

  First we show is that this set spans, for which is suffices to show that $\psi_\Bw$ for any word can be rewritten in terms of
  $y_i$'s times $\psi_{\Bw'}$ for our fixed choice of reduced words
  and shorter diagrams.

  If $\Bw$ is not a reduced word in the symmetric group, then by
  applying braid relations (which hold modulo shorter words), we can
  assume that there are two consecutive crossings of the same strands,
  which can be simplified using the relations and written in terms of $\psi_{\Bw'}$ for shorter words $\Bw'$.

  If $\Bw$ is a reduced word, then the fixed reduced word
  corresponding to the same permutation $\Bw'$ differs from $\Bw$ by
  Tits moves, so the difference between $\psi_\Bw-\psi_{\Bw'}$ can
  thus be written in terms of shorter diagrams.

The difficult part is to show that the elements are linearly independent.  First, we note that $\tilde{\alg}^\bla$ has a version of Khovanov and Lauda's polynomial representation, where $\tilde{\alg}^\bla$ acts on a direct sum of polynomial rings $\K[y_1,\dots,y_n]$ over all choices of $\Bi$ and $\kappa$ by the rule (where in each case, there are $k-1$ black strands to the left of the portion of the diagram shown) shown in Figure \ref{poly-rep}.
\begin{figure}
\begin{center}
\begin{tikzpicture}
\node at (-4,3.5){
\begin{tikzpicture}
\draw[wei] (-1,-1) -- (1,1) node[at start,below]{
$\la$} node[at end,above]{
$\la$};
\draw[very thick] (1,-1) -- (-1,1) node[at start,below]{
$i$} node[at end,above]{
$i$};
\draw[very thick,|->] (1.5,0) -- (2.5,0);
\node at (4,0) {$f\mapsto 1\cdot f$ } ;
\end{tikzpicture}};

\node at (4,3.5){
\begin{tikzpicture}
\draw[wei] (1,-1) -- (-1,1) node[at start,below]{
$\la$} node[at end,above]{
$\la$};
\draw[very thick] (-1,-1) -- (1,1) node[at start,below]{
$i$} node[at end,above]{
$i$};
\draw[very thick,|->] (1.5,0) -- (2.5,0);
\node at (4,0) {$f\mapsto y_k^{\la^i}\cdot f$ } ;
\end{tikzpicture}
};

\node at (0,0){
\begin{tikzpicture}
\draw[very thick] (1,-1) -- (-1,1) node[at start,below]{
$j$} node[at end,above]{
$j$};
\draw[very thick] (-1,-1) -- (1,1) node[at start,below]{
$i$} node[at end,above]{
$i$};
\draw[very thick,|->] (1.5,0) -- (2.5,0);
\node[scale=.8] at (8,0) {$\begin{cases}
f\mapsto (k,k+1)\cdot f & i<j\\   
f\mapsto Q_{ij}(y_k,y_{k+1})(k,k+1)\cdot f & i>j\\
f\mapsto \frac{f-(k,k+1)\cdot f}{y_k-y_{k+1}} & i=j \\  
\end{cases}$ } ;
\end{tikzpicture}
};

\node at (0,-3.5){
\begin{tikzpicture}
\draw[very thick] (1,-1) -- (1,1) node[at start,below]{
$i$} node[at end,above]{
$i$} node[circle,midway,fill,inner sep=2pt]{};
\draw[very thick,|->] (1.5,0) -- (2.5,0);
\node at (4,0) {$f\mapsto y_k\cdot f$ } ;
\end{tikzpicture}
};
\end{tikzpicture}

\end{center}
\caption{The polynomial representation of $\tilde{\alg}^\bla$}
\label{poly-rep}
\end{figure}

The action of black diagrams is that of Khovanov-Lauda (in original signs, this is \cite[Theorem 2.3]{KLI}, and is discussed with sign modifications in the final section of \cite{KLII}; the most general version for arbitrary $Q_{*,*}$ is covered in \cite[Proposition 3.12]{Rou2KM}), so the only relations we need check are our additional relations (\ref{dumb}) and (\ref{cost}).  The only one of these which is interesting is the first line of (\ref{dumb}).  The LHS is \[f\mapsto \frac{y_k^{\la^i} f-y_{k+1}^{\la^i}(k,k+1)\cdot f}{y_k-y_{k+1}}\] and the RHS is $$f\mapsto y_{k+1}^{\la^i} \frac{f-(k,k+1)\cdot f}{y_k-y_{k+1}}+\frac{y_k^{\la^i}-y_{k+1}^{\la^i}}{y_k-y_{k+1}}f$$ and the relation is verified.

The most important consequence of this is that Khovanov and Lauda's algebra $R$ injects into $\tilde{\alg}^\la$, since any element of the kernel acts trivially on the polynomial representation, and thus is trivial.

Now, we show that we have a basis in general by reducing to this case.  Assume that there is a non-trivial linear relation between vectors of the form in the statement.  Then we can compose on the bottom with the element $\theta_\kappa$, which pulls all black strands to the right and red to the left, and on the top with $\dot\theta_\kappa$.  Pulling all black strands to the right (as described above when showing our desired elements span), we obtain a relation in $R$.  On the other hand, there must be a $\psi_{\Bw}\By^{\mathbf{a}}$ with nontrivial coefficient maximal in Bruhat order compared to all other diagrams with non-trivial coefficients.  Since pulling right only adds correction terms strictly smaller in Bruhat order, we have a relation in $R$ where the corresponding diagram to $\psi_{\Bw}\By^{\mathbf{a}}$  has non-trivial coefficient.  Since these elements are a basis, this coefficient must be trivial, giving a contradiction.  Thus, this relation is trivial and we have a basis of $\tilde{\alg}^\la$.
\end{proof}

\begin{prop}\label{straighten}
  For any fixed choice of reduced word for each permutation, the
  elements $\psi_{\Bw}$ generate $P_{\Bi}^\kappa$ as a module over the
  subalgebra generated by the $y_i$'s.
\end{prop}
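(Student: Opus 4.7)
The plan is to derive this proposition as an essentially immediate consequence of Proposition \ref{basis}. By that proposition, the module $\tilde P_\Bi^\kappa = e(\Bi,\kappa)\tilde\alg^\bla$ has a basis consisting of $e(\Bi,\kappa)\psi_\Bw y_1^{a_1}\cdots y_n^{a_n}$, where $\Bw$ ranges over permutations of $n+\ell$ letters that preserve the relative order of the red strands and $(a_1,\dots,a_n)$ ranges over $\Znn^n$. Repackaging this basis: $\tilde P_\Bi^\kappa$ is generated as a right module over the commutative subalgebra $\K[y_1,\dots,y_n]\subset \tilde\alg^\bla$ by the finite set $\{\psi_\Bw\}$.

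Next I invoke the surjection $\tilde P_\Bi^\kappa \twoheadrightarrow P_\Bi^\kappa$ coming from the quotient $\tilde\alg^\bla \twoheadrightarrow \alg^\bla$ by the ideal of diagrams with a violating strand. A surjection of modules sends generators to generators, so the images of the $\psi_\Bw$ in $P_\Bi^\kappa$ still generate over $\K[y_1,\dots,y_n]$, once we know that this polynomial subalgebra survives the quotient and continues to act by right multiplication on $P_\Bi^\kappa$.

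The surviving subalgebra claim is immediate: the dots $y_1,\dots,y_n$ live in $\alg^\bla$ itself, they commute pairwise by the quiver Hecke relations, and right multiplication by a dot on the top of a non-violating diagram produces another non-violating diagram. So $\K[y_1,\dots,y_n]$ acts on $P_\Bi^\kappa$ from the right, and the quotient of the relation $\psi_\Bw \cdot y^{\mathbf{a}}$ is the same in $P_\Bi^\kappa$ as the image of $\psi_\Bw y^{\mathbf{a}}$ in $\tilde P_\Bi^\kappa$. Combining the three steps gives the statement. There is no real obstacle here: all of the genuine work was carried out in the proof of Proposition \ref{basis}, and the present proposition is a bookkeeping corollary that only claims a spanning property (not a basis), so one does not even have to worry about how $K_\Bi^\kappa$ is spanned in terms of the standard monomials.
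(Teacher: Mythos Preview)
Your proposal is correct and takes essentially the same approach as the paper: the paper's proof is the single sentence ``Clear from the fact that $\tilde{\alg}^\bla$ surjects onto $\alg^\bla$,'' and you have simply spelled out the details of that deduction from Proposition~\ref{basis}.
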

\begin{proof}
Clear from the fact that $\tilde{\alg}^\bla$ surjects onto $\alg^\bla$.
\end{proof}

In order to organize our computations, we must keep track of leading terms in this basis under multiplication; the term ``straightening'' suggests that these will roughly correspond to the multiplication of permutations.  The reality is a bit more subtle.  In order to do this, we consider the category $\mho_n$ whose objects are ordered $n$ elements sets labeled with simple roots of our algebra, and whose morphisms are label preserving maps.  Obviously, every diagram in $\tilde \alg^\bla$ gives such a map by simply tracing out the black strands (we ignore red strands for the time being).  We now wish to put a slightly strange composition on these maps which will give us a different category from the naive one with these morphisms.  

In order to compose morphisms $a$ and $b$, we factor each in a minimal length way into the naive product of a number of simple involutions, i.e.\ those that switch adjacent elements in the order.  Now, we consider the concatenation of these words, which we endeavor to simplify.  We impose the usual braid relations on involutions, but we change how they square.  If $s_i=(i,i+1)$ in cycle notation, we impose that $s_i^2=1$ if the $i$th and $i+1$th have different labels and $s_i^2=s_i$ is the labels are the same. 

Note that if the concatenation is not a reduced word, we can apply braid relations until there are two adjacent $s_i$'s in the word, which we can simplify to obtain a shorter word.  This process terminates at a reduced word for a unique permutation.  We note that morphisms in this category can be given the usual Bruhat order.

\begin{prop}
Given any diagram  $x\in \tilde{\alg}^\bla$ with associated morphism $\omega_x$ in $\mho_n$,  when $x$ is written in terms of basis elements, all diagrams which appear have associated morphisms shorter than or equal to $\omega_x$ in $\mho_n$.
\end{prop}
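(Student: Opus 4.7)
The plan is to run the straightening algorithm from the proof of Proposition \ref{basis} while carefully tracking the morphism $\omega \in \mho_n$ at each step. The statement will follow by induction on a suitable complexity measure (e.g.\ the lexicographic pair consisting of the total number of crossings in the diagram and the Bruhat length of $\omega_x$ in $\mho_n$), once we verify that every local rewriting rule coming from the defining relations of $\tilde\alg^\bla$ replaces a subdiagram by a sum of diagrams whose associated morphisms all satisfy $\omega\le\omega_x$. The black strands alone determine $\omega_x$, so relations that are invisible to the black-strand combinatorics (dot-sliding, isotopies, movement past red strands) will preserve $\omega_x$ trivially.

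First I would verify the claim relation-by-relation. The bigon relation for same-colored black strands, $\psi_i^2=0$, is vacuous. The bigon for different colors, $\psi_i\psi_j=Q_{ij}(y_1,y_2)$, produces the identity diagram with dots, whose morphism is $1$; this matches the LHS since in $\mho_n$ one has $s_i^2=1$ when the two strands carry different labels. Dots passing through a crossing of same-colored strands produce an error term with no crossing, whose morphism is strictly below $s_i$. The triple-crossing braid relation in the generic case ($i=k=j$ or $i\ne k$ and both differ from $j$) preserves $\omega$ by the braid relation in $\mho_n$; in the exceptional case $i=k\ne j$ the main term is braid-equivalent in $\mho_n$ and the error consists of three straight strands with a polynomial in dots, whose morphism is $1$, strictly below $s_is_js_i$.

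Next I would handle the red-strand relations. Relation (\ref{dumb}) moves a black crossing or dot past a red line; since $\omega_x$ ignores red strands entirely, the LHS and main RHS have the same morphism, and the error term appearing when the two black strands share a color is a diagram of straight strands with dots, whose morphism $1$ is strictly smaller than $s_k$. The ``cost'' relation (\ref{cost}) introduces dots but no new black--black crossings, so $\omega$ is preserved exactly. The violating-strand relation sends a diagram to zero, which is vacuous. With this catalogue in hand, the induction is essentially mechanical: the straightening procedure rewrites a non-reduced word into shorter terms using the quiver Hecke $R_2$ or $R_3$ relations, each of which produces error terms of strictly smaller Bruhat length, or equates two reduced expressions via braid/Tits moves, each of which preserves $\omega$ in $\mho_n$.

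The main technical point to watch is the interaction between the modified squaring rule $s_i^2=s_i$ in $\mho_n$ (for matching labels) and the genuine vanishing $\psi_i^2=0$ in the algebra. Because the algebra relation is stronger than the $\mho_n$ relation, no term ever appears on the right-hand side with morphism equal to the unreduced $s_i^2$, so the bound $\omega\le\omega_x$ is preserved with room to spare. Conversely, this mismatch is precisely the reason the category $\mho_n$ (rather than the symmetric group) is the correct indexing monoid: applying $\psi_i^2=0$ legitimately contracts a length-two reduced word in $\mho_n$ down to a single $s_i$, so the relevant notion of ``shorter than'' must allow $s_i^2\to s_i$ as a non-strict inequality. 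Once this is set up, the induction terminates at the Proposition~\ref{basis} basis, as required.
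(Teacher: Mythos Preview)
Your proposal is correct and follows essentially the same approach as the paper's proof: verify relation-by-relation that every local rewriting step in the straightening algorithm produces only terms whose associated $\mho_n$-morphism is $\leq \omega_x$, then conclude by induction. You spell out the case analysis in considerably more detail than the paper's two-sentence argument, which simply asserts that ``all relations \ldots only introduce extra terms shorter in Bruhat order.''

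One small correction to your final paragraph: your diagnosis of \emph{which} relation forces the use of $\mho_n$ rather than $S_n$ is slightly off. The bigon relation $\psi_i^2=0$ for same-colored strands is vacuous for the bound (it produces no terms at all), so it cannot be the source of the difficulty. The genuine issue, as the paper says, arises from the dot-crossing correction term for same-colored strands (and the analogous correction in the first relation of (\ref{dumb})): these delete a single same-colored $s_i$ from the word. If the ambient word already contains $s_i^2$, then in $S_n$ one has $s_i^2=1$, so deleting one copy yields a word representing $s_i$, which is \emph{longer} in Bruhat order than $1$. In $\mho_n$, since $s_i^2=s_i$ for like-colored strands, the same deletion takes a word for $s_i$ to a word for $s_i$, and the morphism does not grow. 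That is the content of the paper's remark ``these relations will sometimes remove a $s_i$ which permutes two like colored strands from a word where $s_i^2$ appears.''
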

\begin{proof}
This is clear from the quiver Hecke relations of Figure \ref{quiver-hecke} and the algorithm for writing a morphism in terms of the basis, since all relations for reducing the ``length'' of a diagram, or to adjust it to fit a particular reduced word of a permutation only introduce extra terms shorter in Bruhat order.  We must use $\mho_n$ because these relations will sometimes remove a $s_i$ which permutes two like colored strands from a word where $s_i^2$ appears.  This could increase the length in the usual multiplication of the symmetric group, but will not in $\mho_n$.  
\end{proof}

This proposition has another important consequence.  Let $\kappa_1,\kappa_2$ be two weakly increasing functions $[1,\ell]\to [0,n]$ and assume that for some $j$ we have $\kappa_i(j)=\kappa_i(j+1)$ for $i=1,2$.  Then, we let $\bla'$ denote $\bla$ with the block $\la_k,\la_{k+1}$ replaced by $\la_k+\la_{k+1}$ and let $$\kappa_i'(k)=\begin{cases} \kappa_i(k) &k\leq j\\
\kappa_i(k+1) & k>j.
\end{cases}$$

There is an obvious map $$\tilde{c}:e({\Bi,\kappa_1'})\tilde \alg^{\bla'}e({\Bi,\kappa_2'})\to e({\Bi,\kappa_1})\tilde \alg^\bla e({\Bi,\kappa_2})$$ given by separating the $k$th red strand into 2 strands, labeled with $\la_k$ and $\la_{k+1}$, and also an induced map on quotients $${c}:e({\Bi,\kappa_1'}) \alg^{\bla'}e({\Bi,\kappa_2'})\to e({\Bi,\kappa_1}) \alg^\bla e({\Bi,\kappa_2}).$$ 
\begin{cor}\label{split-strands}
The maps $\tilde{c}$ and $c$ are isomorphisms. 
\end{cor}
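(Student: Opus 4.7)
The plan is to prove the statement for $\tilde c$ first, using the basis of Proposition \ref{basis}, and then descend to the quotient by identifying the kernels $K^{\bla'}$ and $K^\bla$. The hypothesis $\kappa_i(j)=\kappa_i(j+1)$ is critical: it forces the two red strands of $\bla$ that arise from splitting the merged red strand of $\bla'$ to lie adjacently at both top and bottom, with no black endpoint between them.

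First I would check that the natural splitting operation -- replace the red strand labeled $\la_k+\la_{k+1}$ by a pair of parallel adjacent red strands labeled $\la_k, \la_{k+1}$ -- gives a well-defined $\K$-linear map $\tilde c$. Among the defining relations, only the crossing-correction relation, (\ref{dumb}), and (\ref{cost}) refer to the label on the red strand, and they depend on $\la$ only through the integer $\la^i = \al_i^\vee(\la)$. Since $\la\mapsto\la^i$ is additive, each instance of a black strand interacting with the merged red strand in $\bla'$ unfolds, under splitting, to the black strand interacting twice (once with each split red strand) in $\bla$, with the dot cost $(\la_k+\la_{k+1})^i = \la_k^i+\la_{k+1}^i$ and the correction sum $\sum_{a+b+1=(\la_k+\la_{k+1})^i}$ decomposing into successive sums of sizes $\la_k^i$ and $\la_{k+1}^i$.

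For bijectivity I use Proposition \ref{basis}. The basis of $e(\Bi,\kappa_1')\tilde\alg^{\bla'}e(\Bi,\kappa_2')$ is indexed by pairs $(\Bw',\mathbf{a})$ where $\Bw'$ is a permutation of $n+\ell-1$ letters preserving the order of the $\ell-1$ red strands and carrying $\kappa_2'$ to $\kappa_1'$; similarly for $e(\Bi,\kappa_1)\tilde\alg^\bla e(\Bi,\kappa_2)$ with $\ell$ red strands. Since red strands never cross and the two new red strands are adjacent at top and bottom, they must remain adjacent throughout any such $\Bw$; moreover, in a reduced-word representative no black strand can enter the ``lens'' region between them, since such an incursion would contribute two cancellable red/black crossings. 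Contracting the adjacent pair therefore yields a bijection $\Bw\leftrightarrow\Bw'$ that matches basis elements term-by-term, and the induced map on bases is exactly $\tilde c$.

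Finally, for $c$: by the discussion following Proposition \ref{basis}, $K^\bla\cap e(\Bi,\kappa_1)\tilde\alg^\bla e(\Bi,\kappa_2)$ is spanned by diagrams containing a violating strand, i.e.\ a black strand lying left of every red strand at some horizontal level. Splitting a red strand into two adjacent ones does not alter the position of the leftmost red strand at any level, so a diagram is violating in $\bla'$ if and only if its image under $\tilde c$ is violating in $\bla$. Hence $\tilde c$ carries the kernels bijectively and descends to an isomorphism $c$. The only genuine obstacle is the well-definedness check for $\tilde c$, which reduces to the additivity of $\la\mapsto\la^i$ noted above; once this is in hand, the rest is bookkeeping with the basis.
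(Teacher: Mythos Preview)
Your treatment of $\tilde c$ is correct and matches the paper's: both rely on the basis correspondence from Proposition~\ref{basis}. Your added well-definedness check is something the paper does not spell out; the claim that the correction sum ``decomposes into successive sums'' would in practice require a more careful verification, but this is not where the difficulty lies.

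The genuine gap is in your argument for $c$. You correctly observe that $\tilde c$ sends each violating source diagram to a violating target diagram, which gives the forward inclusion $\tilde c(K^{\bla'}\cap\cdots)\subseteq K^\bla\cap\cdots$. But your conclusion that ``$\tilde c$ carries the kernels bijectively'' does not follow. The kernel on the target side is spanned by \emph{all} violating diagrams with the prescribed top and bottom idempotents, and a generic such diagram may have a black strand passing between the two split red strands at some intermediate height. Such a diagram is not of the form $\tilde c(d)$ for any single source diagram $d$, so your diagram-level bijection does not reach it. What must be shown is that, as an element of the algebra, it lies in $\tilde c(K^{\bla'}\cap\cdots)$.

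The paper closes this gap by observing that any violating diagram in $e(\Bi,\kappa_1)\tilde\alg^\bla e(\Bi,\kappa_2)$ can be rewritten, via the local relations, so as to slide all crossings and dots out of the channel between the $k$th and $(k+1)$st red strands. These rewritings are local and take place to the right of the first red strand, so each resulting term retains its violating strand; now the two split reds are adjacent throughout, and each term is visibly $\tilde c$ of a violating source diagram, giving the reverse inclusion.
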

\begin{proof}
The fact for $\tilde{c}$ simply follows from the fact that the bases of Proposition \ref{basis} correspond under this map.  

Note further that under $\tilde{c}$ that any element of $e({\Bi,\kappa_1})\tilde \alg^\bla e({\Bi,\kappa_2})$ which has a violating strand can be rewritten by sliding all crossings and dots out of the space between the $k$ and $k+1$st strands to be the image of an element with a violating strand under $\tilde{c}$.  Since the kernels to the projections to the domain and target of $c$ correspond under $\tilde{c}$, we must have that $c$ is an isomorphism.
\end{proof}

\subsection{Relationship to quiver Hecke algebras}
\label{sec:QHA}

 If $\bla=(\la)$, then we will simplify notation by writing $\alg^\la$ for $\alg^\bla$, and $P_\Bi$ for $P_\Bi^0$.
\begin{thm}
\label{cyclotomic}
$R^\la\cong \alg^\la$.
\end{thm}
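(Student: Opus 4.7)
The plan is to define a natural algebra homomorphism $\varphi\colon R^\la\to\alg^\la$ on generators by sending $e(\Bi)\mapsto e(\Bi,0)$, and $y_k$, $\psi_k$ to the corresponding dot and crossing on the black strands, in each case leaving the red strand as a straight vertical line at the far left. The theorem then amounts to showing $\varphi$ is a well-defined isomorphism.

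To see $\varphi$ is well-defined, one must verify the cyclotomic relation $y_1^{\la^{i_1}}e(\Bi)=0$ in $\alg^\la$. This follows from reading the second identity of~(\ref{cost}) from right to left: $\la^{i_1}$ dots on the leftmost black strand coincide with a bowed configuration in which that strand momentarily lies to the left of the red strand, which is zero by the violating relation. Surjectivity uses that every idempotent $e(\Bi,\kappa)$ with $\kappa(1)>0$ vanishes in $\alg^\la$ (its bottom is already violating), so $1_{\alg^\la}=\sum_\Bi e(\Bi,0)$; moreover, using~(\ref{dumb}) and~(\ref{cost}) together with the violating condition, any element of $e(\Bi,0)\alg^\la e(\Bj,0)$ may be rewritten as a sum of diagrams in which the red strand is a straight vertical line on the far left, all of which lie in the image of $\varphi$.

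The main obstacle is injectivity. The cleanest route uses Theorem~\ref{cyc-action}: the regular action of $R^\la$ on itself is faithful, so it suffices to extend this to an action of $\alg^\la$ on $R^\la\mpmod$. Interpreting the red strand as the identity functor on $\cata^\la$ and each black $i$-strand as $\fF_i$, with all 2-morphisms acting via the $\tU$-action constructed there, the defining relations of $\alg^\la$—including~(\ref{dumb}),~(\ref{cost}), and the vanishing of violating diagrams (which corresponds to $\fE_i$ killing the highest-weight object)—translate into consequences of the biadjunctions and cyclotomic compatibilities already verified in Theorem~\ref{cyc-action}. The composition $R^\la\xrightarrow{\varphi}\alg^\la\to\End(R^\la)$ is then the regular representation and so injective, forcing $\varphi$ to be injective. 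A purely combinatorial alternative is a dimension count: Proposition~\ref{basis} provides a basis of $\tilde\alg^\la$ that restricts to the standard basis of the image of $R$ inside $\tilde\alg^\la$, and quotienting by the violating ideal introduces exactly the cyclotomic relation, yielding a basis of $\alg^\la$ that matches one of $R^\la$ whose graded dimension is pinned down by Corollary~\ref{Euler-form}.
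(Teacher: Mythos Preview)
Your well-definedness and surjectivity arguments are fine and match the paper's in spirit (the paper starts instead from $R\hookrightarrow\tilde\alg^\la$, identifies the image with $e_0\tilde\alg^\la e_0$ using the basis of Proposition~\ref{basis}, and then computes the kernel of the projection to $\alg^\la$).

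Both injectivity routes have genuine gaps. In Route~A the strategy---build a representation of $\alg^\la$ on $R^\la$ so that the composite with $\varphi$ is the regular action---is sound, but the justification you give is not. Theorem~\ref{cyc-action} provides a $\tU$-action on $R^\la\mpmod$, but red/black crossings are not 2-morphisms in $\tU$; merely declaring the red strand to be the identity functor does not specify what they act as, and biadjunctions are irrelevant here. What actually works is elementary: assign the zero module to every idempotent with a black strand left of the red and $e(\Bi)R^\la$ to $e(\Bi,0)$; then every red/black crossing is a map to or from $0$, every instance of~(\ref{dumb}) lives entirely at zero idempotents, and the single nontrivial relation to check is the case of~(\ref{cost}) with red on the left, which reads $0=y_1^{\la^{i_1}}$ and holds by the cyclotomic relation in $R^\la$. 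So the representation exists and your argument concludes, but nothing from Theorem~\ref{cyc-action} beyond the definition of $R^\la$ is used. Route~B is circular: the sentence ``quotienting by the violating ideal introduces exactly the cyclotomic relation'' \emph{is} the theorem, and Corollary~\ref{Euler-form} computes $\dim_q R^\la$, not $\dim_q\alg^\la$, so no independent dimension count is available.

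The paper avoids constructing any representation and instead shows directly that $e_0K^\la e_0$ equals the cyclotomic ideal of $R$: given a diagram in $e_0\tilde\alg^\la e_0$ with a violating strand, slide the violating black strands back to the right of the red strand using~(\ref{cost}); the one that was leftmost now carries $\la^i$ dots with nothing to its left, so lies in the cyclotomic ideal. The reverse containment is your well-definedness argument.
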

\begin{proof}
  By composing the inclusion $R\hookrightarrow \tilde{\alg}^\la$ given by adding a
  red line at the left and the projection
  $\tilde{\alg}^\la\to \alg^\la$, we obtain a map.  This map is a surjection
  since any  element of the basis of Proposition \ref{basis} not in the image
  contains a strand to the left of the single red strand and thus is
  sent to 0.

  The image of $R$ in $\tilde{\alg}^\la$ is readily identifiable: it is
  the span of all diagrams where both at $y=0$ and $y=1$, the single
  red strand is left of all blacks.  The image is clearly contained in
  this space, since the image of a diagram in $R$ satisfies this
  condition for all values of $y$, and any diagram with this condition
  can be rewritten using the Theorem \ref{basis} as a sum of elements
  where no two strands cross twice.  Since the red strand is at the
  far left both at $y=0$ and $y=1$, it cannot cross a black strand
  exactly once, and thus must not cross with any of them; that is, we
  have written our element in terms of basis vectors in the image of
  $R$.  Let $e_0$ be the idempotent given by the image of the identity
  in $R$.  We note that left multiplication by $e_0$ kills exactly the
  diagrams which do not have the red strand at the far left at the
  bottom and similarly for right multiplication and the top, so
  $R=e_0\tilde{\alg}^\la e_0$.

  The kernel of the map $R\to \alg^\la$ is thus the intersection $K^\la\cap R$; we
  must show that this coincides with the cyclotomic ideal.  First note
  that $K^\la\cap R=e_0K^\la e_0$.  By definition, $K^\la$ is spanned
  by elements with a violating strand, so $K^\la\cap R$ is spanned by
  all elements with a violating strand where the red strand is at the
  left at the top and bottom.

  In such a diagram, we can slide all violating black strands back
  over the red. We thus obtain $\la^i$ dots on all $\al_i$-colored
  strands that were violating in the earlier diagram.  In particular,
  any one of these strands which has no other strand to its left at
  the point where it was violating carries $\la^i$ dots, and thus lies
  in the cyclotomic ideal.  On the other hand, for any element in the
  cyclotomic ideal, when can simply slide the leftmost strand left at
  the point where it carries $\la^i$ dots to obtain a violating
  strand. This gives the equality of ideals and thus the desired
  isomorphism.
\end{proof}

This cyclotomic quotient plays several important roles in
``controlling'' the representation theory of $\alg^\bla$.
Consider the projectives where $\kappa(i)=0$ for all $i$, in which case
we will simply denote the projective for $\kappa$ by $P^0_\Bi$.  We
note that $P^0_\Bi$ carries an obvious action of $R$ by composition on
the bottom.  We let $P^0=\oplus_{\Bi}P^0_\Bi$ be the sum of all such
projectives with $\kappa(i)=0$, and $P=\oplus_{\Bi}P_\Bi$ be the
corresponding module over $\alg^\la$.

\begin{prop}\label{self-dual-end}
$\displaystyle \End_{\alg^\bla}(P^0)\cong \alg^\la\cong R^\la .$
\end{prop}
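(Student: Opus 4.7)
The plan is to identify $\End_{\alg^\bla}(P^0)$ with the corner subalgebra $e^0 \alg^\bla e^0$ of $\alg^\bla$, where $e^0 = \sum_\Bi e(\Bi,0)$, and then collapse the $\ell$ red strands appearing in every diagram of this corner (all of which are pinned to the far left in both the source and target idempotents) into a single red strand labelled $\la = \la_1 + \cdots + \la_\ell$, using Corollary \ref{split-strands} $\ell - 1$ times. This will identify the corner algebra with $\alg^{(\la)}$, and Theorem \ref{cyclotomic} supplies the remaining isomorphism $\alg^{(\la)} \cong R^\la$.

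First I would recall the standard description of endomorphisms of a direct sum of projectives: module homomorphisms $P^0_\Bi \to P^0_\Bj$ correspond to elements of $e(\Bj,0)\,\alg^\bla\,e(\Bi,0)$, with composition corresponding to multiplication in $\alg^\bla$, so that
\[
\End_{\alg^\bla}(P^0) \;\cong\; \bigoplus_{\Bi,\Bj} e(\Bj,0)\,\alg^\bla\,e(\Bi,0) \;=\; e^0 \alg^\bla e^0
\]
as a graded algebra.

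Next, since $\kappa \equiv 0$ satisfies $\kappa(j) = \kappa(j+1) = 0$ for every $j \in [1, \ell-1]$, Corollary \ref{split-strands} applies at each $j$. Iterating it $\ell - 1$ times, merging $\la_j$ with $\la_{j+1}$ at each stage, I obtain a linear isomorphism
\[
e(\Bj,0)\,\alg^\bla\,e(\Bi,0) \;\overset{\sim}{\longrightarrow}\; e(\Bj)\,\alg^{(\la)}\,e(\Bi)
\]
for every pair $\Bi, \Bj$. The only thing to verify is that these isomorphisms assemble into a multiplicative map --- but the separation map of Corollary \ref{split-strands} is defined by a purely local modification (splitting one red strand into two infinitesimally close red strands whose labels sum to the original), and any such local move manifestly commutes with vertical stacking. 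Summing over $\Bi, \Bj$ then produces a graded algebra isomorphism $\End_{\alg^\bla}(P^0) \cong \alg^{(\la)}$, and Theorem \ref{cyclotomic} applied to the single weight $\la$ finishes the proof.

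The main ``obstacle'' is really just bookkeeping: checking that iteratively merging adjacent red strands respects composition. This reduces immediately to the locality of the splitting operation, so no genuine difficulty is expected.
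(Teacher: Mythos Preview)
Your proposal is correct and follows essentially the same approach as the paper: the paper's proof simply states that the first isomorphism follows from repeated application of Corollary \ref{split-strands} and the second is Theorem \ref{cyclotomic}. You have merely expanded the details (identifying $\End_{\alg^\bla}(P^0)$ with the corner $e^0\alg^\bla e^0$ and noting that the splitting map is local and hence multiplicative), which the paper leaves implicit.
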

\begin{proof}
  The first isomorphism follows from repeated application of Corollary
  \ref{split-strands}.  The second is just a restatement of
  Proposition \ref{cyclotomic}
\end{proof}

\subsection{The module category structure}\label{sec:module-full}

Based on the graphical calculus developed in the Section \ref{sec:categorification}, we can define an action of $\tU$ on the categories $\cata^\bla$.  First, we define a candidate functors by a simple extension of our graphical calculus.  Each of these is defined sending a module $M$ to a module spanned by diagrams containing a coupon that carries elements of $M$.

The {\bf induction} $\tilde \fF_i M$ of an $\tilde{\alg}^\bla$-module $M$ is the vector space generated by diagrams as in Figure \ref{funcs} for $m\in M$, modulo the relation that the sum of diagrams which are identical outside the coupon is given by adding the labels on the coupon.  

The algebra $\tilde{\alg}^\bla$ acts by multiplication on the top, simplifying using Proposition \ref{straighten} so that all crossings of strands connecting the coupon occur below the new strand, and absorbing these into the coupon.

More algebraically, this is an extension of scalars; We have a map $\nu_i\colon \tilde{\alg}^\bla\to \tilde{\alg}^\bla$ given by adding a $i$-colored strand at the far right, and $\tilde{\fF}_iM\cong \tilde \alg^\bla\otimes_{\tilde \alg^\bla}M$ where the tensor product is taken over the ring map $\nu_i$.
\begin{defn}
{\bf Induction} for $\alg^\bla$-modules is defined by $\fF_iM=\tilde{\fF}_iM\otimes_{\tilde{\alg}^\bla}\alg^\bla$ for $M\in \cata^\bla_\mu$.

Analogous {\bf restriction} functors $\tilde{\fE}_i,\fE_i$ right adjoint to these are defined by the second set of pictures in Figure \ref{funcs}.  
\end{defn}

These functors give an action of $\tU$, as we will show momentarily; we should note that in order for this action to make sense, we must assign a category to each weight, refining the category that corresponds to the entire representation.  To calculate the weight in which $P^{\kappa}_{\Bi}$ belongs, one should add the weights on the red lines minus the roots on the black strands.  

More generally, we can imagine labeling the regions of the diagram starting with 0 at the left, and using the rule given in \cite[\S 3.1.1]{KLIII}, which the additional rule that the label on the region right of a red strand minus that to its left is the label of the strand itself. The weight we identify above would be the label at the far right of the diagram.

\begin{figure}
\centering
  \begin{tikzpicture}[very thick,label distance=7pt]
 \node at (4,0){ \begin{tikzpicture}[very thick]
\draw[dir] (.25,-2.5) to[out=90,in=180]  (1.2, -1.9) to[out=0, in=90] (2.5,-2.7) node[below,at end]{$i$};
\draw[wei] (-1.5,-2.5)  to[out=90, in=-90] (-1.5,-1.25);
\draw[dir] (-.25,-2.5) to[out=90, in=-90] (-.25,-1.25);
\draw[dir] (.75,-2.5) to[out=90, in=-90] (.25,-1.25);
\draw[dir] (1.5,-2.5) to[out=90, in=-90] (1.3,-1.25);
\node at (-.85,-1.6) {$\cdots$};
\node at (.85,-1.6) {$\cdots$};
\node (a) at (0,-2.8)[draw,fill=white,inner xsep=42pt,inner ysep=10pt,label=below:{$\fE_i$}]{m};

  \end{tikzpicture}};
\node at (-4,0){  \begin{tikzpicture}[very thick]
\draw[postaction={decorate,decoration={markings,
    mark=at position .9 with {\arrow[scale=1.3]{<}}}}] (2.5,-2.7) to[out=90, in=-90] (.25,-1.25)  node[below,at start]{$i$};
\draw[wei] (-1.5,-2.5)  to[out=90, in=-90] (-1.5,-1.25);
\draw[dir] (-.25,-2.5)  to[out=90, in=-90] (-.25,-1.25);
\draw[dir] (.25,-2.5) to[out=90, in=-90] (.75,-1.25);
\draw[dir] (1.5,-2.5) to[out=90, in=-90] (2,-1.25);
\node at (-.85,-1.6) {$\cdots$};
\node at (1.325,-1.6) {$\cdots$};
\node (a) at (0,-2.8)[draw,inner xsep=42pt,inner ysep=10pt,fill=white, label=below:{$\fF_i$}]{m};
  \end{tikzpicture}};
  \end{tikzpicture}
\caption{The functors $\fE_i$ and $\fF_i$}
\label{funcs}
\end{figure}

\begin{prop}\label{full-action}
There is a representation of $\tU$ which sends \[\mu\mapsto
\cata^\bla_\mu \quad \eE_i\mapsto \fE_i\quad  \eF_i\mapsto \fF_i.\] The action of 2-morphisms is simply by composition on the bottom of the diagram, perhaps followed by simplification.

In particular, the functors $\fF_i$ and $\fE_i$ are exact.
\end{prop} 
We have added the orientations in Figure \ref{funcs} in order to make the action of 2-morphisms easier to visualize.
\begin{proof}
 First note that it is enough to show
that the correct relations hold if the functors are applied to
$M=P^{\kappa}_{\Bi}$ for any $(\Bi,\kappa)$. 

This can be proven by constructing an auxiliary category which
clearly has a $\tU$ action and which has $\alg^\bla$ as a quotient.
This category is quite close in spirit to $\tilde{\alg}^\bla$, but we
must use an enlargement of it.  Thus, we define a 2-category $\widetilde{\tU}$ whose \begin{itemize}
\item objects are weights,
\item 1-morphisms are sequences of
$\eE_i$'s, $\eF_i$'s and $\eI_\la$'s such that sum of the corresponding weights is the difference between target and image.  We translate these into
sequences of colored dots as usual by sending $\eI_\la$ to red dots
marked with $\la$.
\item 2-morphisms between two of these objects are
$\K$-linear combinations of immersed oriented diagrams where no red strands cross or self-intersect that
match, subject to the relations of Figures \ref{inv-rels},
\ref{pop-rels}, \ref{opp-cancel} and \ref{quiver-hecke}, and the
relations for $\alg^\bla$ (remember, all these relations are local and
imposed up to isotopy, but they do take into account orientations of
red and black strands.).  Furthermore, we must impose similar relations
between red strands and oppositely oriented red strands
  \begin{equation*}
    \begin{tikzpicture}[very thick]
      \draw[postaction={decorate,decoration={markings,
    mark=at position .9 with {\arrow[scale=1.3]{>}}}}] (-3,0)  +(1,-1) -- +(-1,1) node[at start,below]{$i$};
      \draw[postaction={decorate,decoration={markings,
    mark=at position .9 with {\arrow[scale=1.3]{>}}}}] (-3,0) +(-1,-1) -- +(1,1)node [at start,below]{$j$};
      \draw[wei,postaction={decorate,decoration={markings,
    mark=at position .9 with {\arrow[scale=1.3]{<}}}}] (-3,0)  +(0,-1) .. controls +(-1,0) ..  +(0,1);
      \node at (-1,0) {=};
      \draw[postaction={decorate,decoration={markings,
    mark=at position .9 with {\arrow[scale=1.3]{>}}}}] (1,0)  +(1,-1) -- +(-1,1) node[at start,below]{$i$};
      \draw[postaction={decorate,decoration={markings,
    mark=at position .9 with {\arrow[scale=1.3]{>}}}}] (1,0) +(-1,-1) -- +(1,1) node [at start,below]{$j$};
      \draw[wei,postaction={decorate,decoration={markings,
    mark=at position .9 with {\arrow[scale=1.3]{<}}}}] (1,0) +(0,-1) .. controls +(1,0) ..  +(0,1);   
 \end{tikzpicture}
  \end{equation*}
\begin{equation*}
    \begin{tikzpicture}[very thick,baseline=2.85cm]
      \draw[wei,postaction={decorate,decoration={markings,
    mark=at position .9 with {\arrow[scale=1.3]{<}}}}] (-3,3)  +(1,-1) -- +(-1,1);
      \draw[postaction={decorate,decoration={markings,
    mark=at position .9 with {\arrow[scale=1.3]{>}}}}] (-3,3)  +(0,-1) .. controls +(-1,0) ..  +(0,1);
      \draw[postaction={decorate,decoration={markings,
    mark=at position .9 with {\arrow[scale=1.3]{>}}}}] (-3,3) +(-1,-1) -- +(1,1);
      \node at (-1,3) {=};
      \draw[wei,postaction={decorate,decoration={markings,
    mark=at position .9 with {\arrow[scale=1.3]{<}}}}] (1,3)  +(1,-1) -- +(-1,1);
  \draw[postaction={decorate,decoration={markings,
    mark=at position .9 with {\arrow[scale=1.3]{>}}}}] (1,3)  +(0,-1) .. controls +(1,0) ..  +(0,1);
      \draw[postaction={decorate,decoration={markings,
    mark=at position .9 with {\arrow[scale=1.3]{>}}}}] (1,3) +(-1,-1) -- +(1,1);    \end{tikzpicture}
  \end{equation*}
\begin{equation*}
    \begin{tikzpicture}[very thick]
  \draw[postaction={decorate,decoration={markings,
    mark=at position .9 with {\arrow[scale=1.3]{>}}}}] (-3,6) +(-1,-1) -- +(1,1);
  \draw[wei,postaction={decorate,decoration={markings,
    mark=at position .9 with {\arrow[scale=1.3]{<}}}}](-3,6) +(1,-1) -- +(-1,1);
\fill (-3.5,5.5) circle (3pt);
\node at (-1,6) {=};
 \draw[postaction={decorate,decoration={markings,
    mark=at position .9 with {\arrow[scale=1.3]{>}}}}] (1,6) +(-1,-1) -- +(1,1);
  \draw[wei,postaction={decorate,decoration={markings,
    mark=at position .9 with {\arrow[scale=1.3]{<}}}}](1,6) +(1,-1) -- +(-1,1);
\fill (1.5,6.5) circle (3pt);
    \end{tikzpicture}
  \end{equation*}
  \begin{equation*}
  \begin{tikzpicture}[very thick,baseline=1.6cm]
    \draw[postaction={decorate,decoration={markings,
    mark=at position .9 with {\arrow[scale=1.3]{>}}}}] (-2.8,0)  +(0,-1) .. controls +(1.6,0) ..  +(0,1) node[below,at start]{$i$};
       \draw[wei,postaction={decorate,decoration={markings,
    mark=at position .9 with {\arrow[scale=1.3]{<}}}}] (-1.2,0)  +(0,-1) .. controls +(-1.6,0) ..  +(0,1) node[below,at start]{$\la$};
           \node at (-.3,0) {=};
    \draw[wei,postaction={decorate,decoration={markings,
    mark=at position .9 with {\arrow[scale=1.3]{<}}}}] (2.8,0)  +(0,-1) -- +(0,1) node[below,at start]{$\la$};
       \draw[postaction={decorate,decoration={markings,
    mark=at position .9 with {\arrow[scale=1.3]{>}}}}] (1.2,0)  +(0,-1) -- +(0,1) node[below,at start]{$i$};
          \draw[wei,postaction={decorate,decoration={markings,
    mark=at position .9 with {\arrow[scale=1.3]{<}}}}] (-2.8,3)  +(0,-1) .. controls +(1.6,0) ..  +(0,1) node[below,at start]{$\la$};
  \draw[postaction={decorate,decoration={markings,
    mark=at position .9 with {\arrow[scale=1.3]{>}}}}] (-1.2,3)  +(0,-1) .. controls +(-1.6,0) ..  +(0,1) node[below,at start]{$i$};
           \node at (-.3,3) {=};
    \draw [postaction={decorate,decoration={markings,
    mark=at position .9 with {\arrow[scale=1.3]{>}}}}](2.8,3)  +(0,-1) -- +(0,1) node[below,at start]{$i$};
       \draw[wei,postaction={decorate,decoration={markings,
    mark=at position .9 with {\arrow[scale=1.3]{<}}}}] (1.2,3)  +(0,-1) -- +(0,1) node[below,at start]{$\la$};
     
  \end{tikzpicture}
\end{equation*}
\end{itemize}  

This category acts on $\oplus_{\mu,\nu}\cata^\mu_\nu$ by the usual action of $\eE_i$ and $\eF_i$, and letting $\eI_\la$ act by sending $M$ to the same module considered as a module over $\alg^{\mu+\la}$.  On the level of 2-morphisms, this action sends the crossing 
$\begin{tikzpicture} [thick,scale=1.3,baseline] \draw[wei] (0,0) -- (.3,.3); \draw (.3,0) -- (0,.3); \end{tikzpicture}$  to the obvious projection map $\eF_i\eI_\la\to \eI_\la\eF_i$ and 
$\begin{tikzpicture} [thick,scale=1.3,baseline]
          \draw[wei] (.3,0) -- (0,.3); \draw
          (0,0) -- (.3,.3); 
        \end{tikzpicture}$
to the map multiplying at the bottom by $\la^i$ dots on the new strand
formed by applying $\fF_i$.

In particular, the inclusion of $\tU$ by horizontally composing with any set of red lines to the left is injective.
It follows by the same arguments as Theorem \ref{basis} that $\widetilde{\tU}$ has a basis analogous to that of Khovanov and Lauda for $\tU$.  

Now consider the $\tU$-module subcategory of $\widetilde{\tU}$ where the red lines are fixed to have labels $\bla$ in order, and consider its quotient by all 1-morphisms of the form $A\eE_i$ and all 2-morphisms given by positive degree bubbles at the far left of the diagram.  The argument that the idempotent completion of this category is the category of projective $\alg^\bla$ modules is precisely the same as the proof of Proposition \ref{quotient-is-cyc}.
\end{proof}

This shows, in particular, that $K_0(\alg^\bla)$ is a module over $U_q^\Z(\fg)$, which we will show in the next section is isomorphic to the tensor product $V_\bla^\Z$.

\section{Standard modules}
\label{sec:standard}

\subsection{Standard modules defined}

When analyzing the structure of re\-pres\-en\-ta\-tion-theoretic categories,
such as the categories $\cO$ appearing in Stroppel's construction of
Khovanov homology \cite{Str06b}, a crucial role is played by the Verma modules and their analogues. The property of ``having objects like Verma
modules'' was formalized by Cline-Parshall-Scott as the property of
being {\bf quasi-hereditary} \cite{CPS88}.  Unfortunately, this is too
strong of an assumption for us; as we noted earlier, the cyclotomic
QHA is Frobenius, and thus very far from being
quasi-hereditary (any ring which is both Frobenius and quasi-hereditary is semi-simple).

Luckily, our categories satisfy a weaker condition: they are {\bf
  standardly stratified}, as defined by the same authors \cite{CPS96}.
To show this, we must construct a collection of modules which are
called {\bf standard}, and show that projectives have a filtration by
these modules compatible with a preorder.

We define a preorder on $(\Bi,\kappa)$'s by calling $(\Bi,\kappa)\leq (\Bi',\kappa')$ if
\begin{equation*}
\sum_{k\leq\kappa(j)}\al_{i_k}\leq\sum_{k\leq\kappa'(j)}\al_{i_k'}\quad\text{ for all $j\in[1,\ell]$}.
\end{equation*}
This preorder can be packaged as the dominance order for a function $\bal_{\Bi,\kappa}\colon[1,\ell]\to \rola(\fg)$ which we call a {\bf root function} given by
\begin{equation*}
  \bal_{\Bi,\kappa}(k)=\sum_{\kappa(k-1)<j\leq \kappa(k)}\al_{i_j}.
\end{equation*} Note that this preorder is
entirely insensitive to permutations of the black strands which do not
cross any red strands.

\begin{defn}
  By convention, we call a red/black crossing where black strands go
  from NW to SE {\bf left} and the mirror image of such a crossing
  {\bf right}.

Note that this terminology does not apply to black/black crossings; if
we call a crossing left or right we are implicitly assuming it is black/red.
\end{defn}
  \begin{center}
    \begin{tikzpicture}
      \node at (-3,0) [label=below:{a ``left'' crossing}]
      {\begin{tikzpicture} [very thick,scale=2.3] \draw[wei] (0,0) --
          (.3,.3); \draw (.3,0) -- (0,.3);
        \end{tikzpicture}}; \node at (3,0)[label=below:{a ``right''
        crossing}] {\begin{tikzpicture} [very thick,scale=2.3]
          \draw[wei] (.3,0) -- (0,.3); \draw
          (0,0) -- (.3,.3); 
        \end{tikzpicture}};
    \end{tikzpicture}
  \end{center}

The significance of these definitions is that a map induced between
projectives by adding a left crossing on the bottom always sends
a projective to one smaller in the preorder $\leq$, and \emph{vice versa} for
right crossings.   We will call
a black strand that makes a left crossing below all right crossings {\bf
  standardly violating}.

Let $L^\kappa_\Bi\subset P_{\Bi}^{\kappa}$ be the submodule generated by diagrams
with no right crossings, and at least one left crossing; that is, the module spanned by all diagrams with standardly violating strands. 
\begin{prop}\label{st-trace}
  The image of any map from a projective higher than $(\Bi,\kappa)$ in
  the preorder $\leq $ is contained in $L^\kappa_\Bi\subset P_\Bi^\kappa$, and
  these images generate $L^\kappa_\Bi$.  That is, the submodule $L^\kappa_\Bi$ is the
  ``trace'' of these projectives.
\end{prop}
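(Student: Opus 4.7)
The plan is to prove both containments asserted by the proposition. Since $L^\kappa_\Bi$ is visibly closed under right multiplication—stacking a diagram on top cannot affect the bottom-most crossings of a standardly violating strand—the containment $\mathrm{Im}(f)\subseteq L^\kappa_\Bi$ for $f\colon P^{\kappa'}_{\Bi'}\to P^\kappa_\Bi$ reduces to showing that $f$ itself lies in $L^\kappa_\Bi$ as an element of $e(\Bi,\kappa)\alg^\bla e(\Bi',\kappa')$, whenever $(\Bi',\kappa')>(\Bi,\kappa)$.

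For the easier containment, that $L^\kappa_\Bi$ is spanned by such images, I would use a horizontal slicing. Let $D$ be a spanning diagram of $L^\kappa_\Bi$; by definition $D$ has a standardly violating strand, so some left crossing occurs at a height $h_L$ strictly below the lowest right crossing (at height $h_R$, or $\infty$ if no right crossing exists). Cutting $D$ at any height in $(h_L,h_R)$ factors $D=D_1\cdot D_2$, where the bottom piece $D_1$ involves only left crossings, black-black crossings, and dots, and has at least one left crossing. Each left crossing strictly increases a component of the root function $\bal$, so the intermediate pattern $(\Bi'',\kappa'')$ at the cut satisfies $(\Bi'',\kappa'')>(\Bi,\kappa)$, and thus $D_1$ defines a map $P^{\kappa''}_{\Bi''}\to P^\kappa_\Bi$ whose image contains $D$.

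For the harder containment, I would choose the reduced word for each permutation in Proposition \ref{basis} so that every basis element $\psi_\Bw y^{\mathbf{a}}$ lying in $e(\Bi,\kappa)\alg^\bla e(\Bi',\kappa')$ already has a standardly violating strand; writing $f$ in such a basis then displays it as a sum of elements of $L^\kappa_\Bi$. The key combinatorial lemma is: for any permutation $w$ taking $(\Bi,\kappa)$ to a strictly greater $(\Bi',\kappa')$, there is a reduced word for $w$ in which some left crossing precedes every right crossing. To prove it, I would pick a red-black inversion $(p_r,p_b)$ of $w$ with $p_r$ red, $p_b$ black, $p_r<p_b$ (such a left-crossing inversion exists because some partial sum strictly increases) and with $p_b-p_r$ as small as possible. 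Minimality, together with the order-preservation of red strands under $w$, forces every position in the open interval $(p_r,p_b)$ to be black, and forces $w(q)>w(p_b)$ for each such black $q$: an intervening red $p_m$ would satisfy $w(p_m)>w(p_r)>w(p_b)$ and hence yield a smaller left-crossing inversion, and an intermediate black $q$ with $w(q)<w(p_r)$ would do the same. Consequently the transpositions $s_{p_b-1},s_{p_b-2},\ldots,s_{p_r+1}$ are successive adjacent right descents of the evolving permutation, each a black-black crossing, after which $s_{p_r}$ is an adjacent left crossing. Appending any reduced word for the residual permutation completes a reduced word for $w$; the resulting diagram has its first red-black crossing as the left crossing created by $s_{p_r}$, which therefore sits below every right crossing, exhibiting a standardly violating strand.

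The main obstacle is establishing the combinatorial lemma—specifically, verifying that the cascade of black-black swaps is obstruction-free at every stage, and that the intervening positions really are all black. Both points rest on the same minimality argument, whose essential input is the order-preservation property of red strands under any permutation respecting their relative order; this simultaneously rules out intervening reds and guarantees that the $w$-values of intervening blacks are large enough to keep the successive transpositions valid adjacent right descents.
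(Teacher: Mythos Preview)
Your argument is correct, and the overall strategy coincides with the paper's: for both directions you reduce to showing that every basis diagram $\psi_\Bw y^{\mathbf a}$ going from $(\Bi,\kappa)$ to a strictly higher $(\Bi',\kappa')$ can be drawn so that a left crossing sits below every right crossing, and conversely that any such diagram factors through a higher projective at the obvious horizontal cut.

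The one genuine difference is in how the good reduced word is produced. The paper factors $w = w_2 w_1$ with $w_1$ in the Young subgroup of the block structure at the bottom (each block being a red strand together with the black strands to its immediate left) and $w_2$ a minimal coset representative; then $w_1$ contributes only black--black crossings, while every right descent of $w_2$ must cross a block boundary and hence involves a red on the left and a black on the right, so the bottommost inter-block crossing is automatically a left crossing. Your construction instead locates a minimal-gap left-crossing inversion $(p_r,p_b)$ and explicitly slides the black strand at $p_b$ leftward through the (necessarily black) intervening positions until it crosses the red at $p_r$. The paper's route is slicker and invokes a standard coset decomposition; yours is more elementary and entirely self-contained, and the minimality argument you give (ruling out intervening reds via order-preservation and intervening low blacks via a smaller inversion) is exactly what is needed to make the cascade a genuine sequence of right descents.
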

\begin{proof}
  Generation is clear: any diagram with only left crossings defines a
  map from a higher projective to  $P_\Bi^\kappa$ with the image of
  the idempotent being the original diagram.

  To show that any such image lands in $L^\kappa_\Bi$, consider an arbitrary map
  from a higher projective.  This is given by a sum of diagrams in
  $P_\Bi^\kappa$ whose upper end points are given by the idempotent
  for that projective. Now, apply Proposition \ref{straighten}
  with a set of representatives that do all crossings within blocks
  consisting of a red strand and the black strands to its immediate
  left before doing any others.  By the definition of the preorder, all
  these diagrams must have at least one left crossing which occurs
  before we make any crossings between these blocks, and all
  right crossings involve strands from different blocks; thus the image lies in $L^\kappa_\Bi$.
\end{proof}
 \begin{defn}
   We define $S^\kappa_{\Bi}=P_{\Bi}^{\kappa}/L^\kappa_\Bi$ to be the {\bf standard
     module} for $\kappa$ and $\Bi$.
\end{defn}
Proposition \ref{st-trace} shows that this matches the definition of a
standard module for an algebra with preorder on its projectives given
in (for instance) \cite{MS}, so our terminology matches theirs.
Below, when we speak of a {\bf group} of black strands, we will always
mean the set of black strands which originate between two consecutive
red strands at the bottom of the diagram.

 Let $e_\bal$ be the idempotent which is 1 on
  projectives $P^\kappa_\Bi$ with $\bal_{\Bi,\kappa}=\bal$.  We let
  $S_\bal$ be the standard quotient of the projective $e_\bal \alg^\bla$.
Let $\cC^{\bal}$ be the subcategory of modules with a presentation in $\mathrm{add}(S_\bal)$ for fixed
  $\bal$.  

\begin{figure}[ht]
    \begin{tikzpicture}[xscale=2,yscale=1.2]
      \node[draw,rectangle,inner xsep=100pt,inner ysep=20pt] (a) at
      (0,0){$e_\bal \alg^\bla$}; \node [draw,rectangle,inner
      xsep=20pt,inner ysep=10pt] (b) at (-1.2,-1.7){$\alg_{\bal(1)}^{\la_1}$};
      \node [draw,rectangle,inner xsep=20pt,inner ysep=10pt] (c) at
      (1.2,-1.7){$\alg_{\bal(\ell)}^{\la_\ell}$};
      \draw [very thick,wei] (a.-45) -- +(0,-1.8); \draw [very
      thick,wei] (a.-135) -- +(0,-1.8); \draw[very thick,wei] (a.south
      west) -- +(0,-1.8); \draw[very thick] (b.36) to[out=90,in=-90]
      (a.-143); \draw[very thick] (b.144) to[out=90,in=-90]
      (a.-165.5); \draw[very thick] (c.144) to[out=90,in=-90] (a.-37);
      \draw[very thick] (c.36) to[out=90,in=-90] (a.-15); \draw[ultra
      thick,loosely dotted] (-.9,-1) -- (-1.5,-1); \draw[ultra
      thick,loosely dotted] (.9,-1) -- (1.5,-1); \draw[ultra
      thick,loosely dotted] (-.9,-2.4) -- (-1.5,-2.4); \draw[ultra
      thick,loosely dotted] (.9,-2.4) -- (1.5,-2.4); \draw[ultra
      thick,loosely dotted] (.34,-1.6) -- (-.34,-1.6); \draw[very
      thick] (b.-36) -- +(0, -.4); \draw[very thick] (b.-144) -- +(0,
      -.4); \draw[very thick] (c.-144)-- +(0, -.4); \draw[very thick]
      (c.-36)-- +(0, -.4);
    \end{tikzpicture}
    \caption{The action of $\alg_{\bal(1)}^{\la_1}\otimes\cdots \otimes
      \alg_{\bal(\ell)}^{\la_\ell}$ on $e_\bal \alg^\bla$.}
    \label{stand}
  \end{figure}

Acting by black-black crossings on just each group of strands as in
Figure~\ref{stand} gives a map $\alg_{\al(1)}^{\la_1}\otimes\cdots
\otimes \alg_{\al(\ell)}^{\la_\ell}\to \End_{\alg^\bla}(S_\bal)$, so we can think of $S_{\bal}$ as a $\alg_{\al(1)}^{\la_1}\otimes\cdots
\otimes \alg_{\al(\ell)}^{\la_\ell}-\alg^{\bla}_{\al}$-bimodule, and $S=\oplus_{\bal} S_\bal$ as a $\alg^{\la_1}\otimes\cdots
\otimes \alg^{\la_\ell}-\alg^{\bla}$-bimodule.
\begin{defn}
  The {\bf standardization functor} is the tensor
  product with this bimodule:
  \begin{equation*}
    \mathbb{S}^{\bla}(-)=-\otimes_{\alg^{\la_1}\otimes\cdots
\otimes \alg^{\la_\ell}}S\colon\cata^{\la_1;\dots;\la_\ell}\to \cata^{\bla}
  \end{equation*}
\end{defn}

More generally, we can construct partial standard modules, where we
only kill the left crossings for some of the red strands.  This will
give us a standardization functor   \begin{equation*}
    \mathbb{S}^{\bla_1;\dots;\bla_m}:\cata^{\bla_1;\dots;\bla_\ell}\to \cata^{\bla}
  \end{equation*}
for any sequence of sequences
$\bla_1,\dots,\bla_m$ such that the concatenation $\bla_1\cdots\bla_m$
is equal to $\bla$.

Of particular interest is the standardization functor which corresponds to adding a new red strand labeled $\mu$ and no black ones, since this categorifies the inclusion of $V_{\bla}\otimes\{v_{high}\}\hookrightarrow V_{\bla}\otimes V_{\mu}$.  We denote this functor $\mathbb{S}^{\bla;\mu}(-\boxtimes P_\emptyset)=\fI_{\mu}$.

We can think of the standardization functor as a (very far from full)
inclusion of the naive tensor product category into $\cata^\bla$.  This
functor is full when only considered on objects corresponding to a single root function, but there are, of course, many
``new'' maps between the different values.

\subsection{Decategorification}
\label{sec:decat}

In order to understand the Grothendieck group $K_0(\alg^\bla)$, we need to better understand its Euler form.  In particular, we need a candidate bilinear form on $V_\bla$.  There is a system of
non-degenerate $U_q(\fg)$-invariant sesquilinear forms
$\langle,\rangle$ on all tensor products $V_\bla^\Z$ defined by 
$\langle v,w\rangle=\langle \Theta^{(\ell)}v,w\rangle_p,$ where $\Theta^{(\ell)}$ is the $\ell$-fold {\bf quasi-R-matrix} and $\langle -,-\rangle_p$ is the term-wise $q$-Shapovalov form.  The usual quasi-R-matrix on two tensor factors is defined in \cite[\S 4]{Lusbook}; the $\ell$-fold one is defined inductively by $\Theta^{(\ell)}=(\Theta^{(2)}\otimes 1^{\otimes \ell-2})\cdot\Delta\otimes 1^{\otimes \ell-2}(\Theta^{(\ell-1)})$. 
\begin{prop}
This form is non-degenerate, and $\tau$-Hermitian in the sense that we have $\langle u\cdot v,v'\rangle=\langle v,\tau(u)\cdot v'\rangle$ for any $v,v'\in V_\bla$ and $u\in U_q(\fg)$, where $\tau$ is the antiautomorphism defined in Section \ref{sec:cyc}.

Furthermore, for any $j<\ell$, the natural map
$V_{\la_1}\otimes\cdots\otimes
V_{\la_j}\otimes\{v_h^{j+1}\}\otimes\cdots\otimes \{v_h^\ell\} \hookrightarrow V_\bla$ is an isometric embedding. 
\end{prop}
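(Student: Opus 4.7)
The plan is to verify the three assertions separately, each relying on a structural feature of the quasi-R-matrix.

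For non-degeneracy, Lusztig's decomposition $\Theta = \sum_{\nu} \Theta_\nu$ with $\Theta_\nu \in U^-_{-\nu}\otimes U^+_\nu$ and $\Theta_0 = 1$ gives, by an easy induction on $\ell$ through the recursive formula, an analogous triangular expansion of $\Theta^{(\ell)}$ whose leading piece is the identity on each weight space. Hence $\Theta^{(\ell)}$ is unipotent and invertible. Since the termwise form $\langle -,-\rangle_p$ is non-degenerate (being the tensor product of non-degenerate Shapovalov forms on the simple factors), the composition $\langle \Theta^{(\ell)}\cdot,-\rangle_p$ is also non-degenerate.

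For $\tau$-Hermiticity I would proceed by induction on $\ell$. The case $\ell=1$ is the defining property of the Shapovalov form, and the binary case $\ell=2$ is Lusztig's theorem \cite[\S 4]{Lusbook}: the quasi-R-matrix is constructed precisely so that $\langle \Theta\cdot,-\rangle_p$ is $\tau$-Hermitian with respect to the $U_q(\fg)$-action via $\Delta$. For the inductive step, view $V_\bla = V_{\la_1}\otimes V_\bmu$ with $\bmu=(\la_2,\dots,\la_\ell)$, where $V_\bmu$ is a $U_q(\fg)$-module via the iterated coproduct and carries a $\tau$-Hermitian form $\langle -,-\rangle_\bmu$ by the inductive hypothesis. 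The recursive formula $\Theta^{(\ell)} = (\Theta\otimes 1^{\otimes \ell-2})\cdot(\Delta\otimes 1^{\otimes \ell-2})(\Theta^{(\ell-1)})$ realizes the $\ell$-fold form as the binary form on $V_{\la_1}\otimes V_\bmu$ composed with the inductive form on the $V_\bmu$ factor, so Lusztig's binary result together with the inductive hypothesis yields the conclusion.

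For the isometric embedding, let $\bla' = (\la_1,\dots,\la_{\ell-1})$. The key structural fact is that an induction on the recursive formula shows
\[ \Theta^{(\ell)} \;=\; \Theta^{(\ell-1)}\otimes 1 \;+\; \sum_{\nu>0} X_\nu, \]
where each $X_\nu$ has its last tensor factor in $U^+_\nu$: the recursion never modifies the last tensor slot, so the $U^+$-triangularity inherited from $\Theta$ in that slot is preserved at each step. Since $U^+_\nu$ with $\nu>0$ annihilates the highest weight vector $v_h^\ell$, we obtain $\Theta^{(\ell)}(v\otimes v_h^\ell) = \Theta^{(\ell-1)}(v)\otimes v_h^\ell$ for all $v\in V_{\bla'}$, and together with $\langle v_h^\ell,v_h^\ell\rangle_p = 1$ this shows the natural embedding $V_{\bla'}\otimes\{v_h^\ell\}\hookrightarrow V_\bla$ is an isometry. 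Iterating one highest weight factor at a time yields the statement for any $j<\ell$.

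The main obstacle will be making precise the reduction to the binary quasi-R-matrix in the $\tau$-Hermiticity step: one needs to carefully unpack how $(\Delta\otimes 1^{\otimes \ell-2})(\Theta^{(\ell-1)})$ interacts with the iterated coproduct action on $V_\bmu$, verifying that the resulting pairing really is the binary pairing for $V_{\la_1}\otimes V_\bmu$. The other two claims fall out directly from the triangular structure of $\Theta$.
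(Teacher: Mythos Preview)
Your arguments for non-degeneracy and for the isometric embedding are essentially the paper's: both rely on the triangular decomposition $\Theta^{(\ell)}=\sum_\nu\Theta^{(\ell)}_\nu$ with $\Theta^{(\ell)}_0=1$, and your inductive verification that $\Theta^{(\ell)}(v\otimes v_h^\ell)=\Theta^{(\ell-1)}(v)\otimes v_h^\ell$ is exactly the content of the paper's remark that $\Theta^{(2)}\in U^-\otimes U^+$ fixes $v\otimes v_{high}$, iterated.

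For $\tau$-Hermiticity, however, the paper takes a genuinely different and much shorter route that sidesteps precisely the obstacle you flag. Rather than induct on $\ell$, the paper uses two global identities: the defining intertwining property $\Theta^{(\ell)}\Delta^{(\ell)}(u)=\bar\Delta^{(\ell)}(u)\,\Theta^{(\ell)}$, and the compatibility $(\tau\otimes\cdots\otimes\tau)\circ\bar\Delta^{(\ell)}=\Delta^{(\ell)}\circ\tau$. Granting these, the computation is a single chain
\[
\langle u\cdot v,v'\rangle
=\langle \Theta^{(\ell)}\Delta(u)v,v'\rangle_p
=\langle \bar\Delta(u)\Theta^{(\ell)} v,v'\rangle_p
=\langle \Theta^{(\ell)}v,(\tau\otimes\cdots\otimes\tau)\bar\Delta(u)v'\rangle_p
=\langle v,\tau(u)\cdot v'\rangle,
\]
where the third equality is just termwise $\tau$-Hermiticity of $\langle-,-\rangle_p$. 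Your inductive approach can be made to work, but it requires the ``other'' recursion expressing $\Theta^{(\ell)}$ in terms of $\Theta^{(\ell-1)}$ on the last $\ell-1$ slots (rather than the one the paper states, which applies $\Delta$ to the first slot), and verifying that the binary Lusztig theorem applies when the second factor carries the inductive form rather than a genuine Shapovalov form. The paper's direct argument avoids all of this bookkeeping at the cost of invoking the $\ell$-fold intertwining property and the $\tau$--$\bar\Delta$ compatibility, both of which are standard and require no induction on $\ell$ beyond their own proofs.
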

\begin{proof}
The first statement follows from 
\begin{multline*}\langle u\cdot v,v'\rangle=\langle
  \Theta^{(\ell)}\Delta(u)v,v'\rangle_p=\langle \bar
  \Delta(u)\Theta^{(\ell)} v,v'\rangle_p=\langle
  \Theta^{(\ell)}v,(\tau\otimes\cdots \otimes \tau)\bar \Delta(u)v'\rangle_p\\ =\langle \Theta^{(\ell)} v,\Delta(\tau(u))v'\rangle_p=\langle v,\tau(u)\cdot v'\rangle.\end{multline*}

The second reduces to case of two factors, since $\langle -,-\rangle$ is a multiple of the $q$-Shapovalov form on any simple submodule of a tensor product. In this case it follows form the fact that $\Theta^{(2)}\in U^-\otimes U^+$ and $\Theta^{(2)}_0=1\times 1$, so $\Theta^{(2)}$ fixes $v\otimes v_{high}$. 
\end{proof}

Let $v_{\Bi}^\kappa\in V_\bla$ be defined inductively by
\begin{itemize}
\item if $\kappa(\ell)=n$, then
  $v_{\Bi}^\kappa=v_{\Bi}^{\kappa^-}\otimes v_\ell$ where $v_\ell$ is
  the highest weight vector of $V_{\la_\ell}$, and $\kappa^-$ is the
  restriction to $[1,\ell-1]$.
\item If $\kappa(\ell)\neq n$, so $v_{\Bi}^\kappa=F_{i_n}v^{\kappa}_{\Bi^-}$, where
  $\Bi^-=(i_1,\dots,i_{n-1})$. 
\end{itemize}

\begin{thm}\label{Uq-action}
There is a canonical isomorphism $\eta:K_0(\alg^\bla)\to V_\bla^\Z$ given by
$[P^\kappa_\Bi]\mapsto  v_{\Bi}^\kappa$ intertwining the inner product defined above with the Euler form.
\end{thm}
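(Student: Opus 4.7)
The plan is to construct $\eta$ via the standardization functor combined with the triangular relation between projectives and standards. Writing $\Bi = \Bi_1 | \Bi_2 | \cdots | \Bi_\ell$ for the splitting of the black strands determined by $\kappa$, the standard module $S^\kappa_\Bi$ is precisely $\mathbb{S}^\bla(P_{\Bi_1}\boxtimes\cdots\boxtimes P_{\Bi_\ell})$. Combined with Theorem \ref{cyc-action} identifying each $K_0(\alg^{\la_j}) \cong V^\Z_{\la_j}$, this yields a $\Z[q,q^{-1}]$-linear map $V^\Z_\bla \to K_0(\alg^\bla)$ sending the pure tensor $F_{\Bi_1} v_1 \otimes \cdots \otimes F_{\Bi_\ell} v_\ell$ to $[S^\kappa_\Bi]$. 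Proposition \ref{st-trace} provides the triangular identity $[P^\kappa_\Bi] = [S^\kappa_\Bi] + (\text{terms strictly higher in the preorder})$, so inverting this upper-triangular system shows simultaneously that the standard classes span $K_0(\alg^\bla)$ and that the candidate assignment $\eta\colon [P^\kappa_\Bi] \mapsto v^\kappa_\Bi$ is a $\Z[q,q^{-1}]$-module isomorphism.

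To check $U_q(\fg)$-equivariance, I would verify the two cases of the inductive definition of $v^\kappa_\Bi$ directly on the categorified side. When $\kappa(\ell) = n$, the projective $P^\kappa_\Bi$ is the image under the ``adjoin a trivial red strand'' functor $\fI_{\la_\ell}$ of $P^{\kappa^-}_{\Bi}$, and this functor categorifies the map $u \mapsto u \otimes v_\ell$ since $\Theta^{(\ell)}$ fixes the rightmost highest weight vector. When $\kappa(\ell) < n$, the final black strand lies to the right of every red one, so $P^\kappa_\Bi = \fF_{i_n} P^\kappa_{\Bi^-}$, matching $v^\kappa_\Bi = F_{i_n} v^\kappa_{\Bi^-}$. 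Compatibility with $\fE_i$ follows by biadjunction, and Proposition \ref{full-action} ensures $K_0(\alg^\bla)$ is integrable, so the module structure is pinned down by its highest-weight behavior.

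For the intertwining of bilinear forms, both sides are $\tau$-Hermitian (on the categorical side this uses the antiautomorphism $a \mapsto \dot a$ together with the Frobenius structure from Proposition \ref{self-dual-end} and Theorem \ref{cyc-action}). Induction on $\ell$ reduces the verification to two regimes. First, when at least one of $\kappa,\kappa'$ satisfies $\kappa(\ell) = n$, the functor $\fI_{\la_\ell}$ is fully faithful and preserves graded Hom dimensions by Corollary \ref{split-strands}, matching the fact that the quasi-R-matrix-twisted form is isometric along $-\otimes v_\ell$. Second, the remaining case reduces by $\tau$-Hermiticity and the $\fF_i$-action to cases with fewer black strands, with base case Corollary \ref{Euler-form}.

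The principal obstacle lies in the second regime, where both vectors end with a trailing black strand and the form sees the nontrivial content of the quasi-R-matrix $\Theta^{(\ell)}$ rather than a pure tensor-product form. On the diagrammatic side this is reflected by relation (\ref{cost}): pulling a black strand through a red $\la$-strand costs $\la^i$ dots, which is exactly the role played by the $\tilde K_i$-factors in $\Delta(F_i) = F_i \otimes \tilde K_{-i} + 1 \otimes F_i$. The required verification is to enumerate, using the basis of Proposition \ref{basis} applied to $e(\Bi,\kappa)\alg^\bla e(\Bi',\kappa')$, the dotted permutations whose straightenings avoid violating strands, and match the resulting graded count term-by-term with the expansion of $\Theta^{(\ell)}$ acting on the pure-tensor Shapovalov form. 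This combinatorial matching is where the geometric content of the red/black interaction is concentrated and is the crux of the theorem.
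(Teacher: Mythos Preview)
Your proposal has a genuine circularity. You want to define the inverse map $V^\Z_\bla \to K_0(\alg^\bla)$ by sending pure tensors to $[S^\kappa_\Bi]$ and then invoke a triangular relation $[P^\kappa_\Bi] = [S^\kappa_\Bi] + (\text{higher})$. But at this point in the paper neither ingredient is available. First, $K_0(\alg^\bla)$ is the Grothendieck group of \emph{projectives}, so for $[S^\kappa_\Bi]$ even to be an element you need standards to have finite projective dimension; that is only established as a corollary of the standard stratification, which in turn rests on Proposition~\ref{standard-filtration}. Second, Proposition~\ref{st-trace} does not give the triangular identity you claim: it says only that $L^\kappa_\Bi$ is the \emph{trace} (a quotient of a sum) of higher projectives, not that $L^\kappa_\Bi$ is filtered by higher standards. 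That filtration is exactly Proposition~\ref{standard-filtration}, whose proof in the paper uses the dimension equality $\dim P^\kappa_\Bi = \sum_\phi \dim S^{\kappa_\phi}_{\Bi_\phi}$ obtained by pairing with $[\alg^\bla]$ --- i.e.\ it uses the form intertwining from Theorem~\ref{Uq-action}. So your first paragraph presupposes what is to be proved. Even granting the triangular relation, it would only show that $V^\Z_\bla \to K_0(\alg^\bla)$ is surjective; you give no argument bounding the rank of $K_0(\alg^\bla)$, which is the content of the paper's simple-counting step via $\mathbb{S}^\bla(K)$ having simple cosocle.

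On the form, your inductive structure is essentially the paper's, but you have misplaced the difficulty. The paper proves $\dim_q\Hom(P^\kappa_\Bi,P^{\kappa'}_{\Bi'}) = \langle v^\kappa_\Bi, v^{\kappa'}_{\Bi'}\rangle$ \emph{first}, directly and without any input from standard modules: if not both of $\kappa(\ell),\kappa'(\ell)$ equal $n$, one side has a trailing $\fF_i$ which passes by adjunction to an $\fE_i$ on the other, and $\fE_i P^{\kappa'}_{\Bi'}$ decomposes into smaller projectives by the already-established $\tU$-action (Proposition~\ref{full-action}), matching the commutation of $E_i$ past $F_j$'s on the representation side. There is no ``term-by-term combinatorial match with $\Theta^{(\ell)}$'' to perform; the quasi-$R$-matrix content is entirely absorbed into the $\tau$-Hermitian property plus the $\tU$-relations. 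Once the form equality is known, nondegeneracy of $\langle-,-\rangle$ on $V^\Z_\bla$ gives well-definedness and surjectivity of $\eta$ for free, and only then does one need the rank bound. The moral is that in this paper the form computation is the engine, and the standard-module machinery is downstream of it rather than a route around it.
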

\begin{proof}
First, note that 
$$\dim_q \Hom(P^{\kappa}_{\Bi},P^{\kappa'}_{\Bi'})=\langle
v_{\Bi'}^{\kappa'},v_{\Bi'}^{\kappa'}\rangle.$$  
We prove this by induction on $n$ and $\ell$.  Unless $n=\kappa(\ell)=\kappa'(\ell)$, we can move a $\fF_i$ from one side to become a $\fE_i$ on the other (up to shift).  The decompositions of $\fE_iP^{\kappa}_{\Bi}$ into $P^{\kappa''}_{\Bi''}$'s matches that of the vector since both are done using the commutation relations between $\fE_i$ and $\fF_i$ or $E_i$ and $F_i$, which we already know match.

If  $n=\kappa(\ell)=\kappa'(\ell)$, then the dimension of the $\Hom$-space and the inner product are both unchanged by simply removing the red line (obviously, this holds if we use $\tilde{P}^\kappa_\Bi$ and $\tilde{P}^{\kappa'}_{\Bi'}$ by the basis of Theorem \ref{basis}, and the isomorphism only sends elements with violating strands to elements with violating strands).  This shows the equality.

Thus, if we are given any linear relation satisfied by
$[P^\kappa_\Bi]$'s, the corresponding linear combination of
$v_{\Bi}^\kappa$'s is in the kernel of this form, and thus 0 in
$V_\bla$.  Thus, the map $\eta$ is well-defined and surjective.

A surjective map of finitely generated free $\Z[q,q^{-1}]$ modules is
an isomorphism if and only if they have the same rank (the kernel must
be a summand, which is zero if and only if its complement has the rank
of the whole module).  Thus, we need only prove that $\cata^\bla_\nu$ has at most $\dim (V_\bla)_\nu$ simple modules.

Consider a simple module $L$, and let $\bal$ be maximal among root functions for which $Le_\bal\neq 0$.  Let $K$ be any simple constituent of $Le_\bal$ as a $\alg^{\la_1}\otimes\cdots
\otimes \alg^{\la_\ell}$-module.  Then, by adjunction, we have a surjective map $\mathbb{S}^\bla(K)\to L$.  As modules over $\alg^{\la_1}\otimes\cdots
\otimes \alg^{\la_\ell}$, we have a surjective map $K\to \mathbb{S}^\bla(K)e_\bal$ which is an isomorphism, so by assumption composing with the map to $L$ gives an inclusion of $K$.  This implies that every proper submodule of $\mathbb{S}^\bla(K)$ is killed by $e_\bal$; thus, the sum of all proper submodules is itself killed by $e_\bal$ and thus proper itself.  This implies that the cosocle of $\mathbb{S}^\bla(K)$ is simple and thus $L$.  Thus $L$ is uniquely determined by $K$, and there no more simple objects in $\cata^\bla$ than there are in $\cata^{\la_1;\dots,;\la_\ell}$, which has exactly $\dim V_\bla$ simple modules.  Thus, the ranks coincide, and we are done.
\end{proof}

Now, we let $s^\kappa_\Bi=F_{i_{\kappa(2)}}\cdots F_{i_1}v_1\otimes
  \cdots \otimes F_{i_n}\cdots F_{i_{\kappa(\ell)+1}}v_{\ell}$.

\begin{prop}
  $\displaystyle \eta([S^\kappa_\Bi])=s^\kappa_\Bi$. 
\end{prop}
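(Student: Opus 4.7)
The plan is to identify each standard module $S^\kappa_\Bi$ with a standardization of a pure tensor of cyclotomic projectives, and then decategorify factor-by-factor using Theorem~\ref{Uq-action} applied to each block.

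Setting $\kappa(\ell+1):=n$ and, for $j\in[1,\ell]$, letting $\Bi^j:=(i_{\kappa(j)+1},\ldots,i_{\kappa(j+1)})$ denote the $j$-th block of black strands, the first step is to verify the isomorphism of right $\alg^\bla$-modules
\[
S^\kappa_\Bi \;\cong\; \mathbb{S}^{\bla}\bigl(P_{\Bi^1}\boxtimes\cdots\boxtimes P_{\Bi^\ell}\bigr),
\]
where each $P_{\Bi^j}$ is the projective over the cyclotomic quiver Hecke algebra $\alg^{\la_j}\cong R^{\la_j}$ indexed by $\Bi^j$. Both sides are quotients of $e(\Bi,\kappa)\alg^\bla$: iteratively applying Corollary~\ref{split-strands} to coalesce the red strands of the bimodule $S_\bal$ one at a time exhibits the per-block left action of each $\alg^{\la_j}$, while the relation thereby imposed on the right-hand side matches exactly the generators of $L^\kappa_\Bi$, since a standardly violating strand is one that exits its own red-delimited block through a left crossing with no right crossings below.

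Given this identification, Theorem~\ref{Uq-action} applied to each single-weight algebra $\alg^{\la_j}$ yields $\eta_{\la_j}([P_{\Bi^j}])=F_{i_{\kappa(j+1)}}\cdots F_{i_{\kappa(j)+1}}v_j$, whose outer tensor over $j$ is exactly $s^\kappa_\Bi$. It therefore remains to check that $\mathbb{S}^\bla$ decategorifies to the canonical identification $V_{\la_1}^\Z\otimes\cdots\otimes V_{\la_\ell}^\Z=V_\bla^\Z$ on outer tensors of projective classes; once this is done, $\eta([S^\kappa_\Bi])=\bigotimes_j\eta_{\la_j}([P_{\Bi^j}])=s^\kappa_\Bi$ as desired.

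The main obstacle is this last decategorification, and my approach is induction on $\ell$. The partial standardization $\fI_{\la_\ell}=\mathbb{S}^{(\la_1,\ldots,\la_{\ell-1});\la_\ell}(-\boxtimes P_\emptyset)$, discussed in Section~\ref{sec:module-full}, categorifies $v\mapsto v\otimes v_{\la_\ell}$ and thus reduces the claim to the case of $\ell-1$ factors when $\Bi^\ell=\emptyset$. For non-empty $\Bi^\ell$, a secondary induction on $|\Bi^\ell|$ handles the last block: although $\fF_i$ on $\alg^\bla$ decategorifies via the full coproduct, the contributions other than the last-factor action correspond, in the basis of Proposition~\ref{basis}, to diagrams in which some strand exits the final red-delimited block through a left crossing at the bottom, hence factors through $L^\kappa_\Bi$ and is killed in the standard quotient. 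Combining the two inductions reduces the claim to the base case $n=0$, where $S^\kappa_\emptyset=P^\kappa_\emptyset$ and Theorem~\ref{Uq-action} directly gives $\eta([P^\kappa_\emptyset])=v_1\otimes\cdots\otimes v_\ell=s^\kappa_\emptyset$.
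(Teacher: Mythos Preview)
Your identification $S^\kappa_\Bi\cong\mathbb{S}^\bla(P_{\Bi^1}\boxtimes\cdots\boxtimes P_{\Bi^\ell})$ is correct, and the base case of your induction is fine: when $\Bi^\ell=\emptyset$ the functor $\fI_{\la_\ell}$ applied to a projective literally adds a red strand at the right, so the result is again projective and Theorem~\ref{Uq-action} applies directly.

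The gap is in the inductive step on $|\Bi^\ell|$. You have a surjection $\fF_{i_n}S^\kappa_{\Bi'}\twoheadrightarrow S^\kappa_\Bi$, and by induction $[\fF_{i_n}S^\kappa_{\Bi'}]=F_{i_n}s^\kappa_{\Bi'}$. To conclude you must compute the class of the kernel $K'$. The short exact sequence gives only $[K']=F_{i_n}s^\kappa_{\Bi'}-[S^\kappa_\Bi]$, which is exactly what you are trying to determine. Your suggestion is that $K'$ is (or is filtered by) standardizations with strictly smaller last block, whose classes are known by induction; indeed there is an obvious surjection from such a standardization onto $K'$ (send the new strand across the last red). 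But this map is only known to be \emph{surjective}, not an isomorphism---proving injectivity amounts to a dimension count, and that dimension count is precisely the content of the proposition. The assertion that the extra coproduct terms are ``killed in the standard quotient'' conflates membership in $L^\kappa_\Bi$ (which is clear) with knowing the class of $L^\kappa_\Bi$ (which is not).

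The paper's proof sidesteps this circularity by never asserting that the filtration pieces of $\fF_iS$ or $\fE_iS$ are standards; it uses only the surjections to obtain \emph{inequalities} $\dim_q\Hom(P,S)\le\langle v,s\rangle$, one from the $\fE_i$-filtration and one from the $\fF_i$-filtration. Because adjunction matches $\dim_q\Hom(\fF_iP,S)$ with $\dim_q\Hom(P,\fE_iS)$ and the bilinear form matches $\langle F_iv,s\rangle$ with $\langle v,E_is\rangle$, the two chains of inequalities squeeze to an equality. Only \emph{after} this proposition is established does the paper conclude that the surjections onto the filtration pieces are isomorphisms---the very fact your argument needs as input.
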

\begin{proof}
This proof depends on two inequalities, which we will use to ``squeeze'' the inner products of the two sides of the equality with projectives.  First, we prove by induction that
\begin{equation}\label{sta-ineq}
\dim_q \Hom(P^{\kappa_1}_{\Bi_1},S^{\kappa_2}_{\Bi_2})\leq
\langle
v_{\Bi_1}^{\kappa_1},s_{\Bi_2}^{\kappa_2}\rangle.
\end{equation}
 Consider the restriction of a standard module
  $\fE_iS^\kappa_\Bi$. This carries a filtration by submodules $q_i$
  where $q_i$ is the submodule generated by the collection of diagrams
  where the rightmost strand at the top lands to the right of the
  $i$th strand and left of the $i+1$st at the bottom.

 \begin{figure}[ht]
    \centering
    \begin{equation*}
      \begin{tikzpicture}[very thick,yscale=1]
        \draw[wei] (1,-1) -- +(0,2) node[at
        start,below]{$\la_1$} node[at end,above]{$\la_1$}; \draw
        (1.5,-1) -- +(0,2); \node at (2.3,0){$\cdots$}; \draw[wei]
        (3.5,-1) -- +(0,2) node[at start,below]{$\la_{m+1}$} node[at
        end,above]{$\la_{m+1}$}; \draw (4,-1) -- +(0,2); \draw (6,-1) --
        +(0,2); 
        \node at (5,0){$\cdots$}; \draw[postaction={decorate,decoration={markings,
    mark=at position .5 with {\arrow[scale=1.3]{>}}}}] (6.7,-1) to[in=40,out=140] node[at start,below=2pt]{$i$}
        (2.8,-1) node[at end,below=2pt]{$i$} ;
      \end{tikzpicture}
    \end{equation*}
    \caption{The filtration on $\fE_iS^\kappa_\Bi$.}
    \label{el-p}
  \end{figure}

  We let $\kappa_m$ and $\Bi_m$ be associated to the sequence pictured
  at the bottom of Figure \ref{el-p}. 
  Then we have a natural map 
\begin{multline}
S_i=\mathbb{S}^{\bla}(\cdots \boxtimes P_{\Bi^{m-1}}\boxtimes \fE_iP_{\Bi^m}\boxtimes P_{\Bi^{m+1}} \boxtimes\cdots )\left(\sum_{j=1}^{m-1}\langle\al_i,\la_j-\bal(j)\rangle\right) \to q_i/q_{i+1}.
\end{multline}
sending a $\boxtimes$ of diagrams to the horizontal composition of those diagrams with the strand attaching to $\fE_i$ pulled through the bottom of all the diagrams to its right (see Figure \ref{horiz-map}). 
\begin{figure}
\centering
    \begin{equation*}
      \begin{tikzpicture}[very thick,yscale=1.5]
        \draw[wei] (1,-1) -- +(0,2) node[at
        start,below]{$\la_1$} node[at end,above]{$\la_1$}; 
  \node (a) [inner xsep=12pt, inner ysep=10pt,draw] at (2,-.5){$d_1$};
\draw (a.-130) -- +(0,-.13);
\draw (a.-50) -- +(0,-.13);
\draw (a.130) -- +(0,1.13);
\draw (a.50) -- +(0,1.13);
        \draw[wei] (3,-1) -- +(0,2) node[at
        start,below]{$\la_2$} node[at end,above]{$\la_2$}; 

\draw[wei] (4.5,-1) -- +(0,2) node[at start,below]{$\la_m$} 
  node[at   end,above]{$\la_m$}; 
\node (a) [inner xsep=12pt, inner ysep=10pt,draw] at (5.5,-.5){$d_m$};
\draw (a.-130) -- +(0,-.13);
\draw (a.-50) -- +(0,-.13);
\draw (a.130) -- +(0,1.13);
\draw (12.7,-1) to[in=10,out=160]   (a.50);
  \draw[wei] (6.5,-1) -- +(0,2) node[at start,below]{$\la_{m+1}$} 
  node[at   end,above]{$\la_{m+1}$}; 
  \node (a) [inner xsep=10pt, inner ysep=10pt,draw] at (7.5,.5){$d_{m+1}$};
\draw (a.-130) -- +(0,-1.13);
\draw (a.-50) -- +(0,-1.13);
\draw (a.130) -- +(0,.13);
\draw (a.50) -- +(0,.13);
\draw[wei] (8.5,-1) -- +(0,2) node[at start,below]{$\la_{m+2}$} 
  node[at   end,above]{$\la_{m+2}$}; 
   \node at (9.25,.4){$\cdots$};
   \node at (9.25,-.4){$\cdots$}; 
  \draw[wei] (10.5,-1) -- +(0,2) node[at start,below]{$\la_{\ell}$} 
  node[at   end,above]{$\la_{\ell}$}; 
  \node (a) [inner xsep=12pt, inner ysep=10pt,draw] at (11.5,.5){$d_\ell$};
\draw (a.-130) -- +(0,-1.13);
\draw (a.-50) -- +(0,-1.13);
\draw (a.130) -- +(0,.13);
\draw (a.50) -- +(0,.13);
        \node at (3.75,0){$\cdots$}; 

      \end{tikzpicture}
    \end{equation*}
\caption{The map to $q_m/q_{m+1}$}
\label{horiz-map}
\end{figure}
This map is clearly surjective, so applying the induction hypothesis, we see that
\begin{multline*}\dim \Hom(\fF_iP^{\kappa}_{\Bi},S^{\kappa'}_{\Bi'})=\dim \Hom(P^{\kappa}_{\Bi},\fE_iS^{\kappa'}_{\Bi'})\leq \sum_{j=1}^{\ell}\dim \Hom(P^{\kappa}_{\Bi},S_j)\\ \leq \sum_{j=1}^{\ell}\langle
v_{\Bi_1}^{\kappa},E_i^{(j)}s_{\Bi'}^{\kappa'}\rangle_1=\langle
v_{\Bi}^{\kappa},E_is_{\Bi'}^{\kappa'}\rangle_1=
\langle
F_iv_{\Bi}^{\kappa},s_{\Bi'}^{\kappa'}\rangle_1,
\end{multline*}
where $E_i^{(j)}$ is $E_i$ just acting in the $j$th tensor factor.

If $\kappa(\ell)\neq n$, then we can write $P^\kappa_\Bi$ as the image of a $\fF_i$, and this shows the induction step.
If $\kappa(\ell)=n$, then either Hom to a standard is 0, or the red strand can be removed from both.  This shows the inequality (\ref{sta-ineq}).

Now, we move on to showing the equality $$\dim_q \Hom(P^{\kappa_1}_{\Bi_1},S^{\kappa_2}_{\Bi_2})=
\langle
v_{\Bi_1}^{\kappa_1},s_{\Bi_2}^{\kappa_2}\rangle.$$
This will immediately imply the desired result by non-degeneracy of the Euler form.

Consider the module
  \begin{math}
    \fF_iS^\kappa_\Bi.
  \end{math}
equipped with the filtration consisting of
  submodules $p_m$ generated by diagrams where the black strand
  starting at the far right never passes left of the $m$th red strand.

Acting on the element $x_m$ depicted in the Figure \ref{sta-filt} induces a map $$S^{\kappa_m}_{\Bi_m}\left(-\sum_{j=m+1}^{\ell}\langle\al_i,\la_j-\bal(j)\rangle\right)\to p_m/p_{m-1}$$ which is clearly surjective.   \begin{figure}[ht]
    \centering
    \begin{equation*}
      \begin{tikzpicture}[very thick,yscale=1]
        \draw[wei] (1,-1) -- +(0,2) node[at
        start,below]{$\la_1$} node[at end,above]{$\la_1$}; \draw
        (1.5,-1) -- +(0,2); \node at (2.3,0){$\cdots$}; \draw[wei]
        (3.5,-1) -- +(0,2) node[at start,below]{$\la_{m+1}$} node[at
        end,above]{$\la_{m+1}$}; \draw (4,-1) -- +(0,2); \draw (6,-1) --
        +(0,2); 
        \node at (5,0){$\cdots$}; \draw (6.7,-1) to[in=-60,out=140] node[at start,below=2pt]{$i$}
        (2.6,1) node[at end,above=2pt]{$i$} ;
      \end{tikzpicture}
    \end{equation*}
    \caption{The element $x_m$ inducing the filtration on $\fF_iS^\kappa_\Bi$.}
    \label{sta-filt}
\end{figure}

Thus, we obtain a second inequality  \begin{multline*}\dim \Hom(\fE_iP^{\kappa}_{\Bi},S^{\kappa'}_{\Bi'})=\dim \Hom(P^{\kappa}_{\Bi},\fF_iS^{\kappa'}_{\Bi'})\leq \sum_{j=1}^{\ell}\dim \Hom(P^{\kappa}_{\Bi},S^{\kappa'_j}_{\Bi'_j})\\ \leq \sum_{j=1}^{\ell}\langle
v_{\Bi}^{\kappa},s_{\Bi_j'}^{\kappa_j'}\rangle_1=\langle
v_{\Bi}^{\kappa},F_is_{\Bi'}^{\kappa'}\rangle_1=
\langle
E_iv_{\Bi}^{\kappa},s_{\Bi'}^{\kappa'}\rangle_1.\end{multline*}
Since the initial and final quantities are equal by induction, the above can only hold if the inequality (\ref{sta-ineq}) is always an equality.
\end{proof}

We note that we have now shown that the morphisms between standard modules and successive quotients of $\fF_iS^\kappa_\Bi$ and $\fE_iS^\kappa_\Bi$ must be isomorphisms for dimension reasons.
\begin{cor}  For any standard module $S^\kappa_\Bi$, the modules
  $\fF_iS^\kappa_\Bi$ and $\fE_iS^\kappa_\Bi$ possess 
standard filtrations that categorify the
  identities \begin{multline*} \Delta^{(\ell)}(E_i)=E_i\otimes
    1\otimes \cdots \otimes 1+\tilde K_i\otimes E_i\otimes 1\otimes
    \cdots \otimes 1+ \cdots + \\ \tilde K_i\otimes\cdots \otimes
    \tilde K_i\otimes E_i \otimes 1+\tilde K_i\otimes \cdots\otimes
    \tilde K_i\otimes E_i.\end{multline*}
  \begin{multline*}
    \Delta^{(\ell)}(F_i)=F_i\otimes \tilde K_{-i} \otimes \cdots
    \otimes \tilde K_{-i}+1\otimes F_i\otimes \tilde K_{-i} \otimes
    \cdots \otimes \tilde K_{-i}+\cdots +\\ 1\otimes \cdots \otimes
    1\otimes F_i\otimes \tilde K_{-i}+ 1\otimes \cdots \otimes
    1\otimes F_i.
  \end{multline*}
\end{cor}

This result also shows that the exactness of the standardization
functor:
\begin{prop}\label{standard-exact}
  The standardization functor
  $\mathbb{S}^{\bla_1;\dots;\bla_m}:\cata^{\bla_1;\dots;\bla_\ell}\to
  \cata^{\bla}$ is exact.
\end{prop}
\begin{proof}
Note that we need only consider the case where $m=\ell$ and
$\bla_i=(\la_i)$.  We induct as in the proof of Theorem \ref{Uq-action} on $n$ and
$\ell$.  
  It suffices to prove that
  $\Hom(P_\Bi^\kappa,\mathbb{S}^{\bla}(-))$ is always exact since
  every indecomposable projective is a summand of $P_\Bi^\kappa$.  

Unless $n=\kappa(\ell)$, the projective $P_\Bi^\kappa$ is a sum of summands of modules of the form $\fF_i(P')$.
  Thus, we can use the
  adjunction \[\Hom(\fF_i(P'),\mathbb{S}^{\bla}(-))\cong
  \Hom(P',\fE_i\mathbb{S}^{\bla}(-)).\]  Since
  $\fE_i\mathbb{S}^{\bla}(M)$ is filtered by the
  modules $\mathbb{S}^{\bla}({}_j\fE_iM)$ where
  ${}_j\fE_i$ is the categorification functor applied in the $j$th
  tensor factor.  
By induction, we have that
$\mathbb{S}^{\bla}({}_j\fE_i(-))$ is exact, so this
establishes this induction step.

If $n=\kappa(\ell)$, then $
\Hom(P_\Bi^\kappa,\mathbb{S}^{\bla}(M))$ is the same as
$\Hom(P_\Bi^{\kappa^-},\mathbb{S}^{(\la_1,\dots, \la_{\ell-1})}(M^+))$
where $M^+$ is the $\alg^{\la_1}\otimes \cdots\otimes
\alg^{\la_{\ell-1}}$ submodule in $M$ where the weight for
$\alg^{\la_\ell}$ is $\la_\ell$.  Since $M\mapsto M^+$ is exact (it is
the projection of a sum of idempotents), by induction $M\mapsto
\mathbb{S}^{(\la_1,\dots, \la_{\ell-1})}(M^+)$ is exact as well.  This
completes the induction step, and thus the proof.
\end{proof}

\subsection{Simple modules and crystals}
\label{sec:crystal}

Lauda and Vazirani show that there is a natural crystal structure on simple representations of $R^\la=\alg^\la$, which is isomorphic to the usual highest weight crystal $\mathcal{B}(\la)$.  A similar crystal structure exists for simples of $\alg^\bla$; we denote the set of isomorphism classes of simple modules by $\mathcal{B}^\bla$.

\nc{\te}{\tilde{e}}
\nc{\tf}{\tilde{f}}
\nc{\soc}{\operatorname{soc}}

Note that we have a candidate for a map $$h\colon \mathcal{B}^{\la_1}\times\cdots \times \mathcal{B}^{\la_\ell}\to \mathcal{B}^\bla,$$
given by $\cosoc \mathbb{S}^{\bla}(L_1\boxtimes\cdots\boxtimes L_\ell)$; it's not immediately obvious that this module is simple, but in fact, we have already shown in the proof of Theorem \ref{Uq-action} that this map is well-defined, and surjective.  Since $\cata^{\bla}$ and $\cata^{\la_1;\dots,;\la_\ell}$ has the same number of simples, we have that

\begin{thm}\label{h-bijection}
The map $h$ defines a bijection.
\end{thm}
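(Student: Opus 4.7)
The plan is to reduce the bijectivity of $h$ to a cardinality argument, since surjectivity has already been established in the course of proving Theorem \ref{Uq-action}. Both source and target of $h$ decompose as disjoint unions over the weight grading, so it suffices to fix a weight $\nu$ and show that the corresponding restrictions $h_\nu\colon \bigsqcup_{\nu_1+\cdots+\nu_\ell=\nu}\mathcal{B}^{\la_1}_{\nu_1}\times\cdots\times\mathcal{B}^{\la_\ell}_{\nu_\ell}\to \mathcal{B}^\bla_\nu$ are bijections of finite sets.

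First I would count the target: by Theorem \ref{Uq-action}, $K_0(\alg^\bla)\cong V_\bla^\Z$ as $\Z[q,q^{-1}]$-modules, and specializing $q=1$ shows that the number of isomorphism classes of simple (ungraded) $\alg^\bla$-modules of weight $\nu$ equals $\dim_\C (V_\bla)_\nu$; keeping track of grading shifts one sees $|\mathcal{B}^\bla_\nu|=\dim_\C(V_\bla)_\nu$ (where elements of $\mathcal{B}^\bla_\nu$ are taken up to grading shift). Applied to each single factor $\bla=(\la_j)$, the same theorem gives $|\mathcal{B}^{\la_j}_{\nu_j}|=\dim_\C(V_{\la_j})_{\nu_j}$. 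The source of $h_\nu$ therefore has cardinality
\[
  \sum_{\nu_1+\cdots+\nu_\ell=\nu}\prod_{j=1}^{\ell}\dim_\C(V_{\la_j})_{\nu_j}=\dim_\C(V_{\la_1}\otimes\cdots\otimes V_{\la_\ell})_\nu=\dim_\C(V_\bla)_\nu,
\]
which coincides with the target cardinality.

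Finally, I would invoke surjectivity of $h$ (already noted in the statement and established via the argument in the proof of Theorem \ref{Uq-action}, where for each simple $L$ one constructs a surjection $\mathbb{S}^\bla(K)\twoheadrightarrow L$ with $K$ a simple $\alg^{\la_1}\otimes\cdots\otimes \alg^{\la_\ell}$-module): a surjective map between finite sets of equal cardinality is a bijection, so $h_\nu$ is bijective for every $\nu$, and hence $h$ is a bijection.

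The only mildly delicate point is the bookkeeping with grading shifts: one must declare how $\mathcal{B}^\bla$ is to be understood (classes of simples up to isomorphism and grading shift) and make sure the Grothendieck group computation is interpreted accordingly. Once this convention is fixed, the argument is essentially a cardinality comparison and requires no further input beyond Theorem \ref{Uq-action} and the already-established surjectivity of $h$.
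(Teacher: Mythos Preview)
Your proposal is correct and follows essentially the same approach as the paper: the paper observes (in the paragraph immediately preceding the theorem) that $h$ was shown to be well-defined and surjective in the proof of Theorem~\ref{Uq-action}, and that $\cata^{\bla}$ and $\cata^{\la_1;\dots;\la_\ell}$ have the same number of simples, whence the bijection. Your weight-by-weight cardinality count via the Grothendieck group isomorphism is just a slightly more explicit version of the same argument.
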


For a simple module $L$, the modules $\cosoc(\fF_iL)$, and $\soc(\fE_iL)$ are both several copies of a single simple module.  We define $\tf_i(L)$ and $\te_i(L)$ to be these simples.

\begin{thm}
  These operators make the classes of the simple modules a perfect basis of $K_0(\alg^\bla)$ in the sense of Berenstein and Kazhdan \cite[Definition 5.30]{BeKa}.  In particular, they define a crystal structure on simple modules.
\end{thm}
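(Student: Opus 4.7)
The plan is to adapt the Lauda-Vazirani argument \cite{LV} for cyclotomic quiver Hecke algebras to the tensor product setting. By Proposition~\ref{full-action}, for each vertex $i$ the pair $(\fE_i,\fF_i)$ equips $\cata^\bla$ with a categorical $\mathfrak{sl}_2$-action in the sense of Chuang-Rouquier: the functors are exact, biadjoint up to grading shift, and their $n$-fold compositions carry a nilHecke action yielding divided-power summands $\fE_i^{(n)},\fF_i^{(n)}$. The weight on $\mathcal{B}^\bla$ is inherited from the weight-space decomposition of $K_0(\alg^\bla)\cong V_\bla^{\Z}$ given by Theorem~\ref{Uq-action}.

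The key technical step, and the main obstacle, is to show that whenever $\fE_iL\neq 0$ for a simple $L$, the module $\fE_iL$ has simple cosocle, so that we may set $\te_iL=\cosoc(\fE_iL)$ (and dually $\tf_iL=\cosoc(\fF_iL)$). Writing $\varepsilon_i(L)=\max\{n:\fE_i^nL\neq 0\}$, I would exhibit $\fE_i^{(\varepsilon_i(L))}L$ as a summand of $\fE_i^{\varepsilon_i(L)}L$ with local endomorphism ring (via the nilHecke action and a Nakayama-type argument), deduce that it has simple cosocle, and then transport this back to $\fE_iL$ using the biadjunction together with the categorified Serre relations of $\tU$. Crucially, this argument involves only the categorical $\mathfrak{sl}_2$-structure at a single colour, so the proof from the cyclotomic case transfers verbatim once Proposition~\ref{full-action} is invoked; no genuinely new input beyond that proposition is required.

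With cosocle simplicity in hand, the Berenstein-Kazhdan axioms reduce to the identity
\[
[\fF_iL]=c_L\,[\tf_iL]+\sum_{L'\neq \tf_iL} c_{L'}\,[L'] \qquad \text{in } K_0(\alg^\bla),
\]
where $c_L$ is a positive element of $\Z[q,q^{-1}]$ and each $L'$ appearing satisfies $\varepsilon_i(L')<\varepsilon_i(\tf_iL)$. The first claim is immediate from the cosocle surjection $\fF_iL\twoheadrightarrow\tf_iL$, while the second holds because any composition factor of $\rad(\fF_iL)$ sits in a strictly deeper layer of the $\fE_i$-nilpotency filtration on $\fF_iL$, forcing a strict drop in $\varepsilon_i$. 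The axiom for $\fE_i$ is dual, and the remaining crystal relations---that $\te_i\tf_iL=L$ when $\tf_iL\neq 0$, and $\varphi_i(L)-\varepsilon_i(L)=\al_i^\vee(\wt(L))$---then fall out formally from biadjointness, exactness, and the decomposition of $K_0(\alg^\bla)$ into $\mathfrak{sl}_2$-weight strings at the vertex $i$.
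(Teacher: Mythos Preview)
Your approach is the same as the paper's---both invoke the categorical $\mathfrak{sl}_2$ machinery (Chuang--Rouquier/Lauda--Vazirani) furnished by Proposition~\ref{full-action}, and both reduce the perfect-basis axiom to understanding $\fE_i^{\varepsilon_i(L)}L$. The paper's proof is simply terser: it observes directly that $\fE_i^a L$ (with $a=\varepsilon_i(L)$) is semisimple and isotypic, because any simple submodule $K\subset\fE_i^a L$ satisfies $\fE_iK=0$, giving a surjection $\fF_i^{(a)}K\twoheadrightarrow L$ by adjunction, and then exactness of $\fE_i^a$ makes $\fE_i^aL$ a quotient of $\fE_i^a\fF_i^{(a)}K$, which is a sum of shifts of $K$ by the $\mathfrak{sl}_2$ relations. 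From this, any composition factor of $\fE_iL$ other than $\te_iL$ is killed by $\te_i^{a-1}$, which is exactly the Berenstein--Kazhdan condition.

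One small imprecision in your sketch: ``local endomorphism ring $\Rightarrow$ simple cosocle'' is false for general finite-dimensional modules (indecomposable does not imply simple head). What actually holds here is stronger---$\fE_i^{(a)}L$ is \emph{simple}, not merely indecomposable---and this is precisely what the paper's semisimplicity argument gives. Your ``transport back to $\fE_iL$'' step and the invocation of ``categorified Serre relations'' are also unnecessary once you have isotypicity of $\fE_i^aL$: the $\varepsilon_i$-drop for other composition factors of $\fE_iL$ follows immediately by applying the exact functor $\fE_i^{a-1}$. So your plan works, but the paper's route is more direct and avoids the detour through endomorphism rings.
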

\begin{proof}
This proof uses entirely standard techniques.  If $a$ is the largest integer such that $\te_i^a(L)\neq 0$, then $\fE_i^a(L)$ is semi-simple; in fact, it is a sum of copies of $\te_i^a(L)$ (since $\fF_i^{(a)}(\te_i^a(L))$ surjects onto $L$).  In particular, any other simple constituent of $\fE_i(L)$ is killed by $\te_i^{a-1}$.  This is the definition of a perfect basis.
\end{proof}

\begin{prop}
Any simple module $L\in \mathcal{B}^\bla$ is isomorphic to its dual: $L\cong L^\star$.
\end{prop}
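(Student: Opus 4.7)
The plan is to reduce self-duality for simples of $\alg^\bla$ to the single-red-strand case via the bijection $h$ of Theorem~\ref{h-bijection}, and then to appeal to self-duality of simples over the cyclotomic quiver Hecke algebra $R^\la$.

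First I would observe that $\star$ is an exact contravariant self-equivalence of $\cata^\bla$, so $L^\star$ is again a simple module. The idempotent $e_\bal$ associated to a root function $\bal$ is a crossingless diagram, hence fixed by the reflecting anti-involution $\dot{}$; this yields a natural isomorphism $L^\star e_\bal \cong (L e_\bal)^\star$ as graded modules over $\alg^{\la_1}\otimes\cdots\otimes \alg^{\la_\ell}$. In particular, the set of $\bal$'s for which $L e_\bal\neq 0$ is the same as for $L^\star$, so the maximal root functions coincide. From the proof of Theorem~\ref{Uq-action}, at this maximal $\bal$ the truncation $L e_\bal$ is a simple $\bigotimes_i\alg^{\la_i}$-module of the form $L_1\boxtimes\cdots\boxtimes L_\ell$, and this data recovers $L$ uniquely via $h$. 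Since duality on a tensor product algebra is the tensor product of dualities factor-by-factor, $L^\star e_\bal \cong L_1^\star\boxtimes\cdots\boxtimes L_\ell^\star$, whence $L^\star \cong h(L_1^\star,\dots,L_\ell^\star)$. Thus the problem reduces to showing $L_i^\star \cong L_i$ for each single-red-strand factor, i.e. the case $\ell=1$.

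For $\ell=1$, $\alg^\la=R^\la$ by Theorem~\ref{cyclotomic}, and I would prove self-duality of simples by induction along the connected highest-weight crystal $\mathcal{B}(\la)$ from Section~\ref{sec:crystal}. The highest-weight simple is one-dimensional and manifestly self-dual. For any other simple $L$, there is some $i$ with $L\cong\tf_i L'$ for a higher simple $L'$, and using the biadjunction $\fF_i\dashv\fE_i$ supplied by Theorem~\ref{cyc-action} together with the Frobenius trace $\tau_\la$ identified there, I would verify the commutation $(\fF_i M)^\star \cong \fF_i(M^\star)$ up to grading shift; then taking cosocles gives $(\tf_i L')^\star\cong \tf_i(L'^\star)\cong \tf_i L'$ by the inductive hypothesis, completing the induction. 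The main obstacle is the verification of this commutation of $\star$ with the categorification functors with precisely the correct grading shift, which amounts to checking that the Frobenius pairing implementing biadjunction in $\tU$ is compatible with the horizontal-reflection anti-involution on diagrams; once this is in hand, the remainder of the argument is bookkeeping with the crystal structure.
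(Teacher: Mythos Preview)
Your overall architecture matches the paper's: reduce to the single-red-strand case via the bijection $h$, then run a crystal induction using that $\fF_i$ commutes with $\star$. Your reduction step is in fact a bit cleaner than the paper's---where you directly read off $h^{-1}(L^\star)=(L_1^\star,\dots,L_\ell^\star)$ from $L^\star e_\bal\cong (Le_\bal)^\star$, the paper instead builds an explicit nonzero map $\mathbb{S}^\bla(L_1\boxtimes\cdots\boxtimes L_\ell)\to \mathbb{S}^\bla(L_1^\star\boxtimes\cdots\boxtimes L_\ell^\star)^\star$ and checks it is nonzero on $e_\bal$. Both arguments use the same underlying fact.

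There is, however, a genuine gap in your inductive step. From the commutation $(\fF_i M)^\star\cong \fF_i(M^\star)$ you cannot simply ``take cosocles'' to obtain $(\tilde f_i L')^\star\cong \tilde f_i(L'^\star)$: duality exchanges socle and cosocle, so what you actually get is
\[
(\tilde f_i L')^\star=(\cosoc\fF_iL')^\star\cong \soc\bigl((\fF_iL')^\star\bigr)\cong \soc\bigl(\fF_i(L')^\star\bigr),
\]
whereas $\tilde f_i(L'^\star)=\cosoc\bigl(\fF_i(L')^\star\bigr)$. Even using $L'\cong L'^\star$, this only tells you that $\fF_iL'$ is self-dual, hence that its socle is the \emph{dual} of its cosocle; it does not yet say they are isomorphic. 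The missing ingredient---and this is precisely where the paper spends its effort---is to produce a nonzero endomorphism of $\fF_iL'$ whose image lies in the socle. The paper does this via the biadjunction: the composite $\fF_iL\to \fF_i\fE_i\fF_iL\to \fF_iL$ of unit and counit is nonzero because, after applying $\fE_i$, it restricts to the identity on the obvious copy of $L$ inside $\fE_i\fF_iL$. Since the cosocle and socle of $\fF_iL'$ are each isotypic for a single simple, this forces them to share that simple, and $\tilde f_i(L')$ is self-dual. So the biadjunction is needed not merely to verify $(\fF_iM)^\star\cong\fF_i(M^\star)$, as you suggest, but to bridge socle and cosocle.
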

\begin{proof}
\excise{First, we note that if $L$ is self-dual, then so is $\fF_iL$, since $\fF_i$ commutes with $\star$.  
Thus, the socle of $\fF_iL$ is isomorphic to the dual of the cosocle.
On the other hand, since $\fE_i$ and $\fF_i$ are biadjoint, we have an
induced map $\fF_iL\to \fF_iL$ sending the cosocle to the socle, which
induces an isomorphism from the obvious quotient copy of $L$ to the
obvious sub copy of $L$ inside $\fE_i\fF_iL$ (after applying $\fE_i$
to the map).  This map thus must induce an isomorphism between a
summand of the socle to a summand of the cosocle.  Thus,
$\tilde{f}_i(L)$ is self-dual.  Of course, an analogous argument shows
that $\tilde{e}_i$ also preserves self-dual modules.  Thus, the result
is clear for $\mathcal{B}^\la$.}
This was shown for $\mathcal{B}^\la$ by Khovanov and Lauda in \cite[\S
3.2]{KLI}.

Now, consider the general case.  On the subcategory of modules killed by $e_{\bal'}$ for $\bal'\nleq\bal$, the functor $M\mapsto Me_\bal$ is a functor to $\cata^{\la_1;\dots;\la_\ell}$, which has a left adjoint $\mathbb{S}^\bla$ and right adjoint $\star\circ \mathbb{S}^\bla\circ \star$.  Obviously, the socle of $\mathbb{S}^\bla(L_1\boxtimes\cdots\boxtimes L_\ell)^\star$ and the cosocle of $\mathbb{S}^\bla(L_1\boxtimes\cdots\boxtimes L_\ell)$ are dual simple modules.  

On the other hand, we have a map 
\[\mathbb{S}^\bla(L_1\boxtimes\cdots\boxtimes L_\ell)\to \mathbb{S}^\bla(L_1^\star\boxtimes\cdots\boxtimes L_\ell^\star)^\star\]
This map is non-zero, since the induced map \[\mathbb{S}^\bla(L_1\boxtimes\cdots\boxtimes L_\ell)e_\bal\to\mathbb{S}^\bla(L_1^\star\boxtimes\cdots\boxtimes L_\ell^\star)^\star e_\bal\] is the identity map from $L_1\boxtimes\cdots\boxtimes L_\ell$ to $L_1\boxtimes\cdots\boxtimes L_\ell$. On the other hand, its image lies in the socle, and thus is an isomorphism from $h(L_1,\dots,L_\ell)$ to its dual, since we already know $L_i$ is self-dual.
\end{proof}

Since $K_0(\alg^\bla)\cong V_\bla$, this implies that an isomorphism of crystals exists between $\mathcal{B}^\bla$ and  $\mathcal{B}^{\la_1}\times \cdots\times \mathcal{B}^{\la_\ell}$ without actually determining what it is.
\begin{conj}
  The crystal structure induced on $\mathcal{B}^\bla$ by $h$ has
  Kashiwara operators given by $\tf_i$ and $\te_i$, where
  $\mathcal{B}^{\la_1}\times \cdots\times \mathcal{B}^{\la_\ell}$ is
  endowed with the tensor product crystal structure.
\end{conj}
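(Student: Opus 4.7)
The plan is to prove the conjecture by reduction to the case $\ell = 2$, then analyze standard filtrations of $\fF_i$ and $\fE_i$ applied to standard modules. Associativity of standardization (in the form of the factoring $\mathbb{S}^\bla \cong \mathbb{S}^{(\la_1,\dots,\la_{\ell-1});\la_\ell} \circ (\mathbb{S}^{(\la_1,\dots,\la_{\ell-1})} \boxtimes \id)$, which follows from the module structure in Section \ref{sec:module-full}), together with Theorem \ref{h-bijection}, reduces the general tensor product rule to the two-factor rule, just as Kashiwara's tensor product crystal is defined inductively.

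For $\ell = 2$, I would exploit the short exact sequence of standard-filtered modules
\begin{equation*}
0 \to \mathbb{S}^{(\la_1,\la_2)}(\fF_i L_1 \boxtimes L_2)\langle c \rangle \to \fF_i \mathbb{S}^{(\la_1,\la_2)}(L_1 \boxtimes L_2) \to \mathbb{S}^{(\la_1,\la_2)}(L_1 \boxtimes \fF_i L_2) \to 0
\end{equation*}
(and its $\fE_i$-analogue) extracted from the filtration $\{q_i\}$, $\{p_m\}$ constructed in the proof of Theorem \ref{Uq-action}, which categorifies $\Delta(F_i) = F_i \otimes \tilde K_{-i} + 1 \otimes F_i$. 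Applying cosocle to the outer terms yields two candidate quotients containing $h(\tf_i L_1, L_2)$ and $h(L_1, \tf_i L_2)$ respectively. The first step is to show that $\cosoc \fF_i h(L_1,L_2)$ is a direct sum of copies of exactly one of these simples.

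Next, to decide which candidate occurs, I would compute $\varepsilon_i(h(L_1,L_2))$ directly. By the biadjunction of $\fE_i$ and $\fF_i$ (Theorem \ref{cyc-action}, Proposition \ref{full-action}) and the perfect basis property, $\varepsilon_i$ is read off from the largest power $a$ with $\fE_i^{\,a} h(L_1,L_2) \neq 0$. Iterating the $\fE_i$-analogue of the displayed sequence and using the known crystal structure on each $\mathcal{B}^{\la_j}$ from Lauda-Vazirani, I expect to show
\begin{equation*}
\varepsilon_i(h(L_1,L_2)) = \max\bigl(\varepsilon_i(L_2),\ \varepsilon_i(L_1) - \langle \al_i^\vee, \wt L_2\rangle\bigr),
\end{equation*}
which is exactly Kashiwara's formula. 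Feeding this into the two candidate quotients and matching with $\varphi_i$ forces $\tf_i h(L_1,L_2)$ to equal $h(\tf_i L_1, L_2)$ when $\varphi_i(L_1) > \varepsilon_i(L_2)$ and $h(L_1, \tf_i L_2)$ otherwise; the same argument run with $\star$-duality and the self-duality of simples from the previous proposition handles $\te_i$.

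The main obstacle is the middle step: the standard modules $\mathbb{S}^{(\la_1,\la_2)}(M \boxtimes N)$ are generally not simple, and the displayed short exact sequence is not split, so both layers could in principle contribute to the cosocle of $\fF_i h(L_1,L_2)$. The crucial point will be to show that the ``wrong'' layer contributes only to the radical. I would approach this by exploiting minimality of the root function $\bal$ supporting $h(L_1,L_2)$ as in the surjectivity argument of Theorem \ref{Uq-action}: only the unique layer whose cosocle is supported at the correct minimal $\bal$ can descend to the cosocle of $\fF_i h(L_1,L_2)$, the other being pushed into lower strata of the standard stratification and hence into the radical.
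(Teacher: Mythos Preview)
The statement you are attempting to prove is labeled a \emph{Conjecture} in the paper, and the paper gives no proof of it. Immediately after the statement, the paper only remarks that since $K_0(\alg^\bla)\cong V_\bla$, \emph{some} crystal isomorphism between $\mathcal{B}^\bla$ and $\mathcal{B}^{\la_1}\times\cdots\times\mathcal{B}^{\la_\ell}$ must exist, ``without actually determining what it is.'' So there is no proof in the paper to compare your proposal against.

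That said, your outline is a reasonable strategy, and you have correctly located the genuine difficulty. The reduction to $\ell=2$ via iterated standardization is sound, and the two-step filtration on $\fF_i\mathbb{S}^{(\la_1,\la_2)}(L_1\boxtimes L_2)$ is exactly the structure one has available. The problem is the step you flag yourself: the displayed short exact sequence has as its terms standardizations of $\fF_iL_j\boxtimes L_k$, not of $\tf_iL_j\boxtimes L_k$, and $\fF_iL_j$ is typically far from simple. Passing to cosocles does not commute with $\mathbb{S}$, so neither layer has cosocle equal to a single $h(\tf_iL_1,L_2)$ or $h(L_1,\tf_iL_2)$; each can contain many simples. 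Your proposed fix via the root function $\bal$ does not obviously suffice: both candidate simples $h(\tf_iL_1,L_2)$ and $h(L_1,\tf_iL_2)$ live at root functions differing from that of $h(L_1,L_2)$ by a single $\al_i$, and the dominance order alone does not distinguish which one survives to the cosocle. What is actually needed is finer control over how the Lauda--Vazirani $\varepsilon_i$-grading on each factor interacts with the extension in the sequence, and that is precisely what the paper does not supply and why the statement remains a conjecture here.
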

This conjecture is shown in recent work of the author and Losev \cite[7.2]{LoWe}.

Choose any infinite sequence $\{i_1,\dots,\}$ of simple roots such
that each root appears infinitely often.  For any element $v$ of a
highest weight crystal, there are unique integers $\{a_1,\dots\}$ such
that $\cdots \te_{i_2}^{a_2}\te_{i_1}^{a_1}v=v_{high}$ and
$\te_k^{a_k+1}\cdots \te_{i_1}^{a_1}v=0$; the parametrization of the
elements of the crystal by this tuple is called the ``string
parametrization.''

Our system of projectives $P^\kappa_\Bi$ is quite redundant; there are many more of them than there are simple modules, as Proposition \ref{h-bijection} shows.  We can produce a smaller projective generators by using string parametrizations.

\begin{defn}
We call a sequence $(\Bi,\kappa)$ {\bf stringy} if the sequence of $i$'s between the $j$th and $j+1$st red lines is the string parametrization of a crystal basis vector in $V_{\la_j}$.

We will implicitly use the canonical identification between stringy sequences and $\mathcal{B}^{\bla}$ via $h$.  
\end{defn}

As in Khovanov and Lauda \cite[\S 3.2]{KLI}, we order the elements of
the crystal by first decreasing weight (so that the smallest element
is the highest weight vector) and then lexicographically by the string
parametrization.

For the tensor product crystal, we use the dominance order on
$\bal$'s, with the order on nodes in the factors used lexicographically to break ties.

\begin{prop}\label{prop:stringy}
The projective cover of any simple appears as a summand of $P^{\kappa}_{\Bi}$ where   $(\Bi,\kappa)$ is the corresponding stringy sequence.  This cover is, in fact, the unique indecomposable summand which doesn't appear in $P^{\kappa'}_{\Bi'}$ for $(\Bi',\kappa')>(\Bi,\kappa)$.  
\end{prop}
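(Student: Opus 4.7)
The plan is to rephrase the statement in terms of idempotents. Since $P^\kappa_\Bi = e(\Bi,\kappa)\alg^\bla$, the indecomposable projective cover $P(L)$ of a simple $L$ appears as a summand of $P^\kappa_\Bi$ if and only if $L\cdot e(\Bi,\kappa)\neq 0$; moreover, the multiplicity of $P(L)$ as a summand equals $\dim L\cdot e(\Bi,\kappa)$ (after appropriate grading shift). Writing $L=h(L_1,\dots,L_\ell)$ with stringy sequence $(\Bi_\circ,\kappa_\circ)$, it therefore suffices to show that $L\cdot e(\Bi_\circ,\kappa_\circ)\neq 0$ and that $L\cdot e(\Bi,\kappa)=0$ for every strictly larger $(\Bi,\kappa)>(\Bi_\circ,\kappa_\circ)$.

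The central input is extracted from the proof of Theorem \ref{Uq-action}. There it was shown that if $\bal$ is maximal among root functions with $Le_\bal\neq 0$, then $L\cdot e_\bal$ is precisely the simple $L_1\boxtimes\cdots\boxtimes L_\ell$ in $\cata^{\la_1;\dots;\la_\ell}$, and $L\cdot e_{\bal'}=0$ for every $\bal'$ not dominated by $\bal$. This $\bal$ is nothing but $\bal_{\Bi_\circ,\kappa_\circ}$. The first half disposes of the case $\bal_{\Bi,\kappa}>\bal_{\Bi_\circ,\kappa_\circ}$ in the uniqueness claim. When $\bal_{\Bi,\kappa}=\bal_{\Bi_\circ,\kappa_\circ}$, the idempotent $e(\Bi,\kappa)$ factors as $e(\Bj_1)\otimes\cdots\otimes e(\Bj_\ell)$ inside $\alg^{\la_1}\otimes\cdots\otimes\alg^{\la_\ell}$ acting on $L\cdot e_\bal$, where $\Bj_j$ is the string of black strands between the $j$th and $(j{+}1)$st red lines of $(\Bi,\kappa)$. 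Hence
\begin{equation*}
L\cdot e(\Bi,\kappa)\;=\;(L_1\cdot e(\Bj_1))\otimes\cdots\otimes(L_\ell\cdot e(\Bj_\ell)).
\end{equation*}

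At this point we invoke the analogous statement for each cyclotomic factor $\alg^{\la_j}\cong R^{\la_j}$, established by Khovanov--Lauda and Lauda--Vazirani: for the simple $L_j$, the projective cover appears as a summand of $P_{\Bj_j^\circ}$ with $\Bj_j^\circ$ the string parametrization of $L_j$, and does not appear in any $P_{\Bj_j}$ with $\Bj_j>\Bj_j^\circ$. Equivalently, $L_j\cdot e(\Bj_j^\circ)\neq 0$ and $L_j\cdot e(\Bj_j)=0$ for every strictly larger $\Bj_j$. For the stringy sequence $(\Bi_\circ,\kappa_\circ)$ we have $\Bj_j=\Bj_j^\circ$ for each $j$, proving $L\cdot e(\Bi_\circ,\kappa_\circ)\neq 0$. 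If instead $(\Bi,\kappa)>(\Bi_\circ,\kappa_\circ)$ with the same $\bal$, then by our ordering convention some $\Bj_j$ is lex-larger than $\Bj_j^\circ$, so the corresponding tensor factor vanishes.

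The main technical obstacle is bookkeeping: one must check that the preorder on $(\Bi,\kappa)$-sequences (dominance of $\bal$, then lex on string parameters of each red-to-red block) is compatible, under the identification $L\cdot e_\bal\cong L_1\boxtimes\cdots\boxtimes L_\ell$, with the per-factor Khovanov--Lauda orderings of the $\Bj_j$. Once this compatibility is verified, the two displayed non-vanishing/vanishing statements combine to give both the existence of $P(L)$ as a summand of $P^{\kappa_\circ}_{\Bi_\circ}$ and its absence from every strictly larger $P^\kappa_\Bi$, which together with the bijection of Theorem \ref{h-bijection} pins $P(L)$ down as the unique such summand.
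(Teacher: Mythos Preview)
Your approach is essentially the same as the paper's: both reduce the question to the maximality of the root function $\bal$ (drawn from the proof of Theorem~\ref{Uq-action}) and then, within a fixed $\bal$, to the Khovanov--Lauda triangularity result \cite[Lemma 3.7]{KLI} applied factor by factor.

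There is, however, a small gap in your uniqueness argument. Your opening reduction (``it therefore suffices to show $L\cdot e(\Bi_\circ,\kappa_\circ)\neq 0$ and $L\cdot e(\Bi,\kappa)=0$ for every strictly larger $(\Bi,\kappa)$'') establishes that $P(L)$ is a summand of $P^{\kappa_\circ}_{\Bi_\circ}$ and does not appear in any strictly larger $P^{\kappa}_{\Bi}$. But it does not show that \emph{every other} summand of $P^{\kappa_\circ}_{\Bi_\circ}$ does appear in some strictly larger projective, and the bijection of Theorem~\ref{h-bijection} alone does not close this gap: since the dominance order on root functions is only partial, knowing $L'\cdot e(\Bi_\circ,\kappa_\circ)\neq 0$ and your statement (a) for $L'$ only tells you that $(\Bi_\circ,\kappa_\circ)$ is not strictly larger than the stringy sequence of $L'$, not that the latter is strictly larger than the former. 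The paper instead argues in the converse direction: if $L'\cdot e(\Bi_\circ,\kappa_\circ)\neq 0$ then each $L'_j$ appears as a composition factor of the corresponding block, hence (by \cite[Lemma 3.7]{KLI}) has string parametrization $\geq \Bj_j^\circ$, so the stringy sequence of $L'$ is $\geq(\Bi_\circ,\kappa_\circ)$. You have all the tools to make this work---in particular your observation that $L\cdot e_{\bal'}=0$ for $\bal'\not\leq\bal$ handles the incomparable-root-function case, and the order within a fixed $\bal$ is total---but you should state the implication in this direction rather than relying on the bijection to ``pin down'' uniqueness.
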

As a matter of convention, we call the root function of the stringy sequence where an indecomposable projective first appears the root function of that projective.
\begin{proof}
Obviously, $P^{\kappa}_{\Bi}\twoheadrightarrow S^{\kappa}_{\Bi}=\mathbb{S}^{\bla}(\fF_{i_{\kappa(2)}}^{(a_{\kappa(2)})}\cdots \fF_{i_1}^{(a_1)}P_\emptyset\boxtimes\cdots\boxtimes
\fF_{i_{n}}^{(a_n)}\cdots \fF_{i_{\kappa(\ell)+1}}^{(a_{\kappa(\ell)+1})}P_\emptyset)$ which in turn surjects to the corresponding simple, by the definition of Kashiwara operators on simple modules, and the map $h$.  Thus, the indecomposable projective cover is a summand of $P^{\kappa}_{\Bi}$.

For a simple $L$, there is only a map of $P^\kappa_\Bi$ to $L$ if $Le_{\Bi,\kappa}\neq 0$, which is impossible for $(\Bi,\kappa)$ stringy unless $L$ is the image under $h$ of simples that appear in $\fF_{i_{\kappa(j)}}^{(a_{\kappa(j)})}\cdots \fF_{i_{\kappa(j-1)+1}}^{(a_{\kappa(j-1)})}P_\emptyset$, or $L$ is higher in the dominance order. Since only larger simples in Khovanov and Lauda's order appear in  $\fF_{i_{\kappa(j)}}^{(a_{\kappa(j)})}\cdots \fF_{i_{\kappa(j-1)+1}}^{(a_{\kappa(j-1)})}P_\emptyset$ by \cite[Lemma 3.7]{KLI}, the projective cover of any simple which appears other than that for our chosen stringy sequence is a summand in a projective for a higher stringy sequence.
\end{proof}

For an indecomposable projective $P$, its {\bf standard quotient} is its quotient under the sum of all images of maps from projectives with higher root sequences.  This coincides with its image in $S^{\kappa}_\Bi$, the standard quotient for its associated stringy sequence.  This standard quotient is indecomposable, since it is a quotient of an indecomposable projective.

\begin{prop}
Consider $({\Bi},{\kappa})$ with the associated root function $\bal$.  Then the sum of indecomposable summands of $P_{\Bi}^{\kappa}$ that have the same root function surject to $S^\kappa_\Bi$, which is a direct sum of the standard quotients of those projectives.
\end{prop}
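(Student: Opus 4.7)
The plan is to decompose $P_\Bi^\kappa$ according to the root functions of its indecomposable summands, then use Proposition~\ref{st-trace} to control how each piece interacts with the defining submodule $L^\kappa_\Bi$. Write $P_\Bi^\kappa \cong Q_{=\bal}\oplus Q_{>\bal}$, where $Q_{=\bal}$ (respectively $Q_{>\bal}$) collects the indecomposable summands whose root function equals (respectively strictly dominates) $\bal$. The proposition will then reduce to showing (i) no summand has root function incomparable to or strictly below $\bal$, (ii) $Q_{>\bal} \subseteq L^\kappa_\Bi$, and (iii) on each indecomposable summand $Q$ of $Q_{=\bal}$, the intersection $Q\cap L^\kappa_\Bi$ is precisely the trace in $Q$ of projectives with strictly greater root function.

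For (i), I observe that any indecomposable summand $P_Q$ of $P_\Bi^\kappa$ has root function dominating $\bal$: the nonvanishing $L_Q\,e(\Bi,\kappa)\neq 0$ forces $L_Q\,e_\bal\neq 0$ since $e(\Bi,\kappa)$ is a sub-idempotent of $e_\bal$, and by Theorem~\ref{h-bijection} together with the definition just before this proposition, the root function of $P_Q$ equals the maximal $\bal'$ with $L_Q\,e_{\bal'}\neq 0$, which dominates $\bal$. For (ii), the preceding proposition produces a stringy sequence $(\Bi',\kappa')$ of root function $\bal'>\bal$ such that $Q$ is a summand of $P^{\kappa'}_{\Bi'}$; since then $(\Bi',\kappa')>(\Bi,\kappa)$ in the preorder, the inclusion $Q\hookrightarrow P_\Bi^\kappa$ factors as $Q\hookrightarrow P^{\kappa'}_{\Bi'}\twoheadrightarrow Q\hookrightarrow P_\Bi^\kappa$, exhibiting its image as that of a map out of a strictly higher projective and hence landing in $L^\kappa_\Bi$.

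For (iii), the direct-sum decomposition yields $L^\kappa_\Bi = Q_{>\bal}\oplus (L^\kappa_\Bi\cap Q_{=\bal})$, and because each indecomposable summand $Q_j$ of $Q_{=\bal}$ is itself a direct summand of $P_\Bi^\kappa$, a map from a strictly higher projective into $Q_j$ is the same datum as a map into $P_\Bi^\kappa$ whose image is contained in $Q_j$; thus $Q_j\cap L^\kappa_\Bi$ is exactly the defining submodule for the standard quotient of $Q_j$. Passing to the quotient gives $S^\kappa_\Bi \cong \bigoplus_j Q_j/(Q_j\cap L^\kappa_\Bi)$, which is the claimed direct sum of standard quotients. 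The main obstacle is the support statement in step (i), namely that $L_Q\,e_\bal\neq 0$ forces $\bal$ to be dominated by the root function of $L_Q$; this rests on the realization of every simple as a cosocle $h(L_1,\dots,L_\ell)$ of a standardization (Theorem~\ref{h-bijection}) and on the support behavior of the standardization functor as exploited in the proof of Theorem~\ref{Uq-action}.
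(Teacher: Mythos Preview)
Your proof is correct and follows essentially the same approach as the paper's: decompose $P^\kappa_\Bi$ by root function of summands, show the strictly higher summands lie in $L^\kappa_\Bi$, and identify the intersection of $L^\kappa_\Bi$ with each remaining summand as the trace of higher projectives. The paper's proof is terser, simply asserting ``if an indecomposable summand has a different root function, it must be higher,'' whereas you supply justification for this via the support of the associated simple and Theorem~\ref{h-bijection}; your flagged ``main obstacle'' is exactly the point the paper leaves implicit, and your cited ingredients (bijectivity of $h$ plus the maximality argument from the proof of Theorem~\ref{Uq-action}) do suffice once one notes that a finite poset with a unique maximal element has that element as a maximum.
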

\begin{proof}
If an indecomposable summand of $P_{\Bi}^{\kappa}$ has a different root function, it must be higher, so this summand is in the image of a higher stringy projective and thus in $L^\kappa_\Bi$.  Thus, the other summands must surject.

Similarly, it is clear that the intersection of any indecomposable with the same root function with $L^\kappa_\Bi$ is exactly the trace of the projectives with higher root functions.
\end{proof}

Finally, we prove a result which, while somewhat
technical in nature, is very important for understanding how to
decategorify our construction.  As in \cite[\S 2.12]{BGS96}, we let
$C^{\uparrow}(\alg^\bla)$ denote the category of complexes of graded
modules satisfying $C^i_j=0$ for $i\gg 0$ or $i+j\ll 0$.

\begin{thm}
  Every simple module over $\alg^\bla$ has a projective resolution in
  $C^{\uparrow}(\alg^\bla)$.
  In particular, each simple module $L$ has a well-defined class in
  $K_0(\alg^\bla)\otimes_{\Z[q,q^{-1}]}\Z((q))\cong V_\bla$.
\end{thm}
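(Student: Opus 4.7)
The plan is to use the standardly stratified structure from Section~\ref{sec:standard} to reduce the problem to resolving simples over the cyclotomic pieces $\alg^{\la_i}$, and then to transport those resolutions via the standardization functor. Fixing a simple $L\in\cata^\bla$, by Theorem~\ref{h-bijection} we may write $L=\cosoc\,\mathbb{S}^\bla(L_1\boxtimes\cdots\boxtimes L_\ell)$ for uniquely determined simples $L_i$ over $\alg^{\la_i}$. As observed in the proof of Theorem~\ref{Uq-action}, the kernel $K$ of the surjection $\mathbb{S}^\bla(L_1\boxtimes\cdots\boxtimes L_\ell)\twoheadrightarrow L$ is killed by the idempotent $e_\bal$ for the root function $\bal$ of $L$, so every composition factor of $K$ has root function strictly higher than $\bal$ in the dominance order. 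Since any fixed $\cata^\bla_\nu$ contains only finitely many simples, I will induct on root functions: granting the theorem for each composition factor of $K$ produces a resolution of $K$ in $C^{\uparrow}(\alg^\bla)$, which I then splice with a resolution of $\mathbb{S}^\bla(L_1\boxtimes\cdots\boxtimes L_\ell)$ to resolve $L$.

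To resolve $\mathbb{S}^\bla(L_1\boxtimes\cdots\boxtimes L_\ell)$, I will pick a minimal projective resolution $P^{(i)}_\bullet\to L_i$ over each finite-dimensional algebra $\alg^{\la_i}\cong R^{\la_i}$, tensor them over $\K$ to obtain a projective resolution of $L_1\boxtimes\cdots\boxtimes L_\ell$ over $\alg^{\la_1}\otimes\cdots\otimes\alg^{\la_\ell}$, and then apply $\mathbb{S}^\bla$. By Corollary~\ref{split-strands} combined with the description of the bimodule $S_\bal$, the standardization $\mathbb{S}^\bla(P_{\Bi_1}\boxtimes\cdots\boxtimes P_{\Bi_\ell})$ is, up to a grading shift determined by $\bal$, a summand of the projective $P^\kappa_{\Bi_1\cdots\Bi_\ell}$ over $\alg^\bla$, and $\mathbb{S}^\bla$ is exact on complexes of such projectives because its action is simply horizontal composition of diagrams against a fixed pattern of red strands.

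To confirm that the resulting complex lies in $C^{\uparrow}(\alg^\bla)$, the statement reduces --- because $\mathbb{S}^\bla$ shifts gradings by an amount independent of the cohomological degree --- to a degree-growth estimate for minimal resolutions over each $R^{\la_i}$: namely, $(P^{(i)}_k)_j=0$ whenever $k+j$ is sufficiently small. Since each graded piece of $R^{\la_i}$ is finite-dimensional, the graded Nakayama lemma places each syzygy in the graded Jacobson radical of the projective covering it. The hard part will be to verify that this radical lies in strictly positive degrees (after factoring out the degree-zero idempotents), since $R^{\la_i}$ is not strictly positively graded in the naive sense --- same-colored $\psi$-crossings have negative degree. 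These negative-degree elements are nilpotent, hence lie in the radical, and after quotienting by the degree-zero nilpotent ideal what remains is positively graded, which supplies the required growth. Once this is in place, the resolution of $L$ satisfies $C^k_j=0$ for $k+j\ll 0$, and the convergence of $\sum_k(-1)^k[P_k]$ in $K_0(\alg^\bla)\otimes_{\Z[q,q^{-1}]}\Z((q))$ yields the well-defined class $[L]\in V_\bla$.
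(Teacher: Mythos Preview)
Your global architecture matches the paper's: write $L$ as the cosocle of a standardized module, note that the kernel has composition factors with strictly higher root function, and induct. The gap is in the middle step, where you resolve $\mathbb{S}^\bla(L_1\boxtimes\cdots\boxtimes L_\ell)$.

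The claim that $\mathbb{S}^\bla(P_{\Bi_1}\boxtimes\cdots\boxtimes P_{\Bi_\ell})$ is a summand of the projective $P^\kappa_{\Bi_1\cdots\Bi_\ell}$ is false. By definition $\mathbb{S}^\bla$ is tensor product with the bimodule $S$, and $S$ restricted to the root function $\bal$ is the standard quotient $S_\bal$ of $e_\bal\alg^\bla$; hence $\mathbb{S}^\bla$ sends the projective $P_{\Bi_1}\boxtimes\cdots\boxtimes P_{\Bi_\ell}$ to the \emph{standard} module $S^\kappa_\Bi$, a proper quotient of $P^\kappa_\Bi$ rather than a summand. (Corollary~\ref{split-strands} is about merging two adjacent red strands with no black strands between them and is not relevant here.) What you actually obtain by applying $\mathbb{S}^\bla$ to a projective resolution of $L_1\boxtimes\cdots\boxtimes L_\ell$ is a resolution by standard modules, not by projectives. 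The paper's fix is precisely the missing ingredient: each standard module has a \emph{finite} projective resolution (a general feature of standardly stratified algebras, recorded as a corollary just after Proposition~\ref{standard-filtration}), so one replaces each standard term by that finite resolution and passes to the total complex. Finiteness of the replacement ensures the $C^\uparrow$ bounds survive.

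Your handling of the input resolutions over $R^{\la_i}$ is also too soft. From ``same-colored crossings are nilpotent'' it does not follow that the quotient of $R^{\la_i}$ by a degree-zero nilpotent ideal is positively graded; what you really need is that every simple $R^{\la_i}$-module is concentrated in a single graded degree, and that is essentially the statement you are trying to prove. The paper avoids this by running a separate induction along the crystal order of \cite[\S 3.2]{KLI}: the base case is $\la-\al\in\Z_{\geq 0}\,\al_i$, where the algebra is Morita equivalent to the cohomology of a Grassmannian and hence genuinely non-negatively graded; the inductive step forms $M=\mathrm{Ind}_{\al+a_1\al_{i_1},\,a_1\al_{i_1}}\bigl(\tilde e_{i_1}^{\,a_1}L\boxtimes L(i_1^{a_1})\bigr)$, which inherits a $C^\uparrow$ resolution from its tensor factors, surjects onto $L$, and has kernel whose composition factors are strictly smaller by \cite[Theorem~3.7]{KLI}.
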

This observation would be clear if $T^\bla$ were Morita equivalent to
a positively graded algebra.  This case is called {\bf mixed} by Achar
and Stroppel \cite{AchS}, and is carefully worked out in their paper.
As shown in \cite[4.6]{WebwKLR}, this is true when $\K$ is
characteristic 0, the Cartan matrix of $\fg$ is symmetric, and
polynomials $Q_{ij}$ are carefully chosen, but as the example
\cite[5.6]{WebCB} shows, it can fail outside these cases.  Thus, we
instead prove this weaker result.
\begin{proof}
  The proof is by induction on our order above.  First, we
  do the base case of $\bla=(\la)$ and $\la-\al=k\al_i$.  This case,
  $\alg^\bla_\al$ is Morita equivalent to its center, which is the
  cohomology ring on a Grassmannian of $k$-planes in
  $\la^i$-dimensional space.  In particular, it is positively graded,
  so such a resolution exists.

  Now, we bootstrap to the case where $\bla=(\la)$ but $\al$ is
  arbitrary.  In this case, we may assume that $L'=\te_{i_1}^{a_1}L$
  has this type of resolution.  Now, we
  consider $$M=\mathrm{Ind}_{\al+a_1\al_{i_1},a_i\al_{i_1}}L'\boxtimes
  L(i_1^{a_1}),$$ where here we use the notation of \cite[\S
  3.2]{KLI}.  The module $M$ has a projective resolution of the
  prescribed type, by inducing the outer tensors of the resolutions on
  the two factors.  Furthermore, there is a surjection
  $M\twoheadrightarrow L$ whose kernel has composition factors smaller
  in the order given above on simples, by \cite[Theorem
  3.7]{KLI}. Since each of these has an appropriate resolution by
  induction, we may lift the inclusion of each composition factor to a
  map of projective resolutions, and take the cone to obtain a
  resolution of $L$ in $C^{\uparrow}(\alg^\bla)$.

  Finally, we deal with the general case using standardization; let
  $L=h(\{L_i\})$.  By standardizing the resolutions of $L_i$, we
  obtain a standard resolution of
  $\mathbb{S}^{\bla}(L_1\boxtimes\cdots\boxtimes L_\ell)$.  Replacing
  each standard with its finite projective resolution, we obtain a
  projective resolution of the same module.  As before, the kernel of
  the surjection of this module to $L$ has composition factors all
  smaller in the partial order, so we may attach projective
  resolutions of each composition factor to obtain a projective
  resolution of $L$ in $C^{\uparrow}(\alg^\bla)$.
\end{proof}

\subsection{Standard stratification}

Now, we proceed to showing that the algebra $\alg^\bla$ is standardly stratified.  
Consider the set $\Phi$ of permutations of the bottom ends of the
strands which only move black strands into blocks to their left and
are minimal coset representatives for the permutations of the strands
at the top of the diagram.  We first give these a partial order which
only depends only on the resulting idempotent at the top of the
diagram.

So, we first preorder $\Phi$ according to this preorder on the
idempotent $(\Bi_\phi,\kappa_\phi)$ which appears at the top of the
diagram.  Then within the permutations giving a single idempotent, we
use the Bruhat order.  Unlike the preorder above, this is a partial order.

\begin{figure}[ht]
  \centering
  \begin{tikzpicture}[very thick]
      \draw[ultra thick, loosely dotted, -] (0,-.5) to (-1,-.5);
      \draw[ultra thick, loosely dotted, -] (5.5,-.5) to (6.5,-.5);
      \draw (1.9,-2) to[out=90,in=-90] (.5,-.6) to[out=90,in=-90] (.5,1);
      \draw (4,-2) to[out=90,in=-90] (.8,-.3) to[out=90,in=-90] (.8,1);
      \draw (5,-2) to[out=90,in=-90] (1.1,.2) to[out=90,in=-90] (1.1,1);
      \draw[wei] (1.5,-2) -- (1.5,1);
      \draw (2.2,-2) -- (2.2,1);
      \draw (4.3,-2) to[out=90,in=-90] (2.5,.4) to[out=90,in=-90] (2.5,1);
      \draw[wei] (2.9,-2) -- (2.9,1);
      \draw (3.3,-2) -- (3.3,1);
      \draw[wei] (4.7,-2) -- (4.7,1);
      \draw (5.3,-2) to[out=90,in=-90] (4,.4) to[out=90,in=-90] (4,1);
      \draw (5.6,-2) to[out=90,in=-90] (4.3,.5) to[out=90,in=-90] (4.3,1);
    \end{tikzpicture}
\caption{The element $x_\phi$}
\end{figure}

Let $x_\phi$ be an element where we permute the strands exactly
according to a chosen reduced word of $\phi\in \Phi$.  Let $$P_{\leq
  \phi}=\langle x_{\phi'} \vert \phi'\leq \phi\rangle \subset
P_{\Bi}^\kappa \qquad P_{< \phi}=\langle x_{\phi'} \vert \phi'<
\phi\rangle \subset P_{\Bi}^\kappa$$

The element $x_\phi$ is not unique, since it depends on a choice
of reduced word; however, any two choices differ by an element of $L_{<\phi}$,
so the filtration described above is unique.

\begin{prop}\label{standard-filtration}
$P_{\leq \phi}/P_{<\phi}\cong S^{\kappa_{\phi}}_{\Bi_\phi}$.
\end{prop}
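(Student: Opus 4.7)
The plan is to exhibit a natural surjection $\pi\colon S^{\kappa_\phi}_{\Bi_\phi}\twoheadrightarrow P_{\leq\phi}/P_{<\phi}$ and then verify injectivity by a basis comparison using Proposition~\ref{basis}.

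First I would construct the map. Right-multiplication by $x_\phi$ defines a right $\alg^\bla$-homomorphism $f_\phi\colon P^{\kappa_\phi}_{\Bi_\phi}\to P_{\leq\phi}$ sending the canonical idempotent $e(\Bi_\phi,\kappa_\phi)$ to $x_\phi$; concretely $f_\phi(a)=x_\phi\cdot a$, which just stacks $a$ on top of $x_\phi$. Post-composing with the quotient projection gives $\bar f_\phi\colon P^{\kappa_\phi}_{\Bi_\phi}\to P_{\leq\phi}/P_{<\phi}$. By the very definition $P_{\leq\phi}=\langle x_{\phi'}\,|\,\phi'\leq\phi\rangle$, all generators $x_{\phi'}$ with $\phi'<\phi$ die modulo $P_{<\phi}$, so the class of $x_\phi$ alone generates the quotient and $\bar f_\phi$ is surjective.

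Next I would argue $\bar f_\phi$ vanishes on $L^{\kappa_\phi}_{\Bi_\phi}$ and hence factors through $S^{\kappa_\phi}_{\Bi_\phi}$. Each generator of $L^{\kappa_\phi}_{\Bi_\phi}$ is a diagram $d$ containing a standardly violating strand, and we may decompose $d=\sigma d'$ where $\sigma$ consists of the bottom-most left crossing of such a strand. Since $P_{<\phi}$ is a right submodule and $x_\phi d=(x_\phi\sigma)d'$, it suffices to show $x_\phi\sigma\in P_{<\phi}$. The diagram $x_\phi\sigma$ is obtained from $x_\phi$ by stacking one extra leftward crossing on top, so its top idempotent is strictly larger than $(\Bi_\phi,\kappa_\phi)$ in the preorder on idempotents. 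Applying Proposition~\ref{basis} together with the $\mho_n$-straightening bound immediately following it, $x_\phi\sigma$ rewrites as a combination of basis vectors whose associated permutations in $\Phi$ all have strictly larger top idempotents than $\phi$. Under the preorder on $\Phi$ (oriented so that ``more leftward motion'' corresponds to a strictly smaller element of $\Phi$, which is precisely what makes the filtration cut out the standard quotient at each step), each such permutation is strictly below $\phi$, so $x_\phi\sigma\in P_{<\phi}$ as needed.

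For injectivity of the induced surjection $\pi\colon S^{\kappa_\phi}_{\Bi_\phi}\twoheadrightarrow P_{\leq\phi}/P_{<\phi}$, I would argue by a basis count. Proposition~\ref{basis} provides a basis of $\tilde{\alg}^\bla$ parametrised by pairs (reduced word for a red-order-preserving permutation, dot tuple), and hence a spanning set of $\alg^\bla$. Partitioning this spanning set by the associated element of $\Phi$, the vectors whose permutation is exactly $\phi$ span a complement to $P_{<\phi}$ inside $P_{\leq\phi}$. The same family also spans $S^{\kappa_\phi}_{\Bi_\phi}=P^{\kappa_\phi}_{\Bi_\phi}/L^{\kappa_\phi}_{\Bi_\phi}$, because $L^{\kappa_\phi}_{\Bi_\phi}$ is spanned (via Proposition~\ref{st-trace}) by the basis vectors whose permutation lies strictly below $\phi$ in the reversed order. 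The map $\pi$ visibly carries one family onto the other, yielding an isomorphism.

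The main obstacle is the vanishing $\bar f_\phi(L^{\kappa_\phi}_{\Bi_\phi})\subset P_{<\phi}$ in step two. Verifying it cleanly requires reconciling three different orders at once---the idempotent preorder $\leq$, the opposite preorder on $\Phi$ used to index the filtration, and the Bruhat-type order in $\mho_n$ controlling basis expansions---and confirming that \emph{every} term produced when $x_\phi\sigma$ is straightened lands in the correct piece of the filtration. The remaining steps are essentially bookkeeping once this combinatorial core has been checked.
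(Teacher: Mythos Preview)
Your construction of the surjection and the verification that it factors through $S^{\kappa_\phi}_{\Bi_\phi}$ (your steps 1--3) match what the paper does, and in fact you spell out more carefully than the paper why $x_\phi\cdot L^{\kappa_\phi}_{\Bi_\phi}\subset P_{<\phi}$; the paper simply writes ``since this map is surjective'' and moves on.

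The gap is in your step 4. Proposition~\ref{basis} gives a basis of $\tilde{\alg}^\bla$, but only a \emph{spanning set} of $\alg^\bla$: you have no control over the kernel $K^\bla$ of $\tilde{\alg}^\bla\twoheadrightarrow\alg^\bla$, and in particular no reason to believe that it intersects the two filtrations (the one on $P^\kappa_\Bi$ by the $P_{\leq\phi}$'s and the one on $P^{\kappa_\phi}_{\Bi_\phi}$ by $L^{\kappa_\phi}_{\Bi_\phi}$) compatibly. Saying that $\pi$ carries one spanning family bijectively onto another does not force $\pi$ to be injective, and ``span a complement'' is exactly the statement you cannot verify without already knowing the dimensions match.

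The paper sidesteps this entirely by a global dimension count coming from decategorification. Using the results of Section~\ref{sec:decat} (Theorem~\ref{Uq-action} and the computation $\eta([S^\kappa_\Bi])=s^\kappa_\Bi$), one has the identity
\[
v^\kappa_\Bi=\sum_{\phi\in\Phi}q^{-\deg x_\phi}\,s^{\kappa_\phi}_{\Bi_\phi}
\]
in $V_\bla$, and pairing with $[\alg^\bla]$ gives $\dim P^\kappa_\Bi=\sum_\phi\dim S^{\kappa_\phi}_{\Bi_\phi}$. Since the $P_{\leq\phi}$ filter $P^\kappa_\Bi$ and each successive quotient surjects from the corresponding standard, the sum of $\dim P_{\leq\phi}/P_{<\phi}$ is at most $\sum_\phi\dim S^{\kappa_\phi}_{\Bi_\phi}$ with equality forced. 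This is the missing ingredient: you need the Euler-form/Shapovalov-form identification to know the total dimension, and no purely combinatorial basis argument inside $\alg^\bla$ is available to replace it.
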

We note that some of these subquotients are trivial, but in this case
the corresponding standard module is trivial as well.
\begin{proof}
Since this map is surjective, we have $\dim P_{\leq\phi}/P_{<\phi}\leq \dim S^{\kappa_{\phi}}_{\Bi_\phi}$.  On the other hand, we have $v^\kappa_\Bi=\sum_{\phi\in\Phi} q^{-\deg x_\phi} s^{\kappa_\phi}_{\Bi_\phi}$, so taking inner product with $[\alg^\bla]$, we obtain $\dim P^\kappa_\Bi=\sum_{\phi\in\Phi}\dim S^{\kappa_{\phi}}_{\Bi_\phi}$.  

Thus we must have equality above, and the map is an isomorphism for degree reasons.
\end{proof}
\begin{cor}
The algebra $\alg^\bla$ is standardly stratified with standard modules given by the standard quotients of indecomposable projectives, and the preorder on simples/standards/projectives given by the dominance order on root functions $\bal$.
\end{cor}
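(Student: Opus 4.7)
The plan is to deduce the corollary as a direct consequence of Proposition~\ref{standard-filtration} combined with the analysis of indecomposable summands carried out at the end of Section~\ref{sec:crystal}. First I would unpack the filtration of $P^\kappa_\Bi$ by the $S^{\kappa_\phi}_{\Bi_\phi}$ for $\phi\in\Phi$. The extremal index $\phi=\id$ yields the subquotient $S^\kappa_\Bi$ on top, with root function $\bal_{\Bi,\kappa}$; for any non-identity $\phi$, some black strand has been pulled leftward through at least one red strand, which strictly enlarges the partial sums $\sum_{k\leq j}\bal(k)$ defining the dominance order, so $\bal_{\Bi_\phi,\kappa_\phi}>\bal_{\Bi,\kappa}$.

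Next I would pass to the indecomposable summands. For each simple $L$, fix a stringy sequence $(\Bi,\kappa)$ with $\bal_{\Bi,\kappa}=\bal_L$ so that $P(L)$ appears as a summand of $P^\kappa_\Bi$. The ambient filtration restricts to a filtration of $P(L)$ whose subquotients are direct summands of the various $S^{\kappa_\phi}_{\Bi_\phi}$. The final proposition of Section~\ref{sec:crystal} identifies the image of $P(L)$ in $S^\kappa_\Bi$ with its standard quotient $\Delta(L)$, and more generally tells us that each $S^{\kappa_\phi}_{\Bi_\phi}$ splits as a direct sum of the standard quotients $\Delta(L')$ of those indecomposable summands of $P^{\kappa_\phi}_{\Bi_\phi}$ whose root function equals $\bal_{\Bi_\phi,\kappa_\phi}$. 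Each such $L'$ therefore satisfies $\bal_{L'}=\bal_{\Bi_\phi,\kappa_\phi}>\bal_L$. Consequently $P(L)$ carries a filtration by standards $\Delta(L')$ with $\bal_{L'}\geq\bal_L$ in the dominance preorder, with a unique copy of $\Delta(L)$ at the top --- exactly the defining condition of a standardly stratified algebra in the sense of Cline--Parshall--Scott. Grouping simples by root function then yields the semi-orthogonal decomposition of the stratification.

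The only step requiring real care is the passage to indecomposable summands: the filtration of Proposition~\ref{standard-filtration} is defined on the possibly decomposable projective $P^\kappa_\Bi$ and is not a priori compatible with the splitting $P^\kappa_\Bi=\bigoplus_{L'}P(L')^{\oplus m_{L'}}$. This is precisely what the final proposition of Section~\ref{sec:crystal} handles, by describing how the standards at each level break up across indecomposable summands and controlling which root functions can occur; once this compatibility is granted, the remaining argument is bookkeeping with root functions and the dominance preorder.
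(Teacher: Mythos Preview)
Your overall strategy matches the paper's implicit argument: the corollary is stated without proof, as an immediate consequence of Proposition~\ref{standard-filtration}. Your first paragraph, reading off that every $P^\kappa_\Bi$ carries a filtration with top quotient $S^\kappa_\Bi$ and lower subquotients $S^{\kappa_\phi}_{\Bi_\phi}$ of strictly larger root function, is exactly right.

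The gap is in the step you yourself flag: the passage to indecomposable summands. The final proposition of Section~\ref{sec:crystal} does not do what you say it does. That proposition controls only the \emph{top} layer of the filtration: it tells you that the quotient map $P^\kappa_\Bi\twoheadrightarrow S^\kappa_\Bi$ is compatible with the decomposition into indecomposable summands, so that the image of each summand $P(L')$ is either $\Delta(L')$ (when $\bal_{L'}=\bal_{\Bi,\kappa}$) or zero. It says nothing about how the deeper submodules $P_{\leq\phi}$ for $\phi\neq\id$ sit relative to the direct-sum splitting. So your assertion that ``the ambient filtration restricts to a filtration of $P(L)$ whose subquotients are direct summands of the various $S^{\kappa_\phi}_{\Bi_\phi}$'' is unjustified: intersecting a filtration with a direct summand gives subquotients that embed into the original subquotients, but there is no reason for these embeddings to split.

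What you actually need is that the class of $\Delta$-filtered modules is closed under direct summands. This is not a formal consequence of the propositions you cite; it requires the Ext-vanishing of Proposition~\ref{semi-orthogonal}, namely $\Ext^{>0}(\Delta(L),\Delta(L'))=0$ whenever $\bal_L\leq\bal_{L'}$. That proposition is proved using only Proposition~\ref{standard-filtration} and the finite projective resolution corollary, so invoking it here is not circular. Alternatively, one can take the view that Proposition~\ref{standard-filtration} already exhibits a $\Delta$-filtration on the full regular module $\alg^\bla$ with the correct ordering of root functions, which in many formulations (e.g.\ via chains of trace ideals) is the \emph{definition} of standardly stratified; the statement about indecomposable projectives is then part of the general theory rather than something to be re-derived by hand.
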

\begin{cor}
  Every standard module has a finite length projective resolution.
\end{cor}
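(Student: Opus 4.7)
The plan is to induct on the dominance order on root functions. Because the total sum $\sum_j \bal(j)$ is fixed (it equals $\sum_j\la_j$ minus the weight at which we work), and a function $\bal\colon[1,\ell]\to\rola(\fg)$ with a prescribed total sum takes values in a finite set, the dominance order on the relevant root functions is well-founded, and induction on it is legitimate.

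For the base case, fix a standard module $S^\kappa_\Bi$ whose root function $\bal$ is maximal in dominance order. Then no sequence $(\Bi',\kappa')$ is strictly higher than $(\Bi,\kappa)$ in the preorder, so by Proposition \ref{st-trace} the submodule $L^\kappa_\Bi$ vanishes. Hence $S^\kappa_\Bi = P^\kappa_\Bi$ is itself projective, giving the trivial resolution.

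For the inductive step, assume the result for every standard whose root function strictly dominates $\bal$, and consider an arbitrary $S^\kappa_\Bi$ with root function $\bal$. I would start from the short exact sequence
$$0 \to L^\kappa_\Bi \to P^\kappa_\Bi \to S^\kappa_\Bi \to 0$$
and invoke Proposition \ref{standard-filtration}, which equips $L^\kappa_\Bi$ with a finite filtration whose successive quotients are the standard modules $S^{\kappa_\phi}_{\Bi_\phi}$ for $\phi\in\Phi$ with $\phi\neq e$. Each such $\phi$ moves at least one black strand into a block to its left, so at least one partial sum appearing in the dominance order strictly increases and $\bal_\phi$ strictly dominates $\bal$. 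By the induction hypothesis each such subquotient admits a finite projective resolution, and since the class of modules of finite projective dimension is closed under extensions, so does $L^\kappa_\Bi$. Splicing a finite projective resolution of $L^\kappa_\Bi$ with $P^\kappa_\Bi$ via the displayed sequence then yields the desired finite projective resolution of $S^\kappa_\Bi$.

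The step demanding the most care is the strictness claim in the inductive step: one must check that the subquotients appearing in the filtration of $L^\kappa_\Bi$ have root functions strictly higher than $\bal$ rather than merely distinct from or incomparable with it. This comes down to unwinding the description of $\Phi$ in Proposition \ref{standard-filtration} and checking that a nontrivial leftward move of black strands through a red strand enlarges at least one of the partial sums $\sum_{k\leq\kappa(j)}\al_{i_k}$. Once that point is secured the rest is purely formal, depending only on the finiteness of $\Phi$ and closure of finite projective dimension under extensions; as a byproduct the argument yields an explicit bound on the projective dimension of $S^\kappa_\Bi$ in terms of the length of a maximal chain above $\bal$ in dominance order.
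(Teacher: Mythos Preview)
Your proof is correct and follows essentially the same approach as the paper: induct on the dominance order on root functions, using that a maximal standard is projective and that the kernel $L^\kappa_\Bi$ of the quotient map $P^\kappa_\Bi\to S^\kappa_\Bi$ is filtered by standards with strictly higher root function (via Proposition~\ref{standard-filtration}). The paper's argument is more terse but structurally identical; your additional care in verifying well-foundedness and the strictness of the increase in root function for nontrivial $\phi\in\Phi$ is welcome detail rather than a departure in method.
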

This is a standard fact about finite dimensional standardly stratified algebras; in particular, any module with a standard filtration has a well-defined class in $K_0(\alg^\bla)$. 
\begin{proof}
  First note that if a module $M$ is filtered by modules which have
  finite length projective resolutions, these resolutions can be glued
  to give a finite length resolution of the entire module.

  Now, we induct on the partial order $\leq$.  If a standard is
  maximal in this order, it is projective. For an arbitrary standard,
  there is a map $P^{\Bi}_{\kappa}\to   S^{\Bi}_{\kappa}$ with kernel
  filtered by standards higher in the partial order.  Since each of
  these has a finite length projective resolution, $S^\kappa_\Bi$ does
  as well.
\end{proof}

We note that $e(\Bi,\kappa)\alg^\bla e(\Bi,0)$ has a unique element
consisting of a diagram with no dots and no crossings between black
strands which simply pulls red strands to the left and black to the
right. As before, we call this element $\theta_\kappa$ (leaving
$\Bi$ implicit).

\begin{lemma}\label{self-dual-embedding}
The map from $P^\kappa_{\Bi}\to P^{0}_{\Bi}$ given by the action of  $\theta_\kappa$ is injective.
\end{lemma}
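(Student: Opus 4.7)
The plan is to lift the problem to $\tilde{\alg}^\bla$, use the basis of Proposition \ref{basis}, and then descend. Right multiplication by $\theta_\kappa=e(\Bi,\kappa)\psi_{w_\kappa}$ stacks $\theta_\kappa$ on top of a diagram, so it preserves the bottom idempotent of each basis element; moreover, since $\theta_\kappa$ consists only of red/black crossings, relation \eqref{dumb} lets dots slide freely past it, so for any basis element $b=e(\Bi',\kappa')\psi_\Bw y^{\mathbf a}$ of $\tilde{\alg}^\bla e(\Bi,\kappa)$ we have
$$b\,\theta_\kappa \;=\; e(\Bi',\kappa')\,\psi_\Bw\psi_{w_\kappa}\,y^{\sigma(\mathbf a)}$$
for a suitable bijection $\sigma$ on dot positions.

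The crucial observation is that every defining relation of $\tilde{\alg}^\bla$---the quiver Hecke relations of Figure \ref{quiver-hecke}, the sliding relations \eqref{dumb}, and the cap--cup relation \eqref{cost}---is local and cannot change the property that the leftmost strand at each horizontal level is red. Hence $K^\bla$ is spanned by those basis elements whose underlying diagrams are violating, and $P^\kappa_\Bi$ inherits a basis given by the non-violating basis elements of $\tilde{\alg}^\bla e(\Bi,\kappa)$. Since $\kappa(1)=0$ is forced (otherwise $e(\Bi,\kappa)$ is already zero in $\alg^\bla$), the diagram $\theta_\kappa$ is itself non-violating---its first red stays at position $1$ throughout---so $b\theta_\kappa$ is non-violating whenever $b$ is, and the basis expansion of $\psi_\Bw\psi_{w_\kappa}$ lies entirely in the non-violating part.

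Injectivity then follows from distinctness of leading terms: the assignment $(\Bw,\mathbf a)\mapsto(w\cdot w_\kappa,\sigma(\mathbf a))$ is injective on minimal coset representatives and dot tuples, and the Bruhat-lower corrections (together with extra dots supplied by \eqref{cost} in the case $l(ww_\kappa)<l(w)+l(w_\kappa)$) preserve the bottom idempotent and cannot cancel distinct leading terms. A cleaner alternative is to build a partial retraction: let $\theta'_\kappa\in e(\Bi,0)\alg^\bla e(\Bi,\kappa)$ be the horizontal mirror of $\theta_\kappa$, apply \eqref{cost} iteratively to the nested bubbles formed in $\theta_\kappa\theta'_\kappa$ to reduce it to an explicit dot monomial $y^N\in e(\Bi,\kappa)\alg^\bla e(\Bi,\kappa)$, and observe that right multiplication by $y^N$ is manifestly injective on $P^\kappa_\Bi$ because it only shifts dot exponents on non-violating basis elements, forcing $\theta_\kappa$ to be injective as well.

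The main obstacle in either approach is the bookkeeping of the dot corrections produced by \eqref{cost}: one must track carefully that distinct leading terms are not cancelled by lower-order corrections (direct approach), or equivalently that the retraction composition really reduces to a pure dot monomial with predictable exponents (retraction approach). In both cases the partial order on $(\Bi',\kappa',\Bw)$ provided by the bottom idempotent together with Bruhat length on permutations is the right organisational tool.
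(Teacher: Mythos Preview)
Both approaches founder on the same unproven claim: that the non-violating basis vectors of Proposition~\ref{basis} descend to a basis of $\alg^\bla$ (equivalently, that $K^\bla$ is exactly the span of the violating basis vectors). The paper explicitly flags this as ``quite out of reach'' in the paragraph introducing $\tilde\alg$; the relations are local, but straightening a violating diagram into the fixed basis can involve many relation moves and there is no argument here that the correction terms stay violating. Without that, you have no basis for $P^\kappa_\Bi$ to run the leading-term argument on.

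The retraction approach fails outright. You correctly observe that $\theta_\kappa\dot\theta_\kappa$ straightens to a pure dot monomial $y^N$ (this is exactly the element $y_{\Bi,\kappa}$ of Section~\ref{sec:self-dual}), but the assertion that left multiplication by $y^N$ is injective on $P^\kappa_\Bi$ is false. The algebra $\alg^\bla$ is finite-dimensional, so every dot $y_k$ is nilpotent; for $\kappa\neq 0$ the monomial $y^N$ has at least one positive exponent and hence annihilates some nonzero element of $P^\kappa_\Bi$. The reasoning ``it only shifts dot exponents on non-violating basis elements'' again presupposes the basis you do not have.

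The paper's argument avoids all of this by using the standard filtration of Proposition~\ref{standard-filtration}: the set $\Phi$ for $P^\kappa_\Bi$ embeds into the set for $P^0_\Bi$ by postcomposing with the permutation underlying $\theta_\kappa$, the map respects the resulting filtrations, and on each associated graded piece it is visibly an isomorphism of standard modules $S^{\kappa_\phi}_{\Bi_\phi}$. Injectivity then follows from a filtered-to-graded argument, with no need for a basis of $\alg^\bla$.
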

\begin{proof}
Obviously, this map is filtered, where we include $\Phi_{\Bi,\kappa}\subset \Phi_{\Bi,\kappa}$ by precomposing with the permutation that pushes all black strands to the right.  Furthermore, it induces an isomorphism on each successive quotient in this image.  Thus, it is injective.
\end{proof}

We let $\cata^{\la_1;\dots;\la_n}_{\bal}=  \alg_{\al(1)}^{\la_1}\otimes\cdots \otimes \alg_{\al(\ell)}^{\la_\ell}\modu$, and let $\cC^{\bal}$ be the subcategory of modules which have a presentation by standard modules with root function $\bal$.

\begin{prop}\label{semi-orthogonal}
We have a natural isomorphism $$\End_{\alg^\bla}(S_\bal)\cong \alg_{\al(1)}^{\la_1}\otimes\cdots \otimes \alg_{\al(\ell)}^{\la_\ell}.$$
In particular, $\cC^\bal$ is equivalent to
    $\cata^{\la_1;\dots;\la_n}_{\bal}$.  
    The triangulated subcategories generated by $\cC^{\bal}$ form a semi-orthogonal
    decomposition of the derived category $D^+(\cata^\bla_{\al})$ with respect to
    dominance order.
  \end{prop}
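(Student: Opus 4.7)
My plan is to prove the three assertions in order, deducing the equivalence and the semi-orthogonal decomposition from the endomorphism computation. Abbreviate $T_\bal := \alg^{\la_1}_{\al(1)} \otimes \cdots \otimes \alg^{\la_\ell}_{\al(\ell)}$ and identify $\End_{\alg^\bla}(S_\bal)$ with $S_\bal e_\bal = e_\bal \alg^\bla e_\bal / (L_\bal \cap e_\bal \alg^\bla e_\bal)$ in the usual way. Using the basis of Proposition~\ref{basis} for $e_\bal \alg^\bla e_\bal$, I will call a basis element \emph{internal} when its underlying permutation does not move any black strand across any red strand. Iterated use of Corollary~\ref{split-strands} identifies the span of internal basis elements with the image of the natural map $T_\bal \hookrightarrow e_\bal \alg^\bla e_\bal$ of Figure~\ref{stand}. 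To finish, I would show that every non-internal basis element lies in $L_\bal$ by choosing a reduced-word representative that performs the red/black crossings strand-by-strand, exactly as in the proof of Proposition~\ref{standard-filtration}: at the horizontal slice immediately after a leftward excursion, the intermediate idempotent has strictly higher root function than $\bal$, exhibiting the element as a composition through a strictly higher projective and hence an element of $L_\bal$ by Proposition~\ref{st-trace}. An $\mho_n$-length induction then gives that the images of internal basis elements remain linearly independent in $S_\bal e_\bal$, yielding the ring isomorphism $T_\bal \cong \End_{\alg^\bla}(S_\bal)$.

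For the equivalence $\cC^\bal \simeq \cata^{\la_1;\dots;\la_\ell}_\bal$, I will view the standardization functor as the tensor product $\mathbb{S}^\bla = (-) \otimes_{T_\bal} S_\bal$ with the $T_\bal$-$\alg^\bla$-bimodule $S_\bal$. The first part gives $\mathbb{S}^\bla(N) e_\bal \cong N$ naturally, so by tensor-Hom adjunction
\begin{equation*}
\Hom_{\alg^\bla}(\mathbb{S}^\bla(M), \mathbb{S}^\bla(N)) \cong \Hom_{T_\bal}(M, \mathbb{S}^\bla(N) e_\bal) \cong \Hom_{T_\bal}(M, N),
\end{equation*}
which shows that $\mathbb{S}^\bla$ is fully faithful on $T_\bal$-mod. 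Since $\mathbb{S}^\bla$ is right exact and sends $T_\bal$ to $S_\bal$, its essential image contains, and therefore coincides with, the subcategory $\cC^\bal$ of modules admitting a presentation in $\mathrm{add}(S_\bal)$.

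For the semi-orthogonal decomposition, generation of $D^+(\cata^\bla_\al)$ by the $\cC^\bal$'s is immediate from Proposition~\ref{standard-filtration}, whose standard filtrations express every projective as an iterated extension of standards, each of which lies in exactly one $\cC^\bal$. Semi-orthogonality reduces, via part~(2), to the vanishing $R\Hom_{\alg^\bla}(S_\bal, S_{\bal'}) = 0$ whenever $\bal > \bal'$. To establish this, I plan to resolve $S_\bal$ by projectives: iterating Proposition~\ref{standard-filtration} starting from any projective cover of $S_\bal$ (of root function $\bal$) yields a resolution using only projectives of root function $\geq \bal$, because the kernel at each stage carries a standard filtration by standards of strictly higher root function. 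Then for any $P_{\bal''}$ with $\bal'' \geq \bal > \bal'$, the space
\begin{equation*}
\Hom_{\alg^\bla}(P_{\bal''}, S_{\bal'}) = S_{\bal'} e_{\bal''}
\end{equation*}
vanishes by the same intermediate-level rewriting used in part~(1): every basis element of $e_{\bal'} \alg^\bla e_{\bal''}$ with $\bal'' > \bal'$ must pass through a strictly higher idempotent, so it lies in $L_{\bal'}$.

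The main obstacle is the combinatorial rewriting in part~(1), which requires two things at once: that a non-internal diagram can be factored through a projective of strictly higher root function in a basis-compatible way, and that the projections of internal basis elements remain linearly independent modulo $L_\bal$. Both are reorganizations of the straightening algorithm already used in Proposition~\ref{standard-filtration}, but the bookkeeping of reduced words and $\mho_n$-lengths must be handled carefully. Once part~(1) is secured, parts~(2) and~(3) follow by essentially formal machinery (tensor-Hom adjunction and the standard derived-category formalism for standardly stratified algebras).
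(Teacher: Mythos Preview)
Your surjectivity argument in part~(1)---choosing reduced words that perform all red/black crossings after the purely internal permutations, so that any non-internal diagram factors through an idempotent of strictly higher root function---is the same as the paper's, and your treatment of the semi-orthogonal decomposition in part~(3) is likewise essentially identical.

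The genuine gap is the injectivity step. Your claim that ``the images of internal basis elements remain linearly independent in $S_\bal e_\bal$'' is false as stated. For each group $j\geq 2$ the cyclotomic relation $y_1^{\la_j^{i_1}}e(\Bi^{(j)})$ is a non-trivial combination of internal diagrams which \emph{does} lie in $L_\bal$: the $\la_j^{i_1}$ dots on the leftmost black strand of group~$j$ allow it to be pulled left across the $j$th red strand, producing a standardly violating strand. So the natural map is $\tilde T_\bal\to e_\bal\alg^\bla e_\bal\to \End(S_\bal)$, its image is spanned by the internal diagrams, and its kernel contains at least the cyclotomic ideal of $\tilde T_\bal$; what you must prove is that it contains nothing more. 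Your invocation of Corollary~\ref{split-strands} does not do this---that corollary merges two red strands with \emph{no black strands between them}, which is not the present situation---and an $\mho_n$-length induction cannot distinguish the cyclotomic relations from hypothetical further relations in $L_\bal$, since even in length zero (pure dots) there are internal elements lying in $L_\bal$.

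The paper closes this gap by a dimension count rather than by combinatorics. Once the surjection $T_\bal\twoheadrightarrow\End(S_\bal)$ is established, injectivity follows from
\[
\dim\End(S_\bal)=\langle[S_\bal],[S_\bal]\rangle_1=\dim T_\bal.
\]
The first equality uses $\Ext^{>0}(S_\bal,S_\bal)=0$, which is already available from the standard filtration of projectives (Proposition~\ref{standard-filtration}) together with $S_\bal e_{\bal'}=0$ for $\bal'>\bal$. The second equality uses the upper-triangularity of the quasi-$R$-matrix $\Theta^{(\ell)}$: for weight vectors $a,a',b,b'$ with $\operatorname{wt}(a)=\operatorname{wt}(a')$ and $\operatorname{wt}(b)=\operatorname{wt}(b')$ one has $\langle a\otimes b,a'\otimes b'\rangle=\langle a,a'\rangle\langle b,b'\rangle$, which reduces the Euler form of $S_\bal$ to the product of the Euler forms of the cyclotomic factors. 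This global argument avoids any direct analysis of which internal combinations land in $L_\bal$.
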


\begin{proof}
  Since every standard with root function $\bal$ is a summand of
  $S_\bal$ and  $S_\bal$ has trivial higher Exts, it follows
  immediately that
  \begin{equation*}
    \cC^\bal\cong\End^{op}(S_\bal)\modu.
  \end{equation*}

  Let us calculate this endomorphism algebra.  By the projective
  property, every endomorphism of $S_\bal$ is induced by an
  endomorphism of $e_\bal \alg^\bla$.  Thus $\End^{op}(S_\bal)$ is a
  subquotient of $e_\bal \alg^\bla e_\bal$.  We take the
  quotient of the subalgebra which preserves
  the kernel of  $S_\bal$ by the ideal of those that send
  everything to that kernel.

  Apply Proposition \ref{straighten} in the case where each reduced word puts each group of black strands and red immediately to its left in the correct order first, followed by a shortest coset representative for this Young subgroups.  This implies that the diagram from any permutation which has a left crossing has at least one before any right crossings.  By the definition of the standard quotient such a
  diagram acts trivially.  On the other hand,
  an element of $e_\bal \alg^\bla e_\bal$ must have equal numbers of the
  two types of crossings, so our element can be ``straightened'' so
  that no red and black strands ever cross.  Thus, we have a
  surjective map from $\tilde{\alg}_\al^{\la_1}\otimes\cdots \otimes
  \tilde{\alg}_\al^{\la_\ell}$ to $\End(S_\bal)$.

By definition of a standard quotient, the cyclotomic ideal of this tensor product is killed by
  the map to $\End^{op}(S_\bal)$, so we have a surjective map
  $\alg^{\la_1}_{\bal(1)}\otimes \cdots \otimes \alg^{\la_\ell}_{\bal(\ell)}\to \End^{op}(S_\bal)$,
  which we need only show is also injective.  Since $\Ext^{>0}(S_\bal,S_\bal)=0$, this is equivalent to showing that \[\dim \End(S_\bal,S_\bal)=\langle[S_\bal],[S_\bal]\rangle_1=\dim\alg^{\la_1}_{\bal(1)}\otimes \cdots \otimes \alg^{\la_\ell}_{\bal(\ell)}.\] The second equality follows from the equality $\langle a\otimes b,a'\otimes b'\rangle=\langle a,b\rangle \langle a',b'\rangle$ if $a,a'$ and $b,b'$ are weight vectors with each pair having the same weight, which follows, in turn, from the upper-triangularity of $\Theta^{(2)}$.

Finally, we establish the semi-orthogonal decomposition: by Proposition~\ref{standard-filtration}, the subcategory
  generated by $\cC^{\bal'}$ for $\bal'> \bal$ in the dominance order
  is the same as that generated by $P^{\kappa}_{\Bi}$ such that
  $\bal_{\Bi,\kappa}> \bal$. Since all the simple modules in
  $S_\Bi^\kappa$ are given by idempotents $e_{\Bi,\kappa}$ such that
  $\bal_{\Bi,\kappa}\leq \bal$, we have
  \begin{equation*}
    \Ext^\bullet(S_{\Bi'}^{\kappa'},S_{\Bi}^\kappa)=0
  \end{equation*}
  whenever $\bal_{\Bi,\kappa}< \bal_{\Bi',\kappa'}$, and higher
  $\Ext$'s vanish when equality holds.
\end{proof}

\subsection{Self-dual projectives}
\label{sec:self-dual}

One interesting consequence of the module structure over $\tU$ and standard stratification is the
understanding it gives us of the self-dual projectives of our
category.  Self-dual projectives have played a very important role in
understanding the structure of representation theoretic categories
like $\cata^\bla$. For example, the unique self-dual projective in BGG
category $\cO$ for $\fg$ was key in Soergel's
analysis of that category \cite{Soe90,Soe92}, and the self-dual
projectives in category $\cO$ for a rational Cherednik algebra provide
an important perspective on the Knizhnik-Zamolodchikov functor defined
by Ginzburg, Guay, Opdam and Rouquier \cite{GGOR}.  In particular, as
Mazorchuk and Stroppel show \cite{MS}, these modules also play an
important role in the identification of the Serre functor; we will
apply their results to describe the Serre functor of the perfect derived category of $\alg^\bla$-modules in the sequel  \cite{KI-HRT} to this paper.

\begin{thm}\label{self-dual}
If $P$ is an indecomposable projective $\alg^\bla$-module, then the following are equivalent:
\begin{enumerate}
\item $P$ is injective.
\item $P$ is a summand of the injective hull of an indecomposable standard module.
\item $P$ is isomorphic (up to grading shift) to a summand of $P^0$.
\end{enumerate}
\end{thm}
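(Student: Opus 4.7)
My plan is to prove $(2) \Rightarrow (1)$, $(1) \Rightarrow (3)$, $(1) \Rightarrow (2)$, and (most crucially) $(3) \Rightarrow (1)$. The first three are short. The direction $(2) \Rightarrow (1)$ is immediate, since summands of injective modules are injective. For $(1) \Rightarrow (3)$: any indecomposable projective $P$ is (up to grading shift) a summand of some $P_\Bi^\kappa$, and Lemma \ref{self-dual-embedding} gives an embedding $P_\Bi^\kappa \hookrightarrow P_\Bi^0$ via multiplication by $\theta_\kappa$; restricting to the summand $P$ produces $P \hookrightarrow P_\Bi^0$, which splits by the assumed injectivity of $P$, showing $P$ is a summand of $P_\Bi^0$. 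For $(1) \Rightarrow (2)$ I would use Proposition \ref{standard-filtration}: $P = I(L)$, where $L$ is the simple socle of the indecomposable injective $P$, admits a standard filtration, and the first nonzero standard submodule $\Delta_1 \subset P$ has $\socle(\Delta_1) \subseteq \socle(P) = L$, forcing $\socle(\Delta_1) = L$. Decomposing $\Delta_1$ into indecomposable standards, $L$ must sit entirely inside one summand $\Delta$, and then $P = I(L)$ is an indecomposable summand of $I(\Delta)$.

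The main step is $(3) \Rightarrow (1)$, showing each $P_\Bi^0$ is injective. My strategy is to prove $P_\Bi^0$ is self-dual as a right $\alg^\bla$-module: $P_\Bi^0 \cong (P_\Bi^0)^\star$. Since the $\star$-dual of a projective is injective, self-duality gives injectivity at once. By Proposition \ref{self-dual-end} and Theorem \ref{cyclotomic}, $\End_{\alg^\bla}(P^0) \cong \alg^{(\la)} = R^\la$, and Theorem \ref{cyc-action} shows $\alg^{(\la)}$ is symmetric Frobenius with a cyclic trace $\tau_\la$. Using this trace, I would define an $\alg^\bla$-balanced bilinear form
\begin{equation*}
\langle -, -\rangle \colon P_\Bi^0 \otimes \dot{P_\Bi^0} \to \K, \qquad \langle x, y\rangle = \tau_\la(xy),
\end{equation*}
where the product $xy$ lies in $e(\Bi,0) \alg^\bla e(\Bi,0) \subset \alg^{(\la)}$ (and is zero when the top shape of $x$ does not match that of $y$). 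Associativity in $\alg^\bla$ combined with cyclicity of $\tau_\la$ yields the balance identity $\langle xa, y\rangle = \langle x, ay\rangle$, producing the desired homomorphism $P_\Bi^0 \to (P_\Bi^0)^\star$ of right $\alg^\bla$-modules.

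The hard part will be non-degeneracy. My plan is to decompose both $P_\Bi^0$ and $\dot{P_\Bi^0}$ by top shape $(\Bj, \kappa')$, so that the form splits into blocks of the composition pairing
\begin{equation*}
\Hom_{\alg^\bla}(P_\Bj^{\kappa'}, P_\Bi^0) \otimes \Hom_{\alg^\bla}(P_\Bi^0, P_\Bj^{\kappa'}) \longrightarrow \End_{\alg^\bla}(P_\Bi^0) \xrightarrow{\tau_\la} \K.
\end{equation*}
Non-degeneracy on each block should follow from the Frobenius non-degeneracy of $\tau_\la$ on $\alg^{(\la)}$, together with the units and counits of the $(\fE_i, \fF_i)$-biadjunction supplied by Theorem \ref{cyc-action}, which let me express every endomorphism of $P_\Bi^0$ as a composition $\alpha\beta$ for appropriate $\alpha, \beta$, so that the pairing sees every element of $\alg^{(\la)}$. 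Once this non-degeneracy is in hand, the isomorphism $P_\Bi^0 \cong (P_\Bi^0)^\star$ is established and $(3) \Rightarrow (1)$ follows.
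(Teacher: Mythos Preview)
Your argument is essentially the paper's, organized a bit differently. The paper runs the cycle $(3)\Rightarrow(1)\Rightarrow(2)\Rightarrow(3)$; you instead prove $(1)\Leftrightarrow(2)$ and $(1)\Leftrightarrow(3)$. Your $(1)\Rightarrow(3)$---embed $P$ into $P_\Bi^0$ via Lemma~\ref{self-dual-embedding} and split by injectivity of $P$---is in fact cleaner than the paper's $(2)\Rightarrow(3)$, which argues via the socle of standard modules. Your $(3)\Rightarrow(1)$ uses the same pairing $(a,b)\mapsto\tau_\la(a\dot b)$ as the paper.

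The one place your plan does not go through as stated is the non-degeneracy step. What you wrote---express every endomorphism of $P_\Bi^0$ as a composite $\alpha\beta$---is a statement about surjectivity of the composition pairing, not about its non-degeneracy. What is actually required on the block indexed by $(\Bj,\kappa')$ is: for every nonzero $\alpha\in\Hom(P_\Bj^{\kappa'},P_\Bi^0)$ there exists $\beta\in\Hom(P_\Bi^0,P_\Bj^{\kappa'})$ with $\tau_\la(\alpha\beta)\neq 0$. The $(\fE_i,\fF_i)$-biadjunctions do not visibly produce such a $\beta$, and the Frobenius property of $\alg^\la$ alone does not suffice, since $\alpha$ does not lie in $\alg^\la$. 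The fix is to invoke Lemma~\ref{self-dual-embedding} once more (via the anti-automorphism $\dot{\ }$): right multiplication by $\theta_{\kappa'}$ gives an injection $\alg^\bla e(\Bj,\kappa')\hookrightarrow\alg^\bla e(\Bj,0)$, so $\alpha\theta_{\kappa'}$ is a nonzero element of $\alg^\la$; then Frobenius non-degeneracy of $\tau_\la$ supplies $z\in\alg^\la$ with $\tau_\la(\alpha\theta_{\kappa'}z)\neq 0$, and $\beta=\theta_{\kappa'}z$ works. The paper is equally terse on this point.
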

\begin{proof}
$(3)\rightarrow(1)$:
To establish this, we show that $P^0$ is self-dual; that is, there is a non-degenerate pairing
$P^0_\Bi\otimes P^0_\Bi\to \K$.  This is given by $(a,b)=\tau_\la(a\dot
b)$, where $\tau_\la$ is the Frobenius trace on $\End(P^0)\cong
\alg^{\la}$ given in Section \ref{sec:cyc}.  Thus $P^0$ is both
projective and injective, so any summand of it is as well.

$(1)\rightarrow(2)$: Since $P$ is indecomposable and injective, it is the injective hull of any submodule of $P$.  Since $P$ has a standard stratification, it has a submodule which is standard.

$(2)\rightarrow(3)$: We have already established that $P^0$ is injective, so we need only 
establish that any simple in the socle of $S^\kappa_\Bi$ is a summand
of the cosocle of $P^0$ (since the injective hull of $S^\kappa_\Bi$
coincides with that of its socle).  It suffices to show that there is no non-trivial submodule of $S^\kappa_\Bi$ killed by $e_{0,*}$.  If such a submodule $M$ existed, then we would have $M\dot{\theta_\kappa}=0$. Thus, its preimage $M'$ in $P^\kappa_\Bi$ satisfies $M' \dot{\theta_\kappa}\subset L^0_\Bi$.  But the injectivity of Lemma \ref{self-dual-embedding} and the fact that $L^\kappa_\Bi\dot{\theta_\kappa}=L^0_\Bi\cap P^\kappa_\Bi \dot{\theta_\kappa}$, this implies that $M=0$.
\end{proof}

For two rings $A$ and $B$, we say an $A$-$B$ bimodule $M$ has the {\bf
  double centralizer property} if $\End_B(M)=A$ and $\End_A(M)=B$.  In
particular, this implies that if $M$ is projective as a $B$-module,
the functor $$\Hom( M,-):B\modu\to A\modu$$ is fully faithful on
projectives (it could be quite far from being a Morita equivalence, as
the theorem below shows).

\begin{cor}\label{doub-cen}
    The projective-injective $P^0$ has the double centralizer property
  for the actions of $\alg^{\la}$ and $\alg^\bla$ on the left and right.
\end{cor}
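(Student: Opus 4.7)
The plan is to build on Proposition \ref{self-dual-end}, which already gives one half of the double centralizer statement, namely $\End_{\alg^\bla}(P^0) \cong \alg^\la$. What remains is the reverse identification: the natural map $\alg^\bla \to \End_{\alg^\la}(P^0)^{op}$ induced by the right action of $\alg^\bla$ on $P^0$ is an isomorphism. For this I would invoke the standard K\"onig--Slung{\aa}rd--Xi criterion: it suffices to exhibit an exact sequence $0 \to \alg^\bla \to I_0 \to I_1$ of right $\alg^\bla$-modules with $I_0, I_1 \in \operatorname{add}(P^0)$ (i.e.\ to show that $\alg^\bla$ has dominant dimension at least $2$ with respect to $P^0$). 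Injectivity of the natural map itself is easy: summing the embeddings of Lemma \ref{self-dual-embedding} over all $(\Bi,\kappa)$ shows that $\alg^\bla$ acts faithfully on $P^0$ on the right.

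Constructing the embedding $\alg^\bla \hookrightarrow I_0$ is immediate. Decompose $\alg^\bla = \bigoplus_{(\Bi,\kappa)} P^\kappa_\Bi$; on each summand, multiplication by $\theta_\kappa$ gives an inclusion $P^\kappa_\Bi \hookrightarrow P^0_\Bi$ by Lemma \ref{self-dual-embedding}. Taking the direct sum yields the desired embedding into $I_0 = \bigoplus P^0_\Bi \in \operatorname{add}(P^0)$.

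The substantive step is to embed the cokernel $C$ of this map into some $I_1 \in \operatorname{add}(P^0)$. Note that $C$ decomposes as a sum of cokernels $P^0_\Bi / \theta_\kappa P^\kappa_\Bi$, and I would argue that each of these inherits a standard filtration from Proposition \ref{standard-filtration} by matching up those permutations $\phi \in \Phi$ whose associated idempotent is strictly larger in the preorder than $(\Bi,\kappa)$. The key input is then the equivalence $(2)\Leftrightarrow(3)$ of Theorem \ref{self-dual}, which says that the injective hull $I(S)$ of any indecomposable standard module $S$ lies in $\operatorname{add}(P^0)$. Inducting on the length of the standard filtration $0 \subset F_1 \subset \cdots \subset F_n = C$: given an embedding $F_{i-1} \hookrightarrow J_{i-1} \in \operatorname{add}(P^0)$, use injectivity of $J_{i-1}$ to extend $F_{i-1} \hookrightarrow J_{i-1}$ to a map $F_i \to J_{i-1}$, and pair this with the composition $F_i \twoheadrightarrow S_i \hookrightarrow I(S_i)$ to obtain $F_i \hookrightarrow J_{i-1} \oplus I(S_i)$. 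The injectivity of the latter follows because any element of its kernel must lie in $F_{i-1}$ (it dies in $S_i$) and then in the kernel of the extension (which is zero on $F_{i-1}$).

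The main obstacle I anticipate is confirming that the cokernel really does carry a standard filtration whose subquotients are indecomposable standards (so that Theorem \ref{self-dual}(2) applies), rather than just some arbitrary modules with standard filtrations. Concretely, one must match the preorder used in Proposition \ref{standard-filtration} for $P^0_\Bi$ with the "inclusion" of the subset of $\Phi$ coming from $P^\kappa_\Bi$; once done, the subquotients of $P^0_\Bi / \theta_\kappa P^\kappa_\Bi$ are precisely indexed by $\phi \in \Phi$ failing to preserve $(\Bi,\kappa)$ at the top, and each such subquotient is an indecomposable standard $S^{\kappa_\phi}_{\Bi_\phi}$ whose injective hull is in $\operatorname{add}(P^0)$ by Theorem \ref{self-dual}. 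With this identification in hand, the rest of the argument is formal.
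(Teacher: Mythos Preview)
Your proposal is correct and is essentially the paper's argument unpacked: the paper's proof is a one-line citation of \cite[Corollary 2.6]{MS}, whose hypothesis is exactly that the injective hull of every (indecomposable) standard is projective-injective, i.e.\ Theorem~\ref{self-dual}$(2)\Leftrightarrow(3)$. What you have written is a by-hand verification of the dominant-dimension-$\geq 2$ criterion that underlies that Mazorchuk--Stroppel result; the key input in both is the same.

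Your flagged obstacle is real but easily handled, and in fact you do not need to analyze the specific cokernel $P^0_{\Bi}/\theta_\kappa P^\kappa_{\Bi}$ at all. The cleaner route (which is what the MS machinery does abstractly) is: any module $M$ with a standard filtration embeds into a module $T\in\operatorname{add}(P^0)$ with $T/M$ again standardly filtered. One builds $T$ by induction on the filtration length exactly as you describe, using Theorem~\ref{self-dual} at each step; since $T$ is projective it too has a standard filtration, and the Ext-orthogonality of standards in a standardly stratified algebra forces $T/M\in\mathcal{F}(\Delta)$. Applying this twice to each indecomposable projective gives the required copresentation, with no need to match up the particular $\Phi$-indexings. (Your worry about \emph{indecomposable} standards is also unnecessary: each $S^{\kappa_\phi}_{\Bi_\phi}$ is a direct sum of standard quotients of indecomposable projectives, so its injective hull is in $\operatorname{add}(P^0)$ regardless.)
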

\begin{proof}
By \cite[Corollary 2.6]{MS}, this follows immediately from the fact that the injective hull of an indecomposable standard is also a summand of $P^0$.
\end{proof}
Thus, in this case, our algebra can be realized as the endomorphisms
of a collection of modules over $R^\la$, in a way analogous to the
realization of a regular block of category $\cO$ as the modules over
endomorphisms of a particular module over the coinvariant algebra, or
of the cyclotomic $q$-Schur algebra as the endomorphisms of a module
over the Hecke algebra.

In fact, these modules are easy to identify.  Given $(\Bi,\kappa)$, we
consider the element $y_{\Bi,\kappa}$ of $P^0_{\Bi}$ given
by $$y_{\Bi,\kappa}=e_{\Bi}\prod_{j=1}^{\ell}\prod_{k=\kappa(j)+1}^ny_k^{\la_j^{i_k}}.$$
Pictorially this is given by multiplying the element with no
black/black crossings going from $(\Bi,0)$ to $(\Bi,\kappa)$ (which we
denote $\vartheta_\kappa$) by its horizontal reflection
$\dot\vartheta_\kappa$, and then straightening the strands.

\begin{figure}[ht]
  \centering
  \begin{tikzpicture}[very thick, scale=1.3]
    
\draw[wei] (1.5,-1) to node[below, at start]{$\la_1$} (1.5,1);
\draw[wei] (2,-1) to[out=50, in=-50] node[below, at start]{$\la_2$} (2,1) ;
\draw[wei] (2.5,-1) to[out=60, in=-60] node[below, at start]{$\la_3$} (2.5,1)  ;
\draw[wei] (3,-1) to[out=10, in=-10] node[below, at start]{$\la_4$}  (3,1) ;
\draw (3.5,-1) to[out=150, in=-90]  node[below, at start]{1}(2,0) to[out=90,  in=-150] (3.5,1) ;
\draw (4,-1)  to[out=150,in=-90] node[below, at start]{5} (3,0) to[out=90,  in=-150] (4,1);
\draw (4.5,-1)  to[out=140,in=-90] node[below, at start]{4} (3.3,0) to[out=90,   in=-140] (4.5,1);
\draw (5,-1) tonode[below, at start]{2}  (5,1) ;
\draw[loosely dashed,thin] (1,0) -- (5.5,0);
\node at (6,.5) {$\dot\theta_{0,1,1,3}$};
\node at (6,-.5) {$\theta_{0,1,1,3}$};
  \end{tikzpicture}

  \caption{The element $y_{(1,5,4,2),(0,1,1,3)}$.}
  \label{fig:yik}
\end{figure}

\begin{prop}\label{prop:add-embed}
The algebra $\alg^\bla$ is isomorphic to the algebra $\End_{\alg^\la}(\bigoplus_{\kappa}y_{\Bi,\kappa} \alg^\la  )$.
\end{prop}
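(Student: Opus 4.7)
The plan is to bootstrap from the double centralizer property of Corollary~\ref{doub-cen}. As a bimodule, $\alg^\bla e_0$ (where $e_0=\sum_\Bi e(\Bi,0)$) carries a left $\alg^\bla$-action by multiplication and a right $\alg^\la$-action via the identification $\alg^\la\cong e_0\alg^\bla e_0$ from Proposition~\ref{self-dual-end}; Corollary~\ref{doub-cen} then gives $\alg^\bla\cong \End_{\alg^\la}(\alg^\bla e_0)$, with $\alg^\bla$ acting by left multiplication. It therefore suffices to exhibit an isomorphism of right $\alg^\la$-modules $\alg^\bla e_0\cong \bigoplus_{(\Bi,\kappa)} y_{\Bi,\kappa}\alg^\la$, after which applying $\End_{\alg^\la}$ yields the proposition.

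The first step is the decomposition $\alg^\bla e_0=\bigoplus_{(\Bi,\kappa)} e(\Bi,\kappa)\alg^\bla e_0$, in which each summand is a right $\alg^\la$-submodule because the right $\alg^\la$-action preserves the left idempotent. When $\kappa\equiv 0$ we have $y_{\Bi,0}=e_\Bi$, and iterated use of Corollary~\ref{split-strands} (merging all red strands into a single one of weight $\la=\sum\la_j$) identifies $e(\Bi,0)\alg^\bla e_0\cong e(\Bi)\alg^\la=y_{\Bi,0}\alg^\la$. For general $\kappa$, Lemma~\ref{self-dual-embedding} provides an injective right-$\alg^\la$-linear map
\[
\vartheta_\kappa\cdot(-)\colon e(\Bi,\kappa)\alg^\bla e_0\hookrightarrow e(\Bi,0)\alg^\bla e_0\cong e(\Bi)\alg^\la.
\]
Since $\dot\vartheta_\kappa\in e(\Bi,\kappa)\alg^\bla e_0$ and $\vartheta_\kappa\cdot\dot\vartheta_\kappa$ straightens (via repeated application of relation~(\ref{cost}) to each resulting red/black lens) to $y_{\Bi,\kappa}$, the image contains $y_{\Bi,\kappa}$ and hence the entire cyclic right ideal $y_{\Bi,\kappa}\alg^\la$.

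The main obstacle is the reverse inclusion: every element of $e(\Bi,\kappa)\alg^\bla e_0$ must be pulled back from $y_{\Bi,\kappa}\alg^\la$ along $\vartheta_\kappa\cdot(-)$. I would establish this by a normal-form argument based on Proposition~\ref{straighten}: for any diagram $D\in e(\Bi,\kappa)\alg^\bla e(\Bj,0)$, choose reduced expressions for the underlying permutation in which the crossings interchanging the red strands with the black strands initially to their left (which are dictated entirely by the disagreement of the idempotents $(\Bi,\kappa)$ and $(\Bi,0)$) occur at the very bottom and recombine into $\dot\vartheta_\kappa$; what sits above then involves only crossings and dots in the region where all reds have already been pulled to the left, and so lies in $e(\Bi,0)\alg^\bla e(\Bj,0)\subset \alg^\la$. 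Writing $D=\dot\vartheta_\kappa\cdot a$ with $a\in\alg^\la$ gives $\vartheta_\kappa\cdot D=y_{\Bi,\kappa}\cdot a\in y_{\Bi,\kappa}\alg^\la$. Assembling these isomorphisms over all $(\Bi,\kappa)$ produces the required decomposition of $\alg^\bla e_0$, and combining with the double centralizer property completes the proof.
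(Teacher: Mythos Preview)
Your proof is correct and follows essentially the same route as the paper's: both invoke the double centralizer property (Corollary~\ref{doub-cen}) to reduce to identifying $e(\Bi,\kappa)\alg^\bla e_0$ with $y_{\Bi,\kappa}\alg^\la$, and both establish this identification by combining the injectivity of multiplication by the straight-line element (Lemma~\ref{self-dual-embedding}) with the normal-form factorization from Proposition~\ref{straighten}. The only cosmetic difference is that you package the argument as computing the image of a single injection via two containments, whereas the paper presents the same data as a surjection--injection factorization whose composite is multiplication by $y_{\Bi,\kappa}$.
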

\begin{proof}
  Based on Corollary \ref{doub-cen}, all we need to show is that
  $\Hom_{\alg^\bla}(P^0,P^{\kappa}_{\Bi})\cong y_{\Bi,\kappa} P^0_{\Bi} $
  as a $\alg^\la$ representation.  A map $m$ from $P^0_{\Bi'}$ to
  $P^\kappa_{\Bi}$ is simply a linear combination of diagrams starting
  at $\Bi$ with the correct placement of red strands and ending at
  $\Bi'$ with all red strands to the right.  By Proposition \ref{straighten}, we
  can assure that all red/black crossings occur above all black/black
  ones, so $m=\vartheta_\kappa m'$, where $m\in \alg^\la$.
 
  Thus, we have
  maps $$\Hom_{\alg^\bla}(P^0,P^{0}_{\Bi})\overset{\vartheta_\kappa}\longrightarrow
  \Hom_{\alg^\bla}(P^0,P^{\kappa}_{\Bi})\overset{\dot\vartheta_\kappa}\longrightarrow
  \Hom_{\alg^\bla}(P^0,P^{0}_{\Bi})$$ given by composition. The first of
  these is surjective, as we argued above. Furthermore, the latter is injective,
  by Proposition \ref{self-dual-embedding}.
  Thus, $\Hom_{\alg^\bla}(P^0,P^{\kappa}_{\Bi})$ is isomorphic to the
  image of the composition of these maps, which is $y_{\Bi,\kappa}
  \alg^\la$.
\end{proof}
For some choices of $\Bi$ and $\kappa$, the element $y_{\Bi,\kappa}$
has already appeared in work of Hu and Mathas \cite{HM}.  Assume that
$\fg=\mathfrak{sl}_n$ and specialize to the case where $\la_j=\omega_{\pi_j}$ for some
$\pi_j$.  As suggested by the notation, we will later want to think of
$\pi_j$ as a composition. We can define stringy sequences for this
algebra using the reduced decomposition of the longest element of the
Weyl group $w_0=s_{n-1}(s_{n-2}s_{n-1})\cdots(s_1\cdots s_{n-1})$.   

As illustrated in Figure \ref{fig:partitions}, the stringy sequences
for the fundamental representation $V_{\omega_i}$ are gotten by
\begin{itemize}
\item taking a partition diagram which fits in an $i\times(n-i)$ box,
\item  filling it with its content, shifted so that the box $(1,1)$ is
  filled with $i$,
\item taking the row-reading word.
\end{itemize}
\begin{figure}
  \centering
  \tikz{
\node (a) at (-3,0) {
\tikz[thick,scale=.5]{
\draw (-4,4)--(0,0); 
\draw (-3,5) -- (1,1);
\draw (-1,5) -- (2,2);
\draw (2,4) -- (3,3);
\draw (-4,4)-- (-3,5);
\draw (-3,3) -- (-1,5);
\draw (-2,2) -- (0,4);
\draw (-1,1) -- (2,4);
\draw (0,0) -- (3,3);
\node at (0,1) {$4$};
\node at (1,2) {$5$};
\node at (2,3) {$6$};
\node at (-1,2) {$3$};
\node at (-2,3) {$2$};
\node at (-3,4) {$1$};
\node at (0,3) {$4$};
\node at (-1,4) {$3$};
\draw[dashed,->] (.3,0) -- (-4,4.3) to[out=135,in=135] (-1,7.8) --
(3.8,3) to[out=-45, in=-45] (1.3,1) -- (-2,4.25) to[out=135,in=135]
(0,6.3) -- (3.3,3) to[out=-45, in=-45] (2.3,2) -- (.5,3.8);
}
};
\node[outer sep=3pt] (b) at (3,0){$(4,3,2,1,5,4,3,6)$};
\draw[very thick, ->] (a)-- (b);
}
  \caption{The stringy sequence attached to a partition for $n=7$ and $i=4$.}
  \label{fig:partitions}
\end{figure}
For a multipartition $\xi=(\xi^{(1)},\dots, \xi^{(\ell)})$, with
$\xi^{(i)}$ fitting in a $\pi_i\times (n-\pi_i)$ box, we can thus
define $(\Bi_\xi,\kappa_\xi)$ where $\Bi_\xi$ is the concatenation of
these row-reading words, and $\kappa_\xi(k)$ is the number of
the boxes in the first $k-1$ partitions.  The element
$y_{\Bi_\xi,\kappa_\xi}$ is exactly that denoted
$\psi_{\mathfrak{t}^\xi \mathfrak{t}^\xi}$ in \cite{HM,HMQ}.

Mathas and Hu have defined another algebra,
which they call a {\bf quiver
    Schur algebra}\footnote{This is an unfortunate terminological clash
    with \cite{SWschur}, where a non-equivalent, but graded Morita
    equivalent algebra is given the same name; after forgetting the
    grading, this is the difference between defining Schur algebras
    using all permutation modules attached to partitions or to
    compositions.} $\mathcal{S}^\la_m$.
\begin{thm}\label{quiver-schur}
  For $\fg=\mathfrak{sl}_n$, the category $\cata^\bla$ is equivalent
  (as a graded category)
  to a sum of blocks of graded representations of $\mathcal{S}^\la_m$ for the charges $(\pi_1,\dots,\pi_\ell)$.
\end{thm}
If we considered the case where $\fg=\mathfrak{sl}_\infty$ (thought of
as the Kac-Moody algebra of the $A_\infty$-quiver), then we could say
that $\cata^\bla$ is simply equivalent to $\oplus_m\mathcal{S}^\la_m\modu$.
\begin{proof}
  By \cite[4.35]{HMQ},  the graded category of projectives in a block of Mathas and Hu's algebra
  is equivalent to an additive subcategory of $\alg^\la\modu$.  By
  Proposition \ref{prop:add-embed}, the graded category of projectives in each weight space of
  $\cata^\bla$ is also equivalent to such a subcategory.  Thus, we
  need only show that we can match these subcategories coincide.  

Each block of $\mathcal{S}^\la_m$ is the sum of images of the
idempotents $e(\Bi)$ where $\Bi$ ranges over all integer sequences
with a fixed number $m_i$ of occurrences of $i$.  As long as only
numbers between $1$ and $n-1$ appear, we can associate to this
multiplicity data a weight $\mu=\la-\sum_im_i\al_i$.  We wish to
show that this block is equivalent to $\cata^\bla_\mu$.  Let $m=\sum m_i$.

The image of projective modules over $\mathcal{S}^\la_m$ is the subcategory
additively generated by $\psi_{\mathfrak{t}^\xi
  \mathfrak{t}^\xi}\alg^\la=y_{\Bi_\xi,\kappa_\xi}\alg^\la$ as we
range over all multipartitions with $m$ boxes fitting inside the correct
$\pi_i\times (n-\pi_i)$ boxes.  These are the same as the images of
the projectives $P_{\Bi_\xi}^{\kappa_\xi}$ under the functor
$\Hom(P^0,-)$.  By Proposition \ref{prop:stringy}, every
indecomposable projective over $\alg^\bla_\mu$ is a summand of a
unique one of these modules, so those which have weight $\mu$ already
additively generate the image of the $\alg^\bla_\mu\mpmod$ in
$\alg^\la_\mu\modu$.  Thus, that image coincides with the corresponding
image for the quiver Schur algebra.
\end{proof}

\section{Comparison to category \texorpdfstring{$\cO$}{O}}
\label{sec:type-A}

\subsection{Cyclotomic degenerate Hecke algebras}

Now, we specialize to the case where $\fg\cong \mathfrak{sl}_n$.  In
this case, we can reinterpret our results in terms of the work of
Brundan and Kleshchev \cite{BKSch,BKKL} who have shown that in this
case, the cyclotomic Khovanov-Lauda algebra is a cyclotomic degenerate
affine Hecke algebra (cdAHA).

Recall that the degenerate affine Hecke algebra (dAHA) is the algebra
with generators $x_1,\dots,x_d$ and $w\in S_d$ such that
$$s_ix_j=x_{s_i\cdot j}s_i-\delta_{j,i}+\delta_{j,i+1}\qquad\qquad x_ix_j=x_jx_i$$ 
for the simple reflections in $s_i\in S_d$ and the usual relations
between permutations.

We have a natural action of $H_d$ on the $\mathfrak{gl}_N$ module
$P\otimes V^{\otimes d}$ for any $\fgl_n$ representation $P$, where
$V=\C^N$ is the defining representation of $\mathfrak{gl}_N$: 
\begin{itemize}
\item  $S_d$ acts on the $d$ copies of $V$, and
\item $x_1$ acts by $C\otimes 1^{\otimes d-1}$ where $C$ is the Casimir element of $\mathfrak{gl}_N$.  
\end{itemize}
 We will
be most interested in the case where $P$ is a certain parabolic Verma
module for a parabolic $\fp$; in this case, by the definition of
induction, $$P\otimes V^{\otimes d}\cong
U(\fgl_n)\otimes_{U(\fp)}(W\otimes V^{\otimes d})$$ for a finite
dimensional representation $W$ of $\fp$.  
\begin{defn}
Parabolic category $\cO$, which we denote $\cO^\fp$, is  the full
subcategory of $\mathfrak{gl}_N$-modules with a weight decomposition
where $\fp$ acts locally finitely.
\end{defn}
Since induction sends finite-dimensional modules to $\fp$-locally finite modules, $P\otimes V^{\otimes d}$ lies in this category.

Attached to each parabolic $\fp\subset\fgl_N$, we have a unique composition
$\pi=(\pi_1,\dots, \pi_\ell)$ such that $\fp$ is conjugate to block-diagonal matrices for this composition (the composition $\pi$ can be recovered as the gaps in
the finest flag $\fp$ preserves).  These can be used to define a weight 
$\la=\sum_i\omega_{\pi_i}\in\wela(\fg)$; that is, $\la^j=\#\{i|\pi_i=j\}$.

\begin{defn}
The {\bf cyclotomic
degenerate affine Hecke algebra} is the quotient of the dAHA given by $$H^\la=\bigoplus_{d\geq 0}
H_d/\Big\langle\prod_{i=1}^n(x_1-i)^{\la^i}\Big\rangle.$$
\end{defn}
 This has a
natural system of orthogonal idempotents $e_d$ for all $d\geq 0$ which
project to the image of $H_d$.  Brundan and Kleshchev show that when
$P$ is the parabolic Verma module associated to the ``ground state''
tableau on $\pi$, then the action of dAHA on $P\otimes V^{\otimes d}$ factors through its cyclotomic quotient.

Thus, we have a functor $\Hom_{\mathfrak{gl}_N}(P\otimes
V^{\otimes d},-):\cO^\fp\to H^\la\modu$.  This functor is very far from being an
equivalence, but on each block of $\cO^\fp$ it is either 0, or fully
faithful on projectives.  Thus, certain blocks of $\cO^\fp$  can be described in
terms of endomorphism rings of modules over $H^\la$.

In \cite{BKKL}, Brundan and Kleshchev show that each category $\cata^\la_\mu$ is
equivalent to a block of the representations of $H^\la$. Thus, using this isomorphism, we can
also express $\cata^\bla_\mu$ in terms of endomorphisms of modules over
$H^\la$.

There is an idempotent of $H_d$ associated to any length $d$ sequence
of integers.  We let $e_\fg$ be the sum of these idempotents
corresponding to sequences of integers in $[1,n]$.  In this section, we use the polynomials $Q_{ij}$ as defined in the previous section for a fixed orientation of the type A (or later, affine type A) quiver.  The most obvious choice is \[Q_{ij}(u,v)=\begin{cases} 1 & i\neq j\pm 1\\
u-v & i=j+1\\
v-u & i=j-1 
\end{cases}\]
\begin{prop}[\cite{BKKL}]\label{BK}
  There is an isomorphism $\Upsilon\colon \alg^\la\to e_\fg H^\la
  e_\fg\overset{\text{def}}=H^{\la,n}$.
\end{prop}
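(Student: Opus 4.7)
The plan is to invoke the isomorphism of Brundan and Kleshchev \cite{BKKL} between the cyclotomic quiver Hecke algebra $R^\la$ (in type $A_{n-1}$) and the relevant idempotent truncation of the cyclotomic degenerate affine Hecke algebra, and then combine it with the identification $\alg^\la\cong R^\la$ from Theorem \ref{cyclotomic}. Concretely, the composition
\[
\alg^\la \;\xrightarrow{\;\sim\;}\; R^\la \;\xrightarrow{\;\sim\;}\; e_\fg H^\la e_\fg
\]
will be the desired map $\Upsilon$. The first arrow is Theorem \ref{cyclotomic}; the second is the content of \cite[Main Theorem]{BKKL}, once one matches up conventions.

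The main points to verify in matching conventions are as follows. First, the idempotent $e_\fg$, which by definition is the sum of idempotents $e(\Bi)$ in $H^\la$ over all sequences $\Bi$ with entries in $[1,n]$, corresponds under the Brundan--Kleshchev map to the sum of quiver Hecke idempotents indexed by sequences of simple roots of $\mathfrak{sl}_n$ (since the Dynkin nodes are labelled $1,\dots,n-1$ and the residues of $x_1$ that survive modulo the cyclotomic relation lie in $[1,n]$). This is immediate from their construction, which identifies residues of Jucys--Murphy-type elements with vertices of the Dynkin quiver. Second, our chosen matrix of polynomials
\[
Q_{ij}(u,v)=\begin{cases} 1 & |i-j|\neq 1\\ u-v & i=j+1\\ v-u & i=j-1\end{cases}
\]
matches the relations \cite{BKKL} use to present the quiver Hecke algebra in finite type $A$, so the Khovanov--Lauda presentation with these $Q_{ij}$ is literally the one appearing on the quiver-Hecke side of their isomorphism. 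Third, on the level of generators, Brundan and Kleshchev's isomorphism sends $y_j e(\Bi)$ to $e(\Bi)(x_j-i_j)$; the shift by $i_j$ arises because the quiver Hecke $y_j$ is the nilpotent part of $x_j$ acting on the generalized $i_j$-eigenspace.

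The only step that is not a formal bookkeeping argument is importing the Brundan--Kleshchev isomorphism itself, which is a nontrivial theorem proving that the explicit KLR generators satisfy the cyclotomic Hecke relations and \emph{vice versa}. I am not reproving that here; the hardest piece of the translation that is genuinely our responsibility is simply tracking that the sign/polynomial choice of $Q_{ij}$ used throughout the rest of our paper (inherited from Section \ref{sec:quiver-varieties} via the geometric comparison) agrees with the Brundan--Kleshchev choice in type $A$, which it does for the orientation fixed above. Granting this, the stated formula on $y_j e(\Bi)$ is exactly their formula, and the isomorphism follows.
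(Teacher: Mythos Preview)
Your proposal is correct and matches the paper's approach: the paper gives no proof of this proposition at all, simply citing it from \cite{BKKL}, so your composition $\alg^\la\cong R^\la\cong e_\fg H^\la e_\fg$ via Theorem~\ref{cyclotomic} and the Brundan--Kleshchev Main Theorem is exactly the intended argument. The convention-checking you supply (matching $Q_{ij}$, identifying the idempotent $e_\fg$, and reading off the formula $y_j e(\Bi)\mapsto e(\Bi)(x_j-i_j)$) is a helpful elaboration of what the bare citation leaves implicit.
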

Under this map, we have that
 $\Upsilon(y_je(\Bi))=  e(\Bi)(x_j-i_j)$, and $\Upsilon^{-1}(T_i)$ is
 in a linear combination of $y_{i}^ay_{i+1}^b\psi_ie(\Bi)$ and
 $y_{i}^ay_{i+1}^be(\Bi)$ by \cite[(3.41-42)]{BKKL}.

\subsection{Comparison of categories}
\label{sec:comp-categ}

First, let us endeavor to understand how we can translate the $\alg^\la$-modules
$y_{\Bi,\kappa}\alg^\la$ defined in Section \ref{sec:QHA} into the language of the cdAHA using $\Upsilon$.
It's immediate from Proposition \ref{BK} that
\begin{equation*}
\Upsilon(y_{\Bi,\kappa})=  e(\Bi)\prod_{j=1}^{\ell}\prod_{k=\kappa(j)+1}^n(x_k-i_k)^{\la_j^{i_k}}.
\end{equation*}

However, the strong dependence of this element on $e(\Bi)$ makes it
problematic for use in the Hecke algebra.


We first specialize to the case where $\la_j=\omega_{\pi_j}$ for some
$\pi_j$.  As suggested by the notation, we will later want to think of
$\pi_j$ as a composition.  This bit of notation allows us to associate
to each $\kappa$ an element of $H^{\la,n}$ (note that there is no
dependence on $\Bi$):
\begin{equation}
z_\kappa=\prod_{j=1}^\ell \prod_{k=1}^{\kappa(j)}(x_k-\pi_j)
\end{equation}

We let $M^\kappa_\Bi=e(\Bi) z_{\kappa}H^{\la,n}$ and $M^\kappa=z_{\kappa}H^{\la,n}$.

\begin{prop}
  For all $\Bi$, we have $y_{\Bi,\kappa} H^{\la,n}= M^\kappa_\Bi$. In
  particular, we have an isomorphism $\alg^\bla\cong \End(\oplus_{\kappa}
  M^\kappa)$.
\end{prop}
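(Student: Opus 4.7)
The plan is to compare both sides after applying the Brundan--Kleshchev isomorphism $\Upsilon$. Since $\la_j=\omega_{\pi_j}$ gives $\la_j^{i_k}=\delta_{i_k,\pi_j}$, and since $e(\Bi)$ commutes with every $x_k$, one has the factorizations
\[
  \Upsilon(y_{\Bi,\kappa})=e(\Bi)\prod_{j=1}^\ell\prod_{\substack{k>\kappa(j)\\ i_k=\pi_j}}(x_k-\pi_j),\qquad
  e(\Bi)z_\kappa=e(\Bi)\prod_{j=1}^\ell\prod_{k=1}^{\kappa(j)}(x_k-\pi_j).
\]

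I would then simplify $e(\Bi)z_\kappa$ by stripping off the ``unmatched'' factors. Using $e(\Bi)(x_k-\pi_j)=(i_k-\pi_j)e(\Bi)+y_ke(\Bi)$, when $i_k\neq\pi_j$ this is a nonzero scalar plus a nilpotent element (the $y_k$ act nilpotently on every simple $R^\la$-module, so lie in the Jacobson radical of the finite-dimensional algebra $R^\la\cong H^{\la,n}$), hence admit a geometric-series inverse inside $e(\Bi)H^{\la,n}e(\Bi)$; when $i_k=\pi_j$ the factor equals $y_ke(\Bi)$. Since all of these factors commute (they all lie in the commutative subalgebra of $H^{\la,n}$ generated by the $x_k$ and the idempotents), the units can be absorbed without altering the right ideal they generate, yielding
\[
  M^\kappa_\Bi=e(\Bi)\prod_k y_k^{b_k}\cdot H^{\la,n},\qquad y_{\Bi,\kappa}H^{\la,n}=e(\Bi)\prod_k y_k^{a_k}\cdot H^{\la,n},
\]
where $a_k=\#\{j:\kappa(j)<k,\,\pi_j=i_k\}$ and $b_k=\#\{j:\kappa(j)\geq k,\,\pi_j=i_k\}$ satisfy $a_k+b_k=\la^{i_k}$.

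The key remaining step, which I expect to be the main technical obstacle, is to show that these two right ideals are equal. I would first establish the inclusion $y_{\Bi,\kappa}H^{\la,n}\subseteq M^\kappa_\Bi$ directly, using the cyclotomic relation $y_1^{\la^{i_1}}e(\Bi)=0$ together with its shifted analogues at other strand positions --- obtained by pushing the relation across strands via the quiver Hecke defining relations of Figure~\ref{quiver-hecke} --- which allow each $y_k^{a_k}$ to be rewritten as an $H^{\la,n}$-multiple of $y_k^{b_k}$. The reverse inclusion then follows from a graded dimension count: the previous proposition identifies $y_{\Bi,\kappa}\alg^\la$ with $\Hom_{\alg^\bla}(P^0,P^\kappa_\Bi)$, whose graded dimension is computed by the Euler form (Corollary~\ref{Euler-form}), and a parallel calculation using the Frobenius structure on $H^{\la,n}$ shows that $\dim_q M^\kappa_\Bi$ takes the same value.

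The ``In particular'' statement then follows immediately. The previous proposition gives $\alg^\bla\cong\End_{\alg^\la}\big(\bigoplus_\kappa y_{\Bi,\kappa}\alg^\la\big)$, and transporting along $\Upsilon\colon\alg^\la\xrightarrow{\sim}H^{\la,n}$, together with the decomposition $M^\kappa=\bigoplus_\Bi e(\Bi)z_\kappa H^{\la,n}$ and the equality $y_{\Bi,\kappa}H^{\la,n}=M^\kappa_\Bi$, yields $\alg^\bla\cong\End_{H^{\la,n}}\big(\bigoplus_\kappa M^\kappa\big)$.
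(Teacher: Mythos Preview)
Your first two paragraphs are exactly the paper's argument: the geometric-series inversion showing that $e(\Bi)(x_k-\pi_j)$ is a unit in $e(\Bi)H^{\la,n}e(\Bi)$ when $i_k\neq\pi_j$, and equals $e(\Bi)y_k$ when $i_k=\pi_j$, is precisely what the paper records as the identity $e(\Bi)(x_k-\pi_j)H^{\la,n}=e(\Bi)(x_k-i_k)^{\la_j^{i_k}}H^{\la,n}$. The paper simply applies this factor by factor to $z_\kappa$ and declares the result.

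The trouble is with your ``key remaining step''. You are right that the displayed formula for $y_{\Bi,\kappa}$ gives exponents $a_k$ while the unit argument applied to $z_\kappa$ gives exponents $b_k$, and that these are complementary. But the two right ideals $e(\Bi)\prod_k y_k^{a_k}\,H^{\la,n}$ and $e(\Bi)\prod_k y_k^{b_k}\,H^{\la,n}$ are \emph{not} equal in general, so neither your inclusion nor your dimension count can succeed. Already for $\mathfrak{sl}_2$ with $\bla=(\omega_1,\omega_1,\omega_1)$, one black strand, and $\kappa=(0,1,1)$, one finds $a_1=1$, $b_1=2$, and in $e(1)R^{3\omega_1}e(1)\cong\K[y_1]/(y_1^3)$ the ideals $y_1\cdot\K[y_1]/(y_1^3)$ and $y_1^2\cdot\K[y_1]/(y_1^3)$ have dimensions $2$ and $1$.

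What has gone wrong is a typo in the displayed formula for $y_{\Bi,\kappa}$ in Section~\ref{sec:self-dual}: the product there should run over $k\leq\kappa(j)$ rather than $k>\kappa(j)$. This is what the pictorial description and Figure~\ref{fig:yik} actually produce---the bigon relation~(\ref{cost}) puts $\la_j^{i_k}$ dots on black strand $k$ precisely for those $j$ with $\kappa(j)\geq k$---and it is what the proof of the double-centralizer proposition uses. With the corrected formula one has $\Upsilon(y_{\Bi,\kappa})=e(\Bi)\prod_{j}\prod_{k\leq\kappa(j)}(x_k-i_k)^{\la_j^{i_k}}$, the exponents are $b_k$ on both sides, and the unit argument gives the equality of right ideals immediately, with no further step required. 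Your concluding paragraph deducing $\alg^\bla\cong\End(\oplus_\kappa M^\kappa)$ is then correct as written.
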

\begin{proof}
  If $a\neq i_j$, then we can rewrite $e(\Bi)$ as 
  $$e(\Bi)=(x_j-a)e(\Bi)\bigg(\frac{-1}{a-i_j}-\frac{x_j-i_j}{(a-i_j)^{2}}-\frac{(x_j-i_j)^2}{(a-i_j)^{3}}-\cdots\bigg)$$
  since $(x_j-i_j)e(\Bi)$ is nilpotent.  It follows that 
\begin{equation}\label{Hec-nil}
    e(\Bi)(x_k-\pi_j)H^{\la,n}=e(\Bi)(x_k-i_k)^{\la_j^{i_k}}H^{\la,n}
\end{equation}
since $\la_j^{i_k}=\delta_{\pi_j,i_k}$.  Thus, applying
(\ref{Hec-nil}) to each term in $z_\kappa$, the result follows.
\end{proof}

We note that the modules $M^\kappa$ are closely related to the
permutation modules discussed by Brundan and Kleshchev in \cite[\S
6]{BKSch}.  Each way of filling $\pi$ as a tableau such that the
column sums are $\kappa(i)-\kappa(i-1)$ results in a permutation
module which is a summand of $M^\kappa$.

Now we wish to understand how the modules $M^\kappa$ are related to
parabolic category $\cO$.  Let $N=\sum_{j}\pi_j$ be the number of
boxes in $\pi$. As before, the $\pi_i$ give a composition of $N$,
and thus a parabolic subgroup $\fp\subset \fgl_N$, which is precisely the operators preserving a flag
$V_1\subset V_2\subset \cdots \subset V$.  If, as usual, $\kappa$ is a weakly increasing function on
$[1,\ell]$ with non-negative integer values and further $\kappa(\ell)\leq d$,
then we let $$V_\kappa^d=V_1^{\otimes \kappa(1)}\otimes V_2^{\otimes
  \kappa(2)-\kappa(1)}\otimes \cdots \otimes V^{d-\kappa(\ell)}$$ as a
$\fp$-representation.  We can induce this representation to an object
in $\cO^\fp$ which we denote $$P^\kappa_d\cong
U(\fgl_n)\otimes_{U(\fp)}(\C_{-\rho}\otimes V^d_\kappa ),$$ where $\C_{-\rho}$ is the 1-dimensional $\fp$-module defined in \cite[pg. 4]{BKSch}.

All the objects $P^\kappa_d$ live in the subcategory we denote
$\cO^\fp_{> 0}$ which is generated by all parabolic Verma modules whose
corresponding tableau has positive integer entries.  We also consider
a much smaller subcategory which has only finitely many simple
objects: let $\cO^\fp_{n}$ be the subcategory of $\cO^\fp$ generated
by all parabolic Vermas whose corresponding tableau only uses the
integers $[1,n]$.  Let $\pr_n:\cO^\fp\to\cO^\fp_n$ be the projection
to this subcategory ($\cO^\fp_n$ is a sum of blocks, so there is a unique
projection).

\begin{prop}
If one ranges over all $\kappa$ and all integers $d$, then $\displaystyle\oplus_{\kappa,d}V^d_\kappa$ is a projective generator for $\cO^\fp_{>0}$.
\end{prop}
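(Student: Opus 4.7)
The plan is to establish projectivity and generation separately, using the tensor identity as the main tool.

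\emph{Projectivity.} By the Bernstein-Gelfand tensor identity,
\[
P^\kappa_d \;=\; U(\fgl_N)\otimes_{U(\fp)}(\C_{-\rho}\otimes V^d_\kappa) \;\cong\; M^\fp(-\rho)\otimes V^d_\kappa
\]
as $\fgl_N$-modules, where $M^\fp(-\rho) = U(\fgl_N)\otimes_{U(\fp)}\C_{-\rho}$ is the ground-state parabolic Verma. First I would recall the standard fact that $M^\fp(-\rho)$ is projective in $\cO^\fp$: the weight $-\rho$ is chosen so that its Weyl orbit representative is extremal, making $M^\fp(-\rho) = P^\fp(-\rho)$. Then, since tensoring with a finite-dimensional $\fgl_N$-module preserves projectivity in $\cO^\fp$ (a standard fact, proved via Frobenius reciprocity from the analogous statement for induction from $\fp$), each $P^\kappa_d \cong M^\fp(-\rho) \otimes V^d_\kappa$ is projective.

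\emph{Generation.} I would show that already the subfamily $\{P^0_d\}_{d\geq 0}$, where $\kappa$ is the zero function so that $V^d_\kappa = V^{\otimes d}$, suffices to generate. The category $\cO^\fp_{>0}$ is the Serre subcategory generated by the parabolic Vermas $M^\fp(\mu)$ whose corresponding tableau has positive integer entries, i.e., those $\mu$ for which $\mu+\rho$ restricted to each Levi block is a strictly decreasing sequence of positive integers (equivalently $\mu + \rho \in \Znn^N$ after identifying with weights in the standard way). By Frobenius reciprocity,
\[
\Hom_{\fgl_N}(P^0_d, L(\la)) \;\cong\; \Hom_\fp(V^{\otimes d}, L(\la)\otimes \C_\rho),
\]
and by standard branching, $\Hom_\fp(V^{\otimes d}, L(\la)\otimes \C_\rho)\neq 0$ whenever the $\fp$-top of $L(\la)\otimes \C_\rho$ --- the $\fp$-irreducible of highest weight $\la+\rho$ --- appears in $V^{\otimes d}$. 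When $\la+\rho$ has all entries in $\Znn$ with total sum $d$, this $\fp$-irreducible does appear in $V^{\otimes d}$ (classical Schur--Weyl), so $P^0_d$ admits a nonzero map to each simple of $\cO^\fp_{>0}$ for which the associated tableau has positive integer entries summing to $d$. Ranging over all $d$ covers every simple in $\cO^\fp_{>0}$.

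\emph{Main obstacle.} The arguments above are each close to textbook, but the bookkeeping is slightly delicate: one must keep the conventions for the $\rho$-shift, the identification of $\fp$-dominance with the ``positive integer entries'' condition, and the distinction between $\cO^\fp_{>0}$ and $\cO^\fp$ in good alignment. The conceptual content sits in the tensor identity plus projectivity of $M^\fp(-\rho)$; the labor is in verifying that the family $\{V^d_\kappa\}$ indeed produces all $\fp$-tops one needs, which at the level of $\kappa = 0$ reduces to the classical fact that every polynomial $\fgl_N$-weight appears in some $V^{\otimes d}$.
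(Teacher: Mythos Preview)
Your projectivity argument has a genuine gap. The tensor identity
\[
U(\fgl_N)\otimes_{U(\fp)}\bigl(\C_{-\rho}\otimes V^d_\kappa\bigr)\;\cong\;M^\fp(-\rho)\otimes V^d_\kappa
\]
requires $V^d_\kappa$ to be a $\fgl_N$-module, not merely a $\fp$-module. For $\kappa\neq 0$ the tensor factors $V_j\subsetneq V$ are only pieces of the flag preserved by $\fp$; they carry no $\fgl_N$-action, so neither does $V^d_\kappa$, and the identity simply does not apply. Your argument therefore proves projectivity only for $P^0_d$, not for $P^\kappa_d$ in general. This is exactly the obstacle the paper's proof is designed to get around: it peels off the outermost factor $V^{\otimes\, d-\kappa(\ell)}$ (which \emph{is} a $\fgl_N$-module, so the tensor identity is legitimate there) and handles the remaining inner piece by writing $P^\kappa_d\cong R\bigl(P^{\kappa^-}_{\kappa(\ell)}\otimes\C_{-\rho}\bigr)\otimes V^{\otimes\, d-\kappa(\ell)}$, where $R$ is a functor from \cite{BKSch} known to preserve projectives, and then inducts on~$\ell$.

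For generation you take a different route from the paper: you argue directly via Frobenius reciprocity that the subfamily $\{P^0_d\}$ already hits every simple, whereas the paper observes that Brundan--Kleshchev's divided power modules occur as summands of the $P^\kappa_d$ and invokes their result that those divided power modules generate. Your approach is reasonable in spirit, but the branching step is imprecise as written: the claim that ``the $\fp$-irreducible of highest weight $\la+\rho$ appears in $V^{\otimes d}$'' is an $\fl$-statement (about the Levi), and you need a $\fp$-module map $\C_{-\rho}\otimes V^{\otimes d}\to L(\la)|_\fp$, which also involves the nilradical. This can be repaired, but as it stands the paper's route---deferring to \cite[Theorem 4.14]{BKSch}---is both shorter and cleaner.
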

\begin{proof}
This follows from a simple modification of the proof of \cite[Theorem 4.14]{BKSch}.  In the notation of that proof, we have that $P^\kappa_d\cong R(P^{\kappa^-}_{\kappa(\ell)}\otimes\C_{-\rho})\otimes V^{\otimes d-\kappa(\ell)}$, where $\kappa^-$ is the restriction of $\kappa$ to $[1,\ell-1]$. As noted in that proof,  by induction, this is two functors which preserve projective modules applied to a projective module; thus $P^\kappa_d$ is projective.

Each of Brundan and Kleshchev's divided power modules is a summand in one of the $P^\kappa_d$, as we noted earlier.  Since any indecomposable projective of $\cO^\fp$ is a summand of a divided power module, the same is true of the $P^\kappa_d$'s.
\end{proof}

\begin{prop}\label{Hecke-equivalence}
For all $d,\kappa$, we have
\begin{align*}
 z_\kappa H^\la e_d &\cong  \Hom(P\otimes V^{\otimes d}, P_\kappa^d)\\
  M^\kappa e_d &\cong  \Hom(P\otimes V^{\otimes d}, \pr_n(P_\kappa^d)).
  \end{align*}
\end{prop}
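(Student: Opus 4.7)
Both isomorphisms will follow by identifying $P \otimes V^{\otimes d}$ with $P_0^d$, splitting the latter into $\fp$-summands, and matching $z_\kappa$ with the corresponding projector via Brundan-Kleshchev's isomorphism $\End_{\fgl_N}(P \otimes V^{\otimes d}) \cong H^\la e_d$.

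First, by the standard tensor identity for induced modules,
\[P \otimes V^{\otimes d} \;\cong\; U(\fgl_N) \otimes_{U(\fp)} (\C_{-\rho} \otimes V^{\otimes d}) \;=\; P_0^d.\]
Since $\fp$ is block diagonal for $\pi$, the restriction of $V$ to $\fp$ splits as $V = U_1 \oplus \cdots \oplus U_\ell$ with $V_j = U_1 \oplus \cdots \oplus U_j$. Consequently $V^{\otimes d} = \bigoplus_{\sigma \in [1,\ell]^d} U_\sigma$ and $V_\kappa^d = \bigoplus_{\sigma \in A(\kappa)} U_\sigma$, where $A(\kappa) = \{\sigma : \sigma_k \leq j \text{ whenever } \kappa(j-1) < k \leq \kappa(j)\}$. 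Applying induction gives $P_0^d = \bigoplus_\sigma P_\sigma$ and exhibits $P_\kappa^d$ as the direct summand indexed by $A(\kappa)$; writing $e_\kappa \in \End_{\fgl_N}(P_0^d)$ for the corresponding idempotent, we have $\Hom(P_0^d, P_\kappa^d) = e_\kappa H^\la e_d$ as a left module.

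The heart of the first isomorphism is then the identity $z_\kappa H^\la e_d = e_\kappa H^\la e_d$ of left ideals in $H^\la e_d$. This reduces to a generalized-eigenvalue computation: in Brundan-Kleshchev's $\rho$-shifted normalization, $x_k$ acts on $P_\sigma$ with generalized eigenvalues equal to the contents of boxes in the $\sigma_k$-th column of the ground-state tableau for $\pi$. Together with the exchange identity $e(\Bi)(x_k - \pi_j) H^\la = e(\Bi)(x_k - i_k)^{\la_j^{i_k}} H^\la$ established in the preceding proposition, this shows that left multiplication by $\prod_{j,\, k \leq \kappa(j)}(x_k - \pi_j)$ vanishes on $P_\sigma$ for $\sigma \notin A(\kappa)$ and acts as a unit times $e_\kappa$ on the remaining summands, giving equality of the two ideals.

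The second isomorphism then follows by restriction: under Brundan-Kleshchev's isomorphism, the idempotent $e_\fg$ corresponds exactly to the projection $\pr_n$, since both cut out the subcategory whose $x_k$-eigenvalues (tableau entries) lie in $[1,n]$. Applying $e_\fg$ to the first isomorphism yields
\[M^\kappa e_d \;=\; z_\kappa e_\fg H^\la e_\fg e_d \;\cong\; e_\fg \cdot \Hom(P_0^d, P_\kappa^d) \;=\; \Hom(P_0^d,\, \pr_n P_\kappa^d),\]
using that $\pr_n$ is projection to a sum of blocks and so commutes with $\Hom$ out of a projective. The principal technical obstacle throughout is the eigenvalue analysis in the third paragraph: correctly tracking Brundan-Kleshchev's $\rho$-shift on each $P_\sigma$, and navigating the reindexing required to move between the $k \leq \kappa(j)$ form of $z_\kappa$ and the $k > \kappa(j)$ form appearing in $y_{\Bi,\kappa}$.
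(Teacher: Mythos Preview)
Your approach and the paper's coincide in strategy: both identify $z_\kappa H^\la e_d$ with $\Hom(P\otimes V^{\otimes d}, P_\kappa^d)$ by showing that the image of $z_\kappa$ on $P\otimes V^{\otimes d}$ is exactly $P_\kappa^d$, then invoking projectivity of $P\otimes V^{\otimes d}$ to lift arbitrary maps through $z_\kappa$. The key input in both cases is the one-variable computation from Brundan--Kleshchev (their Lemma~3.3) giving the image of $\prod_{i>j}(x_1-\pi_i)$ on $P\otimes V$. The second isomorphism is handled identically: $e_\fg$ picks out exactly the generalized eigenspaces of the $x_k$ with eigenvalues in $[1,n]$, which on the category~$\cO$ side is the projection $\pr_n$.

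One step in your sketch needs repair. You assert that $z_\kappa$ ``vanishes on $P_\sigma$ for $\sigma\notin A(\kappa)$ and acts as a unit times $e_\kappa$ on the remaining summands,'' which presumes the $x_k$ act block-diagonally on the $\fp$-splitting $P_0^d=\bigoplus_\sigma P_\sigma$. They do not for $k>1$: from the dAHA relation one gets $x_2=s_1x_1s_1+s_1$, and the $s_1$-term sends $P_\sigma$ to $P_{s_1\sigma}$, so the splitting is not $x_k$-stable and the unit/zero dichotomy is not available on individual $P_\sigma$'s. The paper never passes to this fine decomposition; it argues directly that the image of $z_\kappa$ equals $P_\kappa^d$ (which is equivalent to your ideal identity $z_\kappa H^\la e_d=e_\kappa H^\la e_d$). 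Replacing your block-diagonal argument with this image computation --- or working instead with the $e(\Bi)$-decomposition inside $H^\la e_d$, which \emph{is} stable under the commuting $x_k$'s --- closes the gap.
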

\begin{proof}
  This rests on a single computation, which is that the image in
  $P\otimes V$ of the action of $\prod_{i=j+1}^\ell (x_1-\pi_i)$ is
  $$U(\fgl_n)\otimes_{U(\fp)}(\C_{-\rho}\otimes V_j)\subset U(\fgl_n)\otimes_{U(\fp)}(\C_{-\rho}\otimes V)\cong P\otimes V;$$ this follows
  from \cite[Lemma 3.3]{BKSch}.  This shows that the image of
  $z_\kappa$ acting on $P\otimes V^{\otimes d}$ is $P_\kappa^d$, so by
  the projectivity of $P\otimes V^{\otimes d}$, every homomorphism to
  $P_\kappa^d$ factors through this one.

  We can identify those homomorphisms whose image is in
  $\pr_n(P_\kappa^d)\subset P_\kappa^d$ as those killed by some power
  of $\chi^n_j=\prod_{i=1}^n(x_j-i)$ for each $j$ (if a number $m$
  appears in a tableau, then $x_j-m$ is nilpotent for some $j$, and so
  if $m\notin [1,n]$, then $\chi^n_j$ is invertible for that $j$).
  Thus, this homomorphism space is the subspace of $z_\kappa H^\la e_d $ on
  which all $\chi^n_j$ act nilpotently, which is precisely $M^\kappa e_d$.
\end{proof}
\begin{cor}\label{equiv}
We have an equivalence $\Xi:\cata^\bla\overset{\cong}\longrightarrow\cO^\fp_n$.
\end{cor}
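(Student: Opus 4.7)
The plan is to realize $\alg^\bla$ as the endomorphism algebra of a projective generator of $\cO^\fp_n$, and then invoke Morita theory. The bridge is provided by the Brundan--Kleshchev isomorphism $\alg^\la\cong e_\fg H^\la e_\fg=H^{\la,n}$ of Proposition \ref{BK} together with the identification of $\alg^\bla$ as $\End_{\alg^\la}\big(\oplus_\kappa M^\kappa\big)$ established in the previous proposition.

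First I would let $Q=\bigoplus_{\kappa,d}\pr_n(P^d_\kappa)$ where the sum is indexed by weakly increasing $\kappa\colon[1,\ell]\to\Z_{\geq 0}$ with $\kappa(\ell)\le d$ (and $d$ ranging over non-negative integers). Since $\bigoplus_{\kappa,d}P^d_\kappa$ is a projective generator of $\cO^\fp_{>0}$, and since $\pr_n$ is exact (being projection onto a union of blocks), $Q$ is a projective generator of $\cO^\fp_n$. Thus the Gabriel--Mitchell--style functor
\[
    \Xi(-) \;=\; \Hom_{\fgl_N}(Q,-)\colon \cO^\fp_n\longrightarrow \End_{\fgl_N}(Q)^{op}\operatorname{-mod}
\]
is an equivalence of categories, so the content of the corollary reduces to an identification $\End_{\fgl_N}(Q)^{op}\cong\alg^\bla$.

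Next I would compute $\End_{\fgl_N}(Q)^{op}$ using the functor $F=\Hom_{\fgl_N}(P\otimes V^{\otimes \bullet},-)$ from $\cO^\fp_{>0}$ to $H^\la$-modules. Since $\bigoplus_d P\otimes V^{\otimes d}$ is itself a projective generator of $\cO^\fp_{>0}$ (by the Brundan--Kleshchev work cited in the paper), $F$ is fully faithful on projectives. By Proposition \ref{Hecke-equivalence}, $F(\pr_n(P^d_\kappa))\cong M^\kappa e_d$, and summing over $d$ yields $F(\pr_n(P^\bullet_\kappa))\cong M^\kappa$ as an $H^\la$-module. Moreover, the nilpotency condition used in the proof of that proposition shows that $F(\pr_n(P^d_\kappa))$ is supported on the sequences valued in $[1,n]$, i.e.\ it is naturally an $e_\fg H^\la e_\fg=H^{\la,n}$-module and coincides with $M^\kappa$ viewed as such. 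Hence
\[
    \End_{\fgl_N}(Q)^{op} \;\cong\; \End_{H^{\la,n}}\Big(\bigoplus_\kappa M^\kappa\Big)^{op} \;\cong\; \alg^\bla
\]
by the previous proposition, which is exactly what we need.

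The only delicate step is verifying that $F$ really is fully faithful on the summands $\pr_n(P^d_\kappa)$, and that its image lands in $H^{\la,n}$-modules rather than general $H^\la$-modules; both follow from the projectivity of $P\otimes V^{\otimes d}$ combined with the nilpotent action of the polynomials $\chi^n_j$ used in the proof of Proposition \ref{Hecke-equivalence}. The remaining verifications, identifying the composition on $\End(Q)$ with multiplication in $\alg^\bla$, are routine once the module-level identification $M^\kappa\cong F(\pr_n(P^\bullet_\kappa))$ is in hand, since all maps are induced by $\fgl_N$-equivariant morphisms and agree on the canonical generators $z_\kappa$ fixed earlier.
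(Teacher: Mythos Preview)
Your proof is correct and is precisely the argument the paper intends: the corollary is stated without proof because it follows by Morita theory from the immediately preceding results (that $\bigoplus_{\kappa,d}P^d_\kappa$ is a projective generator of $\cO^\fp_{>0}$, that $\Hom(P\otimes V^{\otimes d},-)$ is fully faithful on projectives in the relevant blocks, Proposition~\ref{Hecke-equivalence}, and the isomorphism $\alg^\bla\cong\End(\bigoplus_\kappa M^\kappa)$). One small remark: the $\mathrm{op}$ you insert is harmless since $\alg^\bla$ carries the anti-automorphism $a\mapsto\dot a$, but strictly speaking the paper's identification $\alg^\bla\cong\End(\bigoplus_\kappa M^\kappa)$ is already without the $\mathrm{op}$, and the paper's convention for $\cata^\bla$ uses right modules $P^\kappa_\Bi=e(\Bi,\kappa)\alg^\bla$, so the bookkeeping matches directly.
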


We can generalize this statement a bit further: let us now consider
the case where the weights $\la_i$ are not fundamental.  In this case,
to each weight $\la_i$ we have a unique Young diagram given by writing
it as a sum of fundamental weights, and we obtain a pyramid $\pi$ by
concatenating these horizontally (this is the pyramid associated
earlier to the refinement of $\bla$ into fundamental weights).  We
associate a parabolic $\fp$ with the pyramid as before.

For each collection of semi-standard\footnote{In \cite{BKSch}, these
  are called ``standard.''} tableaux $T_i$ on each of these diagrams
which only use the integers $[1,n]$, this gives a tableau on $\pi$
(now just column-strict).  Such a tableau can be converted into a module
in $\cO^\fp$ for $\fgl_N$ (where $N=\sum \pi_i$) by taking the
projective cover of the $\fp$-parabolic Verma module corresponding to
this tableau.  Let $\cO^\fp_{\bla}$ be the subcategory of modules presented by these
projectives.

\begin{prop}
The functor $\Xi$ induces an equivalence of $\cO^\fp_\bla$ and $\cata^\bla$.
\end{prop}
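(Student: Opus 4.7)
The plan is to deduce this from the fundamental-weight case established in Corollary \ref{equiv} by matching projective generators. Let $\bla'$ denote the refinement of $\bla$ obtained by replacing each $\la_i$ with the sequence of fundamental weights summing to it (in some fixed order within each block); by Corollary \ref{equiv} we have the equivalence $\Xi\colon \cata^{\bla'} \xrightarrow{\sim} \cO^\fp_n$, and we must show that this restricts to an equivalence $\cata^\bla \simeq \cO^\fp_\bla$.

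First I would use iterated applications of Corollary \ref{split-strands} to identify $\alg^\bla \cong f\alg^{\bla'}f$, where $f \in \alg^{\bla'}$ is the idempotent defined as the sum of the crossingless idempotents $e(\Bi,\kappa)$ such that $\kappa$ never places a black strand between two red strands arising from the splitting of a single $\la_i$. Consequently, the category of projective $\alg^\bla$-modules is equivalent to the additive subcategory of projective $\alg^{\bla'}$-modules consisting of summands of $\alg^{\bla'}f$, i.e., the projectives $P^{\kappa'}_\Bi$ with $\kappa'$ avoiding the forbidden interleavings, together with the indecomposable summands singled out by the divided-power idempotents on like-colored adjacent black strands within each $\la_i$-block.

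Next I would transport these projectives across $\Xi$. By Proposition \ref{Hecke-equivalence}, their images in $\cO^\fp_n$ are direct sums of projective covers of parabolic Verma modules indexed by column-strict tableaux $T$ on $\pi$. The restriction on $\kappa'$ forces all entries of $T$ within a given $\la_i$-block of columns to appear together, while the divided-power idempotent on like-colored adjacent black strands within such a block enforces weakly increasing rows in $T|_{\la_i}$; combined with column-strictness, this is exactly semi-standardness of $T|_{\la_i}$. Hence the image consists precisely of projective covers corresponding to concatenations of SSYT $T_i$ on the $\la_i$'s with entries in $[1,n]$, which are the defining generators of $\cO^\fp_\bla$.

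The main obstacle will be the combinatorial matching in the preceding step: verifying that after truncation by $f$, divided-power-decorated $(\Bi,\kappa)$-data correspond bijectively to concatenations of SSYT on the $\la_i$'s, with multiplicities agreeing on both sides. The subtlety is that distinct $(\Bi,\kappa)$-data may present the same underlying tableau, and one must check that the natural identifications on the $\cata^\bla$ side (coming from isomorphisms among divided-power idempotents) precisely mirror those arising from the parabolic Verma decomposition on the $\cO^\fp$ side.
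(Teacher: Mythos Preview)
Your setup via Corollary~\ref{split-strands} is correct and agrees with the paper's starting point: $\cata^\bla$ is indeed the subcategory of $\cata^{\bla'}$ presented by the projectives $P^\kappa_\Bi$ with $\kappa$ constant on each $\la_i$-block, equivalently by the $\pr_n(P^d_\kappa)$ with the corresponding restriction on $\kappa$. So the equivalence $\Xi$ restricts to a fully faithful functor from $\cata^\bla$ into $\cO^\fp_n$, and the remaining task is to identify the essential image as $\cO^\fp_\bla$.

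Where your argument diverges from the paper, and where it has a real gap, is in the identification of the indecomposable summands. Your claim that divided-power idempotents on like-colored adjacent strands ``enforce weakly increasing rows in $T|_{\la_i}$'' is not justified and does not obviously hold: divided-power summands of $P^\kappa_\Bi$ correspond to applying $F_i^{(\vartheta)}$ rather than $F_i^\vartheta$, and the link between this and semi-standardness of a tableau in a fixed block is far from immediate. More fundamentally, a single $P^\kappa_\Bi$ does not correspond to a single tableau but to a sum of projective covers, so reading off a tableau directly from $(\Bi,\kappa)$ plus divided-power data is not how the matching works. You flag this yourself as the ``main obstacle,'' but you give no mechanism for resolving it.

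The paper sidesteps this entirely with a counting-plus-crystal argument. One knows that $\cata^\bla$ has exactly $\dim V_\bla$ indecomposable projectives (Theorem~\ref{Uq-action}), and there are exactly $\dim V_\bla$ tableaux on $\pi$ which are semi-standard on each $\la_i$-block with entries in $[1,n]$. So it suffices to show that every such block-semistandard tableau labels a projective appearing as a summand of some restricted $\pr_n(P^d_\kappa)$. For this the paper appeals to the crystal structure: any block-semistandard tableau is reached from the empty one by successively attaching a fresh Young diagram filled with the ground-state tableau and applying crystal operators $\tilde f_i$, and the argument of \cite[Corollary~4.6]{BKSch} then guarantees the corresponding projective appears. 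This replaces your unproven combinatorial bijection with a clean one-sided inclusion plus a dimension count.
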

\begin{proof}
  What is clear from Corollary \ref{equiv} is that $\cata^\bla$ is
  equivalent to the subcategory of $\cO^\fp_\bla$ consisting of objects presented by
  projectives $\pr_n(P_\kappa^d)$ for the sequence of weights obtained by
  breaking $\bla$ into fundamental weights, where we require $\kappa$
  to be constant on the blocks of fundamental weights obtained by
  breaking up $\la_i$.  In terms of category $\cO$, we only induce
  finite-dimensional $\fp$ vector spaces obtained by tensoring the
  vector spaces which appear in a particular flag preserved by $\fp$,
  the gaps of which encode the sequence $\bla$.

  That is, the indecomposable projectives of $\cata^\bla$ are sent to
  the indecomposable projectives which appear as summands of these
  $\pr_n(P_d^\kappa)$.  Thus these are in bijection, and there can
  only be $\dim V_\bla$ of the latter.  Since there is exactly that
  number of tableaux which are semi-standard in blocks as described
  above, we need only show that these occur as summands.

  This follows from the relationship between the crystal structure on
  tableaux and projectives in category $\cO$.  Specifically, since any
  tableau which is semi-standard in blocks can be obtained from the
  empty tableau by the operations of attaching a fresh Young diagram
  filled with the ground state tableau and of applying crystal
  operators, the argument from \cite[Corollary 4.6]{BKSch} shows that
  the projective corresponding to such a tableau is a summand of an
  appropriate $P_d^\kappa$.
\end{proof}

We note that this shows that our categorification corresponds to that
for twice fun\-da\-men\-tal weights of $\mathfrak{sl}_n$ recently given by
Hill and Sussan \cite{HS}.

The category $\cO^\fp$ has a natural endofunctor given by tensoring
with $V$.  Restricting to $\cO^\fp_n$, we can take the functor
$f_\bullet=\pr_n(-\otimes V)$. This functor has a natural
decomposition $f_\bullet=\oplus_{i=1}^n f_i$ in terms of the
generalized eigenspaces of $x_1$ acting on $-\otimes V$;  we need only
take $i\in [0,n]$ since these are the only eigenvalues of $x_1$ on the
projection to $\cO^\fp_n$.

\begin{prop}\label{trans-act}
We have a commutative diagram
\begin{equation*}
    \begin{tikzpicture}[yscale=1.1,xscale=1.5,very thick]
        \node (a) at (1,1) {$\cO^\fp_n$};
        \node (b) at (-1,1) {$\cO^\fp_n$};
        \node (c) at (1,-1) {$\cata^\bla$};
        \node (d) at (-1,-1) {$\cata^\bla$};
        \draw[->] (b) -- (a) node[above,midway]{$f_i$};
        \draw[->] (d) -- (c) node[below,midway]{$\fF_i$};
        \draw[->] (c) --(a) node[right,midway]{$\Xi$};
         \draw[->] (d) --(b) node[left,midway]{$\Xi$};
    \end{tikzpicture}
\end{equation*}
\end{prop}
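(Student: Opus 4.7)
The plan is to lift Brundan--Kleshchev's algebra isomorphism $\Upsilon$ from Proposition~\ref{BK} to a compatibility of functors, realizing both $\fF_i$ and $f_i$ as induction along the same strand-adding bimodule.

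First, I would unwind the equivalence $\Xi$ using Proposition~\ref{Hecke-equivalence}: explicitly, $\Xi^{-1}(M) = \bigoplus_d \Hom_{\cO^\fp}(P \otimes V^{\otimes d}, M)$ equipped with its right $H^{\la,n}$-action, transported to $\alg^\bla$ via $\Upsilon$. Then on the $\cO^\fp$ side one computes
\[
\Xi^{-1}(f_i M) \;=\; \pr_n \bigoplus_d \Hom_{\cO^\fp}\bigl(P \otimes V^{\otimes d}, M \otimes V\bigr),
\]
and by projectivity of the tensor products $P\otimes V^{\otimes d+1}$ together with the standard Hom-tensor adjunction (exactly as in the proof of Proposition~\ref{Hecke-equivalence}), this rewrites as the Hecke-theoretic induction $\Hom(P\otimes V^{\otimes d}, M) \otimes_{H^\la_d} H^\la_{d+1}$, followed by the idempotent truncation $e_\fg(-)e_\fg$ that implements $\pr_n$.

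Second, I would decompose this Hecke induction along the primitive orthogonal idempotents $e(\Bj)$ for length-$(d{+}1)$ sequences. Since $\Upsilon(y_j e(\Bi)) = e(\Bi)(x_j - i_j)$, the idempotent $e(\Bi\cdot i)$ cuts out the simultaneous generalized eigenspace where the previously present $x_j$'s have eigenvalues $\Bi$ and the polynomial generator arising from the new tensor factor has eigenvalue $i$. Summing over $\Bi$, this is exactly the generalized eigenvalue-$i$ component for the operator controlling the new $V$-factor on the category $\cO$ side, namely $f_i$. On the $\alg^\bla$ side, the same decomposition is precisely the splitting of the strand-adding bimodule into pieces $\bigoplus_\Bi e(\Bi)\alg^\bla e(\Bi\cdot i)$, which by definition implements $\fF_i$; this matches the two compositions up to canonical isomorphism.

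The main obstacle is the bookkeeping: matching the ordering of Hecke polynomial generators with the left-to-right order of strands in $\alg^\bla$, and confirming that the projection $\pr_n$ onto the block $\cO^\fp_n$ commutes with Hecke induction. Both follow directly from Proposition~\ref{BK}, together with the observation that the cyclotomic quotient $e_\fg H^\la e_\fg \cong \alg^\la$ automatically restricts to the locus where all Hecke eigenvalues lie in $[1,n]$, so that the eigenvalue conditions cutting out $\cO^\fp_n$ and those cutting out $\cata^\bla$ match term by term.
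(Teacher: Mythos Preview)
Your approach is essentially the same as the paper's: both identify $f_\bullet$ with Hecke induction $-\otimes_{H^{\la,n}_d} H^{\la,n}_{d+1}$, then use Brundan--Kleshchev's isomorphism $\Upsilon$ to match the generalized eigenspace decomposition of the new polynomial generator with the idempotent decomposition into the summands $\fF_i$. One small imprecision: your formula $\Xi^{-1}(M) = \bigoplus_d \Hom_{\cO^\fp}(P \otimes V^{\otimes d}, M)$ transported via $\Upsilon$ lands in $\alg^\la$-modules (since $\Upsilon\colon \alg^\la \to H^{\la,n}$), not $\alg^\bla$-modules---the equivalence $\Xi$ is built from $\Hom(\oplus_\kappa \pr_n P^d_\kappa,-)$ rather than just the $\kappa=0$ piece---but this does not affect the argument, which really takes place at the level of $H^{\la,n}$ anyway, exactly as in the paper.
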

\begin{proof}
  The functor $f_\bullet$ corresponds to tensoring a
  $H^{\la,n}_d$-module with $H^{\la,n}_{d+1}$. By Proposition
  \ref{BK}, this corresponds to tensoring over $T^\bla_\mu$ with
  $\oplus_iT^\bla_{\mu-\al_i}$ via the map $\oplus \nu_i$.  This is,
  of course, the
  functor $\oplus_{i=1}^n\fF_i$.  Via Brundan and Kleshchev's
  isomorphism, $x_n$ acts on $\fF_iM$  for any $M$ by $y_n+i$; that is, $x_n-i$ acts
  invertibly on $\fF_jM$ for $j\neq i$ and nilpotently on $\fF_iM$.
  This shows the desired isomorphism.
\end{proof}

For any parabolic subalgebra $\fq\supset \fp$ with Levi $\fl=\fq/\!\rad\fq$, we have an induction functor \[\ind_\fl^{\fgl_N}\overset{def}= U(\fgl_N)\otimes_{U(\fq)}-:\cO^\fp(\fl)\to \cO^\fp\] where $\cO^\fp(\fl)$ denotes the parabolic category $\cO$ for $\fl$ and the parabolic $\fp/\!\rad\fq$ (here $\fl$-representations are considered as $\fq$ representations by pullback). 

Choices of $\fq$ are in bijection with partitions of $\bla$ into
consecutive blocks $\bla_1,\dots,\bla_k$. Let $\Xi_\fl:\cata^{\bla_1;\dots ;\bla_k}\to \cO^\fp(\fl)$ be the comparison functor analogous to $\Xi$ for $\fl$.

\begin{prop}\label{ind-sta}
We have a commutative diagram
\begin{equation*}
    \begin{tikzpicture}[yscale=1.1,xscale=1.9,very thick]
        \node (a) at (1,1) {$\cO^\fp_n$};
        \node (b) at (-1,1) {$\cO^\fp_n(\fl)$};
        \node (c) at (1,-1)  {$\cata^\bla$};
        \node (d) at (-1,-1){$\cata^{\bla_1;\dots ;\bla_k}$};
        \draw[->] (b) -- (a) node[above,midway]{$\ind_{\fl}^{\fgl_N}$};
        \draw[->] (d) -- (c) node[below,midway]{$\mathbb{S}^{\bla_1,\dots,\bla_k}$};
        \draw[->] (c) --(a) node[right,midway]{$\Xi$};
         \draw[->] (d) --(b) node[left,midway]{$\Xi_\fl$};
    \end{tikzpicture}
\end{equation*}
\end{prop}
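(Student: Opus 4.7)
The plan is to verify this natural isomorphism on a projective generator, extending by right exactness --- both $\mathbb{S}^{\bla_1,\dots,\bla_k}$ (tensor product with the partial-standard bimodule) and $\ind_\fl^{\fgl_N}$ (parabolic induction) are right exact, while $\Xi$ and $\Xi_\fl$ are equivalences. Iterating along the partition of $\bla$, I may assume $k=2$, so that $\fq \supset \fp$ corresponds to $\bla = \bla_1\cdot\bla_2$ and $\fl = \fl_1 \oplus \fl_2$.

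For the base case, I would take the ground-state projective $P^0_\emptyset \boxtimes P^0_\emptyset \in \cata^{\bla_1;\bla_2}$ (no black strands). Along one path, $\mathbb{S}^{\bla_1,\bla_2}$ sends it to $P^0_\emptyset \in \cata^\bla$, whose image under $\Xi$ (by Proposition \ref{Hecke-equivalence}) is $\pr_n(U(\fgl_N) \otimes_{U(\fp)} \C_{-\rho})$. Along the other path, $\Xi_\fl$ produces the box product of the Levi-ground-state projectives, and $\ind_\fl^{\fgl_N}$ then yields
\[
U(\fgl_N) \otimes_{U(\fq)} U(\fl) \otimes_{U(\fp_1 \oplus \fp_2)} (\C_{-\rho_1} \boxtimes \C_{-\rho_2}) \cong U(\fgl_N) \otimes_{U(\fp)} \C_{-\rho},
\]
since $\fp = (\fp_1\oplus\fp_2) + \rad\fq$; the projection $\pr_n$ is automatic since the ground-state tableau on $\pi$ uses only integers in $[1,n]$.

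To lift the isomorphism to all projectives, I would use the fact that every indecomposable projective of $\cata^{\bla_1;\bla_2}$ appears as a summand of some iterated application of $\fF_i$ (acting on a chosen tensor factor) to the ground state. Both composed functors commute with these $\fF_i$: by Proposition \ref{trans-act}, $\Xi$ and $\Xi_\fl$ intertwine $\fF_i \leftrightarrow f_i$; the partial standardization $\mathbb{S}^{\bla_1,\bla_2}$ intertwines $\fF_i$ on the $j$-th tensor factor with the inner $\fF_i$ that adds a black strand inside the $j$-th block of red strands, as is visible from the diagrammatics of the bimodule $S$; and on the induction side, the tensor identity together with the $\fl$-decomposition $V|_\fl = V^{(1)} \oplus V^{(2)}$ into the defining representations of the two Levi factors yields the corresponding compatibility for $\ind_\fl^{\fgl_N}$ with $f_i$ after decomposing by generalized $x_1$-eigenvalues. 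Naturality of the resulting isomorphism is inherited from naturality of the tensor identity and from the morphism description for standard modules in Proposition \ref{semi-orthogonal}.

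The main obstacle is the second step: correctly matching the summand decomposition of $\fF_i$ applied to a partially standardized module with the tensor-identity decomposition on the induction side. This reduces to checking compatibility of the Brundan-Kleshchev isomorphism $\Upsilon$ with the block decomposition $\bla = \bla_1\cdot\bla_2$ at the level of generalized eigenspaces of $x_1, \dots, x_d$ (equivalently, compatibility with the coloring $\Bi$ of black strands by residues), which is essentially built into the way $\Upsilon$ is constructed in \cite{BKKL}.
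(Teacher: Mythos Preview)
Your overall framework---reduce to projectives by right exactness, verify the base case, then propagate---is reasonable, and your base case is fine. The gap is in the propagation step.

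There is no ``inner $\fF_i$ that adds a black strand inside the $j$-th block'' as a functor on $\cata^\bla$; the only $\fF_i$ the paper defines adds a strand at the far right. What is true is that $\fF_i\,\mathbb{S}(P_1\boxtimes P_2)$ carries a two-step \emph{filtration} whose associated graded pieces are $\mathbb{S}(P_1\boxtimes \fF_iP_2)$ and $\mathbb{S}(\fF_iP_1\boxtimes P_2)$ (this is the standard filtration categorifying $\Delta(F_i)$). Likewise on the category $\cO$ side, the tensor identity gives only a short exact sequence
\[
0\to \ind\bigl((Q_1\otimes V^{(1)})\boxtimes Q_2\bigr)\to \ind(Q_1\boxtimes Q_2)\otimes V\to \ind\bigl(Q_1\boxtimes (Q_2\otimes V^{(2)})\bigr)\to 0,
\]
because $V|_{\fq}$ has $V^{(1)}$ as a $\fq$-submodule, not a summand. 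So your phrase ``summand decomposition'' is already too optimistic: neither side splits, the middle terms are not projective (they are standard-type objects), and matching associated gradeds does not by itself produce an isomorphism of the extensions. Worse, if you chase the conventions, the two filtrations run in opposite directions, so even a five-lemma style argument is not immediate.

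The paper bypasses this entirely. Rather than building up via $\fF_i$'s, it takes an arbitrary projective $P_1\boxtimes\cdots\boxtimes P_k$ and identifies \emph{both} $\ind_\fl^{\fgl_N}(\Xi_\fl(P_1\boxtimes\cdots\boxtimes P_k))$ and $\mathbb{S}(P_1\boxtimes\cdots\boxtimes P_k)$ directly as the same explicit quotient of the projective $P'$ attached to the concatenated tableau: on the induction side the kernel is the trace of projectives higher in Bruhat order via transpositions that change the content of some $T_i$, and on the standardization side it is the trace of projectives where a black strand has moved left across a block boundary---and these two descriptions coincide. Agreement on morphisms then follows from the compatibility of $\Upsilon$ with the inclusions $H_{d'}\otimes H_{d''}\hookrightarrow H_{d'+d''}$ and $R(\nu')\otimes R(\nu'')\hookrightarrow R(\nu'+\nu'')$, which is the content you gestured at in your final paragraph but did not use in the right place.
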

\begin{proof}
We know that both functors are exact, by Proposition
\ref{standard-exact}; thus need only check this on projectives. Consider a representation of $\fl$ given by an exterior product of projectives in category $\cO$ for each of its $\mathfrak{gl}_j$-factors \[P=P_1\boxtimes\cdots\boxtimes P_k.\]  Then the induction $\ind^{\fgl_N}_\fl P$ is a quotient of the projective $P'$ corresponding to the concatenation $T$ of the tableaux $T_i$ for the $P_i$.  The kernel is the image of all maps from projectives higher than $T$ in Bruhat order through a series of transpositions which change the content of at least one of the $T_i$.

Similarly, the standardization $\mathbb{S}^{\bla_1;\dots;\bla_k}(\Xi^{-1}_\fl(P))$ is a quotient of $\Xi^{-1}(P')$; the kernel is the image of all maps from projectives that correspond to idempotents for sequences where at least one black strand has been moved left from one block to the other.  Thus, these functors agree on the level of projective objects.

Now, we must show that they agree on morphisms; that is, we must show
that the action of $\alg^{\bla_1}\otimes \cdots \otimes \alg^{\bla_k}$
induced on $\ind_{\fl}^{\fgl_N}(\Xi(\alg^{\bla_1}\otimes \cdots
\otimes \alg^{\bla_k}))$ agrees with that on
$\Xi(\mathbb{S}^{\bla_1,\dots,\bla_k}(\alg^{\bla_1}\otimes \cdots
\otimes \alg^{\bla_k}))$ under an isomorphism between these objects.
Since $\alg^{\bla_1}_{\al_1}\otimes \cdots \otimes
\alg^{\bla_k}_{\al_k}$ is the full-endomorphism algebra of $S_\bal$,
it is also the full endomorphism algebra of $\Xi(S_\bal)$.  Thus, in
fact, any isomorphism $\Xi(S_\bal)\cong \ind_{\fl}^{\fgl_N}(\Xi(\alg^{\bla_1}_{\al_1}\otimes \cdots
\otimes \alg^{\bla_k}_{\al_k}))$ induces an isomorphism of functors.  
\end{proof}

Some care is required here on the subject of gradings. Brundan and
Kleshchev's results relating category $\cO$ to Khovanov-Lauda algebras
are ungraded; they imply no connection between the usual graded lift
of $\tO^\fp$ of category $\cO$ and the graded category of modules over
$\alg^\bla$.  Luckily, the uniqueness of Koszul gradings proven in \cite[2.5.2]{BGS96}
implies that any Morita equivalence between two Koszul graded algebras
can be lifted to a graded equivalence.

There are now two proofs in the literature that these algebras are
Koszul for in the type A case.  Hu and Mathas have shown that their
quiver Schur algebra is Koszul \cite[Th. C]{HMQ}; thus, we may use the Morita
equivalence of Theorem \ref{quiver-schur} to transport this result to
$\alg^\bla$.  The author has also given a direct geometric proof in
\cite[Th. 1]{Webksln}, by directly constructing a graded isomorphism of
$\alg^\bla$ with an Ext-algebra in the Koszul dual of $\cO^\fp_n$.

\begin{prop}\label{sln-Koszul}
  When $\fg=\mathfrak{sl}_n$ and $\bla$ is a list of fundamental
  weights, the algebra $\alg^\bla_\mu$ is Koszul.
\end{prop}
\begin{cor}
The equivalence $\Xi$ has a graded lift.
\end{cor}

We note that both the action of projective functors and of the
induction functors have graded lifts which are unique up to grading
shift, and thus are determined by their action on the Grothendieck
group.  Thus the graded lifts given by the action of $\tU$ and
$\mathbb{S}$ agree, up to an easily understood shift, with those used
in other papers on graded category $\cO$ (most importantly for us,
this is used in the work of Mazorchuk-Stroppel \cite{MS} and Sussan \cite{Sussan2007} on link homologies, which we address in this paper's sequel \cite{KI-HRT}).

\subsection{The affine case}
We note that the constructions of the previous subsection generalize in an
absolutely straightforward way to the affine case by simply replacing
the results of Section 3 of \cite{BKKL} with Section 4.

We let $\hat H_d$ denote the affine Hecke algebra (not the degenerate
one we considered earlier). Fix an element  $\zeta\in \overline{\K}$,
the separable algebraic closure of $\K$ such
that \[1+\zeta+\zeta^2+\cdots+\zeta^{n-1}=0,\] and $n$ is smallest
integer for which this holds (for example, if $\K$ is characteristic
0, these means that $\zeta$ is a primitive $n$th root of unity). The
{\bf cyclotomic affine Hecke algebra} or {\bf Ariki-Koike algebra}
(introduced in \cite{AK}) for $\la$ is the quotient $$\hat
H^\la=\bigoplus_{d}\hat H_d/\langle (x_1-\zeta^{i})^{\al_i^\vee(\la)}
\rangle.$$
where we adopt the slightly strange convention that if $\zeta\in \Z$, then $\zeta^i=\zeta+i$, and otherwise it is the usual power operation.

\begin{thm}[\mbox{\cite[Main Theorem]{BKKL}}]
When $\fg\cong \widehat{\mathfrak{sl}}_n$, there is an isomorphism $\alg^\la\cong \hat H^\la$.
\end{thm}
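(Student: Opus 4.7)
The plan is to deduce this exactly as in the degenerate case (Proposition \ref{BK}), by composing two isomorphisms. First I would invoke Theorem \ref{cyclotomic} to identify $\alg^\la\cong R^\la$, which reduces the problem to producing an isomorphism between the cyclotomic quiver Hecke algebra of type $\widehat{\mathfrak{sl}}_n$ and the Ariki-Koike algebra $\hat H^\la$. This second step is precisely the content of \cite[Main Theorem]{BKKL} in its non-degenerate form, so nothing conceptually new is required; the task is to check that the setup in Section \ref{sec:KL} agrees with the conventions of \cite{BKKL}.

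Concretely, I would first fix the orientation of the cyclic quiver implicit in our choice of $Q_{ij}$ (the standard orientation used throughout Section \ref{sec:quiver-varieties}), which pins down $Q_{i,i+1}(u,v)=v-u$ and $Q_{i+1,i}(u,v)=u-v$ and $Q_{ij}=1$ for $|i-j|>1$. These are the polynomials for which Brundan-Kleshchev's explicit isomorphism $R^\la\to\hat H^\la$ is written down; if one prefers a different normalisation of $Q_{ij}$, Section 4 of \cite{BKKL} shows that any two such choices produce isomorphic cyclotomic algebras after an easy rescaling of dots and crossings, so there is no loss in making this choice.

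Then I would write down the map $\Upsilon\colon R^\la\to\hat H^\la$ given by sending $e(\Bi)$ to the idempotent cutting out the generalized $(\zeta^{i_1},\ldots,\zeta^{i_n})$-eigenspace of $(x_1,\ldots,x_n)$, sending the dot $y_je(\Bi)$ to $e(\Bi)(\zeta^{-i_j}x_j-1)$ (or its degenerate analogue, depending on whether $\zeta\in\Z$), and sending $\psi_ke(\Bi)$ to a suitable explicit renormalisation of the braid generator $T_k$, as spelled out in Section~4 of \cite{BKKL}. Verifying that this respects the defining relations is a direct substitution, since in the affine type $A$ case every residue class modulo $n$ is realized by a simple root of $\widehat{\mathfrak{sl}}_n$, so there is no idempotent truncation by an $e_\fg$; the map is defined on the whole of $R^\la$ and lands in all of $\hat H^\la$.

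The only real obstacle is clerical: one must check that the ambient field $\K$ and the element $\zeta$ satisfy Brundan-Kleshchev's hypotheses, and that the slightly awkward convention $\zeta^i=\zeta+i$ when $\zeta\in\Z$ (our degenerate case, which forces $\K$ to have characteristic dividing $n$) matches the specialization needed in \cite[\S 4]{BKKL}. Once this dictionary is in place, bijectivity of $\Upsilon$ follows from \cite[Main Theorem]{BKKL}, completing the composition $\alg^\la\cong R^\la\cong\hat H^\la$.
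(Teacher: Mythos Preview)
Your proposal is correct and matches the paper's approach exactly: the paper does not give an independent proof of this statement but simply cites it from \cite{BKKL}, with the identification $\alg^\la\cong R^\la$ of Theorem~\ref{cyclotomic} left implicit. Your write-up is just a more explicit unpacking of that citation, including the bookkeeping on $Q_{ij}$ and $\zeta$ that the paper does not spell out.
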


This symmetric Frobenius algebra has a natural quasi-hereditary cover, called the {\bf cyclotomic $q$-Schur algebra}, defined by Dipper, James and Mathas \cite{DJM}.  Indecomposable projectives over this algebra are indexed by ordered $k=\sum_{i=0}^{n}\al_i^\vee(\la)$-tuples of partitions.

\begin{prop} When
$\fg=\widehat{\mathfrak{sl}}_n$, then ${\cata}^\bla$ is equivalent to the
subcategory of representations of the cyclotomic $q$-Schur algebra consisting
of objects presented by certain projective modules.

If all $\la_i$ are fundamental, then these are exactly the projectives for the multipartitions where each constituent partitions are $n$-regular.

In general, we break the multipartition into smaller ones consisting of the first $k_1=  \sum_{i=0}^{n}\al_i^\vee(\la_1)$ partitions, the next $k_2$, etc, and take the projectives for multipartitions where each of these smaller multi-partitions is $n$-Kleshchev. 
\end{prop}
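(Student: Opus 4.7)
The plan is to adapt the finite-type-$A$ argument of Section~\ref{sec:comp-categ} essentially verbatim, substituting the degenerate cyclotomic Hecke algebra $H^\la$ with the Ariki-Koike algebra $\hat H^\la$, and parabolic category $\cO^\fp$ with the representation category of the cyclotomic $q$-Schur algebra $S^\la$. The entire strategy rests on the double centralizer property (Corollary~\ref{doub-cen}), which reduces the problem to identifying the images of the self-dual projectives $P^\kappa_\Bi$ inside an appropriate module category over $\hat H^\la$.

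First I would define the obvious $q$-analogues of the elements $z_\kappa$ from Section~\ref{sec:comp-categ}, namely
\[
\hat z_\kappa=\prod_{j=1}^\ell \prod_{k=1}^{\kappa(j)}(x_k-\zeta^{\pi_j}),
\]
and check, by the same nilpotency argument as in Section~\ref{sec:comp-categ} (replacing $x_j-i$ with the invertibility of $x_j-\zeta^i$ on $e(\Bi)$ when $i\neq i_j$), that the right ideals $\hat M^\kappa_\Bi=e(\Bi)\hat z_\kappa\hat H^\la$ coincide with the images of the $y_{\Bi,\kappa}\alg^\la$ under Brundan-Kleshchev's isomorphism. Then the double centralizer property gives $\alg^\bla\cong \End_{\hat H^\la}\!\big(\bigoplus_\kappa \hat M^\kappa\big)$.

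Next I would recognize each $\hat M^\kappa$ as a direct sum of the ``$q$-permutation modules'' of Dipper-James-Mathas. By definition, the cyclotomic $q$-Schur algebra $S^\la$ is the endomorphism algebra of a direct sum $\bigoplus_{\boldsymbol{\mu}} M^{\boldsymbol{\mu}}$ indexed by multicompositions, and our $\hat M^\kappa$ are precisely the summands corresponding to the multicompositions whose columns reflect the grouping of the flag $V_{(1)}\subset V_{(2)}\subset\cdots$ dictated by $\kappa$. Standard Morita-type manipulations then identify $\cata^\bla$ with the subcategory of $S^\la\mpmod$ presented by the corresponding indecomposable projectives $P^{\boldsymbol{\mu}}$.

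The hard part will be identifying which multipartitions index these projectives. For $\bla$ a sequence of fundamental weights this amounts to Ariki's theorem, which parametrizes simples of $\hat H^\la$ by $n$-Kleshchev (equivalently, when all $\la_i$ are fundamental, $n$-regular) multipartitions, together with the fact that the summand of $\hat M^\kappa$ supporting a given simple appears precisely when the corresponding multipartition satisfies this regularity. For general $\bla$ one iterates this, applying the partial standardization functor $\mathbb{S}^{\bla_1;\dots;\bla_m}$ of Section~\ref{sec:standard} and the analogue of Proposition~\ref{ind-sta} to reduce to the fundamental case block-by-block; the blockwise $n$-Kleshchev condition then emerges from matching the Lauda-Vazirani crystal on $\mathcal{B}^\bla$ (Section~\ref{sec:crystal}) with the tensor product of the Kleshchev crystals on each block, a compatibility whose verification is the main technical obstacle and should follow from uniqueness of highest-weight crystal embeddings together with Theorem~\ref{h-bijection}.
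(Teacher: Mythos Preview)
Your proposal is correct and follows essentially the same approach as the paper: the double centralizer property (Corollary~\ref{doub-cen}) reduces to identifying the modules $\hat z_\kappa\hat H^\la$ (with exactly the formula you wrote), which are recognized as permutation modules for the Ariki--Koike algebra. The paper's final identification of the relevant projectives is more direct than your crystal-theoretic route---it simply observes that these $\hat z_\kappa\hat H^\la$ are the permutation modules for the multipartitions whose constituent partitions are columns (all parts of size~1), and invokes the known fact that the indecomposable summands of such permutation modules are exactly the projectives indexed by $n$-Kleshchev multipartitions.
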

\begin{proof}
By Corollary \ref{doub-cen}, $\alg^{\bla}$ is the endomorphism algebra of certain modules over $\alg^\la$, which one can see by the same arguments as Proposition \ref{Hecke-equivalence} are of the form $\hat z_\la \hat \alg^\la$ where \begin{equation*}
\hat z_\kappa=\prod_{j=1}^\ell \prod_{k=1}^{\kappa(j)}(x_k-\zeta^{\pi_j}).
\end{equation*}
These are permutation modules for the Ariki-Koike algebra, exactly those corresponding to multi-partitions where all constituent partitions have all parts size 1. Thus, in the case where all $\la$'s are fundamental, the category of modules over $\alg^\bla$ is the subcategory of representations of the cyclotomic $q$-Schur algebra generated by summands of these, and in the case where not all representations are fundamental, we must restrict these projectives further.  

The descriptions above follow from the fact that for the permutation module of the multipartition where all parts are 0 except for the last, which has all parts 1, the indecomposable projectives which appear are exactly those for $n$-Kleshchev multipartitions.
\end{proof}

Thus, our categorification can be seen a generalization of the Ariki
categorification theorem \cite{Acat}.  As mentioned in the
introduction, the author and Stroppel address the question of how to
describe the entirety of the cyclotomic $q$-Schur algebra
diagrammatically in \cite{SWschur}.

 \bibliography{../gen}
\bibliographystyle{amsalpha}
\end{document}